\documentclass{article}

\usepackage{epic}
\usepackage{eepic}
\usepackage{mathtools}
\usepackage{ mathrsfs }
\usepackage{amsmath}
\usepackage{amsthm}
\usepackage{amssymb}
\usepackage{enumerate}
\usepackage{tikz-cd}

\usepackage{xcolor}

\usepackage{hyperref}
\usepackage{url}

\newcommand{\al}{\alpha}

\newcommand{\C}{\mathbb C}
\newcommand{\Q}{\mathbb Q}
\newcommand{\R}{\mathbb R}
\newcommand{\Z}{\mathbb Z}

\newcommand{\A}{\mathbb A}

\newcommand{\Ima}{\operatorname{Im}}
\newcommand{\Rea}{\operatorname{Re}}

\newcommand{\Aut}{\operatorname{Aut}}
\newcommand{\Hom}{\operatorname{Hom}}

\newcommand{\tr}{\operatorname{tr}}
\newcommand{\rk}{\operatorname{rk}}

\newcommand{\GL}{\operatorname{GL}}
\newcommand{\PGL}{\operatorname{PGL}}
\newcommand{\q}{\operatorname{q}}

\newcommand{\Stab}{\operatorname{Stab}}
\newcommand{\Std}{\operatorname{Std}}

\DeclareMathOperator{\spin}{spin}
\DeclareMathOperator{\id}{id}
\DeclareMathOperator{\Sat}{Sat}
\DeclareMathOperator{\Rep}{Rep}
\DeclareMathOperator{\Spin}{Spin}

\DeclareMathOperator{\diag}{diag}
\newcommand{\define}{=}
\newcommand{\enifed}{=}
\newcommand{\Ort}{\operatorname{O}}

\newcommand{\Sort}{\operatorname{SO}}

\newcommand{\Obs}{\operatorname{Obs}}

\newcommand{\matr}[4]{\left(\begin{smallmatrix}
		#1 & #2 \\
		#3 & #4
	\end{smallmatrix}\right)}
\newcommand{\SGamma}{\mathrm{S}\Gamma}
\newcommand{\SDelta}{\mathrm{S}\Delta}
\newcommand{\sumstack}[1]{\sum_{\substack{#1}}}

\newcommand{\SL}{\operatorname{SL}}
\newcommand{\Sp}{\operatorname{Sp}}

\newcommand{\fin}{\mathrm{f}}
\newcommand{\mfg}{\mathfrak{g}}
\newcommand{\mfk}{\mathfrak{k}}
\newcommand{\mfp}{\mathfrak{p}}
\newcommand{\autrep}{\Pi_{\mathrm{cusp}}}
\newcommand{\holautform}[2]{\mathcal{A}_{k,\mathrm{hol}}(#1,#2)}

\newcommand{\comment}[1]{}

\numberwithin{equation}{section}

\newtheorem{thm}{Theorem}[section]
\newtheorem{defthm}[thm]{Definition and Theorem}
\newtheorem*{con*}{Conjecture}
\newtheorem{prp}[thm]{Proposition}
\newtheorem{cor}[thm]{Corollary}
\newtheorem{lem}[thm]{Lemma}

\theoremstyle{definition}

\title{Eisenstein series for orthogonal groups and Kneser neighbours}
\author{Manuel K.-H. M\"uller and Nils R. Scheithauer}

\begin{document}

\begin{center}
	{\Large\bf Eisenstein series for orthogonal groups\\[2mm] and Kneser neighbours}\\[10mm]
	Manuel K.-H.\ M\"{u}ller,\\
	Dipartimento di Matematica "Guido Castelnuovo"\\
	Universit{\'a} di Roma "La  Sapienza", Rome, Italy \\[10mm]
	Nils R.\ Scheithauer,\\
	Fachbereich Mathematik,\\
	Technische Universit\"{a}t Darmstadt, Darmstadt, Germany
\end{center}

\vspace*{1.2cm}
\begin{center}
	\begin{minipage}{.8\textwidth}
		\noindent
		{\small We study Eisenstein series for orthogonal groups of even unimodular lattices of signature $(n,2)$ which arise as lifts of elliptic modular forms for $\SL_2(\Z)$ on 1-dimensional cusps. When the weight is small they have to be defined by analytic continuation. In general, they will be smooth but non-holomorphic after regularization. We show exactly in which cases we obtain holomorphic modular forms by computing the intertwining factors appearing in Langlands' constant term formula in terms of standard parameters and Rankin--Selberg $L$-values. This is related to Kneser's neighbour theory of positive-definite even lattices, which also governs the eigenvalues of these Eisenstein series. In particular, we describe the space of singular weight modular forms and construct the function $\Phi_{12}$, the denominator function of the Fake Monster Algebra, as an Eisenstein series. We apply these results to show that the corresponding orthogonal modular varieties are of general type.}
	\end{minipage}
\end{center}

\vspace*{1.2cm}
\begin{tabular}{rl}
	1 & Introduction \\
	2 & Preliminaries and definitions \\
	3 & Hecke operators and constant terms \\
	4 & Eisenstein series \\
	5 & Holomorphic modular forms \\
	6 & Geometric applications 
\end{tabular}
\vspace*{10mm}

\section{Introduction}

Orthogonal modular forms are holomorphic functions on the Hermitian symmetric space $\Ort_{n,2}(\R)/(\Ort_n(\R)\times\Ort_2(\R))$ which are invariant up to a cocycle under a discrete subgroup $\Gamma\subset \Ort_{n,2}(\R)$. Hence, they are automorphic forms for the reductive group $\Ort_{n,2}$ with special regularity properties. Geometrically they arise naturally as global sections of tensor powers of the Hodge line bundle of the orthogonal Shimura variety $\Gamma\backslash\Ort_{n,2}(\R)/(\Ort_n(\R)\times\Ort_2(\R))$. This makes them interesting from the point of view of arithmetic geometry since many moduli spaces have natural models as orthogonal Shimura varieties, such as for example the moduli space of quasi-polarized K3-surfaces \cite{Huybrechts}. On the other hand, they also appear in the representation theory of generalized Kac--Moody algebras (see e.g.\ \cite{BorcherdsMM}, \cite{GritsenkoNikulin1}, \cite{GritsenkoNikulin2}, \cite{ScheithauerInventiones}). Borcherds found two powerful methods to construct orthogonal modular forms, the additive and the multiplicative theta lift (see \cite[Theorems 13.3, 14.3]{Borcherds98}).

We consider the case that $\Gamma$ is the orthogonal (or special orthogonal)
group of the even unimodular lattice $I\!I_{n,2}$ of signature $(n,2)$, $n>2$. Then the variety $\Gamma\backslash\Ort_{n,2}(\R)/(\Ort_n(\R)\times\Ort_2(\R))$ is non-compact and can be compactified by adding finitely many $0$- and $1$-dimensional cusps. The $1$-dimensional cusps are parameterized by the genus $I\!I_{n-2,0}$ of positive-definite even unimodular lattices of rank $n-2$ and can be identified with modular curves $\SL_2(\Z)\backslash\mathbb{H}$. Accordingly, restricting orthogonal modular forms to these cusps yields modular forms for $\SL_2(\Z)$. In this article we study those orthogonal modular forms which do not vanish on the $1$-dimensional cusps, but vanish on the unique $0$-dimensional cusp. As we will show, their Hecke eigenvalues encode information about Kneser neighbour numbers of the genus $I\!I_{n-2,0}$. These modular forms are somewhat illusive since they are not in the image of the additive theta lift. We construct them as Eisenstein series.

Let $\widetilde{I\!I}_{n-2,0}$ denote the set of isometry classes of positive-definite even unimodular lattices with respect to $\Sort_{n-2}$ and let $\C[\widetilde{I\!I}_{n-2,0}]$ be the space of formal linear combinations of classes in $\widetilde{I\!I}_{n-2,0}$. The two eigenspaces of the outer automorphism coming from $\Sort_{n-2}\backslash\Ort_{n-2}$ are denoted by $\C[I\!I_{n-2,0}^{1}]$ and $\C[I\!I_{n-2,0}^{\det}]$. The restriction to the cusps is a linear map $\Phi$ on the space of modular forms of weight $k$ taking values in $\mathrm{M}_k(\SL_2(\Z))\otimes_\C\C[\widetilde{I\!I}_{n-2,0}]$. For an isometry class $[D]\in I\!I_{n-2,0}$ we denote by $\Gamma_D\subset\Gamma$ the (parabolic) subgroup fixing the cusp associated to $[D]$. The group $\Gamma$ has two characters, the trivial character $1$ and $\det$. Let $\chi$ be one of them and pick $f\in\mathrm{S}_k(\SL_2(\Z))$. We define the Eisenstein series $E_{f,[D],\chi}(Z,s)$ as
\begin{multline*}
	E_{f,[D],\chi}(Z,s) \\
	\define \sum_{\gamma\in\Gamma_D\backslash\Gamma}\chi(\gamma)f((-\rho_D,\cdot))|_k[\gamma](Z)\!\left(\frac{-\q(\Ima(Z))}{|j(\gamma,Z)|^2(-\rho_D,\Ima(\gamma Z))}\right)^{s}
\end{multline*}
and extend linearly to $\mathrm{S}_k(\SL_2(\Z))\otimes_\C\C[\widetilde{I\!I}_{n-2,0}]$.
This series converges absolutely for all $\Rea(s) > n-1-k/2$ to a smooth function in $Z$ and has a meromorphic continuation to all $s\in\C$. For $f\in\mathrm{S}_k(\SL_2(\Z))\otimes_\C\C[\widetilde{I\!I}_{n-2,0}]$ we denote this function by $E_f(Z,s)$. If $k>2n-2$, then $E_f(Z)\define E_f(Z,0)$ is a holomorphic modular form of weight $k$ with $\Phi(E_f) = f$. We investigate what happens for small $k$ at the point $s=0$.

The regularity of $E_f(Z,s)$ at $s=0$ depends on the inducing data. We may restrict to the case $f\otimes v\in\mathrm{S}_k(\SL_2(\Z))\otimes_\C\C[\widetilde{I\!I}_{n-2,0}]$ where $f\in\mathrm{S}_k(\SL_2(\Z))$ is a simultaneous eigenform and $v\in\C[\widetilde{I\!I}_{n-2,0}]$ is an eigenvector for the Hecke algebra of $\Sort_{n-2}$ acting on $\C[\widetilde{I\!I}_{n-2,0}]$. Their Hecke eigenvalues are Kneser neighbour numbers. According to the work of Arthur on the classification of automorphic representations by endoscopic transfer to $\GL_N$ \cite{Arthur}, the automorphic representation attached to $v$ is encoded by a standard parameter $\psi$ of the form
\[ \psi = \bigoplus_{i}\pi_i[d_i] \]
where $\pi_i\in\Pi_{\mathrm{cusp}}(\GL_{n_i})$ are self-dual cuspidal automorphic representations, $d_i\in\Z_{\geq1}$ and $\sum_{i}n_id_i = n-2$ (see p.\ \pageref{sec:StandardParameters}). The first main result of this article is the following (see Theorem \ref{thm:EisensteinseriesClassicalIsAdelic}).

\medskip

\noindent\textbf{Theorem.}
{\em Let $f\in\mathrm S_k(\SL_2(\Z))$ be a normalized eigenform and $v\in\C[\widetilde{I\!I}_{n-2,0}]$ a non-zero Hecke eigenvector. Assume that $v$ has Arthur parameter $\psi=\bigoplus_{i=1}^m\pi_i[d_i]$. Then $E_{f\otimes v}(Z)$ is a holomorphic modular form if and only if one of the following conditions hold
	\begin{enumerate}[(i)]
		\item $k \geq n+2$
		\item $k=n$ and $L(\frac{1}{2},\pi_f\otimes\pi_i)=0$ for some $i$ with $d_i=1$
		\item $k = n-2$ and $L(\frac{1}{2},\pi_f\otimes\pi_i)\not=0$ whenever $d_i = 1$
		\item $k < n-2$, $\pi_i\cong\pi_f$ and $d_i=n-k-2$ for some $i$ and $L(\frac{1}{2},\pi_f\otimes\pi_i)\not=0$ whenever $d_i = n-k-1$.
	\end{enumerate}
	In this case 
	\[ \Phi(E_{f\otimes v}) = f\otimes v. \]}

\medskip

Note how this contrasts to the case of Siegel modular forms, where the holomorphicity of Eisenstein series of Klingen type depends only on the weight $k$ but not on the specific inducing data (see \cite{Weissauer}).

The multiplicity formula of Arthur describes the discrete spectrum of a reductive group in terms of standard parameters. In particular, it describes for which Arthur parameters there exists a vector $v\in\C[\widetilde{I\!I}_{n-2,0}]$ with this parameter. Note that the multiplicity formula is still conditional on the hypothesis
\begin{enumerate}[(H1)]
	\item The weighted fundamental lemma (WFL) is true for non-split groups and its non-standard version is true in general.
\end{enumerate}
We will state whenever we assume (H1). See p.\ \pageref{sec:MultiplicityFormulaDependencies} for more details.

We show that any orthogonal modular form whose value at the cusps is a cusp form and which is orthogonal to the kernel of $\Phi$ is a linear combination of Eisenstein series (see Proposition \ref{prp:EisensteinBasis}). Assuming (H1) they are classified by the above theorem (Theorem \ref{thm:EisensteinseriesClassicalIsAdelic}).

Of particular interest are modular forms of singular weight $n/2-1$. This is the smallest possible weight for which there can exist non-trivial holomorphic modular forms. A famous example is
$\Phi_{12} = \Psi_{I\!I_{26,2}}^{\mathrm{mult}}(1/\Delta)$, the multiplicative theta lift of $1/\Delta$. This modular form has weight 12 and is the unique automorphic product of singular weight on a unimodular lattice \cite{ScheithauerCompositio}. It is the denominator function of the Fake Monster Algebra, a generalized Kac--Moody algebra describing the physical states of a bosonic string moving on the torus $\R^{(25,1)}/I\!I_{25,1}$. We can now give a complete description of the space of singular-weight modular forms on unimodular lattices (see Theorem \ref{thm:SingularWeightEisensteinSeries}).

\medskip

\noindent\textbf{Theorem.}
{\em Suppose that (H1) is true. Let $f\in\mathrm{S}_{n/2-1}(\SL_2(\Z))$ be a normalized eigenform. There exists an element $v_f^1\in\C[I\!I_{n-2,0}^1]$ and $v_f^{\det}\in\C[I\!I_{n-2,0}^{\det}]$ with Arthur parameter $\psi = \pi_f[n/2-1]$. The Eisenstein series $E_{f\otimes v_f^\chi}(Z)$ has singular weight and the space of singular weight modular forms on $I\!I_{n,2}$ is given by
	\[ \C\Psi_{I\!I_{n,2}}^{\mathrm{add}}(1) \oplus\bigoplus_{\substack{f\in\mathrm{S}_{n/2-1}(\SL_2(\Z)) \\ \text{normalized eigenform}}}\left(\C E_{f\otimes v_f^1} \oplus \C E_{f\otimes v_f^{\det}}\right). \]
	If $L(\frac{1}{2},\pi_f,\Std_{\GL_2}) \not=0$, then $v_f^\chi\in\C[I\!I_{n-2,0}^\chi]$ exists without the assumption of (H1) and $\vartheta_{I\!I_{n-2,0}}^{(n/2-1)}(v_f^1)$ is the Ikeda-lift of $f$.}

\medskip

In particular, we have $\Phi_{12} = E_{\Delta\otimes v_\Delta^{\det}}$, which gives a direct proof of the modularity of the sum expansion of $\Phi_{12}$. We also remark that for $n=26$, the description of the singular weight forms is entirely unconditional on (H1) (see Corollary \ref{cor:SingularWeightFormsn=26}).

Using Arthur parameters, Chenevier and Lannes showed in \cite[Corollary 5.3.2]{ChenevierLannes} that the difference of the number of times the Leech lattice is a proper $p$-neighbour of itself and the number of times it is an improper $p$-neighbour of itself is given by
\[  \#\mathcal{N}_{\Z/p\Z}^+([\Lambda],[\Lambda]) - \#\mathcal{N}_{\Z/p\Z}^-([\Lambda],[\Lambda]) = \tau(p)\frac{p^{12}-1}{p-1}
\]
where $\tau(p)$ denotes the $p$-th Fourier coefficient of the $\Delta$-function. We generalize this result to arbitrary signatures in Theorem \ref{thm:NeighbourFormulaUsingOrthogonalModularForms}.

\medskip

\noindent\textbf{Theorem.}
{\em Let $F\in\mathrm{M}_{n/2-1}(\Gamma,\chi)$. Suppose that $\Phi(F) = f\otimes v$ for some modular form $f\in\mathrm{M}_{n/2-1}(\SL_2(\Z))$ with $c_f(1) = 1$ and $v\in\C[I\!I_{n-2,0}^\chi]$. Then
	\[ T_{\Z/p\Z}v = c_f(p)\frac{p^{n/2-1}-1}{p-1} \,  v. \]}

\medskip 

Taking $F=\Phi_{12}$ recovers Chenevier and Lannes' result. The theorem imposes strong restrictions on the existence of singular-weight forms.

The Kodaira dimension of a variety is a birational invariant measuring the size of its canonical model. Gritsenko, Hulek and Sankaran \cite{GritsenkoHulekSankaran3} showed that the Kodaira dimension of $\Gamma_n\backslash\mathcal{H}$, where $\Gamma_n = \Ort(I\!I_{n,2})$, is $\kappa(\Gamma_n\backslash\mathcal{H}) = -\infty$ if $n=2,10,18$ and $\kappa(\Gamma_n\backslash\mathcal{H}) = n$ if $n=42, 50, 58, \ldots$ The only missing cases are $n=26$ and $34$. Using the Eisenstein series of singular weight we can construct cusp forms of sufficiently high vanishing order to obtain our fourth main result (see Theorems \ref{thm:GeneralType26} and \ref{thm:GeneralType34}).

\medskip

\noindent\textbf{Theorem.}
{\em The Kodaira dimension of $\Gamma_{26}\backslash\mathcal{H}$ is
	\[   \kappa(\Gamma_{26}\backslash\mathcal{H}) = 26,  \]
	i.e. $\Gamma_{26}\backslash\mathcal{H}$ is of general type. If the number of chiral lattices in $I\!I_{32,0}$ is bounded by $10^{33}$, then 
	\[   \kappa(\Gamma_{34}\backslash\mathcal{H}) = 34.  \] }

\medskip

The paper is organized as follows. The main computations regarding Eisenstein series are done in the adelic setting in terms of automorphic forms on adele groups. In Section \ref{sec:Definitions} we introduce the necessary notation and definitions for orthogonal groups and automorphic forms. In Section \ref{sec:HeckeOperatorsAndConstantTerms} we recall some theory of Hecke algebras and constant terms of automorphic forms. The results on Eisenstein series are obtained in Section \ref{sec:EisensteinSeries} by computing their constant terms applying Langlands' constant term formula. In Section \ref{sec:HolomorphicModularForms}
we study holomorphic modular forms and derive the first three main results described above. In the final section we apply our results to prove the general type results for the orthogonal Shimura varieties. 

\medskip

The authors would like to thank M.\ Dittmann, P.\ Kiefer, J.\ H.\ Bruinier and G.\ Chenevier for stimulating discussions and helpful comments.

Both authors acknowledge support by the Deutsche Forschungsgemeinschaft (DFG, German Research Foundation). The first through a \emph{Walter Benjamin-programme fellowship}, project number 566117541, the second through the CRC \emph{Geometry and Arithmetic of Uniformized Structures}, project number 444845124.

In the creation of this article, ChatGPT was used for literature research and discussions. All mathematical ideas and results were obtained by the authors.

\section{Preliminaries and definitions}\label{sec:Definitions}

We recall some definitions and results and set the notation for this work. For backgrounds we refer to \cite{GetzHahn}.

\medskip

Let $L$ be an even lattice of signature $(n,2)$ with $n>2$ and rational ambient space $V = L\otimes_\Z\Q$. We assume that $V$ has Witt rang $2$. Denote the bilinear form on $L$ and $V$ by $(\cdot,\cdot)$ and set $\q(v) = (v,v)/2$ for $v\in V$. We define an affine group scheme over $\Z$ by
\[ G(R) = \Sort(L\otimes_\Z R) \]
for any commutative ring $R$ with unit. It is an affine reductive group over $\Q$. Let $F$ and $F'$ be two isotropic subspaces of $V$ of dimension $2$ that are dual to each other. The space $W = ( F \oplus F')^{\bot}$ is a positive-definite subspace of $V$ of dimension $n-2$. We set $\Sort_W(R) \define\Sort(D_0\otimes_\Z R)$ where $D_0 = L\cap W$. We choose bases $e_1,e_2$ of $F$ and $f_1,f_2$ of $F'$ such that
\[ (e_1,e_2) = (f_1,f_2) = 0,\qquad (e_i,f_j) = \delta_{ij}  \]
and define $Z = \Q(e_1-f_1) \oplus \Q(e_2-f_2)$. The setwise stabilizer $P = \Stab_{G}(F)$ is a maximal parabolic subgroup of $G$. It has a Levi decomposition $P = MN$ where
\[  M = \GL_F \times \, {\Sort}_W   .  \]
We write the elements of $M$ as $(m,\omega) \in \GL_2\times \Sort_W$ where the action of $(m,\omega)$ on $V = \langle e_1,e_2\rangle \oplus W \oplus \langle f_1,f_2\rangle$ is given by $(m,\omega,(m^{-1})^T)$. On the other hand $N(R)$ is given by
\[ \{n(x_1,x_2,t) \define E^{e_1}_{te_2+x_1}E^{e_2}_{x_2}\mid x_1,x_2\in W(R), t\in R \} \]
where $E^{e_i}_{x_i}\in G(R)$ are called \emph{Eichler-transformations} and are given by
\[ E^{e_i}_{x_i}(v) \define v - (v,e_i)x_i + (v,x_i)e_i - \q(x_i)(v,e_i)e_i \]
for $x_i\in V(R)\cap\langle e_i,f_i\rangle^\bot$.

The group $G$ has a maximal $\Q$-split torus unique up to conjugation which we choose as  
\[ T = \{ (\diag(a_1,a_2),1)\in M \mid a_i \in \mathbb{G}_m \} \subset M. \]
We write $(\diag(a_1,a_2),1) = t(a_1,a_2)$ and define characters $\varepsilon_i:T \to \mathbb{G}_m$, $t(a_1,a_2) \mapsto a_i$ so that 
\[  X^*(T) = \Z \varepsilon_1 \oplus \Z \varepsilon_2. \]
We can decompose the Lie algebra $\mathfrak{g}$ of $G$ under the adjoint action of $T$ into eigenspaces
\[   \mathfrak{g} = \mathfrak{m} \oplus \bigoplus_{\alpha \in \Phi(G,T)} \mathfrak{g}_{\alpha}. \]
with weights $\Phi(G,T) \subset X^*(T)$ given by   
\[
\pm(\varepsilon_1-\varepsilon_2), \qquad
\pm(\varepsilon_1+\varepsilon_2), \qquad
\pm\varepsilon_1, \qquad
\pm\varepsilon_2 .
\]
They have multiplicities $n-2$ resp.\ $1$ depending on whether the weight is long or short. The weights form a root system of type $B_2$ in  $X^*(T) \otimes_{\Z} \R$. In the following we refer to them as roots. We choose
\[  \al_1 = \varepsilon_1-\varepsilon_2, \qquad \al_2 = \varepsilon_2  \]
as simple roots. Then the positive roots are $\al_1, \al_2, \al_1 + \al_2, \al_1 + 2 \al_2$. The Weyl group $W(\Phi(G,T))$ is the dihedral group of order $8$. It is isomorphic to the \emph{relative Weyl group} $W(G,T)\define N_G(T)(\Q)/Z_G(T)(\Q)$. It acts on $X^*(T)$ by signed permutations of $\varepsilon_1,\varepsilon_2$. Furthermore, we set
\[ W(M,M)\define N_{W(G,T)}(W(M,T))/W(M,T)=\{1,w\} \]
where $w$ as an element of $W(\Phi(G,T))$ acts as
\[ \varepsilon_1\mapsto-\varepsilon_2,\qquad\varepsilon_2\mapsto-\varepsilon_1. \]
Equivalently, it is the group of Weyl isomorphisms of the split center of $M$ induced by elements carrying $M$ to itself. Let $\rho_P$ be the half-sum of the roots in $N$, counted with their root-space dimensions. Equivalently, $2\rho_P$ is the character of $M$ on $\det(\operatorname{Lie}(N))$ (see \cite[Chapter 10]{GetzHahn} and \cite[Section I.1]{MoeglinWaldspurger}). Then
\[ 2\rho_P = (n-2)\varepsilon_1+(n-2)\varepsilon_2+(\varepsilon_1+\varepsilon_2) = (n-1)(\varepsilon_1+\varepsilon_2). \]
Hence,
\[ \rho_P=\frac{n-1}{2}(\varepsilon_1+\varepsilon_2). \]
Finally, for a connected reductive group $H$, its \emph{Langlands dual group} $\widehat H$ is the connected complex reductive group with root datum dual to the absolute root datum of $H$. In particular,
\[ \widehat G=\Sort_{n+2}(\C),\qquad \widehat M=\GL_2(\C)\times\Sort_{n-2}(\C). \]
Here one uses the absolute root datum, not the relative root system of type $B_2$ above (see \cite[Chapter 7]{GetzHahn} and \cite[Section 6.1.2]{ChenevierLannes}).

\subsection*{Automorphic forms on adele groups}

Recall that the adeles $\A$ over $\Q$ are defined by $\A = \R \times \A_\fin$ with $\A_\fin = \prod_p' \Q_p$, the restricted product over the $\Q_p$. The adelic points of $G$ can be identified with
\[  G(\A) = G(\R) \times \prod_p' G(\Q_p) . \]
The group $K_p\define G(\Z_p)$ for a prime $p$ is a compact open subgroup of $G(\Q_p)$, while $K_\infty\define\Stab_{G(\R)}(\R(e_1-f_1)\oplus\R(e_2-f_2))\cong\Sort_n\times\Sort_2$ is a maximal compact subgroup of $G(\R)$. We set $K_\fin \define\prod_pK_p$ and $K \define K_\infty\times K_\fin$. Then $K$ is a compact subgroup of $G(\A)$ and in good position with $P$ (see \cite[Appendix A.1]{GetzHahn}). By slight abuse of notation denote the complexification of the Lie algebra of $G(\R)$ by $\mathfrak{g}$.
Finally let $Z(\mathfrak{g})$ be the center of the universal enveloping algebra $U(\mathfrak{g})$ of $\mathfrak{g}$.

A smooth function
\[ \varphi: G(\A) \to \C \]
is called an \emph{automorphic form} on $G$ with respect to $K$ if it is
\begin{enumerate}[(i)]
	\item left $G(\Q)$-invariant,
	\item right $K$-finite,
	\item right $Z(\mathfrak{g})$-finite,
	\item of moderate growth.
\end{enumerate}
We denote the space of automorphic forms by $\mathcal{A}(G)$. We use the conventions of \cite[Chapter 6]{GetzHahn} and \cite[Section I.2]{MoeglinWaldspurger}. The definition of an automorphic form depends on $K_\infty$, but one can show that it does not depend on the choice of compact open subgroup $K_\fin$.

\medskip

Note that $M$ is also a reductive group with compact subgroup $M(\A)\cap K$. Analogously to $G$ we define the space $\mathcal{A}(M)$ of automorphic forms for $M$.

We define a character $\psi$ on $M(\A)$ by $\psi((m,\omega)) = |\det(m)|$, which extends to a character of $P(\A)$ and a function, but not a character on $G(\A)$ by letting it act trivially on $N(\A)$ and $K$. Note that here $|\cdot|$ refers to the norm on $\A$ and
\[ \psi = \psi_{\infty}\prod_{p}\psi_{p} \]
where $\psi_{\infty}((m_\infty,\omega_\infty)) = |\det(m_\infty)|_\infty$ and $\psi_{p}((m_p,\omega_p)) = |\det(m_p)|_p$. In particular, $\psi|_T = \varepsilon_1+\varepsilon_2$.

\subsection*{The hermitian symmetric space $G(\R)/K_{\infty}$}

The group $K_{\infty}$ is a maximal compact subgroup of $G(\R)$. It can be identified with $\text{SO}_n \times \text{SO}_2$. The quotient $G(\R)/K_{\infty}$ is a hermitian symmetric space of type $IV$. We denote by $\mfk$ the complexification of the Lie algebra of $K_\infty$. Then
\[ \mfg = \mfk \oplus \mfp_+\oplus\mfp_- \]
where $\mfp_+$ and $\mfp_-$ respectively can be identified with the holomorphic and anti-holomorphic tangent spaces on $G(\R)/K_\infty$ at the point $\id K_\infty$. The hermitian symmetric space $G(\R)/K_\infty$ has a projective model
\[ \mathbf{D} \define \{[Z]\in\mathbb{P}(V\otimes_\Q\C)\mid (Z,Z) = 0, (Z,\overline{Z}) < 0\}\cong G(\R)/K_\infty \]
and a tube domain model
\[ \widetilde{\mathcal{H}} \define\{Z\in V\otimes_\Q\C\cap\langle e_1,f_1\rangle^\bot\mid (\Ima(Z),\Ima(Z)) < 0\}\cong G(\R)/K_\infty. \]
Following the notation of Borcherds, we set $z\define e_1,z'\define-f_1$ and $\rho \define e_2,\rho'\define-f_2$. Then the isomorphism $\widetilde{\mathcal{H}}\to\mathbf{D}$ is given by
\[ Z \mapsto [Z + z' + \q(Z)z]. \]
This defines an action of $G(\R)$ on $\widetilde{\mathcal{H}}$ transported from the natural action of $G(\R)$ on $G(\R)/K_\infty$ and $\mathbf{D}$, namely for $g\in G(\R)$ we define a factor of automorphy $j(g,Z) \define -(g(Z+z'+\q(Z)z),z)$ and set $g Z\in\widetilde{\mathcal{H}}$ to be the unique element such that
\[ j(g,Z)\cdot(g Z + z' + \q(g Z)z) = g(Z + z' + \q(Z)z). \]
We parameterize
\begin{multline*}
	\widetilde{\mathcal{H}} = \{\tau'\rho + \tau\rho'+\mathfrak{z}\mid \\
	(\tau,\tau')\in(\mathbb{H}\times\mathbb{H})\cup-(\mathbb{H}\times\mathbb{H}), \mathfrak{z}\in W\otimes_\Q\C, \q(\Ima(\mathfrak{z})) < \Ima(\tau)\Ima(\tau') \}
\end{multline*}
and define the special point $Z_0\define i\rho+i\rho'$. Then $Z_0$ corresponds to $\id K_\infty$ in $G(\R)/K_\infty$ and $K_\infty = \Stab_{G(\R)}(Z_0)$.

For $m=\matr{a}{b}{c}{d}\in\GL_2(\R)$ we write $j(m,\tau)=\det(m)^{-1/2}(c\tau+d)$. We recall some useful formulas.

\begin{lem}\label{lem:ActionOfM}
	Let $g = (m,1)\in M(\R)$ with $m = \matr{a}{b}{c}{d}\in\GL_2(\R)$ and $Z = (\tau',\tau,\mathfrak{z})\in\widetilde{\mathcal{H}}$. Then
	\begin{align*}
gZ&=(\det(m)^{1/2}(\tau'-c\q(\mathfrak{z})j(m,\tau)^{-1}),m\tau,\det(m)^{1/2}j(m,\tau)^{-1}\mathfrak{z}),\\
j(g,Z)&=\frac{j(m,\tau)}{\det(m)^{1/2}}.
\end{align*} 
\end{lem}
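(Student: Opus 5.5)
The plan is a direct computation in the basis $e_1,e_2,f_1,f_2$ together with $W$, using the definition of $j(g,Z)$ and $gZ$ through the map $Z\mapsto[Z+z'+\q(Z)z]$.

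\textbf{Step 1: write the lifted vector.} For $Z=\tau'\rho+\tau\rho'+\mathfrak{z}$, recall $z=e_1$, $z'=-f_1$, $\rho=e_2$, $\rho'=-f_2$. Using $(e_i,f_j)=\delta_{ij}$ and $W\perp F\oplus F'$, we get
\[ \q(Z)=(\tau'\rho,\tau\rho')+\q(\mathfrak{z})=-\tau\tau'+\q(\mathfrak{z}). \]
Hence
\[ X\define Z+z'+\q(Z)z=\q(Z)e_1+\tau'e_2+\mathfrak{z}-f_1-\tau f_2 . \]

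\textbf{Step 2: apply $g=(m,1)$.} The element $g$ acts on $\langle e_1,e_2\rangle$ by $m$ and on $\langle f_1,f_2\rangle$ by $(m^{-1})^T$, and it fixes $\mathfrak{z}\in W\otimes\C$. I will identify the coordinates of $\langle e_1,e_2\rangle$ and $\langle f_1,f_2\rangle$ with those of $m$ in the order matching the paper's convention. This is the order in which $\tau$ transforms by $m\tau$; if necessary, swap the roles of the two basis vectors, which is a relabeling implicit in the statement. With this convention, the $F'$-part of $gX$ is $\det(m)^{-1}$ times an explicit linear expression in $1$ and $\tau$. Its coefficient of $f_1$ equals $-\det(m)^{-1}(c\tau+d)$.

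\textbf{Step 3: read off $j$.} By definition, $j(g,Z)=-(gX,z)$ is minus the $f_1$-coefficient of $gX$. The resulting factor is then compared with $j(m,\tau)/\det(m)^{1/2}$. The powers of $\det(m)$ have to be matched against the normalization $j(m,\tau)=\det(m)^{-1/2}(c\tau+d)$.

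\textbf{Step 4: read off $gZ$.} Divide $gX$ by $j(g,Z)$, so that the $f_1$-coefficient becomes $-1$, as it must for a vector of the form $gZ+z'+\q(gZ)z$. The components of $gZ$ are then read off one at a time.
\begin{itemize}
\item The $f_2$-coefficient gives the new $\tau$. It is a ratio of linear forms in $\tau$, which is exactly $m\tau$.
\item The $W$-component is $\mathfrak{z}/j(g,Z)=\det(m)^{1/2}j(m,\tau)^{-1}\mathfrak{z}$.
\item The $e_2$-coefficient gives the new $\tau'$. It is $j(g,Z)^{-1}(c\,\q(Z)+d\tau')$. Substituting $\q(Z)=-\tau\tau'+\q(\mathfrak{z})$ and collecting terms, the $\tau'$-terms combine to $\tau'$ times a factor that simplifies to $\det(m)^{1/2}$. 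What remains is $-c\,\q(\mathfrak{z})$ times $\det(m)^{1/2}j(m,\tau)^{-1}$, which gives the stated formula.
\end{itemize}
The $e_1$-coefficient needs no separate work. It is automatically $\q(gZ)$, because $g$ preserves the quadric $(X,X)=0$.

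\textbf{Main obstacle.} The only real difficulty is bookkeeping. One has to fix which basis vector pairs with which entry of $m$, and track the signs coming from $z'=-f_1$ and $\rho'=-f_2$, so that the factor comes out as $c\tau+d$ rather than $a-b\tau$ or a sign variant. I would first fix the convention by checking the diagonal case $m=\diag(a_1,a_2)$ and the unipotent case $m=\matr{1}{b}{0}{1}$, where both sides are easy to compare. Then I would carry out the general computation above.
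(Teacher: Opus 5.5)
Your route is the paper's own: lift $Z$ to $X=Z+z'+\q(Z)z$, apply $(m,1)$, read $j(g,Z)$ off the $z'$-coefficient, and divide. Steps 1--3 are fine, and so are the $\tau$- and $\mathfrak z$-bullets of Step 4. The proposal breaks in the third bullet of Step 4, in two ways.

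\textbf{The sign.} The convention is forced, not a free relabeling. Requiring the $f_1$-coefficient to be $-\det(m)^{-1}(c\tau+d)$ and the new $\tau$ to be $m\tau$ determines $g$ on $F'$: $f_1\mapsto\det(m)^{-1}(df_1+bf_2)$ and $f_2\mapsto\det(m)^{-1}(cf_1+af_2)$. Hence on $F$ it forces $e_1\mapsto ae_1-ce_2$ and $e_2\mapsto-be_1+de_2$. This is $m$ written in the basis $(z,-\rho)$, i.e.\ a sign change, not a swap. So the $e_2$-coefficient of $gX$ is $d\tau'-c\,\q(Z)$, not $c\,\q(Z)+d\tau'$. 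With your sign the $\tau'$-terms are $(d-c\tau)\tau'$, and nothing cancels against $c\tau+d$.

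\textbf{The factor.} Even with the correct sign you get
\[ j(g,Z)^{-1}\bigl(d\tau'-c\,\q(Z)\bigr)=\frac{\det(m)}{c\tau+d}\bigl((c\tau+d)\tau'-c\,\q(\mathfrak z)\bigr)=\det(m)\,\tau'-c\,\q(\mathfrak z)\det(m)^{1/2}j(m,\tau)^{-1}. \]
So the $\tau'$-terms combine to $\det(m)\tau'$, not $\det(m)^{1/2}\tau'$. Your assertion that the factor ``simplifies to $\det(m)^{1/2}$'' is false.

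This is a defect of the statement rather than of your method: its first coordinate is correct only when $\det(m)=1$. The diagonal check you planned exposes it. Take $m=\lambda I$ with $\lambda>0$. Every convention makes $g$ act by $\lambda$ on $F$ and by $\lambda^{-1}$ on $F'$, so
\[ g\bigl(Z+z'+\q(Z)z\bigr)=\lambda^{-1}\bigl(\lambda^2\tau'\rho+\tau\rho'+\lambda\mathfrak z+z'+\lambda^2\q(Z)z\bigr). \]
Hence $j(g,Z)=\lambda^{-1}$, as claimed. But $gZ=(\lambda^2\tau',\tau,\lambda\mathfrak z)$, whereas the statement predicts $\lambda\tau'$ in the first slot. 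Indeed, the vector $\lambda\tau'\rho+\tau\rho'+\lambda\mathfrak z$ has $\q$ equal to $-\lambda\tau\tau'+\lambda^2\q(\mathfrak z)$. This differs from the $z$-coefficient $\lambda^2\q(Z)$, contradicting the isotropy you rightly invoke.

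The paper's proof displays the correct vector $(d\tau'-cQ)\rho+\cdots$. It then makes the same slip when it says that dividing by the $z'$-coefficient gives the asserted coordinates. What the computation actually proves is the lemma with first coordinate $\det(m)\tau'-c\,\q(\mathfrak z)\det(m)^{1/2}j(m,\tau)^{-1}$. The formula for $j(g,Z)$ and the $\tau$- and $\mathfrak z$-coordinates are exactly as stated. The rest of the paper only uses $j(g,Z)$, the $\tau$-coordinate, and the case $m\in\SL_2(\R)$, so nothing downstream is affected.
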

\begin{proof}
Put $Q=\q(Z)=-\tau\tau'+\q(\mathfrak z)$. Direct computation gives
\begin{align*}
(m,1)(Z+z'+Qz)
&=(d\tau'-cQ)\rho+\frac{a\tau+b}{\det(m)}\rho'+\mathfrak z\\
&\qquad+\frac{c\tau+d}{\det(m)}z'+(aQ-b\tau')z.
\end{align*}
Dividing by the coefficient of $z'$ gives the asserted coordinates. The coefficient of $z$ is then $\q(gZ)$, since the vector is isotropic.
\end{proof}

Furthermore, we have

\begin{lem}\label{lem:qZisJ}
	Let $g\in G(\R)$ with $gZ_0 = Z$, then
	\[ -\q(\Ima(Z)) = |j(g,Z_0)|^{-2}. \]
\end{lem}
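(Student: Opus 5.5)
The identity $-\q(\Ima(Z)) = |j(g,Z_0)|^{-2}$ for $Z = gZ_0$ follows from one auxiliary identity: for every $g\in G(\R)$ and $Z\in\widetilde{\mathcal{H}}$,
\[ -\q(\Ima(gZ)) = \frac{-\q(\Ima(Z))}{|j(g,Z)|^2}. \]
We then check the base point. Since $Z_0 = i\rho+i\rho'$ and $(\rho,\rho')=-(e_2,f_2)=-1$, we get
\[ \q(\Ima Z_0) = \q(\rho+\rho') = (\rho,\rho') = -1, \]
so $-\q(\Ima(Z_0))=1$. Taking $Z=Z_0$ in the auxiliary identity then gives the claim immediately.

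To prove the auxiliary identity, set $X(Z) \define Z+z'+\q(Z)z\in V\otimes\C$. This vector is isotropic, and we compute $(X(Z),\overline{X(Z)})$. Since $z,z'$ are isotropic and orthogonal to $Z$, $(z,z')=-(e_1,f_1)=-1$, and $\q(Z)$ contributes through the cross terms, one gets
\[ (X,\overline X) = (Z,\overline Z) - \q(Z) - \overline{\q(Z)} = (Z,\overline Z) - \tfrac12(Z,Z) - \tfrac12(\overline Z,\overline Z). \]
This equals $-\tfrac12(Z-\overline Z,Z-\overline Z) = 2(\Ima Z,\Ima Z) = 4\q(\Ima Z)$. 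Hence
\[ (X(Z),\overline{X(Z)}) = 4\,\q(\Ima(Z)). \]

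Next apply $g$. By the defining relation $gX(Z) = j(g,Z)\,X(gZ)$, and since $g$ is real and preserves the bilinear form,
\[ 4\q(\Ima Z) = (gX(Z),\overline{gX(Z)}) = |j(g,Z)|^2\,(X(gZ),\overline{X(gZ)}) = 4|j(g,Z)|^2\,\q(\Ima(gZ)). \]
This is exactly the auxiliary identity, which completes the proof.

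The only real obstacle is keeping the sign conventions straight: $z'=-f_1$ and $\rho'=-f_2$, so that $(z,z')=-1$ and $(\rho,\rho')=-1$. With these in hand, the computation is routine.
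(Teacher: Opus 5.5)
Your proof is correct and is essentially the paper's argument: the paper also pairs the isotropic vector $Z+z'+\q(Z)z$ with its complex conjugate, uses $g$-invariance of the form and the defining relation for $j$, and evaluates the result as $4\q(\Ima(Z))$. The only difference is that you first prove the general transformation law $\q(\Ima(gZ))=\q(\Ima(Z))/|j(g,Z)|^2$ and then specialize to $Z_0$. The paper instead works directly with $Z_0+z'+\q(Z_0)z=z+z'+i\rho+i\rho'$, whose value under this pairing is $-4$.
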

\begin{proof}
	We have
	\[ g(z+z'+i\rho+i\rho') = j(g,Z_0)(Z+z'+\q(Z)z). \]
	Therefore, 
	\begin{align*}
		-4 &= (z+z' + i\rho+i\rho',\overline{z+z'+i\rho+i\rho'}) \\
		&= (g(z+z' + i\rho+i\rho'),g(\overline{z+z'+i\rho+i\rho'})) \\
		&= |j(g,Z_0)|^2((Z,\overline{Z}) - 2\Rea(\q(Z))) \\
		&= 4|j(g,Z_0)|^2\q(\Ima(Z)).
	\end{align*}
\end{proof}

\begin{lem}\label{lem:characterIsImaginaryPart}
	Let $g\in G(\R)$ with $gZ_0 = Z$, then
	\[ \psi_\infty(g) = \frac{-\q(\Ima(Z))}{|(-\rho,\Ima(Z))|} = \frac{1}{|j(g,Z_0)|^{2}|(-\rho,\Ima(Z))|}. \]
\end{lem}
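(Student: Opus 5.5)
The plan is to reduce to the Levi factor via an Iwasawa decomposition and then compute directly with Lemma \ref{lem:ActionOfM}. The second equality in the statement is immediate from Lemma \ref{lem:qZisJ}, so only the first needs proof. Put
\[ F(g) \define \frac{-\q(\Ima(gZ_0))}{|(-\rho,\Ima(gZ_0))|}. \]
We must show $F=\psi_\infty$ on $G(\R)$. Since $K$ is in good position with respect to $P$, we have $G(\R)=N(\R)M(\R)K_\infty$. Both $F$ and $\psi_\infty$ are right $K_\infty$-invariant: $F$ because $K_\infty=\Stab_{G(\R)}(Z_0)$, and $\psi_\infty$ by definition. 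It therefore suffices to show that $F$ is left $N(\R)$-invariant and that $F$ agrees with $\psi_\infty$ on $M(\R)$.

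\emph{Step 1: left $N(\R)$-invariance.} The Eichler transformations $E^{e_i}_{x}$ fix $e_1=z$ and $e_2=\rho$. Indeed, $E^{e_i}_x(e_j)=e_j$ because $(e_j,e_i)=0$ and $(e_j,x)=0$. Hence every $n\in N(\R)$ fixes $z$ and $\rho$, and $n^{-1}$ does as well. Write $v_Z=Z+z'+\q(Z)z$. Then
\[ j(n,Z)=-(nv_Z,z)=-(v_Z,n^{-1}z)=-(v_Z,z)=1. \]
Consequently $nZ+z'+\q(nZ)z=nv_Z$. Pairing with $\rho$ and using $(\rho,z)=(\rho,z')=0$ gives $(\rho,nZ)=(\rho,Z)$, so the denominator of $F$ is unchanged. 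By Lemma \ref{lem:qZisJ} and the cocycle relation $j(ng,Z_0)=j(n,gZ_0)\,j(g,Z_0)=j(g,Z_0)$, the numerator $-\q(\Ima(ngZ_0))=|j(ng,Z_0)|^{-2}$ is unchanged as well.

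\emph{Step 2: the Levi factor.} Write $g=(m,\omega)$. Since $\omega\in\Sort_W(\R)$ acts only on $W$ and fixes the $\rho,\rho'$-coordinates and $z,z'$, it does not change $F$ or $j$. We may therefore take $\omega=1$. Apply Lemma \ref{lem:ActionOfM} with $Z_0$, i.e.\ $\tau=\tau'=i$ and $\mathfrak z=0$. This gives $gZ_0=(\det(m)^{1/2}i,\,mi,\,0)$. Since $(-\rho,Z)=\tau$ up to the sign convention $(\rho,\rho')=-1$, the denominator equals
\[ |\Ima(mi)|=\frac{|\det m|}{|ci+d|^2}. \]
For the numerator, Lemma \ref{lem:qZisJ} gives $-\q(\Ima(gZ_0))=|j(g,Z_0)|^{-2}$. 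We have $j(g,Z_0)=j(m,i)\det(m)^{-1/2}=\det(m)^{-1}(ci+d)$, so this equals $|\det m|^2/|ci+d|^2$. Taking the quotient yields $F(g)=|\det m|=\psi_\infty(g)$.

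The main obstacle is bookkeeping rather than ideas. The signs of $\det m$ and the convention $(\rho,\rho')=-1$ must be tracked carefully. The exact power of $\det(m)$ in $j(g,Z)$ also matters: the normalization $j(m,\tau)=\det(m)^{-1/2}(c\tau+d)$ contributes a second factor of $\det(m)^{-1/2}$. This power is precisely what makes the quotient equal to $|\det m|$ rather than $1$, so one must confirm that it matches Lemma \ref{lem:ActionOfM}. When $\det m<0$, $\tau$ moves to $-\mathbb{H}$, and the absolute value in the denominator absorbs the sign.
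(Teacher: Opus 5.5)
Your proof is correct and is essentially the paper's argument. Both use the Iwasawa decomposition, dispose of $K_\infty$ and the $\Sort_W$-factor, use $j(n,\cdot)=1$ and $(-\rho,nZ)=(-\rho,Z)$ for $n\in N(\R)$, and then apply Lemma~\ref{lem:ActionOfM} on the Levi factor. The only difference is that you obtain the $N$-invariance structurally, from $N$ fixing $z$ and $\rho$, where the paper computes $n(x_1,x_2,t)Z_0$ explicitly.

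Evaluating the numerator through Lemma~\ref{lem:qZisJ}, rather than through the first coordinate of $gZ_0$, is also the safe choice. That coordinate is actually $\det(m)\,i$; the factor $\det(m)^{1/2}$ in front of $\tau'$ in Lemma~\ref{lem:ActionOfM}, which you copied, is a misprint. Since you never use that coordinate, your argument is unaffected.
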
	
\begin{proof}
	Write $g=(m,\omega)n(x_1,x_2,t)\kappa$ in Iwasawa form. We have
\[ \omega n(x_1,x_2,t)=n(\omega(x_1),\omega(x_2),t)\omega \]
and $\omega \kappa Z_0=Z_0$ so that without loss of generality we may assume $\omega \kappa = 1$. Put $u_0=z+z'+i\rho+i\rho'$. Then 
	\begin{align*}
		n(x_1,x_2,t)u_0
&= E^{z}_{t\rho+x_1}\bigl(z+z'+i\rho\\
&\qquad+i\rho'+ix_2+i\q(x_2)\rho\bigr) \\
		&= z+(z' + t\rho+x_1 +\q(x_1)z) +i\rho+(i\rho' - itz) + \\
		&\qquad+(ix_2 + i(x_1,x_2)z) + i\q(x_2)\rho \\
		&= (t+i(1+\q(x_2)))\rho + i\rho' + x_1 + ix_2 + z' + \q(Z_1)z
	\end{align*}
	with $Z_1 = n(x_1,x_2,t)Z_0 = (t+i(1+\q(x_2)))\rho + i\rho' + x_1 + ix_2$. Applying Lemma \ref{lem:ActionOfM} we obtain
	\begin{align*}
		Z &= (m,1)Z_1 \\
		&= \det(m)(t+i(1+\q(x_2)) - c\q(x_1+ix_2)j(m,i)^{-1})\rho + \\
		&\qquad+m i\rho' + \det(m)j(m,i)^{-1}(x_1+ix_2).
	\end{align*}
	and $j(g,Z_0) = j((m,1),Z_1) = j(m,i)\det(m)^{-1/2}$. Note that $\Ima(m i) = \frac{1}{|j(m,i)|^2}$ so that
	\begin{align*}
		|j(g,Z_0)|^2|(-\rho,\Ima(Z))| &= \frac{|j(m,i)|^2}{|\det(m)|}\cdot\frac{1}{|j(m,i)|^2} = \frac{1}{|\det(m)|}.
	\end{align*}
\end{proof}

\section{Hecke operators and constant terms}\label{sec:HeckeOperatorsAndConstantTerms}

In this section we recall some theory about Hecke operators and constant terms. The constructions apply to connected reductive groups, while the Satake description below is stated for split groups with hyperspecial level. These hypotheses hold for $G$ and $M$ over every $\Q_p$ in our unimodular setting. We normalize Haar measures by $\operatorname{vol}(K_p)=\operatorname{vol}(K_p\cap M)=\operatorname{vol}(K_p\cap N)=1$. Let
\[ \mathcal{H}_p(G) \define C_{\mathrm{c}}^\infty(K_p\backslash G(\Q_p)/K_p) \]
be the \emph{local Hecke algebra}, with convolution product and let
\[ \mathcal{H}(G) \define C_{\mathrm{c}}^\infty(K_\fin\backslash G(\A_\fin)/K_\fin)\cong\bigotimes_{p<\infty}'\mathcal{H}_p(G). \]
The restricted tensor product is taken with respect to $\mathbf{1}_{K_p}$. We denote by $\mathcal{A}(G,K_\fin)$ the space of automorphic forms of \emph{level $K_\fin$}, i.e.\ right $K_\fin$-invariant forms. For $f\in\mathcal{H}(G)$ its action is
\[ R(f)\varphi(x)=\int_{G(\A_\fin)}f(y)\varphi(xy)\mathrm{d}y. \]
We define $\mathcal{H}_p(M)$ and $\mathcal{H}(M)$ in the same way, with $K_{M,p}=M(\Q_p)\cap K_p$ and $K_{M,\fin}=\prod_pK_{M,p}$.

Let $\Rep(\widehat G)$ denote the complexified Grothendieck ring of finite-dimensional algebraic representations of $\widehat G$. The normalized \emph{Satake isomorphism} is
\[ \Sat:\mathcal{H}_p(G)\xrightarrow{\cong}\Rep(\widehat G). \]
The normalized constant term on the Hecke algebra is
\[ \iota_M^G(f)(m)=\psi_p(m)^{(n-1)/2}\int_{N(\Q_p)}f(mn)\mathrm{d}n. \]
It is an algebra homomorphism and the diagram
\[ \begin{tikzcd}
\mathcal{H}_p(G) \arrow[d,"\iota_M^G"] \arrow[rr,"\Sat"] && \Rep(\widehat G) \arrow[d,"\operatorname{Res}"] \\
\mathcal{H}_p(M) \arrow[rr,"\Sat"] && \Rep(\widehat M)
\end{tikzcd} \]
commutes (see \cite[Chapter 7]{GetzHahn} and \cite[Section 6.2]{ChenevierLannes}). For a non-split unramified group one must instead use the Satake description involving the Frobenius action on the dual group.

We normalize the invariant measure on $N(\Q)\backslash N(\A)$ to have volume $1$. The \emph{constant term} of $\varphi$ along $P$ is the function on $G(\A)$
\[ \varphi_P(g)=\int_{N(\Q)\backslash N(\A)}\varphi(ng)\mathrm{d}n. \]
Its restriction to $M(\A)$ is an automorphic form on $M$ (see \cite[Chapter 6]{GetzHahn} and \cite[Section I.2]{MoeglinWaldspurger}). If $\varphi$ has level $K_\fin$, then $\varphi_P$ has level $K_{M,\fin}$. For $f\in\mathcal{H}_p(G)$ we have
\begin{equation}\label{eq:HeckeCommutationRelationsAdelic}
(R(f)\varphi)_P=R\bigl(\psi_p^{-(n-1)/2}\iota_M^G(f)\bigr)\varphi_P.
\end{equation}
Indeed, right translation commutes with integration defining the constant term and Iwasawa decomposition of the integral over $G(\Q_p)$ gives the displayed formula. The expression $\psi_p^{-(n-1)/2}\iota_M^G(f)$ means pointwise multiplication of a compactly supported function by the character $\psi_p^{-(n-1)/2}$. The character itself does not belong to the Hecke algebra.

We will describe $\iota_M^G(T)$ explicitly for an example in Section \ref{sec:HolomorphicModularForms}.

\subsection*{Compact orthogonal groups}

Consider the genus $I\!I_{n-2,0}$ of positive-definite even unimodular of rank $n-2$. Let us denote by $\widetilde{I\!I}_{n-2,0}$ the set of isometry classes with respect to $\Sort_{W}$ of positive-definite even unimodular lattices of rank $n-2$, i.e.\ for each chiral lattice $[D]\in I\!I_{n-2,0}$, there are exactly two classes $[D^+],[D^-]\in\widetilde{I\!I}_{n-2,0}$ and one for each achiral lattice. We get a natural surjective map $\widetilde{I\!I}_{n-2,0}\to I\!I_{n-2,0}$. 

Put $K_W=\Sort_W(\widehat{\Z})$. The map
\[ \Sort_W(\Q)\backslash\Sort_W(\A)/(\Sort_W(\R)\times K_W)\longrightarrow\widetilde{I\!I}_{n-2,0},\quad
\omega\longmapsto[\omega_\fin(D_0\otimes\widehat{\Z})\cap W] \]
is a bijection. Hence, the space $\C[\widetilde{I\!I}_{n-2,0}]$ of formal sums $v = \sum_{[D]\in\widetilde{I\!I}_{n-2,0}}v_{[D]}[D]$ is identified with the space of automorphic forms on $\Sort_W$ with level $K_W$ and trivial $\Sort_W(\R)$-type. The vector $v$ corresponds to the automorphic form $\varphi_v(\omega)=v_{[\omega_\fin(D_0\otimes\widehat{\Z})\cap W]}$. Since $\Sort_W$ is anisotropic over $\Q$, its automorphic quotient is compact and every automorphic representation is cuspidal. Following \cite[Sections 4.1, 4.2 and 4.4]{ChenevierLannes}, we define
\[ \mathcal{H}(\Sort_W)=C_{\mathrm c}^\infty(K_W\backslash\Sort_W(\A_\fin)/K_W)=\bigotimes_p'\mathcal{H}_p(\Sort_W). \]
Then $\mathcal{H}(\Sort_W)$ acts on $\C[\widetilde{I\!I}_{n-2,0}]$ by the above identification.

Two even unimodular lattices $D,\widetilde{D}\subset W$ are \emph{$A$-neighbours}, for a finite abelian group $A$, if
\[ D/(D\cap \widetilde{D})\cong \widetilde{D}/(D\cap \widetilde{D})\cong A. \]
Let $\mathcal{N}_A([D])$ be the finite set of such $\widetilde{D}$ in the genus of $D$. For all chiral $[D]\in I\!I_{n-2,0}$ we choose once and for all a labeling $[D^+],[D^-]\in\widetilde{I\!I}_{n-2,0}$ for the two proper isomorphism classes and when convenient for achiral $D$ we will also use $[D^+] = [D^-] = [D]$. Then we may write $\mathcal{N}_A([D^{\pm}]) = \mathcal{N}_A^+([D^\pm])\cup\mathcal{N}_A^-([D^\pm])$ where $\mathcal{N}_A^+([D^\pm])$ contains only those $A$-neighbours $\widetilde{D}$ which are properly isomorphic to $[\widetilde{D}^\pm]$. We also Denote by $\mathcal{N}_A^*([D],[\widetilde{D}])$ the subset of $\mathcal{N}_A^*([D])$ containing only those lattices isomorphic to $\widetilde{D}$. Then the operator $T_A\in\mathcal{H}(\Sort_W)$ is characterized by
\[ (T_Av)_D=\sum_{\widetilde{D}\in\mathcal{N}_A^+([D])}v_{[\widetilde{D}]}. \]
Here the lattices are counted individually before passing to their proper isometry classes. At a prime $p$, this is the sum of the characteristic functions of the corresponding $K_{W,p}$-double cosets, with $\operatorname{vol}(K_{W,p})=1$. An orthogonal double coset may split into two special orthogonal double cosets and $T_A$ then denotes their sum.

Write $\Ort_W=\Ort(W)$ and let $K_{O,p}=\Ort(D_0\otimes\Z_p)$. Restriction from the orthogonal group gives the subalgebra
\[ \mathcal{H}(\Ort_W)=\bigotimes_p'C_{\mathrm c}^\infty(K_{O,p}\backslash\Ort_W(\Q_p)/K_{O,p})\ \hookrightarrow\ \mathcal{H}(\Sort_W). \]
Locally its image is the invariant subalgebra under the outer involution induced by an isometry of determinant $-1$ (see \cite[Section 4.2.11]{ChenevierLannes}). In particular, it contains every $T_A$. It preserves the two eigenspaces for reversal of orientation,
\[ \C[\widetilde{I\!I}_{n-2,0}]=\C[I\!I_{n-2,0}^{1}]\oplus\C[I\!I_{n-2,0}^{\det}]. \]

The Hecke action is simultaneously diagonalizable for the inner product
\[ \langle v,u\rangle=\sum_{[D]\in\widetilde{I\!I}_{n-2,0}}\frac{v_D\overline{u_D}}{|\Sort(D)|}. \]
A simultaneous eigenspace is the space of $\Sort_W(\R)\times K_W$-fixed vectors in the corresponding automorphic isotypic component. Its dimension is the automorphic multiplicity, since each unramified local representation has a one-dimensional spherical subspace. Thus a non-zero eigenvector $v$ determines an irreducible automorphic representation $\pi_v$, unramified at every finite place and trivial at infinity. These identifications and their compatibility with Hecke operators are discussed in \cite[Sections 4.3--4.4]{ChenevierLannes}.

\section{Eisenstein series}\label{sec:EisensteinSeries}

We study Eisenstein series for reductive groups in the framework of \cite{Langlands}. 

\begin{defthm}[{\cite[Chapter 10]{GetzHahn}, \cite[Sections II.1 and IV.1]{MoeglinWaldspurger}, \cite{Langlands}}]\label{thm:EisensteinSeriesDefinition}
	Let $\pi$ be a cuspidal automorphic representation of $M$ with unitary central character and let $\varphi_s\in\mathrm{Ind}_{MN}^G(\psi^{s+(n-1)/2}\otimes\pi\otimes1) \enifed I(\pi,s)$ be a holomorphic $K$-finite section of normalized parabolic induction. Then the \emph{Eisenstein series}
	\[ E(g,\varphi_s) \define \sum_{\gamma\in P(\Q)\backslash G(\Q)}\varphi_s(\gamma g),\qquad g\in G(\A) \]
	converges absolutely for $\Rea(s) > (n-1)/2$ to a function holomorphic in $s$ and has a meromorphic continuation to all $s\in\C$. Whenever $E(g,\varphi_s)$ does not have a pole, it defines an element in $\mathcal{A}(G)$.
\end{defthm}

We are interested in the case $\pi = \pi_f\otimes\pi_v$ where $\pi_f$ is the automorphic representation of $\GL_2$ associated to a simultaneous eigenform $f\in\mathrm{S}_k(\SL_2(\Z))$ of weight $k$ and $\pi_v$ is an automorphic representation of $\Sort_W$ of level $\Sort_W(\widehat{\Z})$ with trivial Archimedean component. As we have just seen, $\pi_v$ corresponds to an eigenvector $v\in\C[\widetilde{I\!I}_{n-2,0}]$. 

Let $u\in W(M,M)$, choose a representative in $G(\Q)$ and set
\[ N_u=N\cap uN^-u^{-1}, \]
where $N^-$ is the unipotent radical opposite to $N$. For $\Rea(s)$ sufficiently large the \emph{standard intertwining operator} is
\[ M(u,s)\varphi_s(g)=\int_{N_u(\A)}\varphi_s(u^{-1}ng)\mathrm{d}n. \]
It maps $I(\pi,s)$ to $I(u\pi,us)$, where $(u\pi)(m)=\pi(u^{-1}mu)$ and commutes with right translations and right differentiation. The integral continues meromorphically in $s$. These are the unnormalized intertwining operators for normalized induction. We use the compatible measures for which the global functional equations and composition laws hold (see \cite[Section II.1.6]{MoeglinWaldspurger} and \cite[Chapter 10]{GetzHahn}).

Let $w\in G(\Q)$ act trivially on $W$ and send the quadruple $(z,z',\rho,\rho')$ to $(-\rho',-\rho,z',z)$. It represents the non-trivial element of $W(M,M)$ and $N_w=N$. On the $\GL_2$ factor it acts, up to an inner automorphism, by the contragredient. Since $f$ has trivial central character, $w\pi\cong\pi$. We fix this identification. We are interested in the operator
\[ M(w,s)\varphi_s \]
at the point $-(n-k-1)/2$. For a factorizable section we can write
\[ \pi \cong \bigotimes_{p\leq\infty}\pi_p,\qquad I(\pi,s) = \bigotimes_{p\leq\infty}I(\pi_p,s),\qquad \varphi_s = \bigotimes_{p\leq\infty}\varphi_{p,s} \]
and
\[ M(w,s)\varphi_s = \bigotimes_{p\leq\infty}M_p(w,s)\varphi_{p,s}. \]
Since $w^2=t(-1,-1)$ acts trivially on these inducing data, the global functional equation gives
\[ 1 = M(w^2,s) = M(w,ws)M(w,s) = M(w,-s)M(w,s). \]
Our goal is to show that $M(w,s)\varphi_s$ vanishes at $s=-(n-k-1)/2$. On the one-dimensional line of sections considered below, this is equivalent to a pole of $M(w,s)\varphi_s$ at $s=(n-k-1)/2$. We compute at this latter point.

\begin{thm}[Langlands' constant term formula, cf.\ {\cite[Theorem 6.2.1]{Shahidi}} and \cite{Langlands}]\label{thm:LanglandsConstantTerm}
	Let $\pi$ and $\varphi_s$ be as above. As an identity of meromorphic families of functions on $G(\A)$,
	\[ E(\cdot,\varphi_s)_P(g)=\sum_{u\in W(M,M)}M(u,s)\varphi_s(g)=\varphi_s(g)+M(w,s)\varphi_s(g). \]
	The constant term along a proper rational parabolic not conjugate to $P$ is zero. If $Q=g_0Pg_0^{-1}$ with $g_0\in G(\Q)$, then
	\[ E(\cdot,\varphi_s)_Q(g)=\varphi_s(g_0^{-1}g)+M(w,s)\varphi_s(g_0^{-1}g). \]
\end{thm}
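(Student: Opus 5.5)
We would prove the constant term formula by the standard unfolding argument, first in the region of absolute convergence $\Rea(s)>(n-1)/2$ given by Theorem \ref{thm:EisensteinSeriesDefinition}, and then extend it to all $s$ by meromorphic continuation. Both sides are meromorphic families: the left-hand side because the Eisenstein series continues meromorphically and the constant term is an integral over a compact set, the right-hand side because $M(w,s)$ continues meromorphically. Once the identity holds on an open set of $s$, it therefore holds identically.

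For the unfolding, we substitute the defining sum into $\varphi_P(g)=\int_{N(\Q)\backslash N(\A)}E(ng,\varphi_s)\,\mathrm{d}n$. Absolute convergence allows us to interchange sum and integral. We then decompose $P(\Q)\backslash G(\Q)/P(\Q)$ via the Bruhat decomposition. The relative Weyl group is the dihedral group of order $8$ described above, and the double cosets $P\backslash G/P$ correspond to $W(M,T)\backslash W(G,T)/W(M,T)$. Here $W(M,T)$ is generated by the reflection in $\al_1$ (the Weyl group of $\GL_2$), so there are three double cosets, represented by $1$, by $s_{\al_2}$ and by $w$.

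We treat the three double cosets in turn.
\begin{enumerate}[(i)]
\item The trivial coset contributes $\varphi_s(g)$, using $\operatorname{vol}(N(\Q)\backslash N(\A))=1$.
\item The big cell $PwP$ satisfies $P(\Q)\backslash P(\Q)wP(\Q)\cong wN(\Q)$, since $N_w=N$. Unfolding gives $\int_{N(\A)}\varphi_s(w^{-1}ng)\,\mathrm{d}n=M(w,s)\varphi_s(g)$. Here $w^{-1}$ versus $w$ is harmless, since $w^2=t(-1,-1)$ acts trivially.
\item The intermediate cell $Ps_{\al_2}P$ is the key point, and we expect it to need the most care. For a representative $u$ of this double coset, the group $M\cap uNu^{-1}$ is a nontrivial unipotent radical of a proper parabolic of $M$. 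Concretely, it is the upper unipotent of $\GL_2$, possibly together with contributions in the $\Sort_W$ direction. After unfolding, the integral over $N(\Q)\backslash N(\A)$ splits off an inner integral of $m\mapsto\varphi_s(mg')$ over $(M\cap uNu^{-1})(\Q)\backslash(M\cap uNu^{-1})(\A)$. Since $\pi_f$ is cuspidal on $\GL_2$, this inner integral is the constant term of a cusp form along the Borel of $\GL_2$, and hence vanishes.
\end{enumerate}
To carry out step (iii) we would choose explicit coset representatives of $P(\Q)\backslash P(\Q)uP(\Q)$ adapted to the decomposition $N=(N\cap u^{-1}Nu)(N\cap u^{-1}N^-u)$. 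We would then verify that $u(N\cap u^{-1}Nu)u^{-1}$ meets $M$ in the unipotent radical of the Borel of $\GL_F$. This can be checked with the root description: the roots of $N$ are $\varepsilon_1,\varepsilon_2,\varepsilon_1+\varepsilon_2$, and $s_{\al_2}$ sends $\varepsilon_2\mapsto-\varepsilon_2$, so $\varepsilon_1$ becomes the root $\varepsilon_1-\varepsilon_2$ of $\mathfrak m$ after the relevant conjugation.

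For a proper rational parabolic $Q$ not conjugate to $P$, the only options are the other maximal parabolic $P'$ (the stabiliser of an isotropic line) and the minimal parabolic $P_0$. For these we run the same unfolding over $P(\Q)\backslash G(\Q)/Q(\Q)$. Every double coset then produces an inner integral over the unipotent radical of a proper parabolic of $M$, because no $M$-conjugate of the Levi of $Q$ equals $M$. That inner integral is a constant term of $\pi$ along a proper parabolic of $M$. For the $\GL_2$ factor it vanishes by cuspidality of $\pi_f$, and $\Sort_W$ is anisotropic, so it has no proper rational parabolics at all. Hence every such contribution is zero.

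Finally, for $Q=g_0Pg_0^{-1}$ we substitute $n\mapsto g_0ng_0^{-1}$ and use the left $G(\Q)$-invariance of $E$. This gives $E_Q(g)=E_P(g_0^{-1}g)$, and the stated formula follows from the $P$ case.
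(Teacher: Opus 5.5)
Your route is the standard unfolding proof, and that is what the paper relies on. The paper gives no argument of its own: it cites \cite[Theorem 6.2.1]{Shahidi} and \cite{Langlands}, and in the proof of Proposition \ref{prp:HolomorphicityConstantTerms} it makes the same two remarks you make, namely vanishing along non-conjugate parabolics and $E_Q(g)=E_P(g_0^{-1}g)$. Your overall architecture is correct:
\begin{itemize}
\item[(a)] there are three double cosets, represented by $1$, $s_{\al_2}$ and $w$;
\item[(b)] the big cell gives $M(w,s)\varphi_s$ because $N_w=N$, and using $w$ or $w^{-1}$ makes no difference since $t(-1,-1)\in P(\Q)$;
\item[(c)] the middle cell vanishes by cuspidality of $\pi_f$;
\item[(d)] $P_0$ and the line stabiliser $P'$ contribute nothing;
\item[(e)] conjugation by $g_0\in G(\Q)$ handles $Q=g_0Pg_0^{-1}$.
\end{itemize}

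However, the explicit verification you sketch for the middle cell would fail as written, in three places. Take $u=s_{\al_2}$. Then
\[
u\{\varepsilon_1,\varepsilon_2,\varepsilon_1+\varepsilon_2\}=\{\varepsilon_1,-\varepsilon_2,\varepsilon_1-\varepsilon_2\}.
\]
\begin{itemize}
\item[(1)] $N$ splits into three pieces, not two:
\[
N=(N\cap u^{-1}Nu)(N\cap u^{-1}Mu)(N\cap u^{-1}N^-u),
\]
with roots $\varepsilon_1$, $\varepsilon_1+\varepsilon_2$ and $\varepsilon_2$ respectively. Your two-factor decomposition omits exactly the factor that produces the cuspidal integral.
\item[(2)] $u(N\cap u^{-1}Nu)u^{-1}=N\cap uNu^{-1}$ lies inside $N$, so it meets $M$ trivially. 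The group you want is $u(N\cap u^{-1}Mu)u^{-1}=M\cap uNu^{-1}$, which is the root group of $\varepsilon_1-\varepsilon_2$.
\item[(3)] The root carried to $\varepsilon_1-\varepsilon_2$ is $\varepsilon_1+\varepsilon_2$, not $\varepsilon_1$. In fact $s_{\al_2}$ fixes $\varepsilon_1$, and a Weyl element can never send the short root $\varepsilon_1$ (root space of dimension $n-2$) to a long root of multiplicity $1$.
\end{itemize}
In the correct unfolding, the $\varepsilon_1$-part is absorbed by left $N(\A)$-invariance. The $\varepsilon_2$-part unfolds to an integral over $\A$. The $\varepsilon_1+\varepsilon_2$-part becomes the constant term of $\varphi_s$ along the upper unipotent of $\GL_F$, which vanishes because $\pi_f$ is cuspidal. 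There is no ``$\Sort_W$ direction'' to worry about: $\Sort_W$ is $\Q$-anisotropic and contains no non-trivial unipotent $\Q$-subgroup.

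For $P'$, the precise reason for vanishing is that $M$ cannot be conjugated into the Levi of $P'$. The long roots $\pm(\varepsilon_1-\varepsilon_2)$ cannot be sent to the short roots $\pm\varepsilon_2$. Consequently, for every Weyl representative, the intersection of $M$ with the conjugate of $P'$ is a Borel subgroup of $\GL_F$ times $\Sort_W$, and cuspidality applies again.
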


\subsection*{$p$-adic intertwining operators}
Let $G$ be split over $\Q_p$ with hyperspecial subgroup $K_p$ and let $\pi_p$ be an irreducible unramified representation of $G(\Q_p)$ with non-zero spherical vector $f_p\in\pi_p^{K_p}$. Then the eigenvalues of $f_p$ define an element of $\Hom(\mathcal{H}_p(G),\C)$. The \emph{Satake-parameter} $s_p$ of $\pi_p$ is the conjugacy class of semi-simple elements in $\widehat{G}(\C)$ such that 
\[ Tf_p = \tr(\Sat(T)(s_p))f_p \]
for all $T\in\mathcal{H}_p(G)$. Let $r:\widehat{G}(\C)\to\GL_m(\C)$ be a finite dimensional representation of $\widehat{G}(\C)$. Then the Euler product
\[ L(s,\pi,r) \define \prod_{p<\infty}L_p(s,\pi_p,r) \define \prod_{p<\infty}\det(1-p^{-s}r(s_p))^{-1} \]
is absolutely convergent for $\Rea(s)$ sufficiently large \cite[Chapter 12]{GetzHahn}. When $G$ is a classical group, such as $\GL_n$, $\Sp_{2n}$ or $\Sort_m$, then $\widehat{G}$ has a so-called \emph{standard representation} $\Std$. In the case of $\GL_n$, this is the identity, for $\Sort_W$, it is the inclusion
\[ \Std:\Sort_{n-2}(\C)\hookrightarrow\GL(W\otimes\C). \]

Let us consider the reductive group $M\cong\GL_2\times\Sort_W$ with automorphic representation $\pi = \pi_f\otimes\pi_v$ for an eigenform $f\in\mathrm{S}_k(\SL_2(\Z))$ and $\pi_v$ of level $\Sort_W(\widehat{\Z})$ with trivial Archimedean component. Since $\pi$ has level one, $\varphi_{p,s}$ is (up to normalization) the unique $K_p$-invariant vector in the unramified representation $I(\pi_p,s)$ and also  $M_p(w,s)\varphi_{p,s}$ must be a multiple of the unique (normalized) $K_p$-invariant vector $\varphi_{p,-s}\in I(\pi_p,-s)$. We denote the factor by $c_p(s)$ and define $c(s) \define\prod_{p<\infty}c_p(s)$. The local factor $c_p(s)$ is given by Euler-factors:

Let $r$ be the adjoint representation of $\widehat M$ on $\widehat{\mathfrak n}=\operatorname{Lie}(\widehat N)$. Its decomposition $r=\bigoplus_jr_j$ into irreducible subrepresentations is graded by the central cocharacter $a\mapsto(aI_2,1)$ of $\widehat M$, which acts on $r_j$ by $a^j$. The unramified intertwining formula gives
\[ c_p(s)=\prod_j\frac{L_p(js,\pi_p,r_j)}{L_p(js+1,\pi_p,r_j)},\qquad
c(s)=\prod_j\frac{L(js,\pi,r_j)}{L(js+1,\pi,r_j)}, \]
see \cite[Section 3.4]{Cogdell2}. This formula uses the absolute roots of the split local groups.

\begin{lem}\label{lem:DecompositionOfLeviAction}
	The representation $r:\widehat{M}\to\GL(\widehat{\mathfrak{n}})$ decomposes as $r = r_1\oplus r_2$ where
	\[ r_1 \cong\Std_{\GL_2}\otimes\Std_{\Sort_{n-2}},\qquad r_2\cong{\det}_{\GL_{2}}.  \]
\end{lem}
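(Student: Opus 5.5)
We work directly with the dual groups and the Lie algebra of $\widehat N$. The dual group is $\widehat G=\Sort_{n+2}(\C)$, acting on a complex quadratic space $U$ of dimension $n+2$. The Levi $\widehat M=\GL_2(\C)\times\Sort_{n-2}(\C)$ is the stabilizer of a decomposition
\[ U=X\oplus U_0\oplus X', \]
with $X,X'$ isotropic of dimension $2$ and in duality, and $U_0=(X\oplus X')^\perp$ of dimension $n-2$. Here $\GL_2$ acts on $X$ by the standard representation and on $X'$ by its contragredient, and $\Sort_{n-2}$ acts on $U_0$. The parabolic $\widehat P$ dual to $P$ is the stabilizer of $X$ (or of $X'$, depending on the convention). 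Its unipotent radical has Lie algebra $\widehat{\mathfrak n}$, the part of $\mathfrak{so}(U)\cong\Lambda^2U$ of positive weight for the central cocharacter $a\mapsto(aI_2,1)$.

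We decompose $\Lambda^2U$ under $\widehat M$:
\[ \Lambda^2 U=\Lambda^2X\oplus (X\otimes U_0)\oplus (X\otimes X')\oplus\Lambda^2U_0\oplus(U_0\otimes X')\oplus\Lambda^2X'. \]
The central cocharacter acts on these summands with weights $2,1,0,0,-1,-2$. Hence
\[ \widehat{\mathfrak n}=(X\otimes U_0)\oplus\Lambda^2X. \]
The first summand is $\Std_{\GL_2}\otimes\Std_{\Sort_{n-2}}$ with weight $j=1$. The second is $\Lambda^2\Std_{\GL_2}=\det_{\GL_2}$ with weight $j=2$. This gives $r_1$ and $r_2$.

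As a consistency check, count dimensions: $\dim\widehat{\mathfrak n}=2(n-2)+1=2n-3$. This matches the root count in Section 2 with multiplicities, namely $2(n-2)$ for the roots $\varepsilon_i$ and $1$ for $\varepsilon_1+\varepsilon_2$. The absolute and relative pictures agree since $G$ is split over each $\Q_p$. Irreducibility holds because the tensor product of two irreducible representations of the two factors is irreducible, and a character is trivially irreducible.

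The main point needing care is matching the chosen cocharacter, and the identification of $\widehat{\mathrm{GL}}_F$ with $\GL_2$, with the positive side. This decides whether one obtains $\Std$ or its dual, and $\det$ or $\det^{-1}$. If the sign came out reversed, we would replace $X$ by $X'$; this swaps $\Std$ with $\Std^\vee$ and $\det$ with $\det^{-1}$.

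We fix the convention by requiring that the weight of $\widehat{\mathfrak n}$ under $(aI_2,1)$ be positive, as in the statement. With that requirement the identification above is forced up to isomorphism. For the relevant $L$-functions the distinction is immaterial anyway, since $\Std_{\GL_2}\cong\Std_{\GL_2}^\vee\otimes\det^{-1}$ and $\pi_f$ has trivial central character.
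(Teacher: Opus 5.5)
Your proof is correct and is essentially the paper's argument: both identify $\widehat{\mathfrak n}$ with $(X\otimes U_0)\oplus\Lambda^2X$ (the paper writes $\Hom(W,F)\oplus\Lambda^2F$), graded in degrees $1$ and $2$ by the central cocharacter, and read off $\Std_{\GL_2}\otimes\Std_{\Sort_{n-2}}$ and ${\det}_{\GL_2}$. Your weight decomposition of $\Lambda^2U\cong\mathfrak{so}(U)$ just makes explicit that the computation takes place inside the dual group, and your closing identity should read $\Std_{\GL_2}\cong\Std_{\GL_2}^\vee\otimes\det$ rather than $\otimes\det^{-1}$, a slip that does not affect the argument.
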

\begin{proof}
	Recall that 
	\[ \widehat{G}\cong \Sort_{n+2}(\C) \quad\text{and}\quad \widehat{M}\cong
	\GL_2(\C)\times \Sort_{n-2}(\C) \]
	and the Lie algebra $\widehat{\mathfrak{n}}$ of $\widehat{N}$ has the same two-step grading $\widehat{\mathfrak{n}}=\mathfrak{n}_1\oplus\mathfrak{n}_2$ as $\mathfrak{n}$ namely
	\[ \mathfrak{n}_1\cong \operatorname{Hom}(W,F)
	\cong F\otimes W,
	\qquad
	\mathfrak{n}_2\cong \Lambda^2F. \]
	Here we used $W^\vee\cong W$ via the quadratic form. Now $\widehat{M}(\C)$ acts on $X\in\operatorname{Hom}(W,F)$ via
	\[ (m,\omega)\cdot X = \gamma X\omega^{-1} \]
	and on $Z\in\Lambda^2F$ via
	\[ (m,\omega)\cdot Z=(\Lambda^2\gamma)Z. \]
	It follows from $\dim(F)=2$ that $\Lambda^2F\cong \det_{\GL_F}$ is one-dimensional. Therefore,
	\[ \mathfrak{n}_1 \cong\Std_{\GL_2}\otimes\Std_{\Sort_{n-2}},\qquad \mathfrak{n}_2\cong{\det}_{\GL_{2}}. \]
	The central cocharacter acts with weights $1$ and $2$ on these two spaces, which explains the indices $j=1,2$.
\end{proof}

Arthur associates to a representation $\pi\in\Pi_{\mathrm{disc}}(G)$ a formal symbol
\[ \psi(\pi,\Std) = \bigoplus_{i=1}^k\pi_i[d_i] \]
called its \emph{standard parameter}\label{sec:StandardParameters} where $\pi_i\in\Pi_{\mathrm{cusp}}(\GL_{n_i})$ are self-dual cuspidal automorphic representations, $d_i\in\Z_{\geq1}$ and $\sum_{i=1}^kn_id_i = m \define \dim(\Std)$.
Let $(s_\infty(\pi),s_2(\pi),s_3(\pi),\hdots)$ be the tuple indexed by $\infty$ and the primes where $s_p(\pi)\in\widehat{G}(\C)_{\mathrm{ss}}$ are the Satake-parameters of $\pi$ and $s_\infty(\pi)\in\widehat{\mathfrak{g}}_{\mathrm{ss}}$ is the image of the infinitesimal character of $\pi_\infty$ under the Harish--Chandra isomorphism. Note that $\widehat{\mathfrak{g}}$ denotes the Lie-algebra of $\widehat{G}(\C)$ and $X_{\mathrm{ss}}$ denotes the set of conjugacy classes of semi-simple elements in $X$. We may also define $[d]_p\in\SL_d(\C)_{\mathrm{ss}}$ as the element with eigenvalues $p^{-\frac{d-1}{2}+j}$ for $j=0,\hdots,d-1$ and $[d]_\infty\in\mathrm{Mat}_d(\C)_{\mathrm{ss}}$ as the element with eigenvalues $-\frac{d-1}{2}+j$ for $j=0,\hdots,d-1$. Then for any formal symbol $\psi$ as above we set
\[ \psi_p = \bigoplus_{i=1}^ks_p(\pi_i)\otimes[d_i]_p\in\GL_{m}(\C)_{\mathrm{ss}} \] 
At $p=\infty$ the tensor product is replaced by the sum of infinitesimal characters, i.e.\ the eigenvalues in a block are $w/2+j-(d_i-1)/2$. If $\psi$ is the standard parameter of $\pi$, then $\psi_\infty = \Std(s_\infty(\pi))$ and $\psi_p = \operatorname{Std}(s_p(\pi))$, respectively. Then the \emph{standard $L$-function} of $\pi$ is given by
\begin{align*}
	L(s,\pi,\Std) &\define\prod_{p<\infty}\det(1-p^{-s}\Std(s_p(\pi)))^{-1} \\
	&= \prod_{i=1}^k\prod_{j=0}^{d_i-1}L\left(s+j-\frac{d_i-1}{2},\pi_i\right)
\end{align*}
and the Euler product is absolutely convergent for $\Rea(s)>\frac{1+\max(d_i)_i}{2}$ following from the absolute convergence of $L(s,\pi_i,\Std)$ for $\Rea(s)>1$ (see \cite[Chapter 11]{GetzHahn} and \cite{Cogdell1}). The functions $L(s,\pi_i,\Std)$ admit holomorphic extensions to all of $\C$ \cite[Chapter 11]{GetzHahn} except when $\pi_i=1$ in which case $L(s,1,\Std) = \zeta(s)$ is the Riemann-$\zeta$-function, which is meromorphic on $\C$ with a simple pole at $s=1$.

\label{sec:MultiplicityFormulaDependencies}The multiplicity formula of Arthur describes the discrete spectrum of a reductive group in terms of standard parameters. Note that the multiplicity formula was conditional on the stabilization of the twisted trace formula for the groups $\GL_N$ and $\Sort_{2n}$ \cite[Hypothesis 3.2.1]{Arthur} as well as some results deferred to projected papers [A24]--[A27]. The issue corresponding to [A24] has since been resolved by M\oe glin--Waldspurger \cite{MoeglinWaldspurger1,MoeglinWaldspurger2} and in recent work by Atobe--Gan--Ichino--Kaletha--M\'inguez--Shin \cite{AGIKMS} the results of [A25]--[A27] have been established. These results leave Arthur's result conditional only on the twisted weighted fundamental lemma. Waldspurger has proved in \cite{Waldspurger} that this follows from the combination of the ordinary weighted fundamental lemma (WFL) and its non-standard version. Ordinary WFL for split groups was proved in \cite{ChaudouardLaumon1,ChaudouardLaumon2} so that the final missing piece is exactly (H1). In \cite{Shin} these dependencies are explained in more detail.

For $\pi\in\autrep(\GL_m)$, let $\mathrm{Weights}(\pi)$ denote its multiset of weights, i.e.\ the elements of $\mathrm{Weights}(\pi)$ are the eigenvalues of the conjugacy class of semi-simple elements of $\mathrm{Mat}_m(\C)$ corresponding to the infinitesimal character of $\pi_\infty$. In \cite{ChenevierLannes} the representations $\pi_v$ were studied in terms of their Arthur parameters. In particular, we have

\begin{lem}[\cite{ChenevierLannes}]\label{lem:ArthurParameterUnimodularProperties}
	Let $\psi=\bigoplus_{i=1}^m\pi_i[d_i]$ be the Arthur parameter of $\pi_v$. Then
	\begin{enumerate}[1)]
		\item the $\pi_i$ are algebraic and self-dual, in particular,
\[ -\mathrm{Weights}(\pi_i)=\mathrm{Weights}(\pi_i)\subset\tfrac{1}{2}\Z, \]
		\item $w\equiv d_i-1\bmod2$ for every $w/2\in\mathrm{Weights}(\pi_i)$,
		\item $|w|\leq n-d_i-3$ for every $w/2\in\mathrm{Weights}(\pi_i)$. If $w\ne0$, then also $d_i-1\leq |w|$,
		\item the multiset
		\[ \{w/2-(d_i-1)/2+j\mid i=1,\hdots,m, w/2\in\mathrm{Weights}(\pi_i), j=0,\hdots,d_i-1 \} \]
		contains the $n-4$ integers $\pm(n/2-2),\pm(n/2-3),\hdots,\pm1$ as well as the integer $0$ with multiplicity $2$. 
	\end{enumerate}
	We order $\psi$ such that $\max(\mathrm{Weights}(\pi_i)) + (d_i-1)/2$ is in ascending order.
\end{lem}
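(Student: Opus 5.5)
The plan is to read everything off Arthur's description of $\pi_v$ through its standard parameter $\psi=\bigoplus_{i=1}^m\pi_i[d_i]$. The key input is the compatibility at the Archimedean place, $\psi_\infty=\Std(s_\infty(\pi_v))$. This compatibility is part of Arthur's theorem and is the step I expect to be the main obstacle; I would take it from \cite{ChenevierLannes}, where it is verified for the compact groups $\Sort_{n-2}$ with $n-2\equiv 0\bmod 8$. Granting it, the whole lemma reduces to combinatorics of eigenvalues.

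First I would compute $\psi_\infty$. Since $\pi_{v,\infty}$ is the trivial representation of the compact group $\Sort_{n-2}(\R)$, its infinitesimal character is $\rho$ of the root system $D_{(n-2)/2}$, namely
\[ \rho=(n/2-2,\,n/2-3,\,\hdots,\,1,\,0). \]
Under $\Std:\Sort_{n-2}(\C)\hookrightarrow\GL_{n-2}(\C)$ this element has eigenvalues $\pm(n/2-2),\hdots,\pm1$, together with $0$ taken twice. By the definition of $\psi_\infty$, these eigenvalues are exactly the elements of the multiset in 4). This proves 4).

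Next I would deduce 1) and 2). By definition, the $\pi_i$ occurring in a standard parameter are self-dual. Self-duality gives $-\mathrm{Weights}(\pi_i)=\mathrm{Weights}(\pi_i)$. Each element $w/2-(d_i-1)/2+j$ of the multiset is an integer by 4). Hence $w/2\in\frac{d_i-1}{2}+\Z$, so $w\in\Z$ and $w\equiv d_i-1\bmod 2$. This is 2), and it also shows the weights lie in $\frac12\Z$, i.e.\ that $\pi_i$ is algebraic, completing 1).

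For 3), the block of $\pi_i$ contributes the eigenvalue $|w|/2+(d_i-1)/2$. By 4) every eigenvalue is at most $n/2-2$, which gives $|w|\le n-d_i-3$. For the second inequality, suppose $w\neq0$ but $d_i-1>|w|$. The weights $w/2$ and $-w/2$ both occur, by self-duality, and they give two strings of integers centred at $\pm w/2$, each of half-length $(d_i-1)/2$. These strings overlap in the integers of the interval $[-(d_i-1-|w|)/2,\,(d_i-1-|w|)/2]$. Since $d_i-1-|w|$ is even and positive, this interval contains $-1,0,1$. So $\pm1$ would occur with multiplicity at least $2$ and $0$ with multiplicity at least $3$, contradicting 4). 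Hence $d_i-1\le|w|$.

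The final sentence of the lemma is only a normalization of the ordering of the summands of $\psi$, so nothing needs to be proved there.
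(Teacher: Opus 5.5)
Your proposal is correct and follows the argument the paper relies on: the paper gives no proof of its own and cites Chenevier--Lannes, where these properties are read off from Arthur's theorem via $\psi_\infty=\Std(s_\infty(\pi_v))$ together with the infinitesimal character $\rho$ of the trivial representation of the compact group, just as you do. One small slip, which does not affect the argument: in the second part of 3) the overlap of the two strings only forces $0$ to occur twice (this is consistent with 4)), not three times; the doubled occurrence of $\pm1$ already gives the contradiction.
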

For two automorphic representations $\pi\in\autrep(\GL_m)$ and $\pi'\in\autrep(\GL_{m'})$ we set $L(s,\pi\otimes\pi') = L(s,\pi\otimes\pi',\Std_{\GL_m},\Std_{\GL_{m'}})$. For a suitable $\Gamma$-factor $L_\infty(s,\pi\otimes\pi')$, the function $\xi(s,\pi\otimes\pi') \define L_\infty(s,\pi\otimes\pi')L(s,\pi\otimes\pi')$ satisfies the functional equation
\begin{equation}\label{eq:FunctionalEquation}
	\xi(s,\pi\otimes\pi') = \varepsilon(\pi\otimes\pi')\xi(1-s,\pi^\vee\otimes\pi^{\vee\prime})
\end{equation}
where $\varepsilon(\pi\otimes\pi')\in\C^\times$ (cf.\ \cite{Cogdell1} for more information on $L$-functions).

\begin{lem}\label{lem:GammaFactorOfLfunctions}
	Let $f\in\mathrm{S}_k(\SL_2(\Z))$ and $\pi\in\autrep(\GL_m)$ algebraic and self-dual with multiset of weights $\mathrm{Weights}(\pi)$. Then
	\[L_\infty(s,\pi_f\otimes\pi) = \prod_{w/2\in\mathrm{Weights}(\pi)}\Gamma_\C\left(s+\frac{|w+k-1|}{2}\right) \]
\end{lem}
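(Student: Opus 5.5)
The plan is to compute the Archimedean Langlands parameter of $\pi_f\otimes\pi$ on $\GL_2\times\GL_m$ and then read off the $\Gamma$-factor from the local Langlands correspondence over $\R$. By that correspondence (Jacquet's theory of Archimedean Rankin--Selberg factors), the local factor $L_\infty(s,\pi_f\otimes\pi)$ is the $L$-factor of the tensor product of the Weil group representations attached to $(\pi_f)_\infty$ and $\pi_\infty$.

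\emph{Step 1.} I first identify the parameter of $(\pi_f)_\infty$. This is the discrete series of weight $k$, so its parameter is the two-dimensional induced representation $I_{k-1}=\mathrm{Ind}_{W_\C}^{W_\R}(z\mapsto (z/\bar z)^{(k-1)/2})$, with unitary normalization. Its Hodge weights, i.e.\ the weights in the sense of the paper, are $\pm\frac{k-1}{2}$.

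\emph{Step 2.} Next I determine the parameter of $\pi_\infty$. Since $\pi$ is algebraic and self-dual, Lemma~\ref{lem:ArthurParameterUnimodularProperties} 1) says its weights are a symmetric multiset in $\tfrac12\Z$. I will use the standard fact that such an algebraic regular (or more generally algebraic) representation has Archimedean parameter $\bigoplus_{w>0} I_w\oplus$ (characters for $w=0$). Here $I_w$ is attached to the pair of weights $\pm w/2$.

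\emph{Step 3.} I then compute the tensor products using
\[ I_a\otimes I_b\cong I_{a+b}\oplus I_{|a-b|}. \]
This gives
\[ I_{k-1}\otimes I_w\cong I_{w+k-1}\oplus I_{|w-(k-1)|}. \]
For a weight $0$ character $\epsilon$, I use $I_{k-1}\otimes\epsilon\cong I_{k-1}$. Combined with $L(s,I_a)=\Gamma_\C(s+a/2)$, the pair $\pm w/2$ contributes
\[ \Gamma_\C\bigl(s+\tfrac{|w+k-1|}{2}\bigr)\,\Gamma_\C\bigl(s+\tfrac{|-w+k-1|}{2}\bigr). \]
These are exactly the factors indexed by $w/2$ and $-w/2$ in the claimed product. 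A weight $0$ contributes $\Gamma_\C(s+\frac{k-1}{2})$, which matches the formula as well.

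\emph{Main obstacle.} The delicate point is the boundary case $|w|=k-1$, where $I_0$ is not irreducible. There $I_0=1\oplus\mathrm{sgn}$, so $L(s,I_0)=\Gamma_\R(s)\Gamma_\R(s+1)=\Gamma_\C(s)$ by the duplication formula, and the formula still holds. I will also need to justify that the weights of $\pi$ in the paper's sense coincide with the Hodge weights of the parameter, and to handle weight $0$ with the sign characters. Both follow from the definition of the infinitesimal character and the fact that $\Gamma_\C$ absorbs these sign ambiguities.
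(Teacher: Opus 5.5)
Your proposal is correct and is essentially the computation behind the paper's proof, which consists only of a citation to Chenevier--Lannes, Chapter 8.2. That reference likewise writes Archimedean parameters as sums of $I_w$ and sign characters and uses $I_a\otimes I_b\cong I_{a+b}\oplus I_{|a-b|}$ together with $L(s,I_a)=\Gamma_\C(s+a/2)$. The input you leave as a ``standard fact'', namely that an algebraic cuspidal $\pi$ has Archimedean parameter $\bigoplus I_w$ plus sign characters, follows from algebraicity (exponents in $\frac12\Z$) combined with the Jacquet--Shalika bound (or Clozel's purity lemma in the regular case). That combination forces all exponents to vanish.
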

\begin{proof}
	\cite[Chapter 8.2]{ChenevierLannes}.
\end{proof}

We can now prove the main result of this section on the behaviour of $c(s)$ at $s=(n-k-1)/2$, which is

\begin{prp}\label{prp:ZerosAndPolesc}
	We assume $k\equiv n\bmod2$. Let $f\in\mathrm{S}_k(\SL_2(\Z))$ and let $\psi=\bigoplus_{i=1}^m\pi_i[d_i]$ be the Arthur parameter of $\pi_v$. Then $c(s)$ has a pole at $s = (n-k-1)/2$ if and only if one of the following conditions hold
	\begin{enumerate}[(i)]
		\item $k \geq n+2$
		\item $k=n$ and $L(\frac{1}{2},\pi_f\otimes\pi_i)=0$ for some $i$ with $d_i=1$
		\item $k = n-2$ and $L(\frac{1}{2},\pi_f\otimes\pi_i)\not=0$ whenever $d_i = 1$
		\item $k < n-2$, $\pi_i\cong\pi_f$ and $d_i=n-k-2$ for some $i$ and $L(\frac{1}{2},\pi_f\otimes\pi_i)\not=0$ whenever $d_i = n-k-1$.
	\end{enumerate}
\end{prp}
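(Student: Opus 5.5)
The plan is to compute $c(s)$ explicitly from the unramified intertwining formula and Lemma \ref{lem:DecompositionOfLeviAction}, telescope it, and then read off the order at $s_0\define(n-k-1)/2$ factor by factor.

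\textbf{Explicit form of $c(s)$.} By Lemma \ref{lem:DecompositionOfLeviAction} we have $r=r_1\oplus r_2$. The factor $r_2={\det}_{\GL_2}$ contributes $L(2s,\omega_{\pi_f})/L(2s+1,\omega_{\pi_f})=\zeta(2s)/\zeta(2s+1)$, since $\pi_f$ has trivial central character. For $r_1=\Std_{\GL_2}\otimes\Std_{\Sort_{n-2}}$, the Satake parameter of $\pi_v$ under $\Std$ is $\psi_p=\bigoplus_i s_p(\pi_i)\otimes[d_i]_p$, so
\[ L(s,\pi,r_1)=\prod_{i}\prod_{j=0}^{d_i-1}L\Bigl(s+j-\tfrac{d_i-1}{2},\pi_f\otimes\pi_i\Bigr). \]
Dividing by the same product at $s+1$, the inner product telescopes. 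This gives
\[ c(s)=\frac{\zeta(2s)}{\zeta(2s+1)}\prod_i\frac{L\bigl(s-\tfrac{d_i-1}{2},\pi_f\otimes\pi_i\bigr)}{L\bigl(s+\tfrac{d_i+1}{2},\pi_f\otimes\pi_i\bigr)}. \]
We then evaluate each factor at $s_0$. The zeta factor becomes $\zeta(n-k-1)/\zeta(n-k)$. Since $k\equiv n \bmod 2$, $n-k$ is even. The factor therefore has a simple pole exactly when $k=n-2$, and a pole coming from the trivial zero of $\zeta$ in the denominator exactly when $n-k\le -2$, i.e.\ $k\geq n+2$. For $k=n$ it is $\zeta(-1)/\zeta(0)$, which is finite and non-zero. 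For $k<n-2$ it is a ratio of values at integers $\ge 2$, which is finite and non-zero.

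\textbf{Behaviour of the finite Rankin--Selberg factors at the points $t=(n-k\mp d_i)/2$.} These points are integers or half-integers, depending on the parity of $d_i$. I will use the following standard facts about $L(t,\pi_f\otimes\pi_i)$.
\begin{enumerate}[(a)]
\item It is entire unless $\pi_i\cong\pi_f$, in which case it has a simple pole at $t=1$ (and at $t=0$ in the finite part only through the functional equation).
\item It is non-zero for $\Rea(t)\ge1$ (Jacquet--Shalika, Shahidi).
\item For $\Rea(t)\le0$, the functional equation \eqref{eq:FunctionalEquation} gives $L(t)=\varepsilon\,\xi(1-t)/L_\infty(t)$. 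Combined with Lemma \ref{lem:GammaFactorOfLfunctions}, the order of $L$ at $t$ equals the number of $w/2\in\mathrm{Weights}(\pi_i)$ with $t+|w+k-1|/2\in\Z_{\le0}$, minus the order of the pole of $L(1-t)$.
\item At $t=1/2$, which occurs exactly when $d_i=n-k\mp1$, nothing is known a priori. This is why the central values appear in conditions (ii)--(iv).
\end{enumerate}

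\textbf{Case analysis.} For each $i$ I will use Lemma \ref{lem:ArthurParameterUnimodularProperties}, 2)--3) to count the Gamma poles. The bound $|w|\le n-d_i-3$, together with $d_i-1\le|w|$ for $w\neq0$, should force $t+|w+k-1|/2>0$ in most cases. The expected outcome is as follows.
\begin{enumerate}[(1)]
\item For $k\ge n+2$ all $L$-factors sit at $t\ge1$ or come in cancelling pairs, so the zeta pole survives. This is case (i).
\item For $k=n$ the only possible pole is a zero of a denominator $L(\tfrac{d_i+1}{2})$ with $d_i=1$, i.e.\ of $L(\tfrac12,\pi_f\otimes\pi_i)$. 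This is case (ii).
\item For $k=n-2$ the zeta pole survives unless it is cancelled by a numerator zero. Such a zero occurs at $t=(2-d_i)/2$ and can only be $L(\tfrac12)$ with $d_i=1$. This is case (iii).
\item For $k<n-2$ the zeta factor is harmless. A pole must then come from a numerator factor at $t=1$, i.e.\ $d_i=n-k-2$ with $\pi_i\cong\pi_f$. It must also not be cancelled by a numerator zero at $t=1/2$, which arises when $d_i=n-k-1$. This is case (iv).
\end{enumerate}
In cases (iii) and (iv) I still have to check that no Gamma-factor zero cancels the relevant pole. In (iv) I also have to check that $\pi_i\cong\pi_f$ cannot occur with some other $d_i$ in a way that contributes a second pole or zero.

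\textbf{Main obstacle.} I expect the hardest step to be this last bookkeeping: showing, via the weight constraints of Lemma \ref{lem:ArthurParameterUnimodularProperties} and the exact Gamma factors, that for $t\le0$ the trivial zeros and poles exactly cancel in every configuration except those listed. I would do this by treating separately the cases $w=0$ and $w\neq0$, and comparing $|w+k-1|$ with $k-1-|w|$ for the self-dual pairs $\pm w$.
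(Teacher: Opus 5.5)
Your route is the paper's: telescope $c(s)$ into $\zeta(2s)/\zeta(2s+1)$ times $\prod_i L(s-\tfrac{d_i-1}{2},\pi_f\otimes\pi_i)/L(s+\tfrac{d_i+1}{2},\pi_f\otimes\pi_i)$. You then treat $\Rea\le 0$ via the functional equation and Lemma \ref{lem:GammaFactorOfLfunctions}, and control everything with Lemma \ref{lem:ArthurParameterUnimodularProperties}. Your analysis of the zeta factor is correct, and more explicit than the paper's.

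\textbf{The decisive step is missing.} The proposal stops exactly where the ``if and only if'' is decided, and the mechanism you expect there is not what happens. There is no pairing-off or cancellation of trivial zeros; the Gamma factors simply never have poles at the relevant points. At a numerator point $t=(n-k-d_i)/2$, a pole of $\Gamma_\C(t+|w+k-1|/2)$ would require $n-k-d_i+|w+k-1|\le 0$.
\begin{itemize}
\item If $w\ge 1-k$, this gives $w\le d_i+1-n\le -|w|-2$ by 3), which is absurd.
\item If $w<1-k$, it gives $w\ge n-d_i+1-2k\ge -w+4-2k$, i.e.\ $w\ge 2-k$, which is also absurd.
\end{itemize}
The denominator points $(n-k+d_i)/2$ are larger, so the same holds a fortiori there. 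Your proposed comparison of $|w+k-1|$ with $k-1-|w|$ would show this, so ``in most cases'' should read ``in all cases''. For the same reason, your description of case (i) is wrong. For $k\ge n+2$ the numerator points are $\le -3/2$, not $\ge 1$, and they are harmless only because of this inequality.

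\textbf{Two further points must be supplied.}

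First, you must exclude poles coming from $\pi_i\cong\pi_f$. The candidates are a numerator point $t=0$ (where $d_i=n-k$) and a denominator point $t'\in\{0,1\}$ (where $k=n+d_i-2t'$). In both situations $(k-1)/2\in\mathrm{Weights}(\pi_i)$ while $k-1>n-d_i-3$, contradicting 3). Without this, such a pole could cancel the pole in (i) or (ii), or produce a pole not listed in (iii)--(iv).

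Second, and this is the genuinely missing idea, the ``only if'' in (iv) needs the pole at $t=1$ to be \emph{simple}. A summand $\pi_f[n-k-2]$ carries the infinitesimal weight $\tfrac{k-1}{2}+\tfrac{n-k-3}{2}=\tfrac n2-2$. By 4) this weight occurs with multiplicity one, so there is at most one such summand. Otherwise a double pole could survive a single vanishing central value $L(\tfrac12,\pi_f\otimes\pi_i)$, and (iv) would fail as a characterization. Your stated worry (``$\pi_i\cong\pi_f$ with some other $d_i$'') points at the wrong configuration: the danger is a repeated $\pi_f[n-k-2]$.

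\textbf{Minor slips.}
\begin{itemize}
\item The denominator reaches $t'=\tfrac12$ when $d_i=k-n+1$, not $n-k+1$.
\item For $k=n$ the denominator factor is $L(\tfrac{d_i}{2},\pi_f\otimes\pi_i)$, not $L(\tfrac{d_i+1}{2},\pi_f\otimes\pi_i)$. With that correction, $d_i=1$ gives the central value.
\end{itemize}
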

\begin{proof}
	By definition
	\[ L(s,\pi_f\otimes\pi_v,\Std_{\GL_2}\otimes\Std_{\Sort_{n-2}}) = \prod_{i=1}^m\prod_{j=0}^{d_i-1}L(s-(d_i-1)/2+j,\pi_f\otimes\pi_i) \]
	so that
	\begin{equation}\label{eq:LfunctionFactorStandardRep}
		\frac{L(s,\pi_f\otimes\pi_v,\Std_{\GL_2}\otimes\Std_{\Sort_{n-2}})}{L(s+1,\pi_f\otimes\pi_v,\Std_{\GL_2}\otimes\Std_{\Sort_{n-2}})} = \prod_{i=1}^m\frac{L(s-(d_i-1)/2,\pi_f\otimes\pi_i)}{L(s+(d_i+1)/2,\pi_f\otimes\pi_i)}.
	\end{equation}
	While
	\[ L(s,\pi_f\otimes\pi_v,{\det}_{\GL_2}) = \zeta(s). \]
	Since the Riemann-zeta function has a simple pole at $s=1$ and simple zeros at the negative even integers, the factor
	\[ \frac{L(2\cdot(n-k-1)/2,\pi_f\otimes\pi_v,{\det}_{\GL_2})}{L(2\cdot(n-k-1)/2+1,\pi_f\otimes\pi_v,{\det}_{\GL_2})} = \frac{\zeta(n-k-1)}{\zeta(n-k)} \]
	produces a simple pole if and only if $k\geq n+2$ or $k=n-2$. Therefore, we must show the following behaviour of the factor  \eqref{eq:LfunctionFactorStandardRep} at $s=(n-k-1)/2$: \\
	If $k\geq n-2$ it has a zero if and only if $k=n-2$ but (iii) is not satisfied and a pole if and only if (ii) is satisfied. If $k<n-2$ it has a pole if and only if case (iv) is satisfied. First note that the Euler product defining $L(s,\pi_f\otimes\pi_i)$ converges for $\Rea(s) > 1$, so that it has no zeros or poles in this sector. It satisfies the functional equation \eqref{eq:FunctionalEquation}, So if $\Rea(s) < 0$, then $L(s,\pi_f\otimes\pi_i)$ has no poles and any zeros come from poles of $L_\infty(s,\pi_f\otimes\pi_i)$. In the strip $0\leq \Rea(s)\leq1$, there can be non-trivial zeros and poles. Let us first address the trivial zeros: \\
	By Lemma \ref{lem:GammaFactorOfLfunctions}, the numerator of \eqref{eq:LfunctionFactorStandardRep} produces a zero for each weight $w/2\in\mathrm{Weights}(\pi_i)$ for which
	\[ (n-k-1)/2-(d_i-1)/2 \in \Z_{\leq0} - \frac{|w+k-1|}{2}. \]
	By Lemma \ref{lem:ArthurParameterUnimodularProperties}, $d_i-1\equiv w\bmod2$. So if $n\equiv 1\bmod2$ this is impossible. We assume that $n$ is even, then it is equivalent to the condition
	\[ n - k - d_i \leq -|w+k-1|. \]
	If $w\geq-(k-1)$, then
	\[ w \leq -(n-d_i-1) \leq -|w| -2 \]
	by Lemma \ref{lem:ArthurParameterUnimodularProperties}, which is a contradiction. If $w < -(k-1)<0$, then
	\[ w \geq n-d_i+1-2k \geq -w + 4 - 2k \]
	by Lemma \ref{lem:ArthurParameterUnimodularProperties}, hence $w\geq -(k-2)$, which is a contradiction. So the numerator of \eqref{eq:LfunctionFactorStandardRep} does not produce any additional zeros. Similarly, the denominator of \eqref{eq:LfunctionFactorStandardRep} produces a pole for each weight $w/2\in\mathrm{Weights}(\pi_i)$ for which
	\[ n - k + d_i \leq -|w+k-1|. \]
	But since $n - k + d_i > n - k - d_i$ this can not occur by the previous argument. The same gamma-factor inequalities exclude cancellation at the boundary of the critical strip.
	
	It remains to consider the critical strip $0\leq \Rea(s)\leq1$. For the numerator we consider
	\[ (n-k-1)/2-(d_i-1)/2 = t\in\{0,\textstyle\frac{1}{2},1\} \]
	or equivalently
	\[ d_i = n-k-2t. \]
	For the denominator we consider
	\[ (n-k-1)/2+(d_i+1)/2 = t'\in\{0,\textstyle\frac{1}{2},1\} \]
	or equivalently
	\[ k = n+d_i-2t'. \]
	First, assume that $k\geq n$. Then the numerator condition is impossible. We need to make sure that the denominator does not produce a zero, i.e.\ that $L(t',\pi_f\otimes\pi_i)$ does not have a pole. By \cite[Lecture 9, Section 3]{Cogdell1} this can only occur if $t'\in\{0,1\}$ and $\pi_i\cong\pi_f$. But then $(k-1)/2\in\mathrm{Weights}(\pi_i)$ so that
	\[ k-1 = n+d_i - 1 - 2t' = n-d_i-3 + 2(d_i-t'+1) > n-d_i-3 \]
	contradicting Lemma \ref{lem:ArthurParameterUnimodularProperties}. For $k=n$, the denominator has argument $1/2$ precisely when $d_i=1$. Thus it produces a pole of the quotient if and only if $L(\frac{1}{2},\pi_f\otimes\pi_i)=0$ for at least one such $i$. There are no canceling zeros in the numerator. 

	Now we assume $k\leq n-2$, then the denominator condition is impossible. Again by \cite[Lecture 9, Section 3]{Cogdell1}, the numerator produces a simple pole if and only if there exists an $i$ for which $\pi_i\cong\pi_f$ and $d_i = n-k-2t$ with $t\in\{0,1\}$. So again $(k-1)/2\in\mathrm{Weights}(\pi_i)$. If $t=0$, then 
	\[ k-1 = n-d_i-1 > n-d_i-3 \]
	contradicting Lemma \ref{lem:ArthurParameterUnimodularProperties} so only the case $d_i = n-k-2$ remains, which, in particular, also implies $k<n-2$. This pole is unique, since such a summand contains the extreme infinitesimal weight $n/2-2$, which occurs with multiplicity one. This is exactly case (iv). Finally, the numerator contributes a zero exactly when $t=\frac{1}{2}$ and $L(\frac{1}{2},\pi_f\otimes\pi_i)=0$.
\end{proof}

\subsection*{The Archimedean intertwining operator}

Let $\varphi_{\infty,s}\in\mathrm{Ind}_{M(\R)N(\R)}^{G(\R)}(\psi_{\infty}^{s+(n-1)/2}\otimes\pi_\infty\otimes1) \enifed I(\pi_\infty,s)$ be the Archimedean part of our section. In our case $\pi_\infty = \pi_k\otimes1$ where $\pi_k$ is the holomorphic discrete series representation of weight $k$ and
\[ \varphi_{\infty,s}(g) = j(g,Z_0)^{-k}\psi_\infty^{s+(n-k-1)/2}(g). \]
This is the case because $j(g,Z_0)^{-k}$ already contributes a factor of $\psi_\infty^{k/2}$ relative to the normalization of $\pi_k$ and the space of $K$-type $-k$ is one-dimensional in $I(\pi_\infty,s)$.

We need to compute the integral
\begin{equation}
	M_\infty(w,s)\varphi_{\infty,s}(g) = 
	\int_\R\int_{W(\R)}\int_{W(\R)} \varphi_{\infty,s}(w^{-1}n(x_1,x_2,t)g)\mathrm{d}x_1\mathrm{d}x_2\mathrm{d}t.
\end{equation}
This is well-defined for $\Rea(s)\gg0$ and has a meromorphic extension to all $s\in\C$. We are interested in the zeros and poles at the point $s=(n-k-1)/2$. Note that the subspace of $I(\pi_\infty,s)$ of $K$-type $-k$ has dimension $1$ and $M_\infty(w,s):I(\pi_\infty,s) \to I(\pi_\infty,-s)$. Therefore
\[ M_\infty(w,s)\varphi_{\infty,s} = \gamma(s)\varphi_{\infty,-s}\in I(\pi_\infty,-s) \]
for some meromorphic function $\gamma(s)$.

\begin{lem}
	For $t\in\R$ and $x_1,x_2\in W\otimes_\Q\R$ one has
	\[ \psi_\infty(w^{-1}n(x_1,x_2,t)) = t^2-t(x_1,x_2) + (1+\q(x_1))(1+\q(x_2)). \]
\end{lem}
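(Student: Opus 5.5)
The plan is to express $\psi_\infty$ as the absolute value of a $2\times2$ determinant of pairings, which removes the need for an explicit Iwasawa decomposition of $w^{-1}n(x_1,x_2,t)$. Let $b_1=e_1-f_1=z+z'$ and $b_2=e_2-f_2=\rho+\rho'$. These span the negative-definite plane $\R b_1\oplus\R b_2$, which $K_\infty$ stabilizes by definition. For $g\in G(\R)$ consider the matrix
\[ A(g)=\bigl((g b_i,f_j)\bigr)_{i,j=1,2}. \]
Pairing with $f_1,f_2$ picks out the $F$-components of a vector in $V=F\oplus W\oplus F'$.

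First I will check how $|\det A(g)|$ transforms under the Iwasawa factors. Right multiplication by $\kappa\in K_\infty$ replaces $(b_1,b_2)$ by another basis of the same plane of the same covolume, since $\kappa$ acts by an element of $\Ort_2$ there. Hence $|\det A|$ is unchanged. An element of $N$ acts trivially on $V/F^\perp$, and $F^\perp=F\oplus W$ pairs to zero with $\ldots$; more precisely, $(nv,f_j)$ depends only on $v$ through its $F$- and $W$-parts, in a way that leaves the determinant unchanged because $N$ acts unipotently with $F\subset$ its fixed space. I will check this directly from the Eichler formulas: $n^{-1}f_j\in f_j+F\oplus W$ and $(b_i,F\oplus W)$ is computed explicitly. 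The factor $\omega\in\Sort_W$ does nothing. The factor $(m,1)$ multiplies the $F$-components by $m$ and so scales $\det A$ by $\det(m)$. Since $A(1)=\mathrm{id}$, this gives
\[ \psi_\infty(g)=|\det A(g)|. \]
The first thing to confirm is that this is $\psi_\infty$ and not $\psi_\infty^{-1}$. This depends on whether $m$ acts on the coefficients of $z,\rho$ or of $z',\rho'$, and Lemma~\ref{lem:ActionOfM} fixes that convention. If the roles come out swapped, I will pair with $e_j$ instead. As a sanity check, the result should agree with Lemma~\ref{lem:characterIsImaginaryPart} on $N$, where $\psi_\infty\equiv1$.

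Next, for $g=w^{-1}n(x_1,x_2,t)$ I use $(w^{-1}v,f_j)=(v,wf_j)$. From the definition of $w$ we get $wf_1=w(-z')=\rho$ and $wf_2=w(-\rho')=-z$. So the entries of $A$ are, up to sign, the $\rho'$- and $z'$-coefficients of $n(x_1,x_2,t)b_i$. I will compute $n b_1=E^{z}_{t\rho+x_1}E^{\rho}_{x_2}(z+z')$ and $n b_2=E^{z}_{t\rho+x_1}E^{\rho}_{x_2}(\rho+\rho')$ exactly as in the proof of Lemma~\ref{lem:characterIsImaginaryPart}. That proof already expanded $n(x_1,x_2,t)$ applied to $z+z'+i\rho+i\rho'$, so the two real vectors can be read off from its real and imaginary parts. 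The expected entries are as follows:
\begin{itemize}
\item one entry equal to $1+\q(x_1)$, from the $z'$-coefficient behaviour of $E^z_{t\rho+x_1}$ on $z'$ combined with the $\q$-term;
\item one entry equal to $1+\q(x_2)$;
\item off-diagonal entries equal to $t$ and to $t-(x_1,x_2)$, where $(x_1,x_2)$ comes from pairing $x_1$ with $x_2$ inside $E^z_{t\rho+x_1}$ after $E^\rho_{x_2}$ has produced $x_2$.
\end{itemize}
The determinant is then $(1+\q(x_1))(1+\q(x_2))-t(t-(x_1,x_2))$ up to an overall sign, and its absolute value should be the stated $t^2-t(x_1,x_2)+(1+\q(x_1))(1+\q(x_2))$. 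That expression is positive, because the quadratic form in $t$ has discriminant $(x_1,x_2)^2-4(1+\q(x_1))(1+\q(x_2))<0$ by Cauchy--Schwarz on the positive-definite $W$. So taking the absolute value is harmless, once the signs are sorted out so that the $t^2$ term enters with a plus sign.

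The main obstacle is bookkeeping of signs and conventions rather than any conceptual step. This includes:
\begin{itemize}
\item the order of the two Eichler transformations in $n(x_1,x_2,t)$;
\item the sign conventions $z'=-f_1$ and $\rho'=-f_2$;
\item the precise images under $w$;
\item the direction in which $M$ acts on $F$ versus $F'$.
\end{itemize}
I would settle these first by checking the formula at $x_1=x_2=0$. There $w^{-1}n(0,0,t)$ lies in an $\SL_2$-type subgroup, and the answer $t^2+1$ is the familiar $\SL_2$ Iwasawa computation. This fixes all the signs before doing the general expansion.
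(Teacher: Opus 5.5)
\paragraph{The gap.} The identity $\psi_\infty(g)=|\det A(g)|$ with $A(g)=((gb_i,f_j))$ is false. Pairing with $f_j$ reads off the $F$-components of $gb_i$, and $N$ changes exactly those. $N$ fixes $F$ pointwise and acts trivially on $V/F^\perp$, but $f_j$ does not pair to zero with $F$.

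Concretely, $n(0,0,t)b_1=z+z'+t\rho$ and $n(0,0,t)b_2=\rho+\rho'-tz$ give $\det A(n(0,0,t))=1+t^2$, although $\psi_\infty\equiv1$ on $N$. Your function is left invariant under the opposite radical $N^-$ instead, since $N^-$ fixes $F'$ pointwise. Moreover $w$ sends $b_1\mapsto-b_2$ and $b_2\mapsto b_1$ and is trivial on $W$, so $w\in K_\infty$ and $w^{-1}nw\in N^-$. Hence your matrix is constant: $A(w^{-1}n(x_1,x_2,t))=\matr{0}{1}{-1}{0}$ for all $t,x_1,x_2$; directly, $(nb_1,\rho)=(nb_2,z)=0$.

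Your fallback of pairing with $e_j$ does repair left $N$-invariance, since $(nv,e_j)=(v,n^{-1}e_j)=(v,e_j)$. But then $(m,\omega)$ acts on the matrix through $m^{-1}$, so $|\det((gb_i,e_j))|=\psi_\infty(g)^{-1}$, not $\psi_\infty(g)$. The sanity check on $N$ that you propose cannot tell $\psi_\infty$ from $\psi_\infty^{-1}$.

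\paragraph{The corrected computation.} Carry out the $e_j$ version. You have
\[ nb_1=(1+\q(x_1))z+z'+t\rho+x_1,\qquad nb_2=(1+\q(x_2))\rho+\rho'+x_2+((x_1,x_2)-t)z, \]
and $we_1=-\rho'$, $we_2=z'$. So $((nb_i,we_j))=\matr{t}{-(1+\q(x_1))}{1+\q(x_2)}{t-(x_1,x_2)}$. Its determinant is exactly $t^2-t(x_1,x_2)+(1+\q(x_1))(1+\q(x_2))$, with a plus sign on $t^2$ and no sign adjustment needed. Hence what is true, and what your method proves once corrected, is
\[ \psi_\infty(w^{-1}n(x_1,x_2,t))=\bigl(t^2-t(x_1,x_2)+(1+\q(x_1))(1+\q(x_2))\bigr)^{-1}. \]

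\paragraph{The statement itself.} The displayed statement is missing this inverse. The paper's own argument computes $|j(w^{-1}n,Z_0)|^2(-\rho,\Ima(Z))$, which by Lemma~\ref{lem:characterIsImaginaryPart} equals $\psi_\infty^{-1}$. The subsequent integral for $\gamma(s)$ correctly uses the exponent $-s-(n-k-1)/2$. Your proposed check at $x_1=x_2=0$ is also misremembered: the rank-one computation gives $(1+t^2)^{-1}$, not $1+t^2$. That decay is precisely what makes the intertwining integral converge for large $\Rea(s)$. Done honestly, this check would have exposed the discrepancy instead of inviting you to adjust signs toward the stated formula.

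\paragraph{Comparison with the paper.} With these corrections your determinant method is a genuinely different route, and a somewhat cleaner one. The paper goes through the tube-domain point $w^{-1}nZ_0$, the automorphy factor $j(w^{-1},Z_1)$ and Lemma~\ref{lem:characterIsImaginaryPart}. Your route needs only the Eichler formulas and a left $N$-invariant, right $K_\infty$-invariant $2\times2$ determinant, and it bypasses the coordinate formulas of Lemma~\ref{lem:ActionOfM}.
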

\begin{proof}
	Let $Z \define w^{-1}Z_1 \define w^{-1}n(x_1,x_2,t)Z_0$ and we write $n$ for $n(x_1,x_2,t)$. As we have seen in the proof of Lemma \ref{lem:characterIsImaginaryPart}
	\[ Z_1 = (t+i(1+\q(x_2)))\rho + i\rho' + x_1 + ix_2\quad\text{and}\quad j(n,Z_0) = 1 \]
	and so
	\begin{align*}
		Z = j(w^{-1},Z_1)^{-1}(-\rho - \q(Z_1)\rho' + x_1 + ix_2)
	\end{align*}
	with $j(w^{-1}n,Z_0) = j(w^{-1},Z_1) = t+i(1+\q(x_2))$ and
	\begin{align*}
		\q(Z_1) &= 1+\q(x_2) - it + \q(x_1) - \q(x_2) + i(x_1,x_2) \\
		&= 1+\q(x_1) - i(t - (x_1,x_2)).
	\end{align*}
	Therefore
	\begin{align*}
		|j(w^{-1}n,Z_0)|^2(-\rho,\Ima(Z)) &= -|j(w^{-1}n,Z_0)|^2\Ima(j(w^{-1},Z_1)^{-1}\q(Z_1)) \\
		&= -\Ima((t-i(1+\q(x_2)))\cdot\q(Z_1)) \\
		&= t^2-t(x_1,x_2) + (1+\q(x_1))(1+\q(x_2)).
	\end{align*}
\end{proof}

By the previous lemma, $\gamma(s)$ is equal to the integral
\begin{align}\label{eq:TheFinalIntegral}
	\begin{split}
		\int_\R&\int_{W(\R)}\int_{W(\R)} (t+i(1+\q(x_2)))^{-k}\times \\
		&\times(t^2-t(x_1,x_2) + (1+\q(x_1))(1+\q(x_2)))^{-s -(n-k-1)/2}\mathrm{d}x_1\mathrm{d}x_2\mathrm{d}t \\
		&= \int_\R\int_{W(\R)}\int_{W(\R)} (t+i)^{-k}(\q(x_2)+1)^{-s-(n+k-3)/2}\times \\
		&\times(t^2(\q(x_2)+1)-t(x_1,x_2) + 1+\q(x_1))^{-s -(n-k-1)/2}\mathrm{d}x_1\mathrm{d}x_2\mathrm{d}t \\
		&= \int_\R\int_{W(\R)}\int_{W(\R)} (t+i)^{-k}(\q(x_2)+1)^{-s-(n+k-3)/2}\times \\
		&\times(\q(x_1) + t^2 + 1)^{-s -(n-k-1)/2}\mathrm{d}x_1\mathrm{d}x_2\mathrm{d}t \\
		&= \int_\R\int_{W(\R)}\int_{W(\R)} (t+i)^{-k}(t^2+1)^{-s+(k-1)/2}(\q(x_2)+1)^{-s-(n+k-3)/2}\times \\
		&\times(\q(x_1) + 1)^{-s -(n-k-1)/2}\mathrm{d}x_1\mathrm{d}x_2\mathrm{d}t \\
	\end{split}
\end{align}
where we made the substitutions $t\mapsto t(\q(x_2)+1)$, $x_1\mapsto x_1 + tx_2$ and $x_1\mapsto x_1\sqrt{t^2+1}$. The following is well-known and goes back to Euler.
\begin{lem}\label{lem:EulerFormulaIntegral}
	Let $A\in\mathrm{Sym}_d(\R)$ be positive-definite, $b\in\R^d$ and $c\in\R$ with $c-\frac14b^TA^{-1}b>0$. For $\Rea(s)>d/2$ we have
	\[ \int_{\R^d}(x^TAx + b^Tx + c)^{-s}\mathrm{d}x = \frac{\pi^{d/2}}{\sqrt{\det(A)}}\left(c-\frac{1}{4}b^TA^{-1}b\right)^{-s+d/2}\frac{\Gamma(s-d/2)}{\Gamma(s)}. \]
\end{lem}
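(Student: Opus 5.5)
The plan is to reduce the integral to a radial one by an affine change of variables, and then evaluate the radial integral through the Beta function. Throughout, $s$ is complex with $\Rea(s)>d/2$. The base $x^TAx+b^Tx+c$ will turn out to be real and strictly positive, so $(\cdot)^{-s}$ denotes the principal power and $|(\cdot)^{-s}|=(\cdot)^{-\Rea(s)}$. This lets us reduce absolute convergence and the use of Fubini to the case of real $s$.

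First I complete the square. Put $x=y-\tfrac12A^{-1}b$; this translation has Jacobian $1$. Since $A$ is symmetric, the quadratic polynomial becomes $y^TAy+c'$ with $c'\define c-\tfrac14 b^TA^{-1}b>0$ by hypothesis. In particular the base is bounded below by $c'>0$ everywhere, which justifies the positivity claim above. Next, let $A^{1/2}$ be the positive-definite square root and substitute $y=A^{-1/2}u$, which contributes the factor $\det(A)^{-1/2}$. Finally, rescale $u=\sqrt{c'}\,v$, which pulls out $c'^{\,d/2}\cdot c'^{-s}$. Together these give
\[ \int_{\R^d}(x^TAx+b^Tx+c)^{-s}\,\mathrm dx=\frac{c'^{\,-s+d/2}}{\sqrt{\det(A)}}\int_{\R^d}(1+|v|^2)^{-s}\,\mathrm dv . \]

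It remains to evaluate $I_d(s)=\int_{\R^d}(1+|v|^2)^{-s}\mathrm dv$. In polar coordinates, using $\operatorname{vol}(S^{d-1})=2\pi^{d/2}/\Gamma(d/2)$, this becomes
\[ I_d(s)=\frac{2\pi^{d/2}}{\Gamma(d/2)}\int_0^\infty r^{d-1}(1+r^2)^{-s}\,\mathrm dr . \]
The radial integral converges exactly when $\Rea(s)>d/2$, which is the stated range. The substitution $u=r^2$ and then $u=\tau/(1-\tau)$ turn it into $\tfrac12 B(d/2,s-d/2)=\tfrac12\Gamma(d/2)\Gamma(s-d/2)/\Gamma(s)$. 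The factors of $\Gamma(d/2)$ and $2$ cancel, leaving $I_d(s)=\pi^{d/2}\Gamma(s-d/2)/\Gamma(s)$, which gives the claimed formula.

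As a cross-check, or an alternative route, one can write $(1+|v|^2)^{-s}=\Gamma(s)^{-1}\int_0^\infty t^{s-1}e^{-t(1+|v|^2)}\,\mathrm dt$. The Gaussian integral then gives $\pi^{d/2}t^{-d/2}$, and the remaining $t$-integral is $\Gamma(s-d/2)$. There is no real obstacle in this argument. The only points needing care are the convergence bookkeeping for complex $s$ and the positivity of $c'$, which guarantees that no branch issues arise. Both are handled by the hypotheses, via the reduction to real $s$ described at the start.
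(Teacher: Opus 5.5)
Your proof is correct. Your first step, completing the square to reduce to $b=0$ and $c=c_0$, is exactly what the paper does; the difference lies in how the reduced integral is evaluated.

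The paper keeps $A$ as it is and uses the Gamma-function representation
\[ \Gamma(s)X^{-s}=\int_0^\infty t^{s-1}e^{-tX}\,\mathrm dt . \]
It then interchanges the integrals and evaluates the Gaussian $\int_{\R^d}e^{-tx^TAx}\,\mathrm dx=\pi^{d/2}(\det A)^{-1/2}t^{-d/2}$. The remaining $t$-integral is $c_0^{-s+d/2}\Gamma(s-d/2)$. This is precisely the ``cross-check'' in your last paragraph.

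Your main route instead normalizes $A$ to the identity via $A^{1/2}$ and $c'$ to $1$ by scaling. It then uses polar coordinates and the Beta integral $\int_0^\infty u^{a-1}(1+u)^{-a-b}\,\mathrm du=B(a,b)$.

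The paper's version is shorter. The factor $\det(A)^{-1/2}$ comes straight out of the Gaussian with no square root or diagonalization of $A$, and no sphere volume is needed. That volume is usually itself computed from the Gaussian integral, so the two routes are closer than they look.

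Your version reduces everything to a single one-variable integral. It also makes it transparent that $\Rea(s)>d/2$ is exactly the range of convergence. Finally, your remark that $|X^{-s}|=X^{-\Rea(s)}$ supplies the Tonelli--Fubini justification that the paper leaves implicit when it swaps the $t$- and $x$-integrals.
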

\begin{proof}
Put $c_0=c-\frac14b^TA^{-1}b$. Completing the square and translating $x$ reduces the integral to the case $b=0$ and $c=c_0$. Substituting $t\mapsto t(x^TAx+c_0)$ yields
\begin{align*}
\Gamma(s)\int_{\R^d}(x^TAx+c_0)^{-s}\mathrm dx
&=\int_0^\infty t^{s-1}e^{-t}\int_{\R^d}(x^TAx+c_0)^{-s}\mathrm dx\,\mathrm dt\\
&=\int_0^\infty t^{s-1}e^{-tc_0}\int_{\R^d}e^{-tx^TAx}\mathrm dx\,\mathrm dt\\
&=\frac{\pi^{d/2}}{\sqrt{\det A}}\int_0^\infty e^{-tc_0}t^{s-d/2-1}\mathrm dt\\
&=\frac{\pi^{d/2}}{\sqrt{\det A}}c_0^{-s+d/2}\Gamma(s-d/2).
\end{align*}
\end{proof}
Furthermore, we have
\begin{lem}\label{lem:BetaIntegral}
	Let $k\in\Z$. For $\Rea(2s+k)>1$ we have
	\[ \int_\R(t+i)^{-k}(t^2+1)^{-s}\mathrm{d}t = i^{-k}\sqrt{\pi}\frac{\Gamma(s+(k-1)/2)\Gamma(s+k/2)}{\Gamma(s+k)\Gamma(s)}. \]
\end{lem}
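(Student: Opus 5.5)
We prove Lemma \ref{lem:BetaIntegral} by reducing it to the classical Beta integral through a contour deformation, or alternatively through the Fourier transform. The plan below uses a direct substitution. For real $t$ write $t^2+1=(t+i)(t-i)$, so the integrand equals $(t+i)^{-k-s}(t-i)^{-s}$. Here the powers are taken with principal branches, which is legitimate because $t\pm i$ never crosses the negative real axis for real $t$. Convergence at infinity needs $\Rea(2s+k)>1$, exactly the stated hypothesis. Both sides are holomorphic in $s$ on this half-plane, so by analytic continuation it suffices to prove the identity for real $s$ large, and then for all $s$ in the region.

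The first step is the substitution $t=\cot\theta$ with $\theta\in(0,\pi)$. This gives $t+i=e^{i\theta}/\sin\theta$, $t^2+1=\sin^{-2}\theta$ and $\mathrm dt=-\sin^{-2}\theta\,\mathrm d\theta$. The integral becomes
\[ \int_0^\pi e^{-ik\theta}(\sin\theta)^{k+2s-2}\,\mathrm d\theta . \]
Substituting $\theta=\phi+\pi/2$ turns this into
\[ i^{-k}\int_{-\pi/2}^{\pi/2}e^{-ik\phi}(\cos\phi)^{k+2s-2}\,\mathrm d\phi . \]
This produces the factor $i^{-k}$ in the statement.

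The second step evaluates the remaining integral with Cauchy's classical formula
\[ \int_{-\pi/2}^{\pi/2}\cos^{\mu}\phi\, e^{i\nu\phi}\,\mathrm d\phi=\frac{\pi\,\Gamma(\mu+1)}{2^{\mu}\,\Gamma\bigl(1+\tfrac{\mu+\nu}{2}\bigr)\Gamma\bigl(1+\tfrac{\mu-\nu}{2}\bigr)}, \qquad \Rea\mu>-1. \]
This formula can be proved by writing $\cos\phi=(e^{i\phi}+e^{-i\phi})/2$ and deforming to a contour integral around the unit circle, which gives a Beta integral. We take $\mu=k+2s-2$ and $\nu=-k$. Then the two Gamma factors in the denominator become $\Gamma(s)$ and $\Gamma(s+k)$, and the result is
\[ \frac{\pi\,\Gamma(k+2s-1)}{2^{k+2s-2}\Gamma(s)\Gamma(s+k)} . \]

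The final step applies the Legendre duplication formula
\[ \Gamma(2z)=\frac{2^{2z-1}}{\sqrt\pi}\,\Gamma(z)\Gamma\bigl(z+\tfrac12\bigr) \]
with $2z=k+2s-1$. This gives
\[ \Gamma(k+2s-1)=\frac{2^{k+2s-2}}{\sqrt\pi}\,\Gamma\bigl(s+\tfrac{k-1}{2}\bigr)\Gamma\bigl(s+\tfrac k2\bigr). \]
The powers of $2$ cancel and we obtain exactly the claimed expression.

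The main obstacle is justifying Cauchy's formula, or equivalently the branch bookkeeping, when $k$ is negative or when $\mu$ is not an integer. I would handle this by proving the formula for real $\mu>-1$ and then extending holomorphically in $s$. An alternative is Fourier inversion, since $(t+i)^{-a}$ has a Fourier transform supported on a half-line; this gives the convolution as a Gamma integral directly.
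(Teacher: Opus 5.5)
Your proof is correct and follows essentially the paper's route. The paper substitutes $t=\tan\theta$, which is equivalent to your $t=\cot\theta$ followed by the shift $\theta=\phi+\pi/2$; it then reduces to the same integral $i^{-k}\int_{-\pi/2}^{\pi/2}e^{\pm ik\theta}\cos^{2s+k-2}\theta\,\mathrm{d}\theta$, folds it to $2\int_0^{\pi/2}\cos(k\theta)\cos^{2s+k-2}\theta\,\mathrm{d}\theta$, evaluates this with the classical Cauchy/Beta formula (DLMF 5.12.5), and finishes with Legendre duplication, exactly as you do.
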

\begin{proof}
	We substitute $t\mapsto\tan(\theta)$. Then
	\[ \tan(\theta)+i = \frac{ie^{-i\theta}}{\cos(\theta)},\qquad \tan^2(\theta)+1 = \frac{1}{\cos^2(\theta)}  \]
	and
	\[ \frac{\partial}{\partial\theta}\tan(\theta) = \frac{1}{\cos^2(\theta)}. \]
	So the integral becomes
	\[ i^{-k}\int_{-\pi/2}^{\pi/2}e^{ik\theta}\cos^{2s+k-2}(\theta)\mathrm{d}\theta = 2i^{-k}\int_{0}^{\pi/2}\cos(k\theta)\cos^{2s+k-2}(\theta)\mathrm{d}\theta \]
	where the equality holds because $\sin$ is antisymmetric and $\cos$ is symmetric so that the imaginary part of $e^{ik\theta}$ does not contribute to the integral. The last integral is a well-known formula for the reciprocal of the so-called beta function $B(z_1,z_2)$ (see \cite[\href{https://dlmf.nist.gov/5.12.E5}{(5.12.5)}]{DLMF}). Hence, our integral equals
	\begin{align*}
		i^{-k}\frac{\pi}{2^{2s+k-2}(2s+k-1)B(s+k,s)} &= i^{-k}\pi\frac{\Gamma(2s+k)}{2^{2s+k-2}(2s+k-1)\Gamma(s+k)\Gamma(s)} \\
		&= i^{-k}\pi\frac{\Gamma(2s+k-1)}{2^{2s+k-2}\Gamma(s+k)\Gamma(s)} \\
		&= i^{-k}\sqrt{\pi}\frac{\Gamma(s+(k-1)/2)\Gamma(s+k/2)}{\Gamma(s+k)\Gamma(s)}
	\end{align*}
\end{proof}

Both identities extend meromorphically in $s$. We finally obtain
\begin{thm}
	The operator $M_\infty(w,s)$ maps $\varphi_{\infty,s}$ to $\gamma(s)\varphi_{\infty,-s}$ with
	\begin{align*}
\gamma(s)&=2^{n-2}i^{-k}\pi^{n-3/2}\frac{\Gamma(s)\Gamma(s+1/2)}{\Gamma(s+(n+k-3)/2)\Gamma(s+(n-k-1)/2)}\\
&\qquad\times\frac{\Gamma(s+(k-1)/2)}{\Gamma(s+(k+1)/2)}.
\end{align*}
\end{thm}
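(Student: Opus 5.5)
The plan is to evaluate the last line of \eqref{eq:TheFinalIntegral} directly, for $\Rea(s)$ large, and then extend by meromorphic continuation. After the three substitutions that integral splits as a product of three independent integrals: one over $x_1\in W(\R)$, one over $x_2\in W(\R)$, and one over $t\in\R$. First fix $\Rea(s)$ large enough that every integral below converges absolutely. This justifies Fubini and the changes of variables $t\mapsto t(\q(x_2)+1)$, $x_1\mapsto x_1+tx_2$ and $x_1\mapsto x_1\sqrt{t^2+1}$. Since both sides are meromorphic in $s$, the identity then holds for all $s$.

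\emph{Measure.} I identify $W(\R)\cong\R^{d}$ with $d=n-2$, using a $\Z$-basis of $D_0$. In these coordinates $\q(x)=x^TAx$ with $A=\frac12 G$, where $G$ is the Gram matrix of $D_0$. Because $D_0$ is unimodular, $\det G=1$, so $\det A=2^{-(n-2)}$. I take Lebesgue measure in these coordinates, which is the measure giving $N(\Z)\backslash N(\R)$ volume $1$. Pinning down this normalization is the step I expect to need the most care: it is the only source of the constant $2^{n-2}$, and a different convention would change that constant but none of the Gamma factors.

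\emph{The two $W(\R)$-integrals.} Both are instances of Lemma \ref{lem:EulerFormulaIntegral} with $b=0$ and $c=1$.
\begin{itemize}
\item The $x_1$-integral has exponent $s+(n-k-1)/2$. It equals
\[
\pi^{(n-2)/2}\,2^{(n-2)/2}\,\frac{\Gamma(s+(1-k)/2)}{\Gamma(s+(n-k-1)/2)}.
\]
\item The $x_2$-integral has exponent $s+(n+k-3)/2$. It equals
\[
\pi^{(n-2)/2}\,2^{(n-2)/2}\,\frac{\Gamma(s+(k-1)/2)}{\Gamma(s+(n+k-3)/2)}.
\]
\end{itemize}

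\emph{The $t$-integral.} This is $\int_\R(t+i)^{-k}(t^2+1)^{-s'}\,\mathrm dt$ with $s'=s-(k-1)/2$. By Lemma \ref{lem:BetaIntegral} it equals
\[
i^{-k}\sqrt{\pi}\,\frac{\Gamma(s)\,\Gamma(s+1/2)}{\Gamma(s+(k+1)/2)\,\Gamma(s-(k-1)/2)}.
\]

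\emph{Combining.} The factor $\Gamma(s+(1-k)/2)$ from the $x_1$-integral cancels the $\Gamma(s-(k-1)/2)$ in the denominator of the $t$-integral. The powers of $\pi$ combine to $\pi^{n-2}\cdot\pi^{1/2}=\pi^{n-3/2}$, and the powers of $2$ combine to $2^{n-2}$. The remaining Gamma quotients are exactly those in the stated formula for $\gamma(s)$.

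Finally, $M_\infty(w,s)\varphi_{\infty,s}=\gamma(s)\varphi_{\infty,-s}$ holds because the $K$-type $-k$ subspace is one-dimensional, as noted before the statement. Evaluating both sides at $g=1$, where $\varphi_{\infty,-s}(1)=1$, identifies the scalar with the integral just computed. Meromorphic continuation in $s$ then extends the identity from large $\Rea(s)$ to all $s\in\C$.
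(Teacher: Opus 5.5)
Your proposal is correct and follows the paper's proof. Like the paper, you split the last line of \eqref{eq:TheFinalIntegral} into the two $W(\R)$-integrals and the $t$-integral. You evaluate these with Lemma \ref{lem:EulerFormulaIntegral} (with $b=0$, $c=1$, and covolume-one Lebesgue measure on $D_0$ giving $2^{(n-2)/2}$ per integral) and Lemma \ref{lem:BetaIntegral}, then cancel $\Gamma(s-(k-1)/2)$.
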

\begin{proof}
	We use Lebesgue measure for which $D_0\subset W(\R)$ has covolume $1$ and $\Z\subset\R$ has covolume $1$. Since $\q(x)=\frac{1}{2}x^TSx$ with $\det(S)=1$, each of the two integrations over $W(\R)$ contributes the factor $2^{(n-2)/2}$. This is compatible with $\operatorname{vol}(N(\Q)\backslash N(\A))=1$ and $\operatorname{vol}(N(\Z_p))=1$. No by \eqref{eq:TheFinalIntegral} and Lemma \ref{lem:EulerFormulaIntegral} and \ref{lem:BetaIntegral}, $\gamma(s)$ equals
	\begin{multline*}
	2^{n-2}i^{-k}\pi^{n-3/2}
	\frac{\Gamma(s-(k-1)/2)}{\Gamma(s+(n-k-1)/2)}
	\frac{\Gamma(s+(k-1)/2)}{\Gamma(s+(n+k-3)/2)}\\
	\times\frac{\Gamma(s)\Gamma(s+1/2)}{\Gamma(s+(k+1)/2)\Gamma(s-(k-1)/2)}.
	\end{multline*}
\end{proof}

\begin{cor}\label{cor:ZerosAndPolesgamma}
	Let $k\geq2$. Then at $s = (n-k-1)/2$ the function $\gamma(s)$ has neither a zero nor a pole.
\end{cor}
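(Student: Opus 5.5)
The plan is to substitute $s_0=(n-k-1)/2$ directly into the closed formula for $\gamma(s)$ from the preceding theorem. Then we check that every Gamma factor in the numerator is evaluated at a point off the non-positive integers, and that every Gamma factor in the denominator is finite. Since $\Gamma$ has no zeros and its poles are exactly $\Z_{\le0}$, this is the whole argument. The constant $2^{n-2}i^{-k}\pi^{n-3/2}$ is non-zero and plays no role.

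The four relevant arguments at $s_0$ are
\[ s_0=\tfrac{n-k-1}{2},\quad s_0+\tfrac12=\tfrac{n-k}{2},\quad s_0+\tfrac{k-1}{2}=\tfrac{n}{2}-1,\quad s_0+\tfrac{k+1}{2}=\tfrac n2, \]
together with the remaining two denominator arguments $s_0+\frac{n+k-3}{2}=n-2$ and $s_0+\frac{n-k-1}{2}=n-k-1$.

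The denominator factors $\Gamma(n/2)$ and $\Gamma(n/2-1)$ cause no trouble because $n>2$. Likewise $\Gamma(n-2)$ is finite for $n\ge3$. The factors $\Gamma(\frac{n-k}{2})$ and $\Gamma(\frac{n-k-1}{2})$ are more delicate. Exactly one of $\frac{n-k}{2}$ and $\frac{n-k-1}{2}$ is a half-integer, so that factor is harmless. The other is an integer, and it may well be $\le0$ when $k\ge n$. This is the genuine obstacle: the numerator poles of $\Gamma(s)\Gamma(s+1/2)$ must cancel against the poles of $\Gamma(s+(n-k-1)/2)$ in the denominator.

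To handle this, I will pair the factors. Write $\Gamma(s)\Gamma(s+\frac12)=2^{1-2s}\sqrt\pi\,\Gamma(2s)$ by the duplication formula. At $s_0$ we have $2s_0=n-k-1$, which matches the argument of the denominator factor $\Gamma(s+(n-k-1)/2)$. So the ratio becomes $\Gamma(2s)/\Gamma(s+(n-k-1)/2)$, and both arguments equal $n-k-1$ at $s_0$. Each is a simple pole when $n-k-1\le0$. The quotient of two Gammas with simple poles at the same point has a finite non-zero limit, namely a ratio of residues weighted by the derivative coefficients $2$ and $1$. Concretely, near $s_0$ with $m=n-k-1\le0$,
\[ \frac{\Gamma(2s)}{\Gamma(s+m/2)} \to \frac{(-1)^{m}/(|m|!\cdot 2(s-s_0))}{(-1)^{m}/(|m|!\,(s-s_0))}=\tfrac12. \]
If instead $m\ge1$, both factors are finite and non-zero. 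Either way this piece is finite and non-zero.

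The final step is to note that the hypothesis $k\ge2$ is used in the factor $\Gamma(s+(k-1)/2)/\Gamma(s+(k+1)/2)=1/(s+(k-1)/2)$. At $s_0$ this equals $1/(n/2-1)$, which is non-zero and finite. Assembling the pieces, $\gamma(s_0)$ is a product of finite non-zero terms. The only step needing care is the duplication-formula cancellation in the regime $k\ge n+1$.
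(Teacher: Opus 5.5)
Your proposal is correct and follows essentially the same route as the paper: evaluate each Gamma factor at $s_0=(n-k-1)/2$, and note that the simple pole of $\Gamma(s)\Gamma(s+1/2)$ is cancelled by the simple pole of $\Gamma(s+(n-k-1)/2)$ in the denominator. The duplication formula is only a tidy way of packaging this cancellation, and your $\lim = 1/2$ computation is right.

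Two small inaccuracies do not affect the argument, since your case split $m\le 0$ versus $m\ge 1$ is exhaustive. First, the cancellation regime is $k\geq n-1$, not $k\geq n$ or $k\geq n+1$ as you state in passing. Second, the factor $1/(s+(k-1)/2)=1/(n/2-1)$ at $s_0$ needs $n>2$ rather than $k\geq 2$.
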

\begin{proof}
	Since $\frac{\Gamma(s+(k-1)/2)}{\Gamma(s+(k+1)/2)} = \frac{1}{s+(k-1)/2}$ and we assume $n>2$, this produces no zero or pole at $s = (n-k-1)/2$. The other arguments in the numerator are 
	\begin{align*}
		&(n-k-1)/2  \\
		(n-k-1)/2+1/2 = &(n-k)/2
	\end{align*}
	producing a simple pole if $k \geq n-1$ and no pole otherwise. In the denominator we have
	\begin{align*}
		(n-k-1)/2+(n+k-3)/2 &= n-2 \\
		(n-k-1)/2+(n-k-1)/2 &= n-k-1
	\end{align*}
	producing a simple zero if $k\geq n-1$.
\end{proof}

\subsection*{The global intertwining operator}

We have seen in the previous sections that
\[ M(w,s)\varphi_s = m(s)\varphi_{-s} \]
for a meromorphic function $m(s) = \gamma(s)c(s)$ that satisfies
\[ m(-s) = m(s)^{-1}. \]
Now Proposition \ref{prp:ZerosAndPolesc} and Corollary \ref{cor:ZerosAndPolesgamma} imply
\begin{thm}\label{thm:mainThmeoremIntertwinging}
	Let $f\in\mathrm{S}_k(\SL_2(\Z))$ and let $\psi=\bigoplus_{i=1}^m\pi_i[d_i]$ be the Arthur parameter of $\pi_v$. Then $m(s)$ has a zero at $s = -(n-k-1)/2$ if and only if one of the following conditions hold
	\begin{enumerate}[(i)]
		\item $k \geq n+2$
		\item $k=n$ and $L(\frac{1}{2},\pi_f\otimes\pi_i)=0$ for some $i$ with $d_i=1$
		\item $k = n-2$ and $L(\frac{1}{2},\pi_f\otimes\pi_i)\not=0$ whenever $d_i = 1$
		\item $k < n-2$, $\pi_i\cong\pi_f$ and $d_i=n-k-2$ for some $i$ and $L(\frac{1}{2},\pi_f\otimes\pi_i)\not=0$ whenever $d_i = n-k-1$.
	\end{enumerate}
\end{thm}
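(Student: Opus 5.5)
The plan is to combine the factorization $m(s)=\gamma(s)c(s)$ with the functional equation $m(-s)=m(s)^{-1}$. By this relation, $m$ has a zero at $s=-(n-k-1)/2$ if and only if $m$ has a pole at $s_0:=(n-k-1)/2$. The functional equation comes from $M(w,-s)M(w,s)=1$ restricted to the one-dimensional line of spherical sections with Archimedean $K$-type $-k$, on which $M(w,s)$ acts by the scalar $m(s)$. So the statement reduces to deciding exactly when $\gamma(s)c(s)$ has a pole at $s_0$.

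By Corollary \ref{cor:ZerosAndPolesgamma}, $\gamma(s)$ is holomorphic and non-vanishing at $s_0$ for every $k\geq2$. This covers all relevant weights, since $\mathrm{S}_k(\SL_2(\Z))=0$ for $k<12$; in particular $k\geq 2$ whenever $f\neq 0$. Hence the order of $m$ at $s_0$ equals the order of $c$ at $s_0$, and Proposition \ref{prp:ZerosAndPolesc} gives precisely the listed conditions (i)--(iv) for a pole of $c$ at $s_0$. Composing the equivalences yields the theorem.

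The one point needing care is the parity hypothesis $k\equiv n\bmod 2$ in Proposition \ref{prp:ZerosAndPolesc}, which the theorem does not state. I would handle the case $k\not\equiv n\bmod2$ separately and show that then $c$ has neither zero nor pole at $s_0$, while each of (i)--(iv) fails. For the conditions: (ii) and (iii) force $k\equiv n$, and (i) would have to be read under the same parity convention.

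For $c$ itself, note that for $n$ even and $k$ odd there is no nonzero $f$, since level-one forms have even weight. That leaves $n$ odd and $k$ even. There the proof of Proposition \ref{prp:ZerosAndPolesc} shows that the trivial-zero condition is impossible by the parity relation in Lemma \ref{lem:ArthurParameterUnimodularProperties}. Moreover, $s_0\in\Z$, so $\zeta(2s_0)/\zeta(2s_0+1)=\zeta(n-k-1)/\zeta(n-k)$ has even integer argument $n-k-1$ in the numerator and odd argument $n-k$ in the denominator. This gives no pole unless $n-k-1=1$, which is excluded by parity. It gives a zero when $n-k-1$ is a negative even integer, which I would check is exactly compensated, or else simply excluded by restricting to $k\equiv n$.

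I expect this parity bookkeeping to be the only obstacle. The cleanest route is to record that the Eisenstein series considered in the paper (and the theorem) implicitly assume $k\equiv n\bmod2$, matching the proposition, and then to invoke it directly.
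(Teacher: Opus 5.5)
Your argument is the paper's: $m(-s)=m(s)^{-1}$ turns a zero at $-(n-k-1)/2$ into a pole at $(n-k-1)/2$. Corollary \ref{cor:ZerosAndPolesgamma} shows that $\gamma$ contributes neither zero nor pole there, and Proposition \ref{prp:ZerosAndPolesc} gives conditions (i)--(iv) for $c$.

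The parity digression is unnecessary. The genus $I\!I_{n-2,0}$ is non-empty only when $8\mid n-2$, so $n$ is even, and any non-zero $f\in\mathrm{S}_k(\SL_2(\Z))$ has even $k$. Hence $k\equiv n\bmod 2$ holds automatically. The case of odd $n$ never arises; there the dual group of $\Sort_W$ would be $\Sp_{n-3}(\C)$ and the setup would not apply.
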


The regularity of $E(g,\varphi_s)$ is already determined by the regularity of $E(g,\varphi_s)_P$.

\begin{prp}\label{prp:HolomorphicityConstantTerms}
	Let $\pi$ be a cuspidal automorphic representation of $M$ and let $\varphi_s\in I(\pi,s)$ be a section holomorphic near $s_0$. Suppose that $s\mapsto M(w,s)\varphi_s$ is holomorphic at $s_0$ and
	\[
	\left.M(w,s)\varphi_s\right|_{s=s_0}=0,
	\qquad R(X)\varphi_{s_0}=0
	\quad\text{for all }X\in\mfp_-.
	\]
	Then $E(g,\varphi_s)$ is regular at $s_0$ and
	\[
	R(X)E(g,\varphi_{s_0})=0
	\quad\text{for all }X\in\mfp_-.
	\]
\end{prp}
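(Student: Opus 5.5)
The plan is to reduce regularity of $E(g,\varphi_s)$ to regularity of its constant terms, using the standard principle that an automorphic form on $G$ all of whose constant terms along proper rational parabolics vanish is cuspidal. The poles of Eisenstein series are controlled by their constant terms: if every constant term of the meromorphic family is holomorphic at $s_0$, then so is the family.

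\textbf{Regularity.} Let $N$ be the order of a possible pole of $E(g,\varphi_s)$ at $s_0$; it is finite by the meromorphic continuation in Definition and Theorem \ref{thm:EisensteinSeriesDefinition}. Consider the leading coefficient
\[ E_{-N}(g)=\lim_{s\to s_0}(s-s_0)^NE(g,\varphi_s). \]
It is an automorphic form, being a Laurent coefficient of a meromorphic family of automorphic forms (cf.\ \cite[Section IV.1]{MoeglinWaldspurger}). By Theorem \ref{thm:LanglandsConstantTerm}, its constant term along any proper parabolic not conjugate to $P$ is zero. Along $Q=g_0Pg_0^{-1}$ it is the leading coefficient of $\varphi_s(g_0^{-1}g)+M(w,s)\varphi_s(g_0^{-1}g)$, which is holomorphic at $s_0$ by hypothesis. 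Hence if $N\geq1$, then $E_{-N}$ has vanishing constant terms along all proper parabolics, so it is a cusp form.

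It remains to show that this cusp form is zero. The family $E(g,\varphi_s)$ is orthogonal to cusp forms for $\Rea(s)\gg0$: unfolding gives $\langle E(\cdot,\varphi_s),\phi\rangle=\int\varphi_s\overline{\phi_P}=0$ for cuspidal $\phi$. The inner product with a fixed cusp form is meromorphic in $s$, since cusp forms are rapidly decreasing and the family has moderate growth locally uniformly in $s$. So this orthogonality persists after continuation, and in particular for $E_{-N}$. A cusp form orthogonal to all cusp forms is zero, which contradicts the choice of $N$. Therefore $E$ is regular at $s_0$.

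\textbf{Holomorphy.} Put $F=R(X)E(\cdot,\varphi_{s_0})$ for $X\in\mfp_-$. Since $R(X)$ commutes with left translations, it commutes with taking constant terms. Hence
\[ F_Q(g)=R(X)\bigl[\varphi_{s_0}+M(w,s)\varphi_s|_{s=s_0}\bigr](g_0^{-1}\cdot)=0 \]
by the two hypotheses, and $F_{Q'}=0$ for parabolics not conjugate to $P$. Thus $F$ is a cusp form. By the same orthogonality argument $F$ is orthogonal to cusp forms: $\langle R(X)E,\phi\rangle=-\langle E,R(\bar X)\phi\rangle$, and $R(\bar X)\phi$ is again cuspidal. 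So $F=0$.

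\textbf{Main obstacle.} The hardest step will be justifying analytic continuation of the orthogonality to cusp forms, and that Laurent coefficients remain automorphic forms of uniform moderate growth. I will take both from the general theory in \cite[IV.1]{MoeglinWaldspurger}, rather than reproving them.
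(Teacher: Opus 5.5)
Your proposal is correct and follows essentially the same route as the paper. Both arguments use Langlands' formula to show that all proper constant terms are holomorphic at $s_0$, deduce that the leading Laurent coefficient is cuspidal and orthogonal to cusp forms (unfolding plus meromorphic continuation), and then run the same cuspidal-plus-orthogonal argument for $R(X)E(\cdot,\varphi_{s_0})$. The only minor difference is in the last step: you transfer orthogonality via the adjointness $\langle R(X)E,\phi\rangle=-\langle E,R(\bar X)\phi\rangle$, whereas the paper writes $R(X)E(\cdot,\varphi_s)=E(\cdot,R(X)\varphi_s)$ and unfolds again; both are valid.
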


\begin{proof}
	Since $\pi$ is cuspidal and $P$ is maximal and conjugate to its opposite, the constant term of $E(\cdot,\varphi_s)$ along any proper rational parabolic not conjugate to $P$ vanishes, while for $Q=g_0Pg_0^{-1}$ with $g_0\in G(\Q)$ we have 
	\[ E(\cdot,\varphi_s)_Q(g) = E(\cdot,\varphi_s)_P(g_0^{-1}g). \]
	From the constant term formula Theorem \ref{thm:LanglandsConstantTerm} it follows that 
	\[ E(\cdot,\varphi_s)_P(g_0^{-1}g) = \varphi_s(g_0^{-1}g)+M(w,s)\varphi_s(g_0^{-1}g). \]
	Hence, all proper constant terms of $E(\cdot,\varphi_{s})$ are holomorphic
	at $s_0$. Unfolding against a cusp form, followed by meromorphic continuation, shows that $E(\cdot,\varphi_{s})$ and all its Laurent coefficients are orthogonal to cusp forms. If $E(\cdot,\varphi_{s})$ has a pole at $s_0$, all proper constant terms of its leading Laurent coefficient are zero and hence cuspidal. Since it is also orthogonal to cusp forms, it
	vanishes, which is a contradiction. Thus $E(\cdot,\varphi_{s})$ is regular at $s_0$. 
	
	By assumption, for $Q=g_0Pg_0^{-1}$ and
	$X\in\mfp_-$ we have
	\[ (R(X)E(\cdot,\varphi_{s_0}))_Q(g) = \left(R(X)\varphi_{s_0}(g_0^{-1}\cdot)\right)(g) = (R(X)\varphi_{s_0})(g_0^{-1}g) = 0 \]
	because right differentiation commutes with left translation. Since	the constant terms along all other proper rational parabolics vanish as well, $R(X)E(\cdot,\varphi_{s_0})$ is cuspidal. Finally, in the region of absolute convergence, differentiation under the sum gives
	\[ R(X)E(\cdot,\varphi_s)=E(\cdot,R(X)\varphi_s) \]
	and this identity extends meromorphically in $s$. Unfolding therefore shows orthogonality to all cusp forms. At $s=s_0$ the derivative is also cuspidal, so it vanishes.
\end{proof}

\section{Holomorphic modular forms}\label{sec:HolomorphicModularForms}

In this section we define holomorphic modular forms. Recall the Hermitian symmetric space $\widetilde{\mathcal{H}}$. It has two connected components $\widetilde{\mathcal{H}} = \mathcal{H}\cup-\mathcal{H}$ where
\begin{multline*}
	\mathcal{H} = \{\tau'\rho + \tau\rho'+\mathfrak{z}\mid \tau,\tau'\in\mathbb{H}, \mathfrak{z}\in W\otimes_\Q\C, \q(\Ima(\mathfrak{z})) < \Ima(\tau)\Ima(\tau') \}.
\end{multline*}
We denote the cone given by the imaginary part of $\mathcal{H}$ by $\mathcal{C}$. There exists a well-defined group homomorphism
\[ \spin:\Ort(V(\R))\to \R^\times/(\R^\times)^2\cong\{\pm1\} \]
called the \emph{spinor norm}. For any ring $R\subset\R$ we define
\begin{align*}
	\Ort^+(L\otimes R)&\define\Ort(L\otimes R)\cap\ker(\spin), \\ \Sort^+(L\otimes R)&\define\Sort(L\otimes R)\cap\ker(\spin).
\end{align*}
Then $\Ort^+(V(\R))$ fixes $\mathcal{H}$. For a function $F:\mathcal{H}\to\C$, an integer $k\in\Z$ and $\gamma\in\Ort^+(V(\R))$, we define
\[ F|_k[\gamma](Z) = j(\gamma,Z)^{-k}F(\gamma Z). \]
The discrete subgroup $\Ort^+(L)$ acts properly discontinuously on $\mathcal{H}$.

Let $\Gamma\subset\Ort^+(L)$ be a finite index subgroup and $\chi$ a character of $\Gamma$. A function $F:\mathcal{H}\to\C$ is called a \emph{modular form of weight $k$ and character $\chi$ for $\Gamma$} if 
\begin{enumerate}[(i)]
	\item $F|_k[\gamma] = \chi(\gamma)F$ for all $\gamma\in\Gamma$,
	\item $F$ is holomorphic on $\mathcal{H}$,
	\item $F$ is holomorphic at the cusps.
\end{enumerate}
Note that (iii) is automatically satisfied by the Koecher principle since we assume $n\geq3$. We denote the space of modular forms by $\mathrm{M}_k(\Gamma,\chi)$.

\medskip

We recall the notation for unimodular lattices. There exists an even unimodular lattice of signature $(n,2)$ if and only if $n\equiv2\bmod8$ and this lattice is unique up to isomorphism and denoted by $I\!I_{n,2}$. We denote $\Gamma_n \define\Ort^+(I\!I_{n,2})$ and $\SGamma_n \define\Sort^+(I\!I_{n,2})$. We consider the characters $1$ and $\det$ of $\Gamma_n$. One immediately obtains
\[ \mathrm{M}_k(\SGamma_n,1) = \mathrm{M}_k(\Gamma_n,1) \oplus \mathrm{M}_k(\Gamma_n,\det). \]
Furthermore, the element $-\id\in\SGamma_n$ acts trivially on $\mathcal{H}$ and $j(-\id,Z) = -1$, so that $\mathrm{M}_k(\SGamma_n,1) = \{0\}$ for $k\equiv1\bmod2$.

The modular variety $\Gamma\backslash\mathcal{H}$ is not compact. It can be compactified by adding finitely many $0$- and $1$-dimensional cusps. The $0$-dimensional cusps are parameterized by the $\Gamma$-orbits of the isotropic lines $I\subset V$, the $1$-dimensional cusps are parameterized by the $\Gamma$-orbits of the isotropic subspaces $F\subset V$ of dimension $2$. Since we assume that $L$ is unimodular, for any such line $I$, we have $(I^\bot\cap L)/(I\cap L) = I\!I_{n-1,1}$ and for any such space $F$, we have $(F^\bot\cap L)/(F\cap L) = D$ where $[D]\in I\!I_{n-2,0}$ is positive-definite even unimodular of rank $n-2$. If on the other hand $[D]\in I\!I_{n-2,0}$ is arbitrary, then $L \cong I\!I_{1,1}\oplus I\!I_{1,1}\oplus D$. There is only one $\Gamma$-orbit of isotropic lines $I$, so that $\Gamma\backslash\mathcal{H}$ has a unique $0$-dimensional cusp, which is the unique cusp of the $1$-dimensional cusps. The $\SGamma_n$-orbits of isotropic subspaces $F$ are parameterized by the classes $[D]\in \widetilde{I\!I}_{n-2,0}$. The $\Gamma_n$-orbits of isotropic subspaces $F$ are parameterized by the classes $[D]\in I\!I_{n-2,0}$. We choose primitive isotropic vectors $z,z'\in L$ with $(z,z') = -1$ and for each $[D]\in \widetilde{I\!I}_{n-2,0}$ we choose primitive isotropic vectors $\rho_D,\rho_D'\in L\cap\langle z,z'\rangle^\bot$ with $(\rho_D,\rho_D') = -1$ such that $L\cap\langle z,z'\rangle^\bot\cap\langle \rho_D,\rho_D'\rangle^\bot\cong D$ and we will henceforth assume that this equivalence is an equality We may assume that $\rho_D+\rho_D'\in\mathcal{C}$. Then $F_D \define\Q z\oplus\Q\rho_D$ and $F_D' \define\Q z'\oplus\Q\rho_D'$ are $2$-dimensional isotropic subspaces of $V$ dual to each other. We choose a fixed $[D_0]\in \widetilde{I\!I}_{n-2,0}$ and set $F = F_{D_0}$, $F' = F_{D_0}'$, which is compatible with the notation in Section \ref{sec:Definitions}. So $P = P_{D_0}$ for $P_D\define\Stab_{G}(F_D)$ with $P_D = M_DN_D$.

\begin{prp}
	The linear map
	\begin{align*}
		\Phi_{D}:\mathrm{M}_k(\SGamma_n,1)&\to\mathrm{M}_k(\SL_2(\Z)) \\
		\Phi_{D}(F)(\tau) &= \lim\limits_{\tau'\to i\infty}F(\tau'\rho_D+\tau \rho_D' + \mathfrak{z})
	\end{align*}
	is independent of $\mathfrak{z}\in D\otimes_\Z\C$ and the choice of $\rho_D$, $\rho_D'$ and is well-defined. We extend the maps $\Phi_D$ linearly to the map
	\begin{align*}
		\Phi:\mathrm{M}_k(\SGamma_n,1)&\to\mathrm{M}_k(\SL_2(\Z))\otimes_\C\C[\widetilde{I\!I}_{n-2,0}] \\
		F&\mapsto\sum_{[D]\in \widetilde{I\!I}_{n-2,0}}\Phi_D(F)\otimes[D]
	\end{align*}
	that we call the \emph{Siegel-$\Phi$-operator}.
\end{prp}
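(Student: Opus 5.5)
We work in the tube-domain coordinates centred at the cusp $F_D$, with $Z=\tau'\rho_D+\tau\rho_D'+\mathfrak z$, and exploit the parabolic subgroup $\Gamma_D\subset\SGamma_n$ stabilising $F_D=\Q z\oplus\Q\rho_D$. The argument has three steps: Fourier--Jacobi expansion in $\tau'$, modularity of the limit, and independence of the choices.

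\emph{Step 1: expansion in $\tau'$.} The unipotent radical $N_D(\Z)$ contains the Eichler transformations. These act by translations $\tau'\mapsto\tau'+t$ with $t\in\Z$, and by $\mathfrak z\mapsto\mathfrak z+\lambda$ for $\lambda\in D$, together with the associated change of $\tau'$ coming from Lemma~\ref{lem:ActionOfM}-type formulas. All of these have $j=1$. Hence $F$ has a Fourier--Jacobi expansion
\[ F(Z)=\sum_{m\ge0}\phi_m(\tau,\mathfrak z)e^{2\pi i m\tau'}, \]
where $\phi_m$ is a Jacobi form of index $m$ for the lattice $D$. The restriction $m\geq 0$ is exactly holomorphy at the cusps, which is automatic by the Koecher principle since $n\ge3$.

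\emph{Step 2: the limit and its modularity.} As $\tau'\to i\infty$ with $\tau,\mathfrak z$ fixed, the limit exists and equals $\phi_0(\tau,\mathfrak z)$. The index-zero coefficient $\phi_0$ is invariant under $\mathfrak z\mapsto\mathfrak z+\lambda$ and $\mathfrak z\mapsto\mathfrak z+\lambda\tau$ for $\lambda\in D$, so for fixed $\tau$ it is a holomorphic doubly periodic function on $D\otimes\C$. The index-$0$ heat-equation/Liouville argument then shows it is bounded, hence constant in $\mathfrak z$. Next take the Levi factor $\GL_2(\Z)\cap\SL_2$ embedded as $(m,1)\in M_D(\Z)$, and apply Lemma~\ref{lem:ActionOfM} with $\det m=1$. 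This gives $\gamma Z=(\tau'-c\,\q(\mathfrak z)/(c\tau+d),\,m\tau,\,\mathfrak z/(c\tau+d))$ and $j=c\tau+d$. Letting $\tau'\to i\infty$ in $F|_k[\gamma]=F$ yields $\phi_0(m\tau)=(c\tau+d)^k\phi_0(\tau)$. Holomorphy of $\phi_0$ at $i\infty$ follows from $\phi_0$ being the $(0,0)$ Fourier coefficient of $F$ at the $0$-dimensional cusp, again by Koecher. We still need these elements $(m,1)$ to lie in $\SGamma_n$. They have determinant $1$, and their spinor norm is trivial since $\SL_2(\R)$ is connected.

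\emph{Step 3: independence of choices.} Another choice of $\rho_D,\rho'_D$ and identification with $D$ giving the same $\SGamma_n$-orbit differs by some $\gamma\in\SGamma_n$ carrying one configuration to the other. We also have to keep track of the freedom inside $\Stab(F_D)$, which changes $(\tau,\tau')$ by elements of $M_D(\Z)$ and by unipotent elements. Using $F|_k[\gamma]=F$ and the formula for $j(\gamma,Z)$ along the limit, the limit changes only by the slash action of an element of $\SL_2(\Z)$, or by a translation. The limit is invariant under these because of Step 2.

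\emph{Main obstacle.} I expect the hardest part to be the bookkeeping in Step 3. The cusp limit must depend only on the proper class $[D]\in\widetilde{I\!I}_{n-2,0}$, and this requires checking that automorphisms of $D$, and the $\GL_2(\Z)$ elements of determinant $-1$, do not introduce a sign. For the determinant $-1$ elements, I would combine them with a reflection in $D$ when $D$ is achiral, so that the total element lies in $\SGamma_n$. Linearity of $\Phi$ and the extension to $\Phi$ are then immediate.
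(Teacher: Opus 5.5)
Your Steps 1 and 2 are sound, and for modularity they agree with the paper. You restrict $F|_k[(m,1)]=F$ for $m\in\SL_2(\Z)$ via Lemma~\ref{lem:ActionOfM} and let $\tau'\to i\infty$; your connectedness argument for $(m,1)\in\SGamma_n$ is correct. For independence of $\mathfrak z$ you take a genuinely different route:
\begin{itemize}
\item The paper reads it off the Fourier expansion at the $0$-dimensional cusp. By Koecher the coefficients $c_F(a\rho_D+b\rho_D'+\lambda)$ vanish unless $\q(\lambda)\le ab$, so $b=0$ forces $\lambda=0$.
\item You instead note that the index-$0$ coefficient is invariant under $\mathfrak z\mapsto\mathfrak z+\lambda\tau+\mu$. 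It therefore descends to the compact torus $D\otimes\C/(D+\tau D)$ and is constant by the maximum principle; the ``heat equation'' plays no role.
\end{itemize}
Your argument uses only the Heisenberg part of $N_D(\Z)$ plus the vanishing of negative Fourier--Jacobi indices. The paper's is a one-line consequence of the cone support, which it needs anyway for the later Fourier--Jacobi analysis. Both work.

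Step 3 is the paper's argument, but your last paragraph misplaces the difficulty, and the fix you propose would fail. Take $m\in\GL_2(\Z)$ with $\det m=-1$. The element $(m,1)$ has determinant $+1$ on $V$, since it acts by $m$ on $F_D$ and by $(m^{-1})^T$ on $F_D'$. But it does not preserve $\mathcal H$: for $m=\diag(1,-1)$ it sends $(\tau',\tau,\mathfrak z)$ to $(-\tau',-\tau,\mathfrak z)$. Composing with a reflection of $D$ makes the determinant $-1$ and leaves the spinor norm non-trivial, so the composite lies in neither $\SGamma_n$ nor $\Gamma_n$.

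No such elements are needed. Take two admissible choices $(\rho_D,\rho_D')$ and $(\tilde\rho_D,\tilde\rho_D')$, meaning the complements are properly isometric to $D$ and $\rho_D+\rho_D'$, $\tilde\rho_D+\tilde\rho_D'\in\mathcal C$. Define $\gamma$ by $z\mapsto z$, $z'\mapsto z'$, $\rho_D\mapsto\tilde\rho_D$, $\rho_D'\mapsto\tilde\rho_D'$, together with a proper isometry $D\to\tilde D$. Then:
\begin{itemize}
\item $\gamma$ preserves $L$, because the unimodular hyperbolic planes split off;
\item $\gamma$ has determinant $1$;
\item $\gamma$ fixes $z,z'$, so it acts linearly on the tube domain with $j(\gamma,Z)=1$;
\item $\gamma$ maps $i(\rho_D+\rho_D')\in\mathcal H$ into $\mathcal H$.
\end{itemize}
Hence $\gamma\in\SGamma_n$, and the two limits coincide on the nose, with no $\SL_2(\Z)$ slash or translation involved.

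The only genuine orientation phenomenon is the swap $\rho_D\leftrightarrow\rho_D'$, which reverses the induced orientation on $D$. For chiral $D$ it is therefore not an admissible choice for the same proper class. For achiral $D$ it is realized in $\SGamma_n$ by $\sigma_{\rho_D-\rho_D'}$ composed with an improper automorphism of $D$.
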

\begin{proof}
	The independence of $\mathfrak{z}$ will follow immediately from Proposition \ref{prp:FourierJacobiExpansion}. Let $\widetilde{\rho}_D$ and $\widetilde{\rho}_D'$ be another choice for $\rho_D$ and $\rho_D'$. Then there exists an automorphism $\gamma\in\SGamma_n$ such that $\gamma(\rho_D) = \widetilde{\rho}_D$ and $\gamma(\rho_D') = \widetilde{\rho}_D'$	so that
	\[ \lim\limits_{\tau'\to i\infty}F(\tau'\widetilde{\rho}_D+\tau \widetilde{\rho}_D') = \lim\limits_{\tau'\to i\infty}F|_k[\gamma](\tau'\rho_D+\tau \rho_D') = \lim\limits_{\tau'\to i\infty}F(\tau'\rho_D+\tau \rho_D'). \]
	The fact that $\Phi_{D}(F)$ transforms as a modular form of weight $k$ for $\SL_2(\Z)$ is now immediate from Lemma \ref{lem:ActionOfM}, holomorphicity and holomorphicity at the cusps follows from the holomorphicity of $F$.
\end{proof}
Note that
\[ \Phi(\mathrm{M}_k(\Gamma_n,\chi)) \subset \mathrm{M}_k(\SL_2(\Z))\otimes_\C\C[I\!I_{n-2,0}^{\chi}] \]
and $\C[I\!I_{n-2,0}^{1}]$ is the image of the natural inclusion $\C[I\!I_{n-2,0}]\hookrightarrow\C[\widetilde{I\!I}_{n-2,0}]$ and  $\C[I\!I_{n-2,0}^{\det}]\subset\C[\widetilde{I\!I}_{n-2,0}]$ is spanned by $[D^+]-[D^-]$ for $D\in I\!I_{n-2,0}$ chiral.

Let $\Gamma=\Gamma_{n}$ or $\Gamma = \SGamma_n$. We denote by
\[ \mathrm{S}_k(\Gamma,\chi) \define \mathrm{M}_k(\Gamma,\chi)\cap\ker(\Phi) \]
the space of \emph{cusp forms} and define 
\[ \mathcal{E}_k(\Gamma,\chi) \define\mathrm{M}_k(\Gamma,\chi)\cap\mathrm{S}_k(\Gamma,\chi)^\bot \]
with respect to the Petersson product. Let $\mathcal{E}^1_k(\Gamma,\chi)\subset\mathcal{E}_k(\Gamma,\chi)$ be the preimage of $\mathrm{S}_k(\SL_2(\Z))\otimes_\C\C[\widetilde{I\!I}_{n-2,0}]$ under $\Phi$ and $\mathcal{E}^0_k(\Gamma,\chi)\subset\mathcal{E}_k(\Gamma,\chi)$ be the preimage of $\C E_k\otimes_\C\C[\widetilde{I\!I}_{n-2,0}]$ under $\Phi$. It follows from the uniqueness of the $0$-dimensional cusp that
\[ \mathrm{M}_k(\Gamma,\chi) = \mathcal{E}^0_k(\Gamma,\chi) \oplus \mathcal{E}^1_k(\Gamma,\chi) \oplus \mathrm{S}_k(\Gamma,\chi) \] 
and $\mathcal{E}_k^0(\Gamma_n,\det) = \{0\}$ and $\Phi( \mathcal{E}^0_k(\Gamma_n,1)) \subset \C E_k\otimes\sum_{[D]\in\widetilde{I\!I}_{n-2,0}}[D]$, so that $\dim(\mathcal{E}^0_k(\Gamma_n,1))\leq1$. We will see that the Eisenstein series studied in this article describe the space $\mathcal{E}^1_k(\Gamma,\chi)$.

The weight $n/2-1$ is called \emph{singular weight}. It is the smallest possible weight for which there can exist holomorphic modular forms and any modular form of singular weight lies in $\mathcal{E}_k(\Gamma,\chi)$. This follows from the discussion on Fourier--Jacobi expansions below.

\medskip

We give some examples of orthogonal modular forms. In \cite{Borcherds98}, Borcherds described two singular theta correspondences between symplectic modular forms and orthogonal modular forms. The first is additive and goes back to Oda, Rallis--Schiffmann and Gritsenko (\cite{Oda,RallisSchiffmann,GritsenkoFourierJacobi}). We denote it by $\Psi_L^{\mathrm{add}}$. It is a linear map from modular forms of weight $k$ for the Weil representation $\omega_L$ (see below) to orthogonal modular forms of weight $k + n/2-1$ for the orthogonal group of some even lattice $L$ of signature $(n,2)$. In particular, when $L=I\!I_{n,2}$ is unimodular, then
\[ \Psi_{I\!I_{n,2}}^{\mathrm{add}}:\mathrm{M}_k(\SL_2(\Z))\to\mathrm{M}_{k+n/2-1}(\Gamma_n,1). \]
It maps cusp forms to cusp forms and $\Psi_{I\!I_{n,2}}^{\mathrm{add}}(E_k)\in \mathcal{E}^0_{k+n/2-1}(\Gamma_n,1))$ for the unique Eisenstein series $E_k\in\mathrm{M}_k(\SL_2(\Z))$. Again by considering Fourier--Jacobi expansions, one finds that $\mathcal{E}_{n/2+1}(\Gamma,\chi) = \{0\}$. Therefore, $\mathcal{E}^0_{k}(\Gamma,\chi))$ can be described entirely by $\Psi_{I\!I_{n,2}}^{\mathrm{add}}$, indeed
\[ \mathcal{E}^0_{k}(\Gamma,\chi)) = \begin{cases}
	\C\Psi_{I\!I_{n,2}}^{\mathrm{add}}(E_{k-n/2+1}) & \substack{\text{if }\Gamma = \Gamma_n, \chi = 1, k\geq n/2-1 \\ k\not=n/2+1, k\equiv0\bmod2} \\
	\{0\} & \text{else}.
\end{cases} \]
The space $\mathcal{E}_k^1(\Gamma,\chi)$ can not be reached by the additive theta lift (cf.\ \cite[Theorem 14.3]{Borcherds98}).

The second lift described in \cite{Borcherds98} is multiplicative and we denote it by $\Psi_L^{\mathrm{mult}}$. It maps weakly holomorphic modular forms $f$ of weight $1-n/2$ for $\omega_{L}$ to meromorphic orthogonal modular forms of weight $c_f(0,0)/2$ (\cite[Theorem 13.3]{Borcherds98}). Its description is more complicated. As an example we consider
\[ \Phi_{12} \define \Psi_{I\!I_{26,2}}^{\mathrm{mult}}(1/\Delta)\in\mathrm{M}_{12}(\Gamma_{26},\det) \]
where
\[ \Delta(\tau) = e^{2\pi i\tau}\prod_{n=1}^\infty(1-e^{2\pi in\tau})^{24} = \sum_{n=1}^{\infty}\tau(p)e^{2\pi in\tau}\in\mathrm{S}_{12}(\SL_2(\Z)) \]
denotes the discriminant modular form. It is the unique holomorphic automorphic product of singular weight on a unimodular lattice (see \cite{ScheithauerCompositio}). As we have explained above, $\mathcal{E}_{12}^0(\Gamma_{26},\det) = \{0\} = \mathrm{S}_{12}(\Gamma_{26},\det)$, so that $\Phi_{12}\in\mathcal{E}_{12}^1(\Gamma_{26},\det)$. The genus $I\!I_{24,0}$ consists of the $24$ Niemeier lattices. Let $\Lambda$ be the Leech lattice, which is the unique Niemeier lattices without roots and hence, the unique Niemeier lattices which is chiral. It follows that
\[ \Phi(\Phi_{12}) = \Delta\otimes([\Lambda^+] - [\Lambda^-]) \]
up to normalization. We will show in Theorem \ref{thm:Phi12IsEisensteinSeries} that $\Phi_{12}$ can be described as an Eisenstein series.

\subsection*{Fourier--Jacobi expansions of orthogonal modular forms}

We recall the definition of Jacobi forms of lattice index, which are generalizations of classical Jacobi forms \cite{EichlerZagier} introduced by Gritsenko \cite{GritsenkoFourierJacobi} and of the Weil representation (cf.\ \cite{Weil,Borcherds98,ScheithauerIMRN}).

Let $L$ be a positive-definite even lattice of even rank. Then $\SL_2(\Z)$ acts from the right on the pairs $(\lambda,\mu) \in L \times L$. The corresponding semidirect product $J_L = \SL_2(\Z) \ltimes (L \times L)$ is the Jacobi group of lattice index $L$. Recall that the product of two elements in $J_L$ is given by
\[ \big( M_1,(\lambda_1,\mu_1) \big)
\big( M_2,(\lambda_2,\mu_2) \big) = \big( M_1M_2,(\lambda_1,\mu_1)M_2 + (\lambda_2,\mu_2) \big). \]
We identify $\SL_2(\Z)$ with the subgroup $\SL_2(\Z) \ltimes (0 \times 0)$ and $L\times L$ with $1\ltimes (L\times L)$. Now let $k \in \Z$. We define an action of the Jacobi group $J_L$ on the functions
$\phi: \mathbb{H} \times (L\otimes_\Z\C)\rightarrow\C$ by
\begin{align*}
	\phi|_k[M](\tau,\mathfrak{z})
	&= \phi \bigg(M\tau,\frac{\mathfrak{z}}{c\tau+d}\bigg) (c\tau+d)^{-k} e \bigg( \frac{-c\q(\mathfrak{z})}{c\tau+d} \bigg) \\
	\phi|_k[(\lambda,\mu)](\tau,\mathfrak{z})
	&= \phi\left(\tau,\mathfrak{z} + \lambda\tau+\mu\right) e\left(\tau \q(\lambda) + (\lambda,\mathfrak{z})\right), 
\end{align*}
where $M = \left(\begin{smallmatrix} a & b \\ c & d \end{smallmatrix}\right) \in \SL_2(\Z)$, $\lambda,\mu \in L$ and $e(z) = e^{2\pi i z}$. A {\em Jacobi form} of weight $k$ and index $L$ is a holomorphic function $\phi : \mathbb{H} \times (L\otimes_\Z\C) \to \C$ which is invariant under the action of $J_L$ and possesses a Fourier expansion of the form
\[ \phi(\tau,\mathfrak{z})
= \sumstack{m \in \Z , \lambda \in L' \\ m \geq \q(\lambda)} c_\phi(m,\lambda) e(m \tau + (\lambda,\mathfrak{z}))  \]
where $L'$ denotes the dual lattice of $L$. We denote the space of Jacobi forms of weight $k$ and lattice index $L$ by $J_{k,L}$.

There exists a representation $\omega_L:\SL_2(\Z)\to\GL(\C[L'/L])$ called the \emph{Weil representation} (cf.\ \cite{Weil, Borcherds98,ScheithauerIMRN}) which can be defined by
\begin{align*}
	\omega_L(T)\mathfrak{e}_{\alpha} &= e(-\q(\alpha))\mathfrak{e}_{\alpha} \\
	\omega_L(S)\mathfrak{e}_{\alpha} &= \frac{e(\rk(L)/8)}{\sqrt{|L'/L|}}\sum_{\beta\in L'/L}e((\beta,\alpha))\mathfrak{e}_{\beta}
\end{align*}
for $T = \matr{1}{1}{0}{1}, S = \matr{0}{-1}{1}{0}\in\SL_2(\Z)$ and $\alpha\in L'/L$. The Weil representation is unitary with respect to the scalar product $\langle\cdot,\cdot\rangle$ defined by
\[ \langle\mathfrak{e}_\alpha,\mathfrak{e}_\beta\rangle = \delta_{\alpha\beta}. \]
A \emph{modular form of weight $k$ for the Weil representation} is a holomorphic function $f:\mathbb{H}\to\C[L'/L]$ such that
\[ f(M\tau) = j(M,\tau)^k\omega_L(M)f(\tau) \]
and that possesses a Fourier expansion of the form
\[ f(\tau) = \sum_{\alpha\in L'/L}\sumstack{m\in\Z-\q(\alpha) \\ m\geq0}c_f(m,\alpha)e(m\tau)\mathfrak{e}_{\alpha}. \]
We denote the space of modular forms of weight $k$ for the Weil representation by $\mathrm{M}_k(\omega_L)$ and remark that $\mathrm{M}_0(\omega_L) =  \C[L'/L]^{\SL_2(\Z)}$ consists of the invariants of $\omega_L$.

A Jacobi form $\phi \in J_{k,L}$ has a unique theta expansion
\[  \phi(\tau,\mathfrak{z}) = \sum_{\alpha \in L'/L} f_{\alpha}(\tau)\vartheta_{\alpha}(\tau,\mathfrak{z}) ,  \]
where 
\[  \vartheta_{\alpha}(\tau,\mathfrak{z}) = \sum_{\lambda \in \alpha + L} e( \tau (\lambda,\lambda)/2 + (\lambda,\mathfrak{z}) )  \]
is the Jacobi theta function of the coset $\alpha + L$ and $f(\tau) = \sum_{\alpha \in L'/L} f_{\alpha}(\tau)\mathfrak{e}_\alpha$ is a modular form of weight $k-\rk(L)$ for the Weil representation $\omega_{L}$ with Fourier coefficients
\[ c_f(m,\lambda+L) = c_\phi(m+\q(\lambda),\lambda), \]
which are independent of the choice of coset representative by the periodicity of $\phi$.
We obtain a map
\[  J_{k,L} \to\mathrm{M}_{k-\rk(L)/2}(\omega_L),  \]
which is actually an isomorphism. This implies that $J_{k,L}$ is trivial for $k<n/2$. The weight $k=n/2$ is called {\em singular} weight.

Let $F\in\mathrm{M}_k(\SGamma_n,1)$. Then $F$ has a \emph{Fourier expansion} of the form
\[ F(Z) = \sum_{\lambda\in I\!I_{n-1,1} \cap \mathcal{C}}c_F(\lambda)e(-(\lambda,Z)) \]
following from the invariance under elements of the form $E^z_{\lambda}$ for $\lambda\in I\!I_{n-1,1}$. Recall that for $Z\in\mathcal{H}$ we may write $Z = \tau'\rho_D+\tau\rho_D'+\mathfrak{z}$ with $\tau,\tau'\in\mathbb{H}$ and $\mathfrak{z}\in D\otimes_\Z\C$. Then
\begin{align*}
	F(Z) &= \sumstack{a,b\in\Z_{\geq0} \\ \lambda\in D \\ \q(\lambda)\leq ab}c_F(a\rho_D+b\rho_D'+\lambda)e(a\tau)e(-(\lambda,\mathfrak{z}))e(b\tau') \\
	&= f_0(\tau) + \sum_{b=1}^\infty\phi_b(\tau,\mathfrak{z})e(b\tau')
\end{align*}
where
\[ f_0(\tau) = \sum_{a=0}^\infty c_F(a\rho_D)e(a\tau),\quad \phi_b(\tau,\mathfrak{z}) = \sumstack{a\in\Z_{\geq0} \\ \lambda\in D \\ \q(\lambda)/b\leq a}c_F(a\rho_D+b\rho_D'+\lambda)e(a\tau-(\lambda,\mathfrak{z})). \]
Denote by $D(b)\subset D\otimes_\Z\R$ the lattice $\sqrt{b}\cdot D$.
\begin{prp}\label{prp:FourierJacobiExpansion}
	Let $F\in\mathrm{M}_k(\SGamma_n,1)$ with notation as above then $f_0 = \Phi_D(F)\in\mathrm{M}_k(\SL_2(\Z))$ and $\phi_b(\tau,\frac{1}{\sqrt{b}}\mathfrak{z})\in J_{k,D(b)}$.
\end{prp}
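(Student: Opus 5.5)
The plan is to extract the modularity of $f_0$ and of the Fourier--Jacobi coefficients $\phi_b$ from the invariance of $F$ under suitable elements of the parabolic subgroup $\SGamma_n\cap P_D$ stabilizing the isotropic plane $F_D=\Q z\oplus\Q\rho_D$. Write $Z=\tau'\rho_D+\tau\rho_D'+\mathfrak z$ and use $L=I\!I_{1,1}\oplus I\!I_{1,1}\oplus D$ with the basis $z,z',\rho_D,\rho_D'$. Three families of elements of $\SGamma_n$ are needed.
\begin{enumerate}[(a)]
\item The Levi elements $(m,1)$ with $m\in\SL_2(\Z)$ acting on $\langle z,\rho_D\rangle$ and contragrediently on $\langle z',\rho_D'\rangle$. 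These lie in $\SGamma_n$, and the spinor-norm condition holds because $\SL_2(\Z)$ is generated by unipotents.
\item The Eichler transformations $E^{z}_{\lambda}$ for $\lambda\in D$ and $E^{z}_{a\rho_D}$, which give the translations that produce the Fourier expansion.
\item The Heisenberg-type transformations $E^{\rho_D}_{\mu}$ for $\mu\in D$ (or equivalently $E^{z}_{\lambda}$ composed with the Weyl element exchanging $z$ and $\rho_D$), which realize the $L\times L$ part of the Jacobi group.
\end{enumerate}

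\emph{Step 1.} For $g=(m,1)$, Lemma \ref{lem:ActionOfM} with $\det m=1$ gives
\[ gZ=\bigl(\tau'-c\q(\mathfrak z)/(c\tau+d),\,m\tau,\,\mathfrak z/(c\tau+d)\bigr),\qquad j(g,Z)=c\tau+d. \]
Invariance $F|_k[g]=F$ then reads
\[ F\bigl(\tau'-\tfrac{c\q(\mathfrak z)}{c\tau+d},m\tau,\tfrac{\mathfrak z}{c\tau+d}\bigr)=(c\tau+d)^kF(\tau',\tau,\mathfrak z). \]
Insert the expansion $F=f_0(\tau)+\sum_b\phi_b(\tau,\mathfrak z)e(b\tau')$. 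Since both sides are Fourier series in $\tau'$ with period $1$, and the expansion converges absolutely and uniformly on compacta of $\mathcal H$, we may compare coefficients of $e(b\tau')$. The $b=0$ coefficient gives $f_0|_km=f_0$. Holomorphy at $i\infty$ is clear, since $a\ge 0$ in the expansion. Thus $f_0\in\mathrm M_k(\SL_2(\Z))$; it is independent of $\mathfrak z$ and equals $\lim_{\tau'\to i\infty}F$, which is $\Phi_D(F)$. For $b\ge1$ we obtain
\[ \phi_b\bigl(m\tau,\tfrac{\mathfrak z}{c\tau+d}\bigr)\,e\bigl(\tfrac{-bc\q(\mathfrak z)}{c\tau+d}\bigr)=(c\tau+d)^k\phi_b(\tau,\mathfrak z). \]
This is exactly the $\SL_2(\Z)$-transformation of index $b\q$ on $D$, that is, of index $D(b)$ after rescaling $\mathfrak z\mapsto\mathfrak z/\sqrt b$.

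\emph{Step 2.} Compute the action of the transformation $E^{\rho_D}_{\mu}$ on $Z$, using the isotropic-vector normalization $Z+z'+\q(Z)z$ as in the proof of Lemma \ref{lem:characterIsImaginaryPart}. The expected shape is
\[ \tau\mapsto\tau,\qquad \mathfrak z\mapsto\mathfrak z+\mu\tau,\qquad \tau'\mapsto\tau'+(\mu,\mathfrak z)+\q(\mu)\tau \]
(up to signs fixed by conventions), with $j=1$. Comparing coefficients of $e(b\tau')$ then yields
\[ \phi_b(\tau,\mathfrak z+\mu\tau)\,e\bigl(b\q(\mu)\tau+b(\mu,\mathfrak z)\bigr)=\phi_b(\tau,\mathfrak z). \]
This is the $\lambda$-action for index $b\q$. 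Combined with the periodicity $\mathfrak z\mapsto\mathfrak z+\mu$, which comes from the Fourier expansion via $E^{z}_{\mu}$, it gives full $J$-invariance. After the substitution $\mathfrak z\mapsto\mathfrak z/\sqrt b$, the lattice $D$ with form $b\q$ becomes $D(b)$.

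\emph{Step 3.} Check the Fourier support. In the rescaled variables, the coefficients of $\phi_b$ are indexed by $(a,\lambda)$ with $b a\ge \q(\lambda)$, where $\lambda$ lies in the dual of $D(b)$. This is exactly the Jacobi-form condition $m\ge\q(\lambda)$ for index $D(b)$. The condition follows from the support of $c_F$ in the closed cone $\overline{\mathcal C}$, which holds by the Koecher principle as stated. Holomorphy of $\phi_b$ follows from that of $F$.

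The main obstacle is Step 2: one must get the exact formula for the Eichler action on the tube domain (signs, and which of $\tau$ or $\tau'$ picks up the quadratic correction) and check that the resulting elements lie in $\SGamma_n$, i.e.\ have spinor norm $1$ and determinant $1$. Eichler transformations are unipotent, so this membership holds automatically. The remaining points, namely interchanging the sum with the transformation and the normalization of the index, are routine bookkeeping.
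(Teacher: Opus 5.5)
Your proposal is correct and follows the same route as the paper. The paper gets the $\SL_2(\Z)$-part of the Jacobi invariance from the Levi subgroup $M_D(\Z)$ via Lemma \ref{lem:ActionOfM}, and the $D(b)\times D(b)$-part from the unipotent radical $N_D$, exactly as in your Steps 1 and 2. Your "expected shape" in Step 2 is in fact exact: expanding $E^{\rho_D}_{\mu}(Z+z'+\q(Z)z)$ shows that $(\tau',\tau,\mathfrak z)$ goes to $(\tau'+(\mu,\mathfrak z)+\q(\mu)\tau,\tau,\mathfrak z+\mu\tau)$ with $j=1$, so no sign corrections are needed.
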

\begin{proof}
	The fact that $f_0 = \Phi_D(F)$ follows immediately from the Fourier--Jacobi expansion. By Lemma \ref{lem:ActionOfM}, the action of $\SL_2(\Z)\ltimes (0 \times 0)\subset J_{D(b)}$ on $\phi_b$ is exactly given by the action of the Levi subgroup $M_D(\Z)$ on $F$ and the action of $1\ltimes (D(b) \times D(b))\subset J_{D(b)}$ by that of $N_D$.
\end{proof}

Since $D$ is unimodular, we have $D(b)' = D'(1/b) = D(1/b)$ and $D(b)'/D(b)\cong D/bD$. Let
\[ f_b(\tau) = \sum_{\lambda+bD\in D/bD}\sumstack{m\in\Z-\q(\lambda)/b \\ m\geq0}c_F((m+\q(\lambda)/b)\rho_D + b\rho_D'+\lambda)e(m\tau)\mathfrak{e}_{\frac{1}{\sqrt{b}}\lambda + D(b)} \]
be the theta decomposition of $\phi_b$. The Fourier coefficients for $m=0$ can be determined from $\Phi(F)$. Indeed, in this case $\q(\lambda)/b\rho_D + b\rho_D'+\lambda$ is isotropic, hence, there exists $[\widetilde{D}]\in \widetilde{I\!I}_{n-2,0}$, $a\in\Z_{\geq0}$ and $\gamma\in\Sort(I\!I_{n-1,1})$ such that $\gamma(a\rho_{\widetilde{D}}) = \q(\lambda)/b\rho_D + b\rho_D'+\lambda$. The modularity of $F$ under $\gamma$ implies that
\begin{equation}\label{eq:FourierCoefficientsSingular}
	c_F(\q(\lambda)/b\rho_D + b\rho_D'+\lambda) = c_F(a\rho_{\widetilde{D}}) = c_{\Phi_{\widetilde{D}}(F)}(a).
\end{equation}
When $F$ is of singular weight, then $\phi_b$ is of singular weight, so that $f_b\in\C[D(b)'/D(b)]^{\SL_2(\Z)}$ and only terms with $m=0$ occur.

\subsection*{Neighbour numbers}

In \cite{ChenevierLannes} the spaces $\C[I\!I_{n-2,0}^{1}]$ and $\C[I\!I_{n-2,0}^{\det}]$ are determined for $n-2 = 8,16,24$ in terms of the Arthur parameters of the corresponding automorphic representations using the multiplicity formula. This yields a new proof of the classification of the positive-definite even unimodular lattices in ranks $8,16$ and $24$ together with formulas for the Kneser-neighbour numbers. In particular, they show
\begin{prp}[{\cite[Proposition 7.5.1 and Corollary 5.3.2]{ChenevierLannes}}]
	The space $\C[I\!I_{24,0}^{\det}]$ is spanned by $[\Lambda^+] - [\Lambda^-]$ and the Arthur parameter of $\pi_{[\Lambda^+] - [\Lambda^-]}$ is $\pi_\Delta[12]$. Consequently,
	\begin{equation}\label{eq:NeighbourFormula}
		\#\mathcal{N}_{\Z/p\Z}^+([\Lambda],[\Lambda]) - \#\mathcal{N}_{\Z/p\Z}^-([\Lambda],[\Lambda]) = \tau(p)\frac{p^{12}-1}{p-1}.
	\end{equation}
\end{prp}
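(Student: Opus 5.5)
The proof has three steps: show that the determinant-eigenspace is one-dimensional, identify the Arthur parameter of its generator, and then read off the Hecke eigenvalue of $T_{\Z/p\Z}$ from the Satake isomorphism.

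\emph{The eigenspace.} By the description at the end of the previous subsection, $\C[I\!I_{24,0}^{\det}]$ is spanned by the vectors $[D^+]-[D^-]$ with $D$ chiral. Among the $24$ Niemeier lattices, every lattice with roots admits an improper automorphism, namely a reflection in a root. The Leech lattice has no roots, and its automorphism group $\mathrm{Co}_0$ is contained in $\Sort$, so $\Lambda$ is the unique chiral class. Hence
\[ \C[I\!I_{24,0}^{\det}]=\C v,\qquad v\define[\Lambda^+]-[\Lambda^-]. \]
Because $\mathcal{H}(\Ort_W)$ preserves this line, $v$ is automatically a Hecke eigenvector and defines $\pi_v$, which is unramified everywhere and trivial at infinity.

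\emph{The Arthur parameter.} This is the main obstacle, and it is where the multiplicity formula is needed. By Lemma \ref{lem:ArthurParameterUnimodularProperties}, with $n-2=24$, the infinitesimal character of $\psi$ has eigenvalues $\pm11,\dots,\pm1$ and $0$ twice. Following Chenevier--Lannes, one lists all formal sums $\bigoplus\pi_i[d_i]$ compatible with these weights. The algebraic cuspidal $\pi_i$ with small motivic weight are known: $1$, $\pi_\Delta$ and the other level-one eigenforms, $\Sym^2\pi_\Delta$, and a few $\Sp$-type forms. One then evaluates Arthur's multiplicity formula for $\Sort_{24}$ in each case, which yields exactly $24+1$ parameters. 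There are $24+1$ classes in $\widetilde{I\!I}_{24,0}$, so this count must match the dimension of $\C[\widetilde{I\!I}_{24,0}]$.

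To single out the one lying in the $\det$-eigenspace, I would use the sign of the outer involution. It acts through the character $\det$ exactly on those parameters whose summand containing the weight $0$ forces the determinant-twisted form. Equivalently, $\psi_\infty$ must avoid the $\Ort_{24}$-invariant line, and $\pi_\Delta[12]$ is the only candidate with this property, since $\{\pm\tfrac{11}{2}+j-\tfrac{11}{2}\}_{j=0}^{11}$ fills all of $\pm11,\dots,\pm1,0,0$. As an independent check that avoids (H1), I would show that the degree-$12$ Siegel theta series of $v$ is a nonzero cusp form. By Böcherer/Rallis this identifies the standard parameter as that of the Ikeda lift of $\Delta$, namely $\pi_\Delta[12]$.

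\emph{The neighbour formula.} Using the Satake isomorphism (and the recipe of \cite[Section 6.2]{ChenevierLannes}), the operator $T_{\Z/p\Z}$ on even unimodular lattices of rank $24$ acts on spherical vectors by
\[ p^{11}\,\tr\Std(s_p)+(\text{correction from lower Satake terms}). \]
For rank $24$ the correction vanishes in Chenevier--Lannes' normalisation, so the eigenvalue is $p^{11}\tr(\psi_p)$. Now substitute $\psi_p=s_p(\pi_\Delta)\otimes[12]_p$, where $\tr s_p(\pi_\Delta)=\tau(p)p^{-11/2}$ and
\[ \tr[12]_p=p^{-11/2}\frac{p^{12}-1}{p-1}. \]
This gives the eigenvalue
\[ p^{11}\,\tau(p)\,p^{-11}\,\frac{p^{12}-1}{p-1}=\tau(p)\frac{p^{12}-1}{p-1}. \]
Finally, evaluate the identity $T_{\Z/p\Z}v=\lambda v$ at the coordinate $[\Lambda^+]$. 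Since $v_{[\Lambda^\pm]}=\pm1$ and all other coordinates of $v$ vanish, the definition of $T_A$ gives
\[ (T_{\Z/p\Z}v)_{\Lambda^+}=\#\mathcal{N}^+_{\Z/p\Z}([\Lambda],[\Lambda])-\#\mathcal{N}^-_{\Z/p\Z}([\Lambda],[\Lambda]). \]
Comparing with $\lambda v_{\Lambda^+}=\lambda$ yields \eqref{eq:NeighbourFormula}.
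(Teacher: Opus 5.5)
\paragraph{Comparison with the paper's route.}
Steps 1 and 3 are correct, and together with the parameter they reproduce the Chenevier--Lannes deduction that the statement quotes. $\Lambda$ is the only chiral Niemeier lattice. The identity $\Sat(T_{\Z/p\Z})=p^{11}[\Std_{\Sort_{24}}]$ holds exactly; it is the normalization used in the proof of Proposition \ref{prp:T_1(p^2)}, so there is no correction term. Then $p^{11}\tr(s_p(\pi_\Delta)\otimes[12]_p)=\tau(p)\frac{p^{12}-1}{p-1}$, read off at the $[\Lambda^+]$-coordinate.

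The paper, however, proves \eqref{eq:NeighbourFormula} differently: it applies Theorem \ref{thm:NeighbourFormulaUsingOrthogonalModularForms} to $F=\Phi_{12}$.
\begin{itemize}
\item The $p$-th Fourier--Jacobi coefficient of a singular-weight form has singular weight, so its theta vector $f_p$ is an $\SL_2(\Z)$-invariant of $\omega_{D(p)}$.
\item Its entries are $\tau(p)v_{[D]}$ on $pD$, and the boundary values $v_{[N_{\lambda/p}]}$ at the $p$-neighbours for isotropic $\lambda\notin pD$.
\item $S$-invariance at the zero coset gives $p^{12}\tau(p)v_{[D]}=\tau(p)v_{[D]}+(p-1)(T_{\Z/p\Z}v)_{[D]}$.
\end{itemize}
That argument uses neither the Arthur parameter nor the Satake normalization of $T_{\Z/p\Z}$. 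It needs only the existence of $\Phi_{12}$ with $\Phi(\Phi_{12})=\Delta\otimes([\Lambda^+]-[\Lambda^-])$. Your argument works for any eigenvector once its parameter is known, but it is only as good as your step 2.

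\paragraph{The problem with step 2.}
Step 2 does not stand on its own, so the parameter $\pi_\Delta[12]$ must be imported from \cite{ChenevierLannes}, as the paper does.

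First, your criterion for singling out the $\det$-part is vacuous. By Lemma \ref{lem:ArthurParameterUnimodularProperties}, part 4), every parameter occurring in $\C[\widetilde{I\!I}_{24,0}]$ has infinitesimal character $\pm11,\dots,\pm1,0,0$; for instance $[23]\oplus[1]$, the parameter of the constant vector, fills it just as well. Moreover the characters $1$ and $\det$ of $\Ort_{24}(\R)$ have the same infinitesimal character. So nothing read off from $\psi_\infty$ can separate $\C[I\!I_{24,0}^{1}]$ from $\C[I\!I_{24,0}^{\det}]$.

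Second, your ``independent check'' fails outright. The Siegel theta series depends only on isometry classes, and $\Lambda^+\cong\Lambda^-$ by an improper isometry. Hence the theta series of $[\Lambda^+]-[\Lambda^-]$ vanishes in every degree: theta series without an orientation-odd harmonic coefficient cannot see the $\det$-part at all. The degree-$12$ Ikeda lift of $\Delta$ is the theta series of the companion vector $v_\Delta^1\in\C[I\!I_{24,0}]$. In Proposition \ref{prp:Psi12} that vector is reached from $[\Lambda^+]-[\Lambda^-]$ only through the odd operator $T_{W,-}(p^2)$. Its non-vanishing there is proved \emph{using} the parameter $\pi_\Delta[12]$, so this route cannot replace the citation.
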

We give an entirely new proof of formula \eqref{eq:NeighbourFormula} using orthogonal modular forms of singular weight.

\begin{thm}\label{thm:NeighbourFormulaUsingOrthogonalModularForms}
	Let $F\in\mathrm{M}_{n/2-1}(\Gamma,\chi)$ and suppose that
	\[ \Phi(F) = f\otimes v \]
	for some $f\in\mathrm{M}_{n/2-1}(\SL_2(\Z))$ with $c_f(1) = 1$ and $v\in\C[I\!I_{n-2,0}^\chi]$. Then
	\[ T_{\Z/p\Z}v = c_f(p)\frac{p^{n/2-1}-1}{p-1}\cdot v. \]
\end{thm}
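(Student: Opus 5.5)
We compare Fourier coefficients of $F$ in two ways, using the singular-weight structure of the Fourier--Jacobi expansion. Fix $[D]\in\widetilde{I\!I}_{n-2,0}$ and consider the first Fourier--Jacobi coefficient $\phi_p$ with index $D(p)$. Since $F$ has singular weight, the vector-valued form $f_p$ attached to $\phi_p$ lies in $\C[D(p)'/D(p)]^{\SL_2(\Z)}$, and only the terms with $m=0$ occur. By \eqref{eq:FourierCoefficientsSingular}, the coefficient of $\mathfrak e_{\lambda/\sqrt p}$, for $\lambda\in D/pD$ with $\q(\lambda)\equiv0\bmod p$, equals $c_f(a)\,v_{[\widetilde D]}$. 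Here $[\widetilde D]$ and $a$ are determined by the primitive isotropic vector $\q(\lambda)/p\,\rho_D+p\rho_D'+\lambda$. This vector is primitive with $a=1$ exactly when $\lambda\not\equiv0\bmod p$. In that case $\lambda$ spans an isotropic line in $D/pD$, and the lattice $\widetilde D$ is the Kneser $p$-neighbour of $D$ attached to that line: $\widetilde D\cong\langle z,\rho_D\text{-}\text{vector}\rangle^\bot/\cdots$ is computed directly as $D\cap\lambda^\bot+\Z\lambda/p$. For $\lambda\equiv0$ the vector is $p\rho_D'$ (plus multiples), so $a=p$ and $\widetilde D=D$. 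Orientation must be tracked via $\Sort$ so that $[\widetilde D]$ lies in the proper class. This is where the character $\chi$ and the sign $\pm$ in $\mathcal N^\pm$ enter; I would verify this with the explicit element $\gamma\in\Sort(I\!I_{n-1,1})$.

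Next, use $\SL_2(\Z)$-invariance, specifically $\omega_{D(p)}(S)$-invariance of $f_p$. Writing $f_p=\sum_\alpha c_\alpha\mathfrak e_\alpha$ with $c_\alpha=0$ unless $\q(\alpha)\in\Z$, invariance under $S$ (the form $\omega(S)$ of the paper, with $e(\rk/8)=1$ since $8\mid n-2$) gives
\[
c_0=p^{-(n-2)/2}\sum_{\beta}c_\beta .
\]
The $\beta$ with $\q(\beta)\in\Z$ are $\beta=0$ together with the $p$-isotropic nonzero $\lambda\in D/pD$. Each isotropic line has $p-1$ nonzero vectors, all giving the same neighbour.

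Substituting the coefficient formula yields
\[
c_f(p)\,v_D\,(p^{(n-2)/2}-1)=(p-1)\,(T_{\Z/p\Z}v)_D .
\]
Here I use that the number of $p$-neighbours equals the number of isotropic lines in $D/pD$, and that the sum over neighbours counted with their proper classes is exactly $(T_{\Z/p\Z}v)_D$. This proves the claim.

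I expect the main obstacle to be the careful identification of $[\widetilde D]$, including orientation, from the isotropic vector, together with correct normalization of the constant in the $S$-transformation. One may first test the identity on $\Phi_{12}$ against \eqref{eq:NeighbourFormula}, or on the trivial cases. As an alternative check of the normalization, one can use $T$-invariance and orthogonality instead of $S$ directly.
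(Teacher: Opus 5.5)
Your proposal is correct and follows essentially the same route as the paper: read off the $m=0$ coefficients of the singular-weight theta decomposition $f_p\in\C[D(p)'/D(p)]^{\SL_2(\Z)}$ of the Fourier--Jacobi coefficient of index $D(p)$ via \eqref{eq:FourierCoefficientsSingular}, identify the classes attached to $\lambda\notin pD$ with Kneser $p$-neighbours, and compare the $\mathfrak{e}_0$-coefficient in $\omega_{D(p)}(S)f_p=f_p$, which yields $c_f(p)v_{[D]}(p^{n/2-1}-1)=(p-1)(T_{\Z/p\Z}v)_{[D]}$. The one detail to tidy up is the neighbour formula. After replacing $\lambda$ by a representative of $\lambda+pD$ with $\q(\lambda)\in p^2\Z$, as the paper does, it should read $\frac{1}{p}\Z\lambda+\{\mu\in D\mid(\mu,\lambda)\in p\Z\}$. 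Taken literally, $D\cap\lambda^\bot+\Z\lambda/p$ is not the neighbour.
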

\begin{proof}
	Let $[D]\in \widetilde{I\!I}_{n-2,0}$. For $a,b\in\Z$, $\lambda\in D$ we define 
	\[ \gcd(a,b,\lambda) \define \max\{d\in\Z_{\geq0}\mid d\mu = a\rho_D+b\rho_D'+\lambda\text{ for some }\mu\in I\!I_{n-1,1} \} \]
	and $\gcd(a,b,\lambda) = -1$ if $a\in\Q\setminus\Z$. Furthermore, we set $u_\lambda\define\q(\lambda)/b\rho_D + b\rho_D'+\lambda$ and
	\[ D_\lambda \define u^\bot/u\cap I\!I_{n-1,0}. \]
	If $b|\q(\lambda)$, there exists a $\gamma\in\Sort(I\!I_{n-1,1})$ such that $\gcd(\q(\lambda)/b,b,\lambda)\gamma(\rho_{D_\lambda}) = u_\lambda$, which determines a unique isometry class $[D_\lambda]\in\widetilde{I\!I}_{n-2,0}$. Hence, by \eqref{eq:FourierCoefficientsSingular} the theta decomposition of the $b$-th Fourier--Jacobi coefficient of $F$ is given by
	\[ f_b = \sum_{\lambda+bD\in D/bD}v_{[D_\lambda]}c_f(\gcd(\q(\lambda)/b,b,\lambda))\mathfrak{e}_{\frac{1}{\sqrt{b}}\lambda + D(b)}\in\C[D(b)'/D(b)]^{\SL_2(\Z)} \]
	where $v = \sum_{[D]\in\widetilde{I\!I}_{n-2,0}}v_{[D]}[D]$. We write $f_{b,\lambda+bD}$ for the coefficient before $\mathfrak{e}_{\frac{1}{\sqrt{b}}\lambda + D(b)}$. Let us now consider $b=p$ prime. Then $\gcd(\q(\lambda)/p,p,\lambda)\in\{1,p\}$. If it is equal to $p$, then $\lambda\in pD$. If on the other hand $\lambda\in pD$, then $\q(\lambda)/p\in p\Z$. It follows that
	\[ f_{b,\lambda+bD} = \begin{cases}
		v_{[D]}c_f(p) & \text{if } \lambda\in pD \\
		v_{[D_\lambda]} & \text{if } \lambda\not\in pD\text{ and }\q(\lambda)\in p\Z \\
		0 & \text{else}.
	\end{cases} \]
	We need to determine $D_\lambda$. Let $v = x\rho_D+y\rho_D'+\mu\in I\!I_{n-1,1}$ with $(v,u_\lambda) = 0$. Then
	\[ -px - \q(\lambda)y/p + (\lambda,\mu) = 0 \]
	and hence
	\[ x = -\q(\lambda)y/p^2 + (\lambda,\mu)/p. \]
	One verifies that
	\begin{align*}
		\nu:u_\lambda^\bot&\to D\otimes_\Z\Q \\
		x\rho_D+y\rho_D'+\mu&\mapsto y/p\lambda-\mu
	\end{align*}
	defines a linear transformation with kernel $\langle u_\lambda\rangle$ that satisfies $\q(\nu(v)) = \q(v)$. Therefore, $N_{\lambda/p} \define \nu(u_\lambda^\bot\cap I\!I_{n-1,1})\cong D_\lambda$. By choosing a different representative in $\lambda+pD$ we may assume that $\q(\lambda)\in p^2\Z$. Since $x$ is integral,
	\[ N_{\lambda/p} = \frac{1}{p}\Z\lambda + \{\mu\in D\mid (\mu,\lambda)\in p\Z\}. \]
	Hence, $N_{\lambda/p}$ is a $p$-neighbour of $D$. Furthermore, any $p$-neighbour of $D$ can be obtained in this way and $N_{\lambda/p} = N_{\lambda'/p}$ if and only if $\lambda' = a\gamma+p\mu$ for some  $a\in(\Z/p\Z)^\times$ and $\mu\in D$ (see \cite[Section 1.2]{ChenevierLannes}).	Therefore, using the invariance of $f_p$ under $S = \matr{0}{-1}{1}{0}$ we obtain
	\begin{align*}
		v_{[D]}c_f(p) &= \langle\omega_{D(p)}(S)f_p,\mathfrak{e}_{D(b)}\rangle \\
		&= \frac{1}{p^{n/2-1}}\sum_{\alpha\in D/pD}f_{p,\alpha} \\
		&= \frac{1}{p^{n/2-1}}\left(v_{[D]}c_f(p) + (p-1)\sum_{[\widetilde{D}]\in\widetilde{I\!I}_{n-2,0}}v_{[\widetilde{D}]}\#\mathcal{N}_{\Z/p\Z}^+([D],[\widetilde{D}])\right)
	\end{align*}
	and hence
	\[ \sum_{[\widetilde{D}]\in\widetilde{I\!I}_{n-2,0}}v_{[\widetilde{D}]}\#\mathcal{N}_{\Z/p\Z}^+([D],[\widetilde{D}])) = v_{[D]}c_f(p)\frac{p^{n/2-1}-1}{p-1}. \]
	The left-hand-side is precisely $(T_{\Z/p\Z}v)_{[D]}$ and the right-hand-side is $c_f(p)\frac{p^{n/2-1}-1}{p-1}\cdot v_{[D]}$.
\end{proof}

Now \eqref{eq:NeighbourFormula} immediately follows from Theorem \ref{thm:NeighbourFormulaUsingOrthogonalModularForms} by considering $F = \Phi_{12}$ and the fact that $\Phi(\Phi_{12}) = \Delta\otimes([\Lambda^+] - [\Lambda^-])$.

Theorem \ref{thm:NeighbourFormulaUsingOrthogonalModularForms} has an important consequence: When $n=26$, there exists (up to normalization) exactly one vector $v\in\C[I\!I_{24,0}^1]$ and $[\Lambda^+] - [\Lambda^-]\in\C[I\!I_{24,0}^{\det}]$ with eigenvalues as in Theorem \ref{thm:NeighbourFormulaUsingOrthogonalModularForms} for $f=\Delta$ \cite[Theorem 3.1]{NebeVenkov}. Consequently, the space $\mathcal{E}_{12}^1(\SGamma_{26},1)$ has dimension at most $2$. We will show in Corollary \ref{cor:SingularWeightFormsn=26} that the dimension is exactly $2$. Furthermore, for any $n$ the eigenvalue of $v$ as in Theorem \ref{thm:NeighbourFormulaUsingOrthogonalModularForms} suggests that it should have Arthur parameter $\pi_f[n/2-1]$. Assuming (H1), again there exist (up to normalization) exactly one vector $v_f^1\in\C[I\!I_{n-2,0}^1]$ and one vector $v_f^{\det}\in\C[I\!I_{n-2,0}^{\det}]$ with this Arthur parameter. Any holomorphic modular form of singular weight must take a linear combination of these values at the cusps. In Theorem \ref{thm:SingularWeightEisensteinSeries} we make this precise and describe the space of singular weight modular forms completely.

\subsection*{Translation from automorphic forms to classical modular forms}

As is well-known, holomorphic modular forms can be viewed as a special case of automorphic forms. For the convenience of the reader we make explicit the translation from the adelic viewpoint to the classical one.

\begin{lem}\label{lem:StrongapproximationG}
	There is a decomposition
	\begin{align}
		G(\A) &= G(\Q)\Sort^+(V(\R))K_\fin\label{eq:Decomposition1}
	\end{align}
\end{lem}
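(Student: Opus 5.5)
The plan is to derive the decomposition $G(\A)=G(\Q)\Sort^+(V(\R))K_\fin$ from strong approximation for the simply connected cover $\Spin(V)$, together with a class number computation for the spinor norm. Both $\Spin(V)$ and $G$ are split at $p$ since $L$ is unimodular, and $\Spin(V)(\R)$ is non-compact because $n>2$. Strong approximation with respect to the infinite place then gives
\[ \Spin(V)(\A_\fin)=\Spin(V)(\Q)\,\widetilde K_\fin \]
for any compact open $\widetilde K_\fin$. We take $\widetilde K_\fin$ to be the preimage of $K_\fin$.

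First we fix $g=(g_\infty,g_\fin)\in G(\A)$ and reduce to the image of the spin covering. The spinor norm gives an exact sequence
\[ \Spin(V)(\Q_p)\to\Sort(V)(\Q_p)\xrightarrow{\spin}\Q_p^\times/(\Q_p^\times)^2, \]
and similarly over $\Q$ and $\R$. Since $L\otimes\Z_p$ is unimodular and contains hyperbolic planes, $\spin(K_p)=\Z_p^\times(\Q_p^\times)^2$. This holds because reflections in vectors of norm $\q(v)=u$, for any unit $u$, lie in $\Ort(L\otimes\Z_p)$, and products of two such reflections lie in $K_p$. Hence the spinor norm of $g_\fin$ defines a class in
\[ \A_\fin^\times\Big/\Bigl(\prod_p\Z_p^\times\Bigr)(\A_\fin^\times)^2 . \]

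Next we kill this class by a rational element. Because $\A_\fin^\times=\Q_{>0}^\times\prod_p\Z_p^\times$, the class is represented by a positive rational $q$. The main obstacle is to produce $\gamma\in G(\Q)$ with $\spin(\gamma)=q$ whose real component is also in $\ker(\spin)$; otherwise we would leave $\Sort^+(V(\R))$. To do this I would take $\gamma=\sigma_a\sigma_b$, the product of two reflections in rational vectors $a,b$ with $\q(a)\q(b)\in q(\Q^\times)^2$ and with $\q(a),\q(b)$ both positive. This is possible because $V$ is indefinite of dimension $\geq 5$ and so represents every nonzero rational number, both positive and negative. The real spinor norm of $\gamma$ is then the sign of $\q(a)\q(b)$, which is positive. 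After replacing $g$ by $\gamma^{-1}g$, we may therefore assume $\spin(g_\fin)$ is trivial in the quotient. Multiplying $g_\fin$ by a suitable element of $K_\fin$, of which only finitely many local components are nontrivial, we may then assume $g_\fin$ has trivial spinor norm at each $p$. So $g_\fin$ lies in the image of $\Spin(V)(\A_\fin)$.

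Finally we apply strong approximation: $g_\fin=\delta k$ with $\delta\in G(\Q)$ the image of a rational spin element and $k\in K_\fin$. Then
\[ g=\delta\,(\delta^{-1}g_\infty,\,k). \]
Here $\delta$ has trivial spinor norm over $\Q$, so $\delta^{-1}\in\Sort^+(V(\R))$. It remains to check that $g_\infty$ itself can be taken in $\Sort^+(V(\R))$. For this we use that $\Sort(V(\R))$ has two components. Before all of the above, we multiply $g$ by an element of $G(\Q)$ of real spinor norm $-1$, for example a product of reflections in rational vectors of norms of opposite sign, say $1$ and $-1$. Its finite spinor norm is $-1$, a unit, and this is absorbed into $K_\fin$ in the same manner. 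Bookkeeping this sign consistently with the previous step is the delicate point, and I would order the steps so that the real sign is fixed first.
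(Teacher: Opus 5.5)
Your proposal is correct and follows essentially the same route as the paper. In both, a rational element moves the finite spinor norms into unit classes while controlling the real spinor norm, the unit classes are absorbed into $K_\fin$, and one then lifts to $\Spin(V)(\A_\fin)$ and applies strong approximation.

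The differences are cosmetic. The paper uses the torus element $t(a^{-1},1)$ with $a=\pm\prod_p p^{\nu_p}$, and fixes the real component in the same step by choosing the sign of $a$. It uses $t(u,1)$, $u\in\Z_p^\times$, to see that $\spin(K_p)$ contains every unit class. You use products of two reflections instead, plus a separate sign-correcting step. That step could be merged into the first by taking $\q(a)=\pm q$ and $\q(b)=1$.
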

\begin{proof}
Let $g\in G(\A)$. Write $\nu_p\in\{0,1\}$ for the valuation of $\spin(g_p)$ modulo $2$ and choose
\[ a=\pm\prod_{p<\infty}p^{\nu_p}\in\Q^\times. \]
Only finitely many $\nu_p$ are non-zero. Multiplication on the left by $t(a^{-1},1)$ makes every finite spinor norm a unit square class. Choose the sign of $a$ so that the real component lies in $\Sort^+(V(\R))$. The spinor norm of $K_p$ contains every unit square class, as is seen from $t(u,1)$ for $u\in\Z_p^\times$. Multiplying on the right by suitable elements of $K_p$, we may therefore assume that all local spinor norms are trivial. The resulting element lifts to $\Spin(V)(\A)$. Strong approximation for the simply connected isotropic group $\Spin(V)$, away from infinity, now gives
\[ g\in G(\Q)\Sort^+(V(\R))K_\fin. \]
See \cite[Section 2.5]{GetzHahn} for strong approximation and its application to adelic decompositions.
\end{proof}

Now let $F\in\mathrm{M}_k(\SGamma_n,1)$. We define a function $\phi_F$ on $G(\A)$ by 
\[ \phi_F(g) = F|_k[g_\infty^+](Z_0) \]
where $g = \gamma g_\infty^+\kappa$ in decomposition \eqref{eq:Decomposition1}. This function is well-defined since
\[ G(\Q)\cap\Sort^+(V(\R))\prod_pK_p = \SGamma_n. \]
Let $\kappa_\infty=\kappa_\infty(\theta)\in K_\infty$ have $\Sort_2$ component
\[ \matr{\cos(\theta)}{\sin(\theta)}{-\sin(\theta)}{\cos(\theta)}. \]
Then
\begin{align*}
	\kappa_\infty(z+z'+&i(\rho+\rho')) \\
	&= (\cos(\theta)+i\sin(\theta))(z+z') + (-\sin(\theta) + i\cos(\theta))(\rho+\rho') \\
	&= e^{i\theta}(z+z' + i(\rho+\rho'))
\end{align*}
so that $j(\kappa_\infty,Z_0) = e^{i\theta}$. On the other hand let $\phi\in\mathcal{A}(G)$ such that for $\kappa_\infty(\theta)\in K_\infty$ we have $\phi(g\kappa_\infty) = e^{-ik\theta}\phi(g)$ for all $g\in G(\A)$. We define
\begin{align*}
	F_\phi:\mathcal{H}&\to\C \\
	Z&\mapsto j(g_Z,Z_0)^k\phi(g_Z)
\end{align*}
where $g_Z\in\Sort^+(V(\R))$ is any element such that $g_Z(Z_0) = Z$. Then we obtain

\begin{thm}\label{thm:ModularFormsAreAutomorphicForms}
	The assignments $F\mapsto\phi_F$ and $\phi\mapsto F_\phi$ are inverse to each other and define bijections between $\mathrm{M}_k(\SGamma_n,1)$ and the subspace of automorphic forms $\phi\in\mathcal{A}(G)$ with the following properties:
	\begin{enumerate}[(i)]
		\item $\phi$ has level $K_\fin$,
		\item $\phi(g\kappa_\infty) = e^{-ik\theta}\phi(g)$ for $\kappa_\infty(\theta)\in K_\infty$ and $g\in G(\A)$,
		\item for any $X\in\mfp_-$ the operator $R(X)$ annihilates $\phi$.
	\end{enumerate}
	We denote this space by $\holautform{G}{K_\fin}$.
\end{thm}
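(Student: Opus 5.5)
The plan is to check directly that the two assignments land in the claimed spaces and compose to the identity. Well-definedness of $\phi_F$ is already noted; the same computation gives the lifts. By Lemma \ref{lem:StrongapproximationG} every $g\in G(\A)$ is $\gamma g_\infty^+\kappa$. Two such decompositions differ by an element $\delta\in G(\Q)\cap\Sort^+(V(\R))K_\fin=\SGamma_n$. The cocycle relation $j(\delta g,Z_0)=j(\delta,gZ_0)j(g,Z_0)$ and the modularity $F|_k[\delta]=F$ then show that $F|_k[g_\infty^+](Z_0)$ is independent of the choice.

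For $F\mapsto\phi_F$, I will verify the defining properties in turn.
\begin{enumerate}[(a)]
\item Left $G(\Q)$-invariance and right $K_\fin$-invariance (property (i)) are built into the definition.
\item For property (ii), use $j(\kappa_\infty(\theta),Z_0)=e^{i\theta}$, $\kappa_\infty Z_0=Z_0$ and the cocycle relation. These give $\phi_F(g\kappa_\infty)=j(g_\infty^+,Z_0)^{-k}e^{-ik\theta}F(g_\infty^+Z_0)$. The $\Sort_n$-part of $K_\infty$ has trivial $j$ and fixes $Z_0$, so $\phi_F$ is $K$-finite.
\item Property (iii) is the standard fact that, for a function of the form $j(g,Z_0)^{-k}F(gZ_0)$, the right action of $\mfp_-$ corresponds to the antiholomorphic derivatives $\partial/\partial\overline{Z}$ of $F$ at $gZ_0$. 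I plan to verify this by differentiating along $\exp(tX)$ for $X\in\mfp_-$. One uses that $j(\cdot,Z_0)$ restricted to $\exp(\mfp_\pm)$ is holomorphic in the appropriate sense, or one reduces to the identification of $\mfp_-$ with the antiholomorphic tangent space at $Z_0$ recalled in Section \ref{sec:Definitions}. Holomorphy of $F$ then gives $R(X)\phi_F=0$.
\item $Z(\mfg)$-finiteness follows from (ii) and (iii). The Casimir acts on functions that are annihilated by $\mfp_-$ and have fixed $K_\infty$-type by $\mfk$-terms plus $\mfp_+\mfp_-$ terms, hence by a scalar. In fact the whole $Z(\mfg)$ acts by scalars via the Harish-Chandra homomorphism restricted to the lowest $K$-type.
\item Moderate growth follows from the Fourier expansion and holomorphy at the cusps (the Koecher principle). $|F(Z)|$ is bounded by a polynomial in $\Ima(Z)$ on Siegel domains, and $|j(g,Z_0)|^{-k}$ is a power of $-\q(\Ima Z)$ by Lemma \ref{lem:qZisJ}.
\end{enumerate}

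For $\phi\mapsto F_\phi$, I will proceed as follows.
\begin{enumerate}[(a)]
\item Independence of $g_Z$: two choices differ by an element of $\Stab(Z_0)\cap\Sort^+(V(\R))$, which lies in $K_\infty$. Property (ii), together with $j(\kappa_\infty(\theta),Z_0)=e^{i\theta}$ and triviality on the $\Sort_n$-part, shows that the factors cancel.
\item Modularity under $\gamma\in\SGamma_n$: use left $G(\Q)$-invariance and right $K_\fin$-invariance of $\phi$, viewing $\gamma$ diagonally. Since $\gamma_\fin\in K_\fin$, $\phi(g_Z)=\phi(\gamma_\infty g_Z)$. With $g_{\gamma Z}=\gamma g_Z$, the cocycle relation gives $F_\phi(\gamma Z)=j(\gamma,Z)^kF_\phi(Z)$.
\item Holomorphy of $F_\phi$ follows from (iii), reversing the differential computation above.
\item Holomorphy at the cusps is automatic by the Koecher principle since $n\ge3$.
\end{enumerate}

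The two maps are mutually inverse. For $g=g_Z$ one has $\phi_F(g_Z)=j(g_Z,Z_0)^{-k}F(Z)$, so $F_{\phi_F}=F$. Conversely, $\phi$ is determined on $\Sort^+(V(\R))$ by $F_\phi$, and therefore on all of $G(\A)$ by Lemma \ref{lem:StrongapproximationG} and invariance.

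I expect the main obstacle to be the identification in (iii): showing precisely that $R(X)$ for $X\in\mfp_-$ corresponds to $\partial_{\overline Z}$ on $\widetilde{\mathcal H}$ under the lift $F\mapsto j(\cdot,Z_0)^{-k}F(\cdot Z_0)$. I would handle it by writing $\mfp_\pm$ explicitly in the basis $z,z',\rho,\rho'$ of Section \ref{sec:Definitions} and using the tube-domain coordinates.
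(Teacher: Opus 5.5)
Your proposal is correct and follows essentially the same route as the paper. You get well-definedness from $G(\Q)\cap\Sort^+(V(\R))K_\fin=\SGamma_n$ and the cocycle relation, property (ii) from $j(\kappa_\infty(\theta),Z_0)=e^{i\theta}$, modularity of $F_\phi$ from left $G(\Q)$- and right $K_\fin$-invariance, (iii) from identifying $R(\mfp_-)$ with the $\overline{\partial}$-operator, and moderate growth from the Fourier expansion and the Koecher principle; your Harish-Chandra/lowest-$K$-type argument for $Z(\mfg)$-finiteness is the same as the paper's observation that $U(\mfg)\phi_F$ is a quotient of a generalized Verma module with an infinitesimal character.
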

\begin{proof}
	The fact that the maps are inverse to each other is clear from the construction, provided that they are well-defined. So let $F\in\mathrm{M}_k(\SGamma_n,1)$ and $\phi\in\holautform{G}{K_\fin}$ be arbitrary. The left $G(\Q)$-invariance and right $K_\fin$-invariance of $\phi_F$ follows from construction and property (ii) follows from $j(\kappa_\infty(\theta),Z_0) = e^{i\theta}$. On the other hand, $F_\phi$ is well-defined because $K_\infty$ is the stabilizer of $Z_0$ and $j(\kappa_\infty(\theta),Z_0) = e^{i\theta}$. For any $\gamma\in\SGamma_n$ we have
	\begin{align*}
		F_\phi|_k[\gamma](Z) &= \phi(\gamma_\infty g_Z)j(\gamma,Z)^{-k}j(\gamma g_Z,Z_0)^k \\
		&= \phi\left(\gamma g_Z\prod_p\gamma_p^{-1}\right)j(g_Z,Z_0)^k \\
		&= \phi(g_Z)j(g_Z,Z_0)^k \\
		&= F_\phi(Z)
	\end{align*}
	by the left $G(\Q)$-invariance and the right $K_\fin$-invariance of $\phi$. It remains to show that $\phi_F$ has moderate growth and is $Z(\mfg)$-finite and that holomorphicity of $F$ is equivalent to property (iii) for $\phi_F$. 
	
	Under the standard identification of smooth functions satisfying
	(ii) with smooth sections of the weight-$k$ homogeneous line
	bundle on $\widetilde{\mathcal{H}}$, the Dolbeault operator is represented
	by the operators $R(X)$, $X\in\mfp_-$. Hence,
	\[ \overline{\partial} F=0
	\quad\Longleftrightarrow\quad
	R(X)\phi_F=0\qquad\text{for every }X\in\mfp_-.
	\]
	Moreover, holomorphicity of $F$ at the cusps, which follows automatically by the Koecher principle, gives polynomial bounds
	for $\phi_F$ on Siegel sets, by the usual Fourier-expansion
	estimates. This implies moderate growth.
	The argument is analogous to that for holomorphic Siegel
	modular forms. 
	
	We have seen that
	$R(\mfp_-)\phi_F=0$. Together with (ii), this implies that
	the $U(\mfg)$-module generated by $\phi_F$ is a quotient of
	the generalized Verma module
	\[ U(\mfg)\otimes_{U(\mfk\oplus\mfp_-)}
	\C_{\tau_k},
	\]
	where $\tau_k(\kappa_\infty(\theta))=e^{-ik\theta}$ and	$\mfp_-$ acts trivially on $\C_{\tau_k}$. This generalized Verma module has an infinitesimal character, so that $\phi_F$ is a simultaneous eigenfunction for $Z(\mfg)$.
\end{proof}

We need a decomposition similar to \eqref{eq:Decomposition1}, but in terms of $P$. For each $[D]\in\widetilde{I\!I}_{n-2,0}$ choose an orientation-preserving rational isometry $\iota_D:D\otimes\Q\to W$. Extend it to $\gamma_D\in G(\Q)$ by
\[ \gamma_D(z)=z,\quad\gamma_D(z')=z',\quad
\gamma_D(\rho_D)=\rho,\quad\gamma_D(\rho_D')=\rho'. \]
Such an extension exists by the Witt decomposition. The choices of the positive cones are made so that $\gamma_D\in\Sort^+(V(\R))$.

\begin{prp}\label{prp:DecompositionStrongApproximationWithParabolic}
The elements $\gamma_D$ represent the double quotient
\[ P(\Q)\backslash G(\Q)/\SGamma_n. \]
Consequently,
\begin{equation*}\label{eq:Decomposition2}
G(\A)=\coprod_{[D]\in\widetilde{I\!I}_{n-2,0}}P(\Q)\gamma_D\Sort^+(V(\R))K_\fin.
\end{equation*}
For every $D$ one can choose $\omega_{D,\fin}\in\Sort_W(\A_\fin)$ such that
\[ \omega_{D,\fin}(D_0\otimes\widehat{\Z})=\iota_D(D\otimes\widehat{\Z}),\qquad
\gamma_{D,\fin}=(1,\omega_{D,\fin})\kappa_D\quad\text{with }\kappa_D\in K_\fin. \]
If $g_\infty\in\Sort^+(V(\R))$ and
$\gamma_Dg_\infty=(m_\infty,\omega_\infty)n_\infty\kappa$ is an Iwasawa decomposition, then
\begin{align*}
(-\rho_D,g_\infty Z_0)&=m_\infty i,\\
j(g_\infty,Z_0)&=\det(m_\infty)^{-1/2}j(m_\infty,i)j(\kappa,Z_0).
\end{align*}
\end{prp}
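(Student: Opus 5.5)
The proof has three parts: a geometric statement about $P(\Q)\backslash G(\Q)/\SGamma_n$, an adelic decomposition derived from it using Lemma \ref{lem:StrongapproximationG}, and two explicit formulas from Lemmas \ref{lem:ActionOfM} and \ref{lem:characterIsImaginaryPart}.

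\textbf{Double coset.} Since $P(\Q)=\Stab_{G(\Q)}(F)$, the map $g\mapsto g^{-1}F$ identifies $P(\Q)\backslash G(\Q)$ with the set of $2$-dimensional isotropic subspaces of $V$; all such subspaces are $G(\Q)$-conjugate by Witt's theorem. The right $\SGamma_n$-orbits are therefore the $\SGamma_n$-orbits of such subspaces. As recalled earlier in this section, these orbits are parameterized by $\widetilde{I\!I}_{n-2,0}$ via $F'\mapsto ((F'\cap L)^\bot\cap L)/(F'\cap L)$ up to proper isometry. The element $\gamma_D$ satisfies $\gamma_D^{-1}F=\gamma_D^{-1}(\Q z\oplus\Q\rho)=F_D$, and $F_D$ has associated lattice $D$. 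Hence the $\gamma_D$ hit every double coset, and they hit it exactly once.

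\textbf{Adelic decomposition.} Starting from Lemma \ref{lem:StrongapproximationG}, write $g=\gamma g_\infty\kappa$ and decompose $\gamma=p\gamma_D\delta$ with $\delta\in\SGamma_n$. Since $\delta$ lies in $\Sort^+(V(\R))$ and in every $K_p$, we can rewrite $\gamma_D\delta g_\infty\kappa=\gamma_D(\delta_\infty g_\infty)(\delta_\fin\kappa)$. This gives the union.

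For disjointness, suppose $p\gamma_D g_\infty\kappa=\gamma_{D'}g'_\infty\kappa'$. Then $\gamma_{D'}^{-1}p\gamma_D$ lies in $G(\Q)\cap\Sort^+(V(\R))K_\fin=\SGamma_n$, so $D=D'$ by the first part.

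\textbf{The elements $\omega_{D,\fin}$.} The two lattices $D_0\otimes\widehat\Z$ and $\iota_D(D\otimes\widehat\Z)$ lie in the same genus, so they are locally isometric. Moreover, at each $p$ one can choose a local isometry of determinant $1$, since composing with a reflection of $D_0\otimes\Z_p$ fixes the determinant; this gives $\omega_{D,\fin}\in\Sort_W(\A_\fin)$. The element $(1,\omega_{D,\fin})^{-1}\gamma_{D,\fin}$ then maps $L\otimes\widehat\Z=\langle z,z'\rangle\oplus\langle\rho_D,\rho_D'\rangle\oplus D$ onto $\langle z,z'\rangle\oplus\langle\rho,\rho'\rangle\oplus D_0$ tensored with $\widehat\Z$, which is again $L\otimes\widehat\Z$. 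Hence it lies in $K_\fin$, and we take this element as $\kappa_D$.

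\textbf{The formulas.} Write $\gamma_Dg_\infty=(m_\infty,\omega_\infty)n_\infty\kappa$. Using $j(\gamma_D,\cdot)=1$, which holds because $\gamma_D$ fixes $z,z'$, the cocycle relation gives $j(g_\infty,Z_0)=j(\gamma_Dg_\infty,Z_0)$. We then evaluate step by step:
\begin{itemize}
\item $j(\kappa,Z_0)$ contributes the last factor;
\item $n_\infty$ has $j=1$, as computed in the proof of Lemma \ref{lem:characterIsImaginaryPart};
\item $\omega_\infty$ fixes $z,z'$ and so contributes nothing;
\item $(m_\infty,1)$ contributes $\det(m_\infty)^{-1/2}j(m_\infty,i)$ by Lemma \ref{lem:ActionOfM}, evaluated at $n_\infty\kappa Z_0$, whose $\tau$-coordinate is $i$.
\end{itemize}

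For the first formula, $(-\rho_D,g_\infty Z_0)=(-\rho,\gamma_Dg_\infty Z_0)$ is the $\tau$-coordinate of $\gamma_Dg_\infty Z_0$. Lemma \ref{lem:ActionOfM} gives this coordinate as $m_\infty\tau$ with $\tau=i$, i.e.\ $m_\infty i$.

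\textbf{Main obstacle.} The delicate points are the sign and orientation bookkeeping. One must check that $\gamma_D\in\Sort^+(V(\R))$, which depends on the choice of positive cones, and that the local isometries can be taken in $\Sort$ rather than $\Ort$. The latter is exactly why $\widetilde{I\!I}_{n-2,0}$, and not $I\!I_{n-2,0}$, indexes the decomposition.
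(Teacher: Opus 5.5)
Your proof is correct and follows the same route as the paper. You use the classification of $1$-dimensional cusps for the double quotient, Lemma \ref{lem:StrongapproximationG} plus intersection with $G(\Q)$ for the adelic decomposition and its disjointness, a local determinant adjustment for $\omega_{D,\fin}$ and $\kappa_D$, and $j(\gamma_D,\cdot)=1$ together with Lemma \ref{lem:ActionOfM} and the cocycle relation for the two real identities; you simply spell out steps the paper leaves implicit, such as the splitting $\delta=\delta_\infty\delta_\fin$ and the factor-by-factor evaluation of $j$.
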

\begin{proof}
The first assertion is the classification of the $1$-dimensional cusps by the proper classes of $F_D^\perp/F_D$. The adelic decomposition follows from Lemma \ref{lem:StrongapproximationG}, its disjointness follows by intersecting with $G(\Q)$. Locally $D$ and $D_0$ are isometric and their local integral orthogonal groups contain elements of determinant $-1$. We may therefore choose the local isometries to have determinant $1$, giving $\omega_{D,\fin}$. Now $\kappa_D\define(1,\omega_{D,\fin})^{-1}\gamma_D$ acts as the identity on $\langle z,z'\rangle$, maps $\rho_D$ to $\rho$, $\rho_D'$ to $\rho'$ and $D\otimes\widehat{\Z}$ to $D_0\otimes\widehat{\Z}$, so it lies in $K_\fin$. Finally, $j(\gamma_D,Z)=1$, the $N$-action fixes the coordinate $\tau$ and the two real identities follow from Lemma \ref{lem:ActionOfM} and the cocycle identity for $j$.
\end{proof}

For $\det(m)>0$, the normalized elliptic slash operator is
\[ f|_k[m](\tau)=j(m,\tau)^{-k}f(m\tau). \]
Let $f=\sum_D f_D\otimes[D]\in\mathrm{M}_k(\SL_2(\Z))\otimes\C[\widetilde{I\!I}_{n-2,0}]$. Define
\[ \varphi_f((m,\omega))=f_D|_k[m_\infty^+](i), \]
where $m=\mu m_\infty^+h\in\GL_2(\Q)\GL_2^+(\R)\GL_2(\widehat{\Z})$ and $[D]=[\omega_\fin(D_0\otimes\widehat{\Z})\cap W]$. This uses the normalized elliptic slash operator defined above, in particular, $\varphi_f$ has trivial central character on the $\GL_2$ factor. Conversely, choose $m_\tau\in\SL_2(\R)$ with $m_\tau i=\tau$ and put
\[ f_\varphi(\tau)=\sum_D j(m_\tau,i)^k\varphi((m_\tau,\omega_{D,\fin}))\otimes[D]. \]

For the complexified Lie algebra $\mathfrak m$ of $M(\R)$ we have
\[ \mathfrak m=\mathfrak a_M\oplus\mathfrak k_M\oplus\mathfrak p_{M,+}\oplus\mathfrak p_{M,-}, \]
where $\mathfrak a_M$ is the split central direction and $\mathfrak p_{M,\pm}$ are the holomorphic and antiholomorphic directions for $\GL_2^+(\R)/(\R_{>0}\Sort_2(\R))\cong\mathbb H$.

\begin{thm}\label{thm:ModularFormsAreAutomorphicFormsCusps}
The assignments $f\mapsto\varphi_f$ and $\varphi\mapsto f_\varphi$ define inverse bijections between $\mathrm{M}_k(\SL_2(\Z))\otimes\C[\widetilde{I\!I}_{n-2,0}]$ and the subspace of $\mathcal{A}(M)$ satisfying
\begin{enumerate}[(i)]
\item $\varphi$ has level $K_{M,\fin}$ and trivial central character on $\GL_2(\A)$
\item $\varphi$ is invariant under $\Sort_W(\R)$ and has weight $-k$ under the $\Sort_2(\R)$ factor of $M(\R)\cap K_\infty$
\item $R(X)\varphi=0$ for all $X\in\mathfrak p_{M,-}$.
\end{enumerate}
Denote this space by $\holautform{M}{K_{M,\fin}}$. The diagram
\[ \begin{tikzcd}
\mathrm{M}_k(\SGamma_n,1) \arrow[d,"\Phi"] \arrow[rr,"\cong"] && \holautform{G}{K_\fin} \arrow[d,"\psi^{-k/2}(\cdot)_P"] \\
\mathrm{M}_k(\SL_2(\Z))\otimes\C[\widetilde{I\!I}_{n-2,0}] \arrow[rr,"\cong"] && \holautform{M}{K_{M,\fin}}
\end{tikzcd} \]
commutes.
\end{thm}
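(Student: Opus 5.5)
The argument runs in parallel with Theorem \ref{thm:ModularFormsAreAutomorphicForms}, now for the Levi $M$, and then compares constant terms with the Fourier--Jacobi expansion of Proposition \ref{prp:FourierJacobiExpansion}.

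\emph{Bijection.} First I check that $\varphi_f$ is well defined. The decomposition $\GL_2(\A)=\GL_2(\Q)\GL_2^+(\R)\GL_2(\widehat{\Z})$ holds because $\det\colon\GL_2(\widehat{\Z})\to\widehat{\Z}^\times$ is surjective and $\Q$ has class number one. The ambiguity in $m=\mu m_\infty^+h$ is an element of $\GL_2(\Q)\cap\GL_2^+(\R)\GL_2(\widehat{\Z})=\SL_2(\Z)$, up to the centre $\Q^\times_{>0}$. The centre acts trivially because the slash operator is normalized by $\det^{-1/2}$. Modularity of $f_D$ for $\SL_2(\Z)$ then removes the remaining ambiguity. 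On the $\Sort_W$ factor, the class $[D]$ depends only on $\Sort_W(\Q)\omega\,(\Sort_W(\R)\times K_W)$, by the bijection with $\widetilde{I\!I}_{n-2,0}$ recalled in Section \ref{sec:HeckeOperatorsAndConstantTerms}.

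Properties (i) and (ii) follow from the construction, using $j(\kappa_\theta,i)=e^{i\theta}$ for the $\Sort_2$ part. Property (iii) is the Cauchy--Riemann equation for $f_D$ on $\mathbb H$. Moderate growth and $Z(\mathfrak m)$-finiteness follow exactly as in the proof of Theorem \ref{thm:ModularFormsAreAutomorphicForms}, via the Verma module argument for $\GL_2$. The inverse direction mirrors this. Modularity of each $f_{\varphi,D}$ follows from left $\GL_2(\Q)$-invariance together with right $\GL_2(\widehat{\Z})$-invariance, and it does not depend on the choice of $\omega_{D,\fin}$ since any two choices differ by elements of $\Sort_W(\Q)$ and $K_W$. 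The two maps are mutually inverse for the same reason as in the $G$ case.

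\emph{Commutativity of the diagram.} This is the main step. Fix $F$ and evaluate $(\phi_F)_P$ at $(m_\infty,\omega_{D,\fin})$ with $m_\infty\in\SL_2(\R)$, $m_\infty i=\tau$. Using $\gamma_{D,\fin}=(1,\omega_{D,\fin})\kappa_D$ from Proposition \ref{prp:DecompositionStrongApproximationWithParabolic}, together with left $G(\Q)$-invariance and right $K_\fin$-invariance of $\phi_F$, I rewrite
\[\phi_F(n(1,\omega_{D,\fin})m_\infty)=\phi_F(\gamma_D^{-1}n'm_\infty),\]
where $n'$ is obtained from $n$ by conjugation with $(1,\omega_{D,\fin})$. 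The factor $\gamma_D^{-1}$ is then a real element conjugating $P$ to $P_D$. Unwinding the definition of $\phi_F$, the integral over $N(\Q)\backslash N(\A)$ reduces, by $K_\fin$-invariance, to an integral over $N_D(\Z)\backslash N_D(\R)$. In coordinates $Z=\tau'\rho_D+\tau\rho_D'+\mathfrak z$ this is the integral of $F$ over $\Re\tau'$ and over $\mathfrak z$ modulo the lattice, weighted by the automorphy factor. By the Fourier--Jacobi expansion of Proposition \ref{prp:FourierJacobiExpansion}, the integral over $\Re\tau'$ kills every $\phi_b$ with $b\ge1$ and leaves $f_0(\tau)=\Phi_D(F)(\tau)$. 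The same integration shows that $f_0$ does not depend on $\mathfrak z$.

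The automorphy factor is computed with Proposition \ref{prp:DecompositionStrongApproximationWithParabolic}:
\[j(g_\infty,Z_0)=\det(m_\infty)^{-1/2}j(m_\infty,i)\,j(\kappa,Z_0).\]
Combined with Lemma \ref{lem:characterIsImaginaryPart}, which identifies $\psi_\infty$ with $1/|\det m|$, this gives the relation $(\phi_F)_P=\psi^{k/2}\varphi_{\Phi(F)}$ on $M(\A)$. Hence the vertical map $\psi^{-k/2}(\cdot)_P$ matches $\Phi$.

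I expect the main obstacle to be the bookkeeping in this last step. One has to track the normalizations $\det(m)^{1/2}$ in Lemma \ref{lem:ActionOfM}, the orientation choices for $\iota_D$, and the fact that the measure on $N(\Q)\backslash N(\A)$ has volume $1$, and check that these produce exactly the power $\psi^{k/2}$ with no stray constant. It also has to be confirmed that the constant term of a form satisfying (iii) for $G$ satisfies (iii) for $M$. This holds because $\mathfrak p_{M,-}\subset\mathfrak p_-$ up to $\mathfrak n$-components, and right differentiation commutes with the constant term integral.
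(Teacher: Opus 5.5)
Your proposal follows essentially the same route as the paper. The bijection is the standard adelization. For the diagram, you rewrite $(\phi_F)_P$ via $\gamma_{D,\fin}=(1,\omega_{D,\fin})\kappa_D$ as an integral over $N_D(\Z)\backslash N_D(\R)$ with $N_D=\gamma_D^{-1}N\gamma_D$, extract $\Phi_D(F)$ as the zeroth Fourier--Jacobi coefficient, and read off $\psi^{k/2}$ from $j(g_\infty^+,Z_0)=\det(m_\infty)^{-1/2}j(m_\infty,i)$.

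Two bookkeeping slips need fixing. First, $n'$ should be $\gamma_{D,\fin}^{-1}n\gamma_{D,\fin}$, not the conjugate by $(1,\omega_{D,\fin})$; otherwise a $\kappa_D^{-1}$ is left stranded to the left of $n'$. This corrected form is the paper's $g=\gamma_D g_\infty^+\kappa_D^{-1}$. Second, $\psi((m,\omega))=|\det m|$ by definition, not $1/|\det m|$. Lemma \ref{lem:characterIsImaginaryPart} gives $|j(g,Z_0)|^2|(-\rho,\Ima(Z))|=1/|\det m|$, and $\psi=|\det m|$ is exactly what produces the exponent $+k/2$ you correctly state.
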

\begin{proof}
The first assertion is the usual adelization of elliptic modular forms, together with the finite double-coset description of $\Sort_W$ (see \cite[Chapter 6]{GetzHahn}). Holomorphicity and moderate growth exclude negative Fourier powers at the elliptic cusp.

For the constant term, fix $D$ and write $N_D=\gamma_D^{-1}N\gamma_D$. Integrating $F(nZ)$ first over the central coordinate $\Z\backslash\R$ extracts the zeroth Fourier--Jacobi coefficient of $F$, which as we have seen is independent of $\mathfrak z$. Thus, with volume-one measures,
\[ \int_{N_D(\Z)\backslash N_D(\R)}F(nZ)\mathrm{d}n=\Phi_D(F)(\tau),\qquad \tau=(-\rho_D,Z). \]
The same identity follows directly by differentiating the average in $\Ima(\tau')$ and using holomorphicity and periodicity, then taking $\Ima(\tau')\to\infty$.

Let $g=(m,\omega)\in M(\A)$. Using left $M(\Q)$-invariance and right $K_{M,\fin}$-invariance, we may take $m=(m_\infty,1)$ with $\det(m_\infty)>0$ and $\omega_\fin=\omega_{D,\fin}$. Put $g_\infty^+=\gamma_D^{-1}(m_\infty,\omega_\infty)$. Then $g=\gamma_Dg_\infty^+\kappa_D^{-1}$. Conjugating the unipotent variable by $\gamma_D$ before removing the rational factor gives
\begin{align*}
(\phi_F)_P(g)
&=\int_{N_D(\Q)\backslash N_D(\A)}\phi_F(ng_\infty^+)\mathrm{d}n\\
&=\int_{N_D(\Z)\backslash N_D(\R)}\phi_F(n_\infty g_\infty^+)\mathrm{d}n_\infty\\
&=j(g_\infty^+,Z_0)^{-k}\Phi_D(F)(m_\infty i).
\end{align*}
The second equality uses strong approximation for $N_D$ and right $K_\fin$-invariance. Here $j(n_\infty,Z)=1$ and $j(g_\infty^+,Z_0)=\det(m_\infty)^{-1/2}j(m_\infty,i)$. Multiplying by $\psi(g)^{-k/2}=\det(m_\infty)^{-k/2}$ yields exactly $\Phi_D(F)|_k[m_\infty](i)$.
\end{proof}

\subsection*{Hecke operators}

We now define Hecke operators acting on the space of holomorphic modular forms. They coincide with the previously defined Hecke operators acting on automorphic forms.

Let $\Delta_n = \Ort^+(I\!I_{n,2}\otimes_\Z\Q)$ and $\SDelta_n = \Sort^+(I\!I_{n,2}\otimes_\Z\Q)$ and $(\Gamma,\Delta)$ will be either $(\Gamma_n,\Delta_n)$ or $(\SGamma_n,\SDelta_n)$. Then $(\Gamma,\Delta)$ is a \textit{Hecke pair}, i.e.\ for all $\delta\in \Delta$ the double coset $\Gamma \delta\Gamma$ is the union of finitely many right cosets
\[ \Gamma \delta\Gamma  = \bigcup_{i=1}^h\Gamma \delta_i. \]
Now let $\mathscr{L}(\Gamma ,\Delta)$ and $\mathscr{H}(\Gamma ,\Delta)$ be the $\C$-vector spaces generated by the elements of $\Gamma \backslash \Delta$ and $\Gamma \backslash \Delta/\Gamma $ respectively. An element $\delta\in \Delta$ acts on $\mathscr{L}(\Gamma ,\Delta)$ by right multiplication. We denote by
\[ \mathscr{L}(\Gamma ,\Delta)^\Gamma \define\{T\in\mathscr{L}(\Gamma ,\Delta)\mid T\gamma=T\text{ for }\gamma\in \Gamma \} \]
the submodule of $\Gamma $-invariants. Then the map given by
\[ \Gamma \delta\Gamma \mapsto \sum_{i=1}^h\Gamma \delta_i \]
defines an isomorphism $\mathscr{H}(\Gamma ,\Delta)\xrightarrow{\sim}\mathscr{L}(\Gamma ,\Delta)^\Gamma $ and we will identify these two spaces. Let $\delta,\epsilon\in \Delta$ and 
\[ \Gamma \delta\Gamma  = \bigcup_i \Gamma \delta_i,\quad \Gamma \epsilon\Gamma  = \bigcup_j \Gamma \epsilon_j. \]
We define the product of $\Gamma \delta\Gamma $ and $\Gamma \epsilon\Gamma $ to be 
\[ (\Gamma \delta\Gamma )\cdot(\Gamma \epsilon\Gamma ) = \sum_{i,j}\Gamma \delta_i\epsilon_j\in\mathscr{L}(\Gamma ,\Delta). \]
Clearly, $(\Gamma \delta\Gamma )\cdot(\Gamma \epsilon\Gamma )\in\mathscr{L}(\Gamma ,\Delta)^\Gamma $ and so this extends to a well-defined product on $\mathscr{H}(\Gamma ,\Delta)$.

Choose $r\in\Gamma_n$ with $\det(r)=-1$. Conjugation by $r$ induces an involution $\sigma$ of $\mathscr{H}(\SGamma_n,\SDelta_n)$. Write
\[ \mathscr{H}(\SGamma_n,\SDelta_n)=\mathscr{H}_0\oplus\mathscr{H}_1,\qquad
\mathscr{H}_i=\{T:\sigma(T)=(-1)^iT\}. \]
This is a $\Z/2\Z$-grading: $\mathscr{H}_i\mathscr{H}_j\subset\mathscr{H}_{i+j}$. The even operators preserve the character spaces, whereas
\[ T:\mathrm{M}_k(\Gamma_n,\chi)\longrightarrow\mathrm{M}_k(\Gamma_n,\det\chi),\qquad T\in\mathscr{H}_1. \]
Indeed, $r$ acts on these spaces by $F\mapsto F|_k[r]$ and $rTr^{-1}=(-1)^iT$. Furthermore, $\mathscr{H}(\Gamma_n ,\Delta_n)$ embeds into $\mathscr{H}(\SGamma_n ,\SDelta_n)$ via $T \mapsto T\cap\SDelta_n$ and its image is $\mathscr{H}_0$.

We use the following identifications between classical and adelic Hecke actions.

\begin{prp}\label{prp:HeckeOperatorsAdelicClassicalAreTheSame}
	To a double coset $T = \SGamma_n\delta\SGamma_n$ we associate a function $f\in\mathcal{H}(G)$ by 
	\[ f(g) = \begin{cases}
		1 & \text{if } g\in G(\widehat{\Z})\delta^{-1}G(\widehat{\Z}) \\
		0 & \text{else}.
	\end{cases} \]
	Then $\mathscr{H}(\SGamma_n ,\SDelta_n)\cong\mathcal{H}(G)$ and the diagram 
	\[ \begin{tikzcd}
		\mathrm{M}_k(\SGamma_n,1) \arrow[d,"\cong"] \arrow[rr, "T"]                                                  &  & \mathrm{M}_k(\SGamma_n,1) \arrow[d, "\cong"]                           \\
		\holautform{G}{K_\fin} \arrow[rr, "f"] &  & \holautform{G}{K_\fin}
	\end{tikzcd} \]
	commutes.
\end{prp}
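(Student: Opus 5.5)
The proof has two parts. First we identify the Hecke algebras $\mathscr{H}(\SGamma_n,\SDelta_n)\cong\mathcal{H}(G)$. Then we check that the classical and adelic actions agree by unwinding the bijection of Theorem \ref{thm:ModularFormsAreAutomorphicForms}.

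For the algebra isomorphism, the starting point is the decomposition of Lemma \ref{lem:StrongapproximationG} together with $G(\Q)\cap\Sort^+(V(\R))K_\fin=\SGamma_n$. From these we will show that the map
\[ \SGamma_n\backslash\SDelta_n\to G(\A_\fin)/K_\fin,\qquad \SGamma_n\delta\mapsto \delta^{-1}K_\fin, \]
is a bijection. Here $\delta$ is embedded diagonally into $G(\A_\fin)$.
\begin{itemize}
\item Injectivity: $\delta^{-1}K_\fin=\epsilon^{-1}K_\fin$ forces $\epsilon\delta^{-1}\in G(\Q)\cap\Sort^+(V(\R))K_\fin=\SGamma_n$.
\item Surjectivity: for $x\in G(\A_\fin)$, write $x^{-1}=\gamma g_\infty\kappa$. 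Reading off the finite component and the positivity at infinity shows that $\gamma\in\SDelta_n$ represents the coset $xK_\fin$.
\end{itemize}
The bijection is equivariant for the right action of $\SGamma_n$ on the source and the left action of $K_\fin$ on the target, since $\SGamma_n$ is dense in $K_\fin$ by strong approximation. It therefore induces a bijection of double cosets $\SGamma_n\delta\SGamma_n\leftrightarrow K_\fin\delta^{-1}K_\fin$. The right cosets $\SGamma_n\delta_i$ in a double coset correspond exactly to the left cosets $\delta_i^{-1}K_\fin$ in $K_\fin\delta^{-1}K_\fin$, and this gives linearity. For compatibility with products, we compare the structure constants. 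On both sides they count the decompositions $\delta_i\epsilon_j$ in the classical algebra, respectively $\epsilon_j^{-1}\delta_i^{-1}$ in the adelic one, lying in a fixed coset. The inverse reverses the order of multiplication, and this matches the order reversal coming from the classical right-action convention versus convolution. So the map is an algebra isomorphism with respect to the chosen conventions.

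For the commutativity of the diagram, take $F\in\mathrm{M}_k(\SGamma_n,1)$ and normalize the classical operator as $F|T=\sum_iF|_k[\delta_i]$, possibly with a power of the similitude factor, which is trivial here since all elements are isometries. For $g_\infty\in\Sort^+(V(\R))$ we compute
\[ R(f)\phi_F(g_\infty)=\int_{G(\A_\fin)}f(y)\phi_F(g_\infty y)\,\mathrm dy=\sum_i\phi_F(g_\infty\delta_{i,\fin}^{-1}), \]
using $\mathrm{vol}(K_\fin)=1$ and $K_\fin\delta^{-1}K_\fin=\coprod_i\delta_i^{-1}K_\fin$. By left $G(\Q)$-invariance, $\phi_F(g_\infty\delta_{i,\fin}^{-1})=\phi_F(\delta_i g_\infty)$, where $\delta_i$ now sits only at infinity. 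Writing $\delta_ig_\infty\in\Sort^+(V(\R))$ and using the cocycle relation $j(\delta_ig_\infty,Z_0)=j(\delta_i,g_\infty Z_0)j(g_\infty,Z_0)$, this equals $(F|_k[\delta_i])|_k[g_\infty](Z_0)$. Summing over $i$ gives $\phi_{F|T}(g_\infty)$. By Lemma \ref{lem:StrongapproximationG} and the $K_\fin$-invariance of both sides, agreement on $\Sort^+(V(\R))$ suffices. Finally, $R(f)$ preserves properties (i)--(iii) of Theorem \ref{thm:ModularFormsAreAutomorphicForms}, since it acts only at the finite places.

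The main obstacle is the bookkeeping of inverses and conventions. We need $\delta^{-1}$ rather than $\delta$ so that the left/right orders match, a consistent choice between multiplicativity and anti-multiplicativity, and the identification of $\delta_{i,\fin}^{-1}$ with $\delta_{i,\infty}$ modulo $G(\Q)$. The rest of the argument is formal once the coset bijection $\SGamma_n\backslash\SDelta_n\cong G(\A_\fin)/K_\fin$ is in place.
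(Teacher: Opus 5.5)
The paper states this proposition without proof, so your argument has to stand on its own. Its skeleton is the standard one: the coset bijection $\SGamma_n\backslash\SDelta_n\cong G(\A_\fin)/K_\fin$ from Lemma~\ref{lem:StrongapproximationG}, then evaluating $R(f)\phi_F$ on $\Sort^+(V(\R))$ via left $G(\Q)$-invariance and the cocycle relation. That final unwinding is correct.

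The gap is your claim that $\SGamma_n$ is dense in $K_\fin$ by strong approximation. This is false. For $\gamma\in\SGamma_n$ the rational spinor norm is positive, since $\gamma_\infty\in\Sort^+(V(\R))$. It is also a unit modulo squares at every $p$, since $\gamma_p\in K_p$ and $I\!I_{n,2}\otimes\Z_p$ is unimodular. So it is trivial in $\Q^\times/(\Q^\times)^2$. Hence $\SGamma_n$ lies in the closed subgroup $K_\fin'\subset K_\fin$ of elements with trivial spinor norm at every prime. This subgroup has infinite index, because $t(u,1)\in K_p$ has spinor norm $u$ for every $u\in\Z_p^\times$ (the same fact used in the proof of Lemma~\ref{lem:StrongapproximationG}). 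Strong approximation for $\Spin(V)$ only shows that $\SGamma_n$ is dense in $K_\fin'$.

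So the $\SGamma_n$-orbits on $G(\A_\fin)/K_\fin$ are a priori only the $K_\fin'$-orbits. You have therefore not shown that the cosets $\delta_i^{-1}K_\fin$ exhaust $K_\fin\delta^{-1}K_\fin$, equivalently that $\SDelta_n\cap K_\fin\delta K_\fin=\SGamma_n\delta\SGamma_n$. Both the isomorphism $\mathscr{H}(\SGamma_n,\SDelta_n)\cong\mathcal{H}(G)$ and your formula $R(f)\phi_F(g_\infty)=\sum_i\phi_F(g_\infty\delta_{i,\fin}^{-1})$ depend on this.

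The statement is still true, but it needs one more input: $K_\fin=K_\fin'\,(K_\fin\cap xK_\fin x^{-1})$ for every $x\in G(\A_\fin)$. In words, the spinor norm must already be surjective on the stabilizer of $xK_\fin$. Locally, write $x_p=k_1t_pk_2$ by the Cartan decomposition, with $t_p$ in a maximal $\Q_p$-split torus $T_p\supseteq T$. Then $K_p\cap x_pK_px_p^{-1}\supseteq k_1T_p(\Z_p)k_1^{-1}$, which contains conjugates of every $t(u,1)$. For almost all $p$ the stabilizer is all of $K_p$. This gives $K_\fin xK_\fin=K_\fin'xK_\fin=\SGamma_nxK_\fin$, and the rest of your proof goes through.

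Two smaller corrections:
\begin{itemize}
\item In the surjectivity step you must decompose $x$, not $x^{-1}$. From $x=\gamma\kappa$ you get $xK_\fin=\gamma K_\fin$, so take $\delta=\gamma^{-1}$.
\item With the paper's conventions, $T\mapsto f$ is an anti-homomorphism. Indeed $\mathbf{1}_{K_\fin\delta^{-1}K_\fin}*\mathbf{1}_{K_\fin\epsilon^{-1}K_\fin}=\sum_{i,j}\mathbf{1}_{\delta_i^{-1}\epsilon_j^{-1}K_\fin}$, whereas $\sum_{i,j}\SGamma_n\delta_i\epsilon_j$ corresponds to the cosets $\epsilon_j^{-1}\delta_i^{-1}K_\fin$. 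The two order reversals you invoke do not cancel. The map is an algebra isomorphism only because $\mathcal{H}(G)$ is commutative by the Satake isomorphism, and you should say so explicitly.
\end{itemize}
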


\begin{prp}\label{prp:HeckeOperatorsAdelicClassicalAreTheSameCusps}
	The assignment
\[ \SL_2(\Z)\delta\SL_2(\Z)\longmapsto
\mathbf1_{\GL_2(\widehat{\Z})\delta^{-1}\GL_2(\widehat{\Z})} \]
defines an isomorphism
\[ \mathscr{H}(\SL_2(\Z),\GL_2^+(\Q))\otimes\mathcal{H}(\Sort_W)\xrightarrow{\sim}\mathcal{H}(M). \]
Under this isomorphism the adelization in Theorem \ref{thm:ModularFormsAreAutomorphicFormsCusps} is Hecke-equivariant. On forms with trivial central character the action factors through
\[ \mathscr{H}(\PGL_2^+(\Z),\PGL_2^+(\Q))\otimes\mathcal{H}(\Sort_W), \]
obtained by imposing that every scalar double coset acts as the identity. Here $\PGL_2^+(\Z)$ denotes the image of $\SL_2(\Z)$.

We denote by $T(p)$ the operator in $\mathscr{H}(\PGL_2^+(\Z),\PGL_2^+(\Q))$ which is the image of
\[ T(p)=p^{k/2-1}\sum_i[\delta_i],\qquad
\SL_2(\Z)\diag(p,1)\SL_2(\Z)=\coprod_i\SL_2(\Z)\delta_i. \]
\end{prp}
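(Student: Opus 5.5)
We prove the three assertions in turn: the isomorphism of Hecke algebras, the Hecke-equivariance of the adelization of Theorem \ref{thm:ModularFormsAreAutomorphicFormsCusps}, and the factorization through $\PGL_2^+$ on forms with trivial central character. The model throughout is the proof of Proposition \ref{prp:HeckeOperatorsAdelicClassicalAreTheSame}, applied to the two factors of $M=\GL_2\times\Sort_W$ separately.

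\emph{The isomorphism.} Since $K_{M,\fin}=\GL_2(\widehat{\Z})\times K_W$, we have $\mathcal{H}(M)\cong\mathcal{H}(\GL_2)\otimes\mathcal{H}(\Sort_W)$. It therefore suffices to treat the $\GL_2$ factor.
\begin{enumerate}[(a)]
\item Strong approximation for $\SL_2$ together with $\det(\GL_2(\widehat{\Z}))=\widehat{\Z}^\times$ gives $\GL_2(\A_\fin)=\GL_2^+(\Q)\GL_2(\widehat{\Z})$.
\item From (a) we obtain bijections
\[ \SL_2(\Z)\backslash\GL_2^+(\Q)/\SL_2(\Z)\;\cong\;\GL_2(\widehat{\Z})\backslash\GL_2(\A_\fin)/\GL_2(\widehat{\Z}), \]
using $\GL_2^+(\Q)\cap\GL_2(\widehat{\Z})=\SL_2(\Z)$. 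The analogous statement holds for single cosets.
\item Both sides are determined by elementary divisors. For the right-hand side this follows from the Cartan decomposition at each $p$; for the left-hand side it follows from the Smith normal form over $\Z$, after allowing positive rational scalars.
\item Multiplicativity is checked by comparing structure constants, counting right cosets in each product as in Shimura's classical argument. The inverse $\delta^{-1}$ in the definition of the map is forced by the convention $R(f)\varphi(x)=\int f(y)\varphi(xy)\,\mathrm{d}y$.
\end{enumerate}

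\emph{Equivariance.} Take a form $f$, a decomposition $\SL_2(\Z)\delta\SL_2(\Z)=\coprod_i\SL_2(\Z)\delta_i$, and $x=(m_\infty,1)$. Then
\[ R(\mathbf 1_{K\delta^{-1}K})\varphi_f(x)=\sum_i\varphi_f(x\,\delta_{i,\fin}^{-1}). \]
Writing $\delta_{i,\fin}^{-1}=\delta_i^{-1}\cdot(\delta_{i,\infty})$ and using left $\GL_2(\Q)$-invariance, each term becomes $\varphi_f(\delta_i m_\infty)$, which equals $f|_k[\delta_i m_\infty](i)$. This is the classical Hecke action up to the normalization $\det^{-1/2}$ built into $j(m,\tau)$. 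That normalization explains the factor $p^{k/2-1}$ in $T(p)$: the classical $T(p)$ is $p^{k-1}\sum f|_k\delta_i$ with the unnormalized slash, which is $p^{k/2-1}$ times our sum. The $\Sort_W$ factor is compatible by definition of the identification $\C[\widetilde{I\!I}_{n-2,0}]\cong\mathcal{A}(\Sort_W)^{K_W}$ recalled in Section \ref{sec:HeckeOperatorsAndConstantTerms}.

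\emph{Central character.} A scalar double coset $\mathbf 1_{aK}$ acts on forms with trivial central character by $\varphi\mapsto\varphi(\cdot\, a^{-1})=\varphi$. This holds because $a_\fin^{-1}$ differs from a rational scalar by an element of $\R_{>0}$, which acts trivially. Hence the action factors through the quotient in which scalars act as the identity, and this quotient is the Hecke algebra of $(\PGL_2^+(\Z),\PGL_2^+(\Q))$.

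The main obstacle is bookkeeping rather than substance: tracking the inverse $\delta^{-1}$, the left/right conventions, and the $\det^{1/2}$ normalization of the slash operator, so that the scalar $p^{k/2-1}$ comes out exactly right. I would check this on $T(p)$ using the explicit coset representatives $\matr{p}{0}{0}{1}$ and $\matr{1}{b}{0}{p}$ for $b=0,\ldots,p-1$.
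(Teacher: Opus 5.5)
Your proposal is correct. The paper gives no proof of this proposition: it is stated, together with Proposition~\ref{prp:HeckeOperatorsAdelicClassicalAreTheSame}, as a standard dictionary between classical and adelic Hecke operators, and is only invoked later. For example, in Proposition~\ref{prp:T_1(p^2)} the paper says $\mathbf 1_{\GL_2(\Z_p)\diag(p,1)\GL_2(\Z_p)}$ acts as $p^{1-k/2}T(p)$ on forms with trivial central character. That is exactly what your unfolding $R(\mathbf 1_{K\delta^{-1}K})\varphi_f=\sum_i\varphi_{f|_k[\delta_i]}$ and your comparison with the unnormalized slash give. So you are not following or departing from an argument in the paper; you are supplying the standard one: strong approximation, elementary divisors, Shimura's coset counting, and explicit unfolding of $R$.

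Three places deserve one more sentence each.
\begin{enumerate}
\item In (b), the bijection does not follow from (a) alone, since $\SL_2(\Z)$ is dense in $\SL_2(\widehat\Z)$ but not in $\GL_2(\widehat\Z)$. Injectivity on double cosets needs the elementary divisors of (c). The single-coset decomposition $K\delta^{-1}K=\coprod_i\delta_i^{-1}K$ used in your equivariance step needs an extra remark. Assuming $\delta$ diagonal, one has $K\delta^{-1}K=\SL_2(\widehat\Z)\delta^{-1}K$, because $\diag(u,1)$ commutes with $\delta$; then use density of $\SL_2(\Z)$ in $\SL_2(\widehat\Z)$.
\item In (d), the inverse is not forced by the algebra structure. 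The map $\mathbf 1_{K\delta K}\mapsto\mathbf 1_{K\delta^{-1}K}$ is an anti-automorphism of a commutative algebra, so both choices give isomorphisms. The inverse is what matches the classical right action $f\mapsto\sum_i f|_k[\delta_i]$ with the left action $R$. On trivial central character it is invisible anyway, since $\SL_2(\Z)\delta^{-1}\SL_2(\Z)=(\det\delta)^{-1}\SL_2(\Z)\delta\SL_2(\Z)$.
\item Checking equivariance only at elements whose $\GL_2$-component has finite part $1$ and $\det(m_\infty)>0$ suffices. Both sides are left $M(\Q)$-invariant and right $K_{M,\fin}$-invariant, and $\GL_2(\A)=\GL_2(\Q)\GL_2^+(\R)\GL_2(\widehat\Z)$.
\end{enumerate}
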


As in the adelic setting the Hecke algebras are compatible with the $\Phi$-operator.

\begin{prp}\label{prp:HeckeCommutationRelation}
	Let $p$ be a prime. There exists a homomorphism
	\[ \iota:\mathscr{H}(\SGamma_n ,\SDelta_n)\to\mathscr{H}(\PGL_2^+(\Z),\PGL_2^+(\Q))\otimes\mathcal{H}(\Sort_W) \]
	such that the diagram
	\[ \begin{tikzcd}
		\mathrm{M}_k(\SGamma_n,1) \arrow[rr, "T"] \arrow[d, "\Phi"] &  & \mathrm{M}_k(\SGamma_n,1) \arrow[d, "\Phi"] \\
		\mathrm{M}_k(\SL_2(\Z))\otimes_\C\C[\widetilde{I\!I}_{n-2,0}] \arrow[rr, "\iota(T)"] &  & \mathrm{M}_k(\SL_2(\Z))\otimes_\C\C[\widetilde{I\!I}_{n-2,0}]   
	\end{tikzcd} \]
	commutes. 
\end{prp}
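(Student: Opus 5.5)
The cleanest route is adelic: I would define $\iota$ through the normalized constant term $\iota_M^G$ of Section \ref{sec:HeckeOperatorsAndConstantTerms} and then translate to classical modular forms using the dictionaries already set up.

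Given $T\in\mathscr{H}(\SGamma_n,\SDelta_n)$, let $f_T\in\mathcal{H}(G)$ be the function attached to $T$ by Proposition \ref{prp:HeckeOperatorsAdelicClassicalAreTheSame}. Define
\[ \iota(T) \define \psi^{-k/2}\,\psi_\fin^{-(n-1)/2}\iota_M^G(f_T)\,\psi^{k/2}, \]
read as an operator on $\holautform{M}{K_{M,\fin}}$ and then transported to $\mathrm{M}_k(\SL_2(\Z))\otimes\C[\widetilde{I\!I}_{n-2,0}]$ via Theorem \ref{thm:ModularFormsAreAutomorphicFormsCusps} and Proposition \ref{prp:HeckeOperatorsAdelicClassicalAreTheSameCusps}. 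Concretely, it is the element of $\mathcal{H}(M)$ obtained from $\iota_M^G(f_T)$ by multiplying pointwise with the finite character $\psi_\fin^{-(n-1)/2}$, and additionally twisting by $\psi_\fin^{\pm k/2}$ to absorb the normalization $\psi^{-k/2}(\cdot)_P$ in the diagram of Theorem \ref{thm:ModularFormsAreAutomorphicFormsCusps}. Since $\iota_M^G(f_T)$ is compactly supported and bi-$K_{M,\fin}$-invariant, and $\psi_\fin$ is trivial on $K_{M,\fin}$, this is again an element of $\mathcal{H}(M)$.

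Commutativity of the diagram then follows by chasing. For $F\in\mathrm{M}_k(\SGamma_n,1)$, Proposition \ref{prp:HeckeOperatorsAdelicClassicalAreTheSame} gives $\phi_{TF}=R(f_T)\phi_F$. Equation \eqref{eq:HeckeCommutationRelationsAdelic} gives $(R(f_T)\phi_F)_P = R(\psi^{-(n-1)/2}\iota_M^G(f_T))(\phi_F)_P$. Finally, Theorem \ref{thm:ModularFormsAreAutomorphicFormsCusps} identifies $\psi^{-k/2}(\phi_F)_P$ with the adelization of $\Phi(F)$. Moving $\psi^{-k/2}$ past $R(h)$ for $h\in\mathcal{H}(M)$ replaces $h(y)$ by $\psi(y)^{-k/2}h(y)$, because $\psi$ is multiplicative on $M(\A)$; this is where the twist in the definition comes from. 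The result lies in $\holautform{M}{K_{M,\fin}}$, because $R(h)$ commutes with the right $M(\R)$-action and so preserves conditions (ii) and (iii). It has trivial central character because $f_T$ is invariant under the scalars in $G$, so only the PGL-quotient acts.

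It remains to show that $\iota$ is a homomorphism. The map $\iota_M^G$ is an algebra homomorphism, and pointwise multiplication by a character of $M(\A_\fin)$ is an algebra automorphism of $C_c^\infty(M(\A_\fin))$. Their composite is therefore multiplicative. It is also compatible with the factorization through $\mathscr{H}(\PGL_2^+(\Z),\PGL_2^+(\Q))\otimes\mathcal{H}(\Sort_W)$ of Proposition \ref{prp:HeckeOperatorsAdelicClassicalAreTheSameCusps}.

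I expect the main obstacle to be keeping the normalizations consistent: the inverse $\delta^{-1}$ in the dictionaries, the factor $p^{k/2-1}$ in $T(p)$, and the signs of the exponents of $\psi$. Here I would simply define $\iota$ so that the chase closes, since the statement only asserts existence. A secondary point is that the diagram on $\mathrm{M}_k(\SL_2(\Z))$ includes non-cuspidal $f$. This is harmless, because Theorem \ref{thm:ModularFormsAreAutomorphicFormsCusps} already covers all of $\mathrm{M}_k(\SL_2(\Z))\otimes\C[\widetilde{I\!I}_{n-2,0}]$.
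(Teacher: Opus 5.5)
Your proposal is correct and follows the paper's own argument. The paper embeds the square into a larger diagram passing through $\holautform{G}{K_\fin}$ and $\holautform{M}{K_{M,\fin}}$ whose middle arrow is $R\bigl(\psi^{-(n-k-1)/2}\iota_M^G(f)\bigr)$, and closes it using \eqref{eq:HeckeCommutationRelationsAdelic}, Theorem \ref{thm:ModularFormsAreAutomorphicFormsCusps} and Propositions \ref{prp:HeckeOperatorsAdelicClassicalAreTheSame} and \ref{prp:HeckeOperatorsAdelicClassicalAreTheSameCusps}.

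One small correction: moving $\psi^{-k/2}$ from outside $R(h)$ to the inside turns $h$ into $\psi^{+k/2}h$, not $\psi^{-k/2}h$, so your $\pm$ resolves to $+$. Your operator $\psi^{-k/2}R(\psi^{-(n-1)/2}\iota_M^G(f_T))\psi^{k/2}$ is then exactly $R\bigl(\psi^{-(n-k-1)/2}\iota_M^G(f_T)\bigr)$, as in the paper.
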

\begin{proof}
	The diagram fits into the larger commutative diagram
		\[ \begin{tikzcd}
		\mathrm{M}_k(\SGamma_n,1) \arrow[rr, "T"] \arrow[d, "\cong"] \arrow[ddd, "\Phi", bend right=49, shift right=14] &  & \mathrm{M}_k(\SGamma_n,1) \arrow[d, "\cong"] \arrow[ddd, "\Phi", bend left=49,shift left=14] \\
		\holautform{G}{K_\fin} \arrow[rr, "f"] \arrow[d, "\psi^{-k/2}\cdot(\cdot)_P"] &  & \holautform{G}{K_\fin} \arrow[d, "\psi^{-k/2}\cdot(\cdot)_P"] \\
		\holautform{M}{K_{M,\fin}} \arrow[rr, "\psi^{-(n-k-1)/2}\cdot\iota_M^G(f)"] \arrow[d, "\cong"]  &  &  \holautform{M}{K_{M,\fin}} \arrow[d, "\cong"] \\
		\mathrm{M}_k(\SL_2(\Z))\otimes_\C\C[\widetilde{I\!I}_{n-2,0}] \arrow[rr, "\iota(T)"] &  & \mathrm{M}_k(\SL_2(\Z))\otimes_\C\C[\widetilde{I\!I}_{n-2,0}]   
	\end{tikzcd} \]
	which is immediate by \eqref{eq:HeckeCommutationRelationsAdelic}, Theorem \ref{thm:ModularFormsAreAutomorphicFormsCusps} and Propositions \ref{prp:HeckeOperatorsAdelicClassicalAreTheSame} and \ref{prp:HeckeOperatorsAdelicClassicalAreTheSameCusps}.
\end{proof}

The simplest operator in $\mathscr{H}(\Gamma_n ,\Delta_n)$ is the double coset operator
\[ T_1(p^2) \define \Gamma_nt(p,1)\Gamma_n \]
for a prime $p$. We use the notation $T_1(p^2)$ for this operator, as in \cite{thesis_mmueller}. In \cite{KriegRoemerSchaps} for this operator an explicit decomposition into right cosets was determined and its action on the Eisenstein series $\Psi_{I\!I_{n,2}}^{\mathrm{add}}(E_k)$ was computed.
\begin{prp}\label{prp:T_1(p^2)}
	Let $p$ be a prime then
	\[ \iota(T_1(p^2)) = (p^{n-k}+p)T(p)\otimes\id + p^2\cdot\id\otimes T_{\Z/p\Z}.  \]
\end{prp}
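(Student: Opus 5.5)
The plan is to compute $\iota(T_1(p^2))$ via the commutative square in Proposition \ref{prp:HeckeCommutationRelation}. Concretely, I will evaluate the normalized constant term $\iota_M^G$ of $\mathbf 1_{K_p t(p,1)^{-1}K_p}$, twisted by $\psi_p^{-(n-1)/2}$ and then by the weight normalization $\psi^{\pm k/2}$, as an element of $\mathcal H_p(M)$. It is cleanest to work directly with the right coset decomposition of $\Gamma_n t(p,1)\Gamma_n$ from \cite{KriegRoemerSchaps}. Each right coset $\Gamma\delta_i$ can be taken of the form $\delta_i = m_i n_i$ with $m_i\in M(\Q)$ and $n_i\in N(\Q)$. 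Applying $\Phi_D$ (the limit $\tau'\to i\infty$) kills the $N$-part up to a lattice-index count. So $\Phi(F|T)$ is a sum over the Levi components $m_i$, weighted by the number of $N$-cosets lying over each $m_i$.

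The Levi parts are sorted according to their $\GL_2$-component and their $\Sort_W$-component, which gives three types of cosets.
\begin{enumerate}[(a)]
\item The $\GL_2$-part lies in $\SL_2(\Z)\diag(p,1)\SL_2(\Z)$ and the $\Sort_W$-part is integral. For these I will use Lemma \ref{lem:ActionOfM} to compute $j$ and the action on $\tau$. The factors $\det(m)^{1/2}$ then combine with the unipotent multiplicities to give the coefficient of $T(p)\otimes\id$.
\item The $\GL_2$-part is $p\cdot\mathrm{SL}_2$-type or $\diag(p,p)$ up to scalars, i.e.\ trivial in $\PGL_2$, and the $\Sort_W$-part lies in the double coset of a $p$-neighbour. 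These produce $\id\otimes T_{\Z/p\Z}$, since the corresponding $K_{W,p}$-double coset is the neighbour operator (Section 3).
\item The degenerate ones: $\GL_2$-part trivial in $\PGL_2$ and $\Sort_W$-part integral. These would contribute a multiple of the identity, and I must check that they cancel or are absent.
\end{enumerate}

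Alternatively, and as a cross-check, I will use the Satake side. $\Sat(T_1(p^2))$ is, up to a lower term, the standard representation of $\Sort_{n+2}(\C)$, essentially the minuscule-type coweight $\varepsilon_1$. Its restriction to $\widehat M=\GL_2\times\Sort_{n-2}$ decomposes as
\[ \Std_{n+2}|_{\widehat M}=\Std_{\GL_2}\oplus\Std_{\GL_2}^\vee\oplus\Std_{\Sort_{n-2}}. \]
Translating back, the $\Std_{\GL_2}\oplus\Std_{\GL_2}^\vee$ part gives $T(p)$ evaluated at the two shifts $\psi_p^{\pm1/2}$ twisted by $p^{(n-1)/2}$ and $p^{k/2}$. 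That produces the exponents $p^{n-k}$ and $p$ after the $p^{k/2-1}$ normalization of $T(p)$. The $\Std_{\Sort_{n-2}}$ part gives $p^2\,T_{\Z/p\Z}$ together with a constant, since $T_{\Z/p\Z}$ corresponds to $\Std_{\Sort_{n-2}}$ plus a scalar.

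The main obstacle is the careful bookkeeping of normalizations. This includes the $\psi^{-k/2}$ twist in Theorem \ref{thm:ModularFormsAreAutomorphicFormsCusps}, the $\psi_p^{-(n-1)/2}$ in \eqref{eq:HeckeCommutationRelationsAdelic}, the $p^{k/2-1}$ in $T(p)$, and the inverse $\delta^{-1}$ in Proposition \ref{prp:HeckeOperatorsAdelicClassicalAreTheSame}. On top of that, the scalar terms from $\Sat(T_1(p^2))$ and from $\Sat(T_{\Z/p\Z})$ must cancel exactly, so that no $\id\otimes\id$ term survives. To pin these down I will first test the formula on $F=\Psi^{\mathrm{add}}_{I\!I_{n,2}}(E_{k-n/2+1})$. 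For this form $\Phi(F)$ is a multiple of $E_k\otimes\sum_D[D]$, and its $T_1(p^2)$-eigenvalue is known from \cite{KriegRoemerSchaps}. Matching it with $(p^{n-k}+p)\sigma_{k-1}(p)p^{1-k/2}\cdots+p^2\deg T_{\Z/p\Z}$ fixes all constants, and the general case then follows since $\iota$ is determined by the Satake restriction.
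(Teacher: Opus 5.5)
Your Satake route is the one the paper takes, but neither of your two arguments closes as written. The coset route fails at its first step. A right coset $\SGamma_n\delta$ has a representative in $P_D(\Q)$ only if $\delta F_D\in\SGamma_n F_D$. In general, however, $\delta F_D$ lies in the orbit of $F_{D'}$ for a $p$-neighbour $D'$ of $D$ that need not be isometric to $D$. These are exactly the cosets that bring $\Phi_{D'}(F)$ into $\Phi_D(F|T_1(p^2))$, i.e.\ they produce the off-diagonal entries of $T_{\Z/p\Z}$. Conversely, for $\delta=(m,\omega)n\in P_D(\Q)$ the limit $\tau'\to i\infty$ does not see the rational component $\omega$ at all, since it only moves $\mathfrak z$ (cf.\ Lemma \ref{lem:ActionOfM}). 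So your type (b) with rational Levi parts yields a multiple of $\Phi_D(F)$, never $\id\otimes T_{\Z/p\Z}$. The phrase ``the $\Sort_W$-part lies in the neighbour double coset'' only makes sense for the $p$-adic Levi component in $P(\Q_p)$, i.e.\ after passing to the adelic constant term and $\iota_M^G$.

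In the Satake route you posit a lower term in $\Sat(T_1(p^2))$ and a scalar in $\Sat(T_{\Z/p\Z})$. You then plan to fix these, together with all normalizations, by matching the $T_1(p^2)$-eigenvalue of $\Psi^{\mathrm{add}}_{I\!I_{n,2}}(E_{k-n/2+1})$. But one eigenvalue is a single linear relation among several unknown coefficients: the two $\GL_2$ blocks, the $T_{\Z/p\Z}$ term and a putative identity term. It cannot pin them down.

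The missing fact is that both double cosets belong to the coweight $\varepsilon_1$, which is minuscule in type $D$ (the weights of the standard representation form one Weyl orbit). Hence $\Sat(\mathbf 1_{K_p\varepsilon_1(p)K_p})=p^{\langle\rho,\varepsilon_1\rangle}[\Std]$ exactly, i.e.\ $\Sat(T_1(p^2))=p^{n/2}[\Std_{\Sort_{n+2}}]$ and $\Sat(T_{\Z/p\Z})=p^{n/2-2}[\Std_{\Sort_{n-2}}]$. Likewise $\Sat(H_p)=p^{1/2}[\Std_{\GL_2}]$ and $\Sat(Z_p)=[\det]$ for $H_p=\mathbf 1_{\GL_2(\Z_p)\diag(p,1)\GL_2(\Z_p)}$ and $Z_p=\mathbf 1_{p\GL_2(\Z_p)}$. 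Using $\Std_{\GL_2}^\vee={\det}^{-1}\otimes\Std_{\GL_2}$, the restriction gives $\iota_M^G(T_1(p^2))=p^{(n-1)/2}(H_p+Z_p^{-1}H_p)\otimes1+p^2\cdot1\otimes T_{\Z/p\Z}$, with no identity term to cancel. The combined twist $\psi_p^{-(n-1)/2}\psi_p^{k/2}=\psi_p^{-(n-k-1)/2}$ multiplies the two $\GL_2$ terms by $p^{(n-k-1)/2}$ and $p^{-(n-k-1)/2}$, since $\psi_p=p^{-1}$, resp.\ $p$, on their supports. Finally $Z_p$ acts trivially and $H_p$ acts as $p^{1-k/2}T(p)$, which gives exactly the coefficients $p^{n-k}$ and $p$. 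The additive lift is then only a consistency check. Note also that $T(p)$ already has eigenvalue $\sigma_{k-1}(p)$ on $E_k$, so the extra factor $p^{1-k/2}$ in your test expression should be dropped.
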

\begin{proof}
Put $H_p=\mathbf1_{\GL_2(\Z_p)\diag(p,1)\GL_2(\Z_p)}$ and $Z_p=\mathbf1_{p\GL_2(\Z_p)}$. Before passing to the quotient by the center, the normalized Satake formulas are
\begin{align*}
\Sat(T_1(p^2))&=p^{n/2}[\Std_{\Sort_{n+2}}],\\
\Sat(T_{\Z/p\Z})&=p^{n/2-2}[\Std_{\Sort_{n-2}}],\\
\Sat(H_p)&=p^{1/2}[\Std_{\GL_2}],\qquad \Sat(Z_p)=[\det].
\end{align*}
See \cite[Section 6.2]{ChenevierLannes}. Since
\[ \Std_{\Sort_{n+2}}|_{\widehat M}=\Std_{\GL_2}\oplus\Std_{\Sort_{n-2}}\oplus\Std_{\GL_2}^\vee,\qquad
\Std_{\GL_2}^\vee={\det}^{-1}\otimes\Std_{\GL_2}, \]
we obtain
\[ \iota_M^G(T_1(p^2))=p^{(n-1)/2}(H_p+Z_p^{-1}H_p)\otimes1+p^2\cdot1\otimes T_{\Z/p\Z}. \]
Multiplication by $\psi_p^{-(n-k-1)/2}$ multiplies the two $\GL_2$ terms by $p^{(n-k-1)/2}$ and $p^{-(n-k-1)/2}$, respectively. Now $Z_p$ acts trivially and $H_p$ acts as $p^{1-k/2}T(p)$. The coefficients are therefore $p^{n-k}$ and $p$, as asserted. In particular, the restriction of representations is taken in $\widehat M=\GL_2(\C)\times\Sort_{n-2}(\C)$ before passing to the trivial-central-character quotient.
\end{proof}

Consider the irreducible representation $\bigwedge^{n/2+1}\Std_{\Ort_{n+2}}$ of $\Ort_{n+2}$. As a representation of $\Sort_{n+2}$ it decomposes into two irreducible representations 
\[ \bigwedge^{n/2+1}\Std_{\Ort_{n+2}}|_{\Sort_{n+2}} = V_+ \oplus V_-. \]
We define the operator $T_-(p^2)\in\mathscr{H}(\SGamma_n ,\SDelta_n)$ as the operator corresponding to
\[ \Sat^{-1}([V_+] - [V_-]). \]
Then $T_-(p^2)\in\mathscr{H}_1$. Define $T_{W,-}(p^2)$ in $\mathcal{H}(\Sort_W)$ by the analogous virtual representation for $\Sort_{n-2}$.
\begin{prp}\label{prp:OddHeckePhi}
	Let $p$ be a prime then
	\[ \iota(T_-(p^2))=
	\bigl((p^{n-k-1}+p^{-(n-k-1)}+2)\id-p^{-(k-1)}T(p)^2\bigr)\otimes T_{W,-}(p^2).  \]
\end{prp}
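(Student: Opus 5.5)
The plan follows the proof of Proposition \ref{prp:T_1(p^2)}: compute the Satake transform of $T_-(p^2)$ restricted to $\widehat M$, split off the $\GL_2$- and $\Sort_{n-2}$-parts, and then pass to the classical normalization by the twist $\psi_p^{-(n-k-1)/2}$ and $H_p=p^{1-k/2}T(p)$. By Proposition \ref{prp:HeckeCommutationRelation} and \eqref{eq:HeckeCommutationRelationsAdelic}, $\iota(T_-(p^2))$ is the image of $\psi_p^{-(n-k-1)/2}\iota_M^G(T_-(p^2))$. By the commutative Satake diagram, $\iota_M^G(T_-(p^2))=\Sat^{-1}\bigl(\operatorname{Res}^{\widehat G}_{\widehat M}([V_+]-[V_-])\bigr)$. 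So the first task is to compute this virtual restriction.

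\emph{Character computation.} Restricted to a maximal torus, the character of $[V_+]-[V_-]$ for $\Sort_{2r}(\C)$ with eigenvalues $x_1^{\pm1},\dots,x_r^{\pm1}$ is, up to a global sign fixed by the labeling of $V_\pm$, the product $\prod_{i=1}^r(x_i-x_i^{-1})$. This is the standard formula for the difference of the two half-exterior powers, obtained by expanding $\bigwedge^\bullet$ of $\bigoplus_i(L_i\oplus L_i^{-1})$.

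Now
\[ \Std_{\Sort_{n+2}}|_{\widehat M}=\Std_{\GL_2}\oplus\Std_{\GL_2}^\vee\oplus\Std_{\Sort_{n-2}}, \]
and the torus eigenvalues split as $x,y$ from $\GL_2$ and $x_3,\dots,x_{n/2+1}$ from $\Sort_{n-2}$. The product therefore factors as
\[ (x-x^{-1})(y-y^{-1})\cdot\prod_{i\ge3}(x_i-x_i^{-1}). \]
The second factor is the character of $[V_+^W]-[V_-^W]$, which defines $T_{W,-}(p^2)$, for compatible choices of orientation. For the first factor,
\[ (x-x^{-1})(y-y^{-1})=xy+(xy)^{-1}+2-(x+y)(x^{-1}+y^{-1}), \]
which is the character of
\[ [\det]+[\det^{-1}]+2-[\Std_{\GL_2}\otimes\Std_{\GL_2}^\vee]. \]
Using $\Std^\vee=\det^{-1}\otimes\Std$, this equals $[\det]+[\det]^{-1}+2-[\det]^{-1}[\Std_{\GL_2}]^2$.

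\emph{Translation to Hecke operators.} Apply $\Sat^{-1}$ with $\Sat(H_p)=p^{1/2}[\Std_{\GL_2}]$ and $\Sat(Z_p)=[\det]$. This gives
\[ \iota_M^G(T_-(p^2))=\bigl(Z_p+Z_p^{-1}+2-p^{-1}Z_p^{-1}H_p^2\bigr)\otimes T_{W,-}(p^2). \]
Next multiply by $\psi_p^{-(n-k-1)/2}$. Since $|\det|_p=p^{-2}$ on $p\GL_2(\Z_p)$, the factor $Z_p$ picks up $p^{n-k-1}$ and $Z_p^{-1}$ picks up $p^{-(n-k-1)}$. Each $H_p$ picks up $p^{(n-k-1)/2}$, so the term $Z_p^{-1}H_p^2$ picks up net factor $1$. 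On trivial central character $Z_p$ acts as the identity, and $H_p=p^{1-k/2}T(p)$. Hence $p^{-1}H_p^2$ becomes $p^{-1}p^{2-k}T(p)^2=p^{-(k-1)}T(p)^2$, which yields the asserted formula.

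\emph{Main obstacle.} The hardest step is the branching identity for $[V_+]-[V_-]$, in particular the sign. The labeling of $V_+$ versus $V_-$ for $\Sort_{n+2}$ must be chosen compatibly with that for $\Sort_{n-2}$ and with the orientation of $F\oplus F'$, so that no stray $-1$ appears. I would fix this by checking that the highest weights $\varepsilon_1+\dots+\varepsilon_{r-1}\pm\varepsilon_r$ restrict consistently under the embedding of tori. I would also verify that no extra power of $p$ enters the Satake normalization of $T_-(p^2)$ and $T_{W,-}(p^2)$; both are defined directly as $\Sat^{-1}$ of the virtual representation, so the two sides carry the same normalization.
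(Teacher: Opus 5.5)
Your proposal is correct and follows the paper's proof. The paper likewise fixes the labels of $V_\pm$ and $V_{W,\pm}$ so that their characters are $\prod_{i=1}^r(x_i-x_i^{-1})$ and $\prod_{i=3}^r(x_i-x_i^{-1})$, splits off the $\GL_2$-part as $[\det]+[\det]^{-1}+2-[\det]^{-1}[\Std_{\GL_2}]^2$, and then applies the twist and passes to trivial central character. Your explicit bookkeeping correctly fills in that last step, which the paper leaves implicit: $Z_p^{\pm1}$ picks up $p^{\pm(n-k-1)}$, the term $Z_p^{-1}H_p^2$ picks up net factor $1$, and $H_p=p^{1-k/2}T(p)$.
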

\begin{proof}
	As for $T_1(p^2)$ we compute the restriction of $[V_+]-[V_-]$ to $\widehat{M}$. This can be done by considering characters. Put $r=n/2+1$. Choose the labels $V_+,V_-$ such that on a diagonal torus of $\Sort_{2r}(\C)$
	\[ \tr_{V_+-V_-}(\diag(x_1,\ldots,x_r,x_1^{-1},\ldots,x_r^{-1}))=\prod_{i=1}^r(x_i-x_i^{-1}). \]
	This follows by taking the difference of the characters of the two summands of the exterior power. By the same argument $[V_{W,+}]-[V_{W,-}]$ has character $\prod_{i=3}^r(x_i-x_i^{-1})$. On the $\mathrm{GL}_2$-factor we have
	\begin{align*}
		(x_1-x_1^{-1})(x_2-x_2^{-1})
		&=x_1x_2+(x_1x_2)^{-1}-x_1/x_2-x_2/x_1\\
		&=x_1x_2+(x_1x_2)^{-1}+2-(x_1x_2)^{-1}(x_1+x_2)^2.
	\end{align*}
	Since $x_1x_2$ is the character of ${\det}_{\GL_2}$ and $x_1+x_2$ is the character of $\Std_{\GL_2}$, restriction to $\widehat M$ gives
	\[ \operatorname{Res}_{\widehat M}([V_+]-[V_-])=
		\bigl([\det]+[\det]^{-1}+2-[\det]^{-1}[\Std_{\GL_2}]^2\bigr)\otimes([V_{W,+}]-[V_{W,-}]). \]
	Applying the twist in the Hecke constant term and then passing to trivial central character, we obtain the claimed formula.
\end{proof}
Here $T(p)^2$ denotes the square of the elliptic Hecke operator, not $T(p^2)$.

We consider again $\Phi_{12}\in\mathcal{E}_{12}^1(\Gamma_{26},\det)$. Recall that
\[ \Phi(\Phi_{12}) = \Delta\otimes([\Lambda^+] - [\Lambda^-]) \]
and
\[ T_{\Z/p\Z} ([\Lambda^+] - [\Lambda^-]) = \tau(p)\frac{p^{12}-1}{p-1}([\Lambda^+] - [\Lambda^-]). \]
Since $\mathcal{E}_{n/2-1}^0(\Gamma_n,\det) = \{0\} = \mathrm{S}_{n/2-1}(\Gamma_n,\det)$ and $I\!I_{24,0}$ contains only one chiral lattice, $\dim(\mathcal{E}_{n/2-1}^1(\Gamma_{26},\det)) = 1$ and $\Phi_{12}$ must be a simultaneous eigenform for $\mathscr{H}(\Gamma_n ,\Delta_n)$. Using Proposition \ref{prp:T_1(p^2)}, we deduce that the eigenvalue of $\Phi_{12}$ for $T_1(p^2)$ is
\begin{align*}
	(p^{26-12} + p)\cdot\tau(p) &+ p^2\cdot\tau(p)\frac{p^{12}-1}{p-1} \\
	&=p\cdot \tau(p)\frac{(p^{13}+1)(p-1) + p^{13}-p}{p-1} \\
	&= p\cdot \tau(p)\frac{p^{14}-1}{p-1}.
\end{align*}

Denote by $\vartheta_{I\!I_{m,0}}^{(g)}:\C[I\!I_{m,0}]\to\mathrm{M}_{m/2}(\Sp_{2g}(\Z))$ the map given by
\[ \vartheta_{I\!I_{m,0}}^{(g)}([D]) = \left(Z\mapsto\frac{1}{|\Aut(D)|}\sum_{\lambda\in D^g}e^{\pi i\tr((\lambda,\lambda)Z)},\qquad Z\in\mathbb{H}_g\right). \]

Then we have
\begin{prp}\label{prp:Psi12}
	There exists a non-zero form $\Psi_{12}\in\mathrm{M}_{12}(\Gamma_{26},1)$. We have $\Phi(\Psi_{12}) = \Delta\otimes v$ where $v\in\C[I\!I_{24,0}]$ is the unique vector such that $\vartheta_{I\!I_{24,0}}^{(12)}(v)$ is the Ikeda-lift of $\Delta$.
\end{prp}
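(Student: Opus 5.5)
The plan is to realise $\Psi_{12}$ as the singular-weight Eisenstein series $E_{\Delta\otimes v}$ with $n=26$, $k=12$, where $v\in\C[I\!I_{24,0}]=\C[I\!I_{24,0}^1]$ is the Ikeda vector. This requires three steps: identify $v$, determine its standard parameter, and then apply the holomorphicity criterion of Section \ref{sec:EisensteinSeries}.

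\textbf{Step 1: the vector $v$.} The Ikeda lift $I_{12}(\Delta)\in\mathrm{S}_{12}(\Sp_{24}(\Z))$ exists: weight $12$ and degree $12$ satisfy Ikeda's parity condition for $f=\Delta$. By the basis problem in this rank (Böcherer, Nebe--Venkov), $\vartheta^{(12)}_{I\!I_{24,0}}$ is surjective onto the relevant eigenform line, so some $v$ maps to $I_{12}(\Delta)$. Uniqueness of $v$ follows from the fact that $\vartheta^{(12)}$ is injective on $\C[I\!I_{24,0}]$. This injectivity holds because the degree-$24$ theta series of the Niemeier lattices are linearly independent and the degree-$12$ kernel is already trivial (Nebe--Venkov). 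Since $\vartheta$ intertwines Hecke operators (Rallis), $v$ is a Hecke eigenvector.

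\textbf{Step 2: the standard parameter of $\pi_v$.} We need $\psi(\pi_v)=\pi_\Delta[12]$, and we must obtain this without (H1). I would read it off from the Hecke compatibility of the theta correspondence $\Ort_{24}\leftrightarrow\Sp_{24}$ at the unramified places. The Satake parameter of $I_{12}(\Delta)$ in $\Sort_{25}(\C)$ is $s_p(\pi_\Delta)\otimes[12]_p\oplus 1$, since its standard $L$-function is $\zeta(s)\prod_j L(s+j-\tfrac{11}{2},\Delta)$. Rallis' relation for stable-range theta lifts of equal rank then gives $\Std(s_p(\pi_v))=s_p(\pi_\Delta)\otimes[12]_p$ in $\Sort_{24}(\C)$. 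Here we use $L(\tfrac12,\Delta)\ne0$, which is exactly the nonvanishing hypothesis in the singular-weight theorem.

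I expect this step to be the main obstacle. The difficulty is that it requires checking that Rallis' local correspondence sends the spherical representation with the Ikeda parameter to the expected one on $\Sort_{24}$, with the normalisations matching those of \cite{ChenevierLannes}. Alternatively, the eigenvalue formula $\tau(p)\frac{p^{12}-1}{p-1}$ for $T_{\Z/p\Z}$ can be compared with the Chenevier--Lannes table of the $24$ parameters occurring in $\C[I\!I_{24,0}]$, which is unconditional for $n-2=24$. The first route is the one I would try first.

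\textbf{Step 3: holomorphicity.} We are in case (iv) of Theorem \ref{thm:mainThmeoremIntertwinging}, since $k=12<n-2=24$ and the parameter $\pi_\Delta[12]$ has $d_1=12=n-k-2$. The side condition is vacuous because no summand has $d_i=n-k-1=13$. Hence $m(s)$ vanishes at $s_0=-(n-k-1)/2$, so $M(w,s)\varphi_s$ vanishes there. At $s_0$ the section reduces to $\varphi_{\infty,s_0}(g)=j(g,Z_0)^{-12}$ times the holomorphic adelisation of $\Delta\otimes v$, so $R(\mfp_-)\varphi_{s_0}=0$. Proposition \ref{prp:HolomorphicityConstantTerms} then shows that $E(\cdot,\varphi_{s_0})$ is regular and annihilated by $\mfp_-$. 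By Theorem \ref{thm:ModularFormsAreAutomorphicForms} it corresponds to a form $\Psi_{12}\in\mathrm{M}_{12}(\SGamma_{26},1)$.

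The constant term is $\varphi_{s_0}$, because the $M(w,s_0)$-term vanishes. Theorem \ref{thm:ModularFormsAreAutomorphicFormsCusps} therefore gives $\Phi(\Psi_{12})=\Delta\otimes v\neq0$, so $\Psi_{12}\ne0$. Finally, $v$ lies in the $+1$ eigenspace for orientation reversal, so $\Psi_{12}\in\mathrm{M}_{12}(\Gamma_{26},1)$. This follows by projecting with $\tfrac12(1+r)$, which fixes $\Phi$ and commutes with the construction.
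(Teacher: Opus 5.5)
Your proposal is correct, but for this proposition the paper argues differently. Your construction is essentially the paper's later ``second proof'' (Theorems \ref{thm:SingularWeightEisensteinSeries} and \ref{thm:Phi12IsEisensteinSeries}), specialised to $n=26$.

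The paper's proof uses no Eisenstein series. It takes the Borcherds product $\Phi_{12}\in\mathrm{M}_{12}(\Gamma_{26},\det)$ and applies the odd Hecke operator $T_-(p^2)\in\mathscr{H}_1$, which switches the character. By Proposition \ref{prp:OddHeckePhi}, the boundary value is a positive scalar times $\Delta\otimes T_{W,-}(p^2)v_\Delta^{\det}$. This vector is non-zero because the known parameter $\pi_\Delta[12]$ of $v_\Delta^{\det}$ has no standard eigenvalue $\pm1$. It lies in $\C[I\!I_{24,0}]$ and has the same $\mathcal{H}(\Ort_W)$-eigenvalues as $v_\Delta^{\det}$. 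That eigenspace is one-dimensional, since $T_{\Z/2\Z}$ has simple spectrum by Nebe--Venkov. Chenevier--Lannes together with Borcherds--Freitag--Weissauer then identify it with the Ikeda vector.

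Comparing the two routes:
\begin{itemize}
\item The paper's route avoids analytic continuation and the constant-term/intertwining analysis. It never needs the parameter of the trivial-character vector, whose eigenvalues are inherited from $v_\Delta^{\det}$ by commutativity of the Hecke algebra.
\item Your route dispenses with the Borcherds product and the odd Hecke computation. It works for every $n$ and every $f$ admitting a vector of parameter $\pi_f[n/2-1]$, and it shows directly that $\Psi_{12}$ is an Eisenstein series.
\item In exchange, your route needs all of Section \ref{sec:EisensteinSeries}. It also needs an independent input, Rallis' unramified compatibility, to get the parameter of $v_\Delta^1$.
\end{itemize}

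Two points need tidying.

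First, the vector $v=v_\Delta^1$ is an eigenvector for $\mathcal{H}(\Ort_W)$ only. Odd operators such as $T_{W,-}(p^2)$ send it to a non-zero multiple of $v_\Delta^{\det}$. So in Step 3 you should split $v$ into its two $\mathcal{H}(\Sort_W)$-eigencomponents. These have outer-conjugate Satake parameters and hence the same standard parameter $\pi_\Delta[12]$. Since $c(s)$ only involves $\Std_{\GL_2}\otimes\Std_{\Sort_{n-2}}$ and $\det$, case (iv) applies to each component, and you add the results. This is the remark following Theorem \ref{thm:EisensteinseriesClassicalIsAdelic}.

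Second, $L(\tfrac12,\pi_\Delta)\neq0$ plays no role in Rallis' Satake compatibility, which only needs $\vartheta^{(12)}(v)\neq0$. It belongs to the existence of $v$ in Step 1. There, the injectivity of $\vartheta^{(12)}$ should be a direct citation (Borcherds--Freitag--Weissauer). It does not follow from linear independence in degree $24$, which is the weaker statement.
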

\begin{proof}
	Write $v_\Delta^{\det}=[\Lambda^+]-[\Lambda^-]$. Its standard parameter is $\pi_\Delta[12]$ by \cite[Section 7.5]{ChenevierLannes}. Let $\alpha_p,\alpha_p^{-1}$ be the unitary Satake eigenvalues of $\pi_\Delta$, thus $\alpha_p+\alpha_p^{-1}=p^{-11/2}\tau(p)$ and $|\alpha_p|=1$ by \cite{Deligne}. The standard eigenvalues for $\pi_\Delta[12]$ are
	\[ \alpha_p^{\pm1}p^{j-11/2},\qquad 0\leq j\leq11. \]
	None is $1$ or $-1$. Hence, the square of the character $\prod_i(x_i-x_i^{-1})$ is non-zero on this parameter. This square is invariant under the outer involution and therefore acts on $v_\Delta^{\det}$ by a non-zero scalar. It follows that $T_{W,-}(p^2)v_\Delta^{\det}\ne0$. By Proposition \ref{prp:OddHeckePhi},
	\[ \Phi(T_-(p^2)\Phi_{12})=
	\bigl(p^{13}+p^{-13}-(\alpha_p^2+\alpha_p^{-2})\bigr)\Delta\otimes T_{W,-}(p^2)v_\Delta^{\det}. \]
	The scalar is positive since $\alpha_p^2+\alpha_p^{-2}\leq2<p^{13}+p^{-13}$. Thus $T_-(p^2)\Phi_{12}\ne0$ for every prime $p$ and it has trivial character. Since the Hecke algebra is commutative, the non-zero vector $T_{W,-}(p^2)v_\Delta^{\det}\in\C[I\!I_{24,0}]$ has the same eigenvalues as $v_\Delta^{\det}$ for every operator in $\mathcal{H}(\Ort_W)$.

	This common eigenspace is one-dimensional: already $T_{\Z/2\Z}$ has distinct eigenvalues on $\C[I\!I_{24,0}]$ and the eigenvalue in question is $\tau(2)(2^{12}-1)=-98280$ (see \cite[Theorem 3.1]{NebeVenkov}). Its degree-$12$ theta lift is non-zero and proportional to the Ikeda lift of $\Delta$ by \cite[Theorem 7.3.2]{ChenevierLannes}. Normalize a scalar multiple $\Psi_{12}$ of $T_-(p^2)\Phi_{12}$ so that its boundary vector has theta lift equal to the chosen normalization of the Ikeda lift. The injectivity of the degree-$12$ theta map in rank $24$ (see \cite{BorcherdsFreitagWeissauer}) gives the asserted uniqueness of $v$.
\end{proof}

\begin{cor}\label{cor:SingularWeightFormsn=26}
	We have
	\[ \mathrm{M}_{12}(\SGamma_{26},1) = \underbrace{\C \Psi_{I\!I_{26,2}}^{\mathrm{add}}(1)}_{\subset\mathcal{E}_{12}^0(\Gamma_{26},1)} \oplus \underbrace{\C \Psi_{12}}_{\subset\mathcal{E}_{12}^1(\Gamma_{26},1)} \oplus \underbrace{\C \Phi_{12}}_{\subset\mathcal{E}_{12}^1(\Gamma_{26},\det)}. \]
\end{cor}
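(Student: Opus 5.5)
The plan is to split $\mathrm{M}_{12}(\SGamma_{26},1)=\mathrm{M}_{12}(\Gamma_{26},1)\oplus\mathrm{M}_{12}(\Gamma_{26},\det)$ and, for each character, use the decomposition
\[ \mathrm{M}_k(\Gamma,\chi)=\mathcal{E}^0_k(\Gamma,\chi)\oplus\mathcal{E}^1_k(\Gamma,\chi)\oplus\mathrm{S}_k(\Gamma,\chi). \]
Since $12=n/2-1$ is the singular weight, every form lies in $\mathcal{E}_{12}(\Gamma,\chi)$, so $\mathrm{S}_{12}=\{0\}$ in both cases. By the description of $\mathcal{E}^0$ via the additive lift, $\mathcal{E}^0_{12}(\Gamma_{26},\det)=\{0\}$ and $\mathcal{E}^0_{12}(\Gamma_{26},1)=\C\Psi^{\mathrm{add}}_{I\!I_{26,2}}(E_0)=\C\Psi^{\mathrm{add}}_{I\!I_{26,2}}(1)$, which is nonzero. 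It therefore remains to show that $\mathcal{E}^1_{12}(\Gamma_{26},\chi)$ is one-dimensional for each $\chi$, spanned by $\Psi_{12}$ for $\chi=1$ and by $\Phi_{12}$ for $\chi=\det$.

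For the upper bound, let $F\in\mathcal{E}^1_{12}(\Gamma_{26},\chi)$. Then $\Phi(F)\in\mathrm{S}_{12}(\SL_2(\Z))\otimes\C[I\!I^\chi_{24,0}]=\C\Delta\otimes\C[I\!I^\chi_{24,0}]$, so $\Phi(F)=\Delta\otimes v$. If $v\neq0$, Theorem \ref{thm:NeighbourFormulaUsingOrthogonalModularForms} gives $T_{\Z/p\Z}v=\tau(p)\frac{p^{12}-1}{p-1}v$. By \cite[Theorem 3.1]{NebeVenkov}, $T_{\Z/2\Z}$ has simple spectrum on $\C[I\!I_{24,0}]$, so this eigenvalue determines $v$ up to scalar. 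In the $\det$-eigenspace, $\C[I\!I^{\det}_{24,0}]$ is already one-dimensional, spanned by $[\Lambda^+]-[\Lambda^-]$, because the Leech lattice is the only chiral Niemeier lattice. Hence $\Phi$ maps $\mathcal{E}^1_{12}(\Gamma_{26},\chi)$ into a space of dimension at most one.

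The main obstacle is injectivity of $\Phi$ on $\mathcal{E}^1$. If $\Phi(F)=0$, then $F$ lies in $\mathrm{S}_{12}\cap\mathcal{E}_{12}$, which is zero. Thus each $\mathcal{E}^1_{12}(\Gamma_{26},\chi)$ has dimension at most one.

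For the lower bound, $\Phi_{12}\in\mathcal{E}^1_{12}(\Gamma_{26},\det)$ is nonzero. Proposition \ref{prp:Psi12} supplies $\Psi_{12}\in\mathrm{M}_{12}(\Gamma_{26},1)$ with $\Phi(\Psi_{12})=\Delta\otimes v\neq0$. Since $\Phi(\Psi_{12})$ is nonzero and cuspidal in the $\tau$-variable, the $\mathcal{E}^0$ and $\mathrm{S}$ components of $\Psi_{12}$ are zero, so $\Psi_{12}\in\mathcal{E}^1_{12}(\Gamma_{26},1)$. Finally, the three listed forms have distinct characters or distinct boundary values under $\Phi$ (an $E_0$ term versus a $\Delta$ term), so the sum is direct. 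This gives the stated decomposition.
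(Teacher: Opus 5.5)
Your proposal is correct and follows essentially the same route as the paper: singular-weight cusp forms vanish, and $\mathcal{E}^0$ is spanned by the additive lift of $1$. Theorem \ref{thm:NeighbourFormulaUsingOrthogonalModularForms}, together with the simple $T_{\Z/2\Z}$-spectrum from Nebe--Venkov (and $\dim\C[I\!I^{\det}_{24,0}]=1$ in the $\det$ case), bounds $\mathcal{E}^1_{12}$ for each character, and the three forms have independent boundary values. The injectivity of $\Phi$ that you flag as the ``main obstacle'' is in fact immediate, since $\mathrm{S}_{12}=\{0\}$ in singular weight.
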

\begin{proof}
The two forms constructed above and $\Psi_{I\!I_{26,2}}^{\mathrm{add}}(1)$ have independent boundary values. Hence, they are linearly independent. Furthermore, singular-weight cusp forms vanish and by Theorem \ref{thm:NeighbourFormulaUsingOrthogonalModularForms} and the arguments thereafter no other singular weight form can exist.
\end{proof}

\subsection*{Eisenstein series}\label{sec:HolomorphicEisensteinSeries}

As before let $\Gamma$ be either $\Gamma_n$ or $\SGamma_n$. Let $P_D\define\Stab_{\Ort(V)}(F_D)$ and define $\Gamma_D \define P_D(\Z)\cap \Gamma$. Let $f\in\mathrm{S}_k(\SL_2(\Z))$ and $[D]\in I\!I_{n-2,0}$. If $\Gamma = \Gamma_n$ and $\chi = \det$, then $D$ must be chiral. We define the \emph{Klingen--Eisenstein series}
\begin{align*}
	E_{\Gamma,f,[D],\chi}(Z,s) &\define \sum_{\gamma\in\Gamma_D\backslash\Gamma}\chi(\gamma)f((-\rho_D,\cdot))|_k[\gamma](Z)\frac{(-\q(\Ima(\gamma Z)))^{s}}{(-\rho_D,\Ima(\gamma Z))^{s}} \\
	&= \sum_{\gamma\in\Gamma_D\backslash\Gamma}\chi(\gamma)f((-\rho_D,\cdot))|_k[\gamma](Z)\!\left(\frac{-\q(\Ima(Z))}{|j(\gamma,Z)|^2(-\rho_D,\Ima(\gamma Z))}\right)^{s}.
\end{align*}
For $\SGamma_n$ we use proper lattice classes. For $\Gamma_n$ and chiral $D$, the sum is the sum or difference of the two series for $\SGamma_n$ of proper lattice classes, according to $\chi$, equivalently it is twice the corresponding character projection of either series, with compatible choices of orientation. For achiral $D$ only the trivial-character boundary value is possible. Now let $f = \sum_{[D]\in \widetilde{I\!I}_{n-2,0}}f_{[D]}[D]\in\mathrm{S}_k(\SL_2(\Z))\otimes_\C\C[\widetilde{I\!I}_{n-2,0}]$ and define
\[ E_{f}(Z,s) \define\sum_{[D]\in \widetilde{I\!I}_{n-2,0}}E_{\SGamma_n,f_{[D]},[D],1}(Z,s) \]
and $E_f(Z) \define E_f(Z,0)$. We then have

\begin{thm}\label{thm:EisensteinseriesClassicalIsAdelic}
Let $f\in\mathrm S_k(\SL_2(\Z))$ be a normalized eigenform and let $v\in\C[\widetilde{I\!I}_{n-2,0}]$ be a non-zero Hecke eigenvector. Define the standard section
\[ \varphi_s((m,\omega)n\kappa)=\psi((m,\omega))^{s+k/2}\varphi_{f\otimes v}((m,\omega))j(\kappa,Z_0)^{-k}, \]
which is right $K_\fin$-invariant. It lies in $I(\pi_f\otimes\pi_v,s-(n-k-1)/2)$. For $g_Z\in\Sort^+(V(\R))$ with $g_ZZ_0=Z$,
\[ E_{f\otimes v}(Z,s)=j(g_Z,Z_0)^kE(g_Z,\varphi_s). \]
It converges absolutely if $\Rea(s)>n-1-k/2$, extends meromorphically to $s\in\C$ and transforms with weight $k$. In particular, $E_{f\otimes v}(Z,0)$ is holomorphic if $k>2n-2$.

Now assume that $v$ has Arthur parameter $\psi=\bigoplus_{i=1}^m\pi_i[d_i]$. Then $E_{f\otimes v}(Z)\in\mathrm{M}_k(\SGamma_n,1)$ if and only if one of the following conditions hold
\begin{enumerate}[(i)]
	\item $k \geq n+2$
	\item $k=n$ and $L(\frac{1}{2},\pi_f\otimes\pi_i)=0$ for some $i$ with $d_i=1$
	\item $k = n-2$ and $L(\frac{1}{2},\pi_f\otimes\pi_i)\not=0$ whenever $d_i = 1$
	\item $k < n-2$, $\pi_i\cong\pi_f$ and $d_i=n-k-2$ for some $i$ and $L(\frac{1}{2},\pi_f\otimes\pi_i)\not=0$ whenever $d_i = n-k-1$.
\end{enumerate}
In this case 
\[ \Phi(E_{f\otimes v}) = f\otimes v. \]
\end{thm}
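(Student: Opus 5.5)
The proof runs in three stages: adelize the classical series, read off holomorphicity at $s=0$ from Theorem \ref{thm:mainThmeoremIntertwinging} together with Proposition \ref{prp:HolomorphicityConstantTerms}, and compute $\Phi$ from the constant term.

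\emph{Step 1: adelization.} First I check that $\varphi_s$ is well defined. Its $\psi$-exponent is $s+k/2$. Writing $s+k/2=(s-(n-k-1)/2)+(n-1)/2$ and using that $\varphi_{f\otimes v}$ carries the weight-$k$ automorphy (which absorbs $\psi^{k/2}$, exactly as in the Archimedean discussion), $\varphi_s$ lies in $I(\pi_f\otimes\pi_v,s-(n-k-1)/2)$.

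Next I unfold $E(g_Z,\varphi_s)$. Proposition \ref{prp:DecompositionStrongApproximationWithParabolic} gives $P(\Q)\backslash G(\Q)=\coprod_D P(\Q)\backslash P(\Q)\gamma_D\SGamma_n$, so the sum becomes $\sum_D\sum_{\gamma\in\Gamma_D\backslash\SGamma_n}\varphi_s(\gamma_D\gamma g_Z)$. For each term I evaluate $\varphi_s(\gamma_D\gamma g_Z)$ using the Iwasawa formulas of that proposition. Lemma \ref{lem:characterIsImaginaryPart} identifies $\psi_\infty$ with $-\q(\Ima(\gamma Z))/(-\rho_D,\Ima(\gamma Z))$ times the appropriate $|j|^{-2}$, and the identity $(-\rho_D,\gamma Z)=m_\infty i$ identifies the $\GL_2$-part with $f_D((-\rho_D,\gamma Z))$. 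Multiplying by $j(g_Z,Z_0)^k$ then reproduces the classical summand, the powers $\psi^{k/2}$ recombining with the $j$-factors. Linearity in $v$ assembles these into $E_{f\otimes v}$. Convergence for $\Rea(s-(n-k-1)/2)>(n-1)/2$, i.e.\ $\Rea(s)>n-1-k/2$, and meromorphic continuation then come from Theorem \ref{thm:EisensteinSeriesDefinition}. Weight-$k$ invariance follows from left $G(\Q)$-invariance, as in Theorem \ref{thm:ModularFormsAreAutomorphicForms}.

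\emph{Step 2: holomorphicity criterion at $s=0$.} Put $s_0=-(n-k-1)/2$. By Theorem \ref{thm:LanglandsConstantTerm}, the constant term along $P$ is $\varphi_{s_0}+m(s_0)\varphi_{-s_0}$.

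For sufficiency, suppose one of (i)--(iv) holds. Then Theorem \ref{thm:mainThmeoremIntertwinging} gives $m(s_0)=0$, so in particular $m$ is holomorphic there. The section $\varphi_{s_0}$ is annihilated by $\mfp_-$: at $s_0$ the $\psi$-exponent is $k/2$ and $\varphi_{s_0}$ is exactly $\phi_F$ of the Klingen-type holomorphic function $f((-\rho_D,Z))$. Proposition \ref{prp:HolomorphicityConstantTerms} then shows that $E$ is regular and annihilated by $\mfp_-$, and Theorem \ref{thm:ModularFormsAreAutomorphicForms} converts this into holomorphicity of $E_{f\otimes v}(Z)$. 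In case (i) with $k>2n-2$ this is just absolute convergence.

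For necessity, assume none of (i)--(iv) holds, so $m(s_0)\neq0$; it may even have a pole. If it has a pole, $E$ is irregular at $s_0$ and we are done. Otherwise the constant term contains $m(s_0)\varphi_{-s_0}$. Classically this is a nonzero multiple of $f((-\rho_D,Z))\,(-\q(\Ima Z)/(-\rho_D,\Ima Z))^{n-k-1}$. It is non-holomorphic because $n-k-1\neq0$, which follows from $k\equiv n\bmod 2$ (the case $k\not\equiv n$ is trivial). The constant term of a holomorphic form is holomorphic, by Theorem \ref{thm:ModularFormsAreAutomorphicFormsCusps} and the fact that $R(\mfp_-)$ commutes with the constant term. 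Hence $E$ is not holomorphic.

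\emph{Step 3: boundary value.} In the holomorphic case the constant term is $\varphi_{s_0}$ alone. Applying the commutative diagram of Theorem \ref{thm:ModularFormsAreAutomorphicFormsCusps}, i.e.\ multiplying by $\psi^{-k/2}$, recovers $\varphi_{f\otimes v}$, and therefore $\Phi(E_{f\otimes v})=f\otimes v$.

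\emph{Main obstacle.} The delicate point is the necessity direction. One must make sure that $m(s_0)\varphi_{-s_0}$ cannot be cancelled by $\varphi_{s_0}$ and that it really is non-holomorphic. Both come down to $-s_0\neq s_0$, which holds because $n-k-1$ is odd, together with the two sections having different $\psi$-exponents. A second point to check is that the Archimedean factor $\gamma$ introduces no extra zero, which is Corollary \ref{cor:ZerosAndPolesgamma}.
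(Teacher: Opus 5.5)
Your proposal is correct and follows the paper's proof essentially step for step: the same unfolding over $P(\Q)\backslash G(\Q)=\coprod_D\gamma_D(\Gamma_D\backslash\SGamma_n)$ with Lemma~\ref{lem:characterIsImaginaryPart}, sufficiency via Theorem~\ref{thm:mainThmeoremIntertwinging} and Proposition~\ref{prp:HolomorphicityConstantTerms}, and the boundary value from the constant term and Theorem~\ref{thm:ModularFormsAreAutomorphicFormsCusps}. The only difference is cosmetic and lies in the necessity direction: you show that $m(s_0)\varphi_{-s_0}$ is not annihilated by $\mathfrak p_-$, while the paper compares the central exponents $k/2\neq n-1-k/2$, and both arguments rest on the same fact $n-k-1\neq0$.
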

\begin{proof}
Proposition \ref{prp:DecompositionStrongApproximationWithParabolic} gives
\[ P(\Q)\backslash G(\Q)=\coprod_D\gamma_D(\Gamma_D\backslash\SGamma_n). \]
For $\gamma\in\SGamma_n$, use the real Iwasawa decomposition of $\gamma_D\gamma g_Z$ and the finite decomposition $\gamma_{D,\fin}=(1,\omega_{D,\fin})\kappa_D$. Then Proposition \ref{prp:DecompositionStrongApproximationWithParabolic} gives exactly
\[ \varphi_s(\gamma_D\gamma g_Z) = v_D f((-\rho_D,\gamma Z))j(\gamma g_Z,Z_0)^{-k}\psi(\gamma_D\gamma g_Z)^s. \]
By Proposition \ref{prp:DecompositionStrongApproximationWithParabolic} and Lemma \ref{lem:characterIsImaginaryPart},
\[ \psi(\gamma_D\gamma g_Z)^s=
\left(\frac{-\q(\Ima(Z))}{|j(\gamma,Z)|^2(-\rho_D,\Ima(\gamma Z))}\right)^s. \]
Multiplying by $j(g_Z,Z_0)^k$ and summing proves the identity. Convergence and continuation now follow from Theorem \ref{thm:EisensteinSeriesDefinition}.

Put $s_0=-(n-k-1)/2$, the normalized inducing parameter corresponding to the classical value $s=0$. At this point the standard section is annihilated by $\mathfrak p_-$. If $m(s_0)=0$, Proposition \ref{prp:HolomorphicityConstantTerms} proves regularity and holomorphicity and the constant term formula and Theorem \ref{thm:ModularFormsAreAutomorphicFormsCusps} give $\Phi(E_{f\otimes v})=f\otimes v$.

Conversely, if the Eisenstein value is regular and holomorphic, its constant term has central exponent $k/2$ before normalization. The two exponents in Langlands' formula are $k/2$ and $n-1-k/2$. They are distinct since $k$ and $n$ are even. Hence, the coefficient of the second exponent must vanish, i.e.\ $m(s_0)=0$. The stated criterion follows from Theorem \ref{thm:mainThmeoremIntertwinging}.
\end{proof}

We remark that in the above theorem one may also take an eigenvector $v\in\C[I\!I_{n-2,0}^\chi]$ for $\mathcal{H}(\Ort_W)$, provided its special orthogonal constituents have the same standard parameter. Then $E_{f\otimes v}(Z)\in\mathrm{M}_k(\Gamma_n,\chi)$.

The elements in $\mathcal{E}_k^1(\Gamma_n,\chi)$ are all given by Eisenstein series.

\begin{prp}\label{prp:EisensteinBasis}
	Choose a normalized eigenbasis $f_1,\ldots,f_h$ of $\mathrm S_k(\SL_2(\Z))$ and a simultaneous eigenbasis $v_1,\ldots,v_r$ of $\C[I\!I_{n-2,0}^\chi]$. Let $\mathcal{B}_k^\chi$ consist of the pairs $(f_i,v_j)$ for which $E_{f_i\otimes v_j}\in\mathrm{M}_k(\Gamma_n,\chi)$. Then
	\[ \bigl(E_{f\otimes v}\bigr)_{(f,v)\in\mathcal{B}_k^\chi} \]
	is a basis of $\mathcal{E}_k^1(\Gamma_n,\chi)$. Under (H1), the standard parameters give the characterization in Theorem \ref{thm:EisensteinseriesClassicalIsAdelic}. The analogous statement holds for $\SGamma_n$ with a simultaneous eigenbasis for $\mathcal{H}(\Sort_W)$.
\end{prp}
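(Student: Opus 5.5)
The plan has three steps: show the Eisenstein series lie in $\mathcal{E}_k^1(\Gamma_n,\chi)$, show they are linearly independent, and show they span by subtracting Eisenstein series and landing in the cusp forms.

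\emph{Membership.} For $(f,v)\in\mathcal{B}_k^\chi$, Theorem \ref{thm:EisensteinseriesClassicalIsAdelic} (with the remark following it for $\Gamma_n$ and $\chi$) gives $E_{f\otimes v}\in\mathrm{M}_k(\Gamma_n,\chi)$ and $\Phi(E_{f\otimes v})=f\otimes v$. Since $f$ is a cusp form, $\Phi(E_{f\otimes v})$ lies in $\mathrm{S}_k(\SL_2(\Z))\otimes\C[\widetilde{I\!I}_{n-2,0}]$. It remains to see that $E_{f\otimes v}$ is orthogonal to $\mathrm{S}_k(\Gamma_n,\chi)$. For this I would use the unfolding argument already invoked in the proof of Proposition \ref{prp:HolomorphicityConstantTerms}. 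For $\Rea(s)\gg0$, the Petersson product of $E_{f\otimes v}(\cdot,s)$ with a cusp form $G$ unfolds to an integral over $\Gamma_D\backslash\mathcal{H}$. The inner integral over $N_D(\Z)\backslash N_D(\R)$ extracts the constant term of $G$ along $P_D$, which is $\Phi_D(G)=0$. Hence the product vanishes identically in $s$, and by meromorphic continuation also at $s=0$. (On the adelic side this is the statement that Eisenstein series are orthogonal to cusp forms.)

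\emph{Linear independence.} Apply $\Phi$. The images $f_i\otimes v_j$ of the basis elements are linearly independent in $\mathrm{S}_k(\SL_2(\Z))\otimes\C[\widetilde{I\!I}_{n-2,0}]$, because $(f_i)$ and $(v_j)$ are bases. So the $E_{f\otimes v}$ are linearly independent.

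\emph{Spanning.} Let $F\in\mathcal{E}_k^1(\Gamma_n,\chi)$ and expand
\[ \Phi(F)=\sum_{i,j}a_{ij}\,f_i\otimes v_j. \]
First I would show that $a_{ij}=0$ whenever $(f_i,v_j)\notin\mathcal{B}_k^\chi$. Once that is known, $F-\sum a_{ij}E_{f_i\otimes v_j}$ lies in $\ker\Phi=\mathrm{S}_k$ and also in $\mathrm{S}_k^\perp$, so it is zero.

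To prove the vanishing, I would first decompose $F$ into Hecke eigencomponents. By Proposition \ref{prp:HeckeCommutationRelation}, together with the commutativity and normality of the Hecke algebras, one can decompose $\mathcal{E}_k^1$ by joint eigencharacters. The map $\Phi$ intertwines the action on $\mathcal{E}_k^1$ with the action of $\iota(\mathscr{H})$ on the boundary space. Since the boundary eigencharacters of distinct pairs $(f_i,v_j)$ are separated by the full Hecke algebra of $M$ at almost all primes (by strong multiplicity one for $\GL_2$ and the choice of eigenbasis for $\Sort_W$), one can reduce to the case $\Phi(F)=f\otimes v$ for a single pair. The point to check is that $\iota(\mathscr{H}(G))$ already separates enough, perhaps after using Proposition \ref{prp:T_1(p^2)}.

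Then pass to the adelic form $\phi_F$. By Theorem \ref{thm:ModularFormsAreAutomorphicFormsCusps}, $(\phi_F)_P=\psi^{k/2}\varphi_{f\otimes v}$. The form $\phi_F$ is orthogonal to cusp forms and has cuspidal constant term along $P$ with respect to $\pi_f\otimes\pi_v$. By Langlands' theory of the spectral decomposition, such a form lies in the space spanned by residues and values (including derivatives in $s$) of $E(\cdot,\varphi_s)$ at the point $s_0=-(n-k-1)/2$. Comparing constant terms via Theorem \ref{thm:LanglandsConstantTerm}, $(\phi_F)_P$ has the single exponent $k/2$ and no logarithmic terms. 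If $m(s_0)\neq0$, then near $s_0$ every combination of Laurent coefficients of $E$ either carries the second exponent $n-1-k/2$ (distinct from $k/2$ because $k,n$ are even) or has a polynomial in $\log\psi$. Hence no such combination can have constant term $\psi^{k/2}\varphi_{f\otimes v}$, and the converse argument in the proof of Theorem \ref{thm:EisensteinseriesClassicalIsAdelic} forces $m(s_0)=0$. So the pair lies in $\mathcal{B}_k^\chi$.

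The main obstacle is justifying this last step rigorously, namely that $\phi_F$ is a finite combination of Laurent coefficients of Eisenstein series. My first attempt would avoid the spectral decomposition. Instead, I would consider the form
\[ \lim_{s\to s_0}\bigl(E(\cdot,\varphi_s)-(\text{principal part})\bigr), \]
suitably corrected, and show directly that $\phi_F$ minus an appropriate combination has vanishing constant terms. It is then cuspidal and orthogonal to cusp forms, hence zero. The final claim of the proposition, the characterization under (H1), is just Theorem \ref{thm:EisensteinseriesClassicalIsAdelic}. The $\SGamma_n$ case runs identically with $\mathcal{H}(\Sort_W)$-eigenbases.
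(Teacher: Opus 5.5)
Your membership and independence arguments are the paper's. So is the core of your spanning argument: write $\phi_F$ through Laurent coefficients at $s_0=-(n-k-1)/2$ of Eisenstein series induced from $\pi_{f_i}\otimes\pi_{v_j}$, then compare the exponents $k/2$, $n-1-k/2$ and the $\log\psi$-terms of the constant term along $P$.

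The gap is the step you yourself single out: \emph{why} $\phi_F$ is such a finite combination. As written, you have no valid justification for it. Langlands' spectral decomposition of $L^2(G(\Q)\backslash G(\A))$ cannot supply it in general. The form $\phi_F$ is square integrable only for $s_0<0$, i.e.\ $k\leq n-2$; in cases (i) and (ii), where $k\geq n$, it is not in $L^2$ at all. The paper instead uses Franke's theorem on the full space of automorphic forms, \cite[Theorem 1.4]{FrankeSchwermer} and \cite[Section III.2.6]{MoeglinWaldspurger}. This produces $I(\pi_{f_i}\otimes\pi_{v_j},s)$-valued polynomials $Q_{ij}$ and integers $N_{ij}$ with
\[ \phi_F=\sum_{i,j}\bigl[(s-s_0)^{N_{ij}}\bigr]_{E(\cdot,Q_{ij}(s))}, \]
where $[(s-s_0)^N]_{A(s)}$ is the $N$-th Laurent coefficient at $s_0$. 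The paper then uses that sections of level $K_\fin$ and $K_\infty$-type $-k$ form a line, so $Q_{ij}(s)=q_{ij}(s)\psi^{s+(n-1)/2}\varphi_{f_i\otimes v_j}$ with scalar polynomials $q_{ij}$. Let $m_{ij}(s)$ be the scalar by which $M(w,s)$ acts on this section. Only with this in hand does your comparison show that $q_{ij}$ has order exactly $N_{ij}$ at $s_0$ while $m_{ij}q_{ij}$ has order at least $N_{ij}+1$. That forces $m_{ij}(s_0)=0$ whenever the coefficient of $f_i\otimes v_j$ in $\Phi(F)$ is non-zero.

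Your suggested shortcut does not close this gap. Suppose $m(s_0)\neq0$ and you subtract from $\phi_F$ the regularized value of $E(\cdot,\varphi_s)$ at $s_0$. What remains is a form orthogonal to cusp forms whose constant term is a non-zero multiple of $\psi^{n-1-k/2}\varphi_{f\otimes v}$. If $m$ has a pole at $s_0$, it instead contains a term $\log\psi\cdot\psi^{n-1-k/2}\varphi_{f\otimes v}$. The same exponent-and-logarithm bookkeeping shows that no combination of Laurent coefficients at $s_0$ removes this term. So the difference is not cuspidal, and ruling out such a form is exactly the uniqueness statement that Franke's theorem provides.

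Separately, your reduction to a single Hecke eigencharacter is unnecessary. You also leave unchecked that $\iota(\mathscr{H}(\SGamma_n,\SDelta_n))$ separates distinct pairs; only restrictions from $\Rep(\widehat G)$, twisted at $s_0$, act on the boundary. Since the $\varphi_{f_i\otimes v_j}$ are linearly independent, the constant-term comparison can be carried out for all pairs simultaneously, as the paper does.
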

\begin{proof}
	Unfolding shows that every Eisenstein series in the statement is orthogonal to cusp forms. Their boundary values are the independent vectors $f_i\otimes v_j$, so they are linearly independent.
	
	For spanning, let $F\in\mathcal{E}_k^1(\Gamma_n,\chi)$ and write $\Phi(F)=\sum_{i,j}c_{ij}f_i\otimes v_j$. Set $s_0 = -(n-k-1)/2$. According to \cite[Theorem 1.4]{FrankeSchwermer} and \cite[Section III.2.6]{MoeglinWaldspurger}, there exist $I(\pi_{f_i}\otimes\pi_{v_j},s)$-valued polynomials $Q_{ij}\in\C[s]\otimes_\C I(\pi_{f_i}\otimes\pi_{v_j},s)$ and integers $N_{ij}\in\Z_{\geq0}$ such that
	\[ \phi_F(g) = \sum_{i,j}[(s-s_0)^{N_{ij}}]_{E(g,Q_{ij}(s))} \]
	where $[(s-s_0)^{N}]_{A(s)}$ denotes the $N$-th Laurent coefficient of a Laurent series $A(s)$ around $s_0$. For each fixed automorphic copy of an inducing representation, the space of sections with level $K_\fin$ and the scalar $K_\infty$-type $-k$ is one-dimensional, so only scalar polynomial multiples of its standard section remain. In particular, we must have
	\[ Q_{ij}(s) = \psi^{s+(n-1)/2}q_{ij}(s)\varphi_{f_i\otimes v_j} \]
	for some polynomial
	\[ q_{ij}(s) = \sum_{l=0}^{\deg(q_{ij})}a_{ij;l}\cdot(s-s_0)^l. \]
	Recall that $M(w,s)$ acts on $\psi^{s+(n-1)/2}\varphi_{f_i\otimes v_j}$ by a meromorphic scalar, say
	\[ M(w,s)\psi^{s+(n-1)/2}\varphi_{f_i\otimes v_j} = m_{ij}(s)\psi^{-s+(n-1)/2}\varphi_{f_i\otimes v_j}. \]
	Write
	\[ m_{ij}(s)q_{ij}(s) = \sum_{l\gg-\infty}^\infty b_{ij;l}\cdot(s-s_0)^l. \]
	According to the constant term formula Theorem \ref{thm:LanglandsConstantTerm} for $m\in M(\A)$ and $a\in M(\A)\cap T(\A)$ we have
	\begin{multline*}
		[(s-s_0)^{N_{ij}}]_{E(\cdot,Q_{ij}(s))_P}(am) = \\
		\psi(a)^{k/2}\sum_{l=0}^{N_{ij}}\frac{\ln(\psi(a))^l}{l!}a_{ij;N_{ij}-l}\cdot\varphi_{f_i\otimes v_j}(m) + \\
		+ \psi(a)^{n-1-k/2}
		\sum_{l=0}^{\infty}\frac{(-\ln(\psi(a)))^l}{l!}b_{ij;N_{ij}-l}\cdot\varphi_{f_i\otimes v_j}(m).
	\end{multline*}
	Note that $(\phi_F)_P(am) = \psi(a)^{k/2}(\phi_F)_P(m)$. So Independence of the inducing vectors and the absence of logarithmic terms force
	\[ a_{ij;l}=0\ (l<N_{ij}),\qquad a_{ij;N_{ij}}=c_{ij},\qquad b_{ij;l}=0\ (l\leq N_{ij}). \]
	If $c_{ij}\ne0$, then $q_{ij}$ has order $N_{ij}$ at $s_0$ whereas $m_{ij}q_{ij}$ has order at least $N_{ij}+1$. Thus $m_{ij}(s_0)=0$. Theorem \ref{thm:EisensteinseriesClassicalIsAdelic} now constructs every required $E_{f_i\otimes v_j}$. Their linear combination has the same boundary as $F$ and the difference is both cuspidal and orthogonal to cusp forms, hence zero.
\end{proof}

For a non-zero holomorphic value in Theorem \ref{thm:EisensteinseriesClassicalIsAdelic} the only normalized cuspidal exponent is $s_0=-(n-k-1)/2$. Langlands' square-integrability criterion therefore gives finite Petersson norm precisely when $s_0<0$, equivalently $k\leq n-2$. These are cases (iii) and (iv). See \cite[Section I.4.11]{MoeglinWaldspurger} and \cite{Langlands}.

\begin{prp}\label{prp:ParameterOfEisenstein1}
Let $f\in\mathrm S_k(\SL_2(\Z))$ be a normalized eigenform with $k<n-2$ and let $v$ be a simultaneous eigenvector for $\mathcal{H}(\Sort_W)$ with standard parameter
\[ \psi=\psi'\oplus\pi_f[d],\qquad d=n-k-2>0. \]
Assume $L(\frac{1}{2},\pi_f\otimes\pi_i)\ne0$ for every other summand $\pi_i[d_i]$ with $d_i=d+1$. Then $E_{f\otimes v}\in\mathrm M_k(\SGamma_n,1)$ is a non-zero Hecke eigenform, its adelization lies in the discrete spectrum and its standard parameter is
\[ \psi'\oplus\pi_f[d+2]=\psi'\oplus\pi_f[n-k]. \]
\end{prp}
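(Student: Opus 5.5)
We first check that the hypotheses put us in case (iv) of Theorem \ref{thm:EisensteinseriesClassicalIsAdelic}. The parameter contains $\pi_f[d]$ with $d=n-k-2$, and $k<n-2$ holds by assumption. The side condition $L(\frac12,\pi_f\otimes\pi_i)\neq0$ for summands with $d_i=n-k-1=d+1$ is exactly what is assumed. Hence $E_{f\otimes v}\in\mathrm M_k(\SGamma_n,1)$ and $\Phi(E_{f\otimes v})=f\otimes v\neq0$, so the form is non-zero. Since $k\le n-2$, the remark after Proposition \ref{prp:EisensteinBasis} (Langlands' square-integrability criterion with cuspidal exponent $s_0=-(n-k-1)/2<0$) shows that the adelization is square integrable. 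It therefore lies in the discrete spectrum, as the residual spectrum attached to $(P,\pi_f\otimes\pi_v)$.

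For the Hecke eigenform property we use that $E(\cdot,\varphi_s)$ is built from the spherical section of $I(\pi_f\otimes\pi_v,s)$ and that right convolution by $f\in\mathcal H_p(G)$ commutes with the Eisenstein sum. Thus $R(f)E(\cdot,\varphi_s)=\lambda_f(s)E(\cdot,\varphi_s)$, where $\lambda_f(s)=\tr\Sat(f)$ is evaluated at the Satake parameter of $I(\pi_p,s)$. This parameter is the image in $\widehat G$ of $(s_p(\pi_f)p^{s},s_p(\pi_v))\in\widehat M$ under $\widehat M\hookrightarrow\widehat G$. The eigenvalue identity is an identity of meromorphic functions, so it persists at $s_0$ because $E$ is regular there. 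Proposition \ref{prp:HeckeOperatorsAdelicClassicalAreTheSame} then translates it to the classical Hecke operators. As an independent check, Proposition \ref{prp:HeckeCommutationRelation} together with injectivity of $\Phi$ on $\mathcal E^1_k$ gives the same conclusion.

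It remains to compute the standard parameter. Via $\Std_{\Sort_{n+2}}|_{\widehat M}=\Std_{\GL_2}\oplus\Std_{\Sort_{n-2}}\oplus\Std_{\GL_2}^\vee$, the standard Satake class of $E$ at $p$ is
\[ s_p(\pi_f)p^{s_0}\oplus \psi_p\oplus s_p(\pi_f)p^{-s_0}, \]
using self-duality of $\pi_f$. The twist must be read in the normalization where $\psi|_T=\varepsilon_1+\varepsilon_2$ contributes $p^{\pm s_0}$ on each $\GL_2$ eigenvalue. With $s_0=-(n-k-1)/2=-(d+1)/2$ we get $\pi_f\otimes(p^{(d+1)/2}\oplus p^{-(d+1)/2})$. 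By definition of $[d]_p$, the block $\pi_f[d]$ has eigenvalues $p^{-(d-1)/2+j}$ for $0\le j\le d-1$. Adjoining the two exponents $\pm(d+1)/2$ gives exactly the eigenvalues of $\pi_f[d+2]$, so the total is $\psi'_p\oplus(\pi_f[d+2])_p$ for all $p$. The infinitesimal character at $\infty$ is treated the same way. By strong multiplicity one for isobaric sums (Jacquet--Shalika), a discrete automorphic representation with these Satake parameters at almost all $p$ has standard parameter $\psi'\oplus\pi_f[d+2]$, once we know its parameter is a formal sum of the form in Lemma \ref{lem:ArthurParameterUnimodularProperties}. Finally, $d+2=n-k$.

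The main obstacle is the sign and normalization bookkeeping in the last step. We must make sure the normalized induction parameter $s_0$ really shifts the $\GL_2$ Satake eigenvalues by $p^{\pm(d+1)/2}$ and not by $p^{\pm(d+1)}$ or $p^{\pm s_0/2}$. To pin this down we compare with Proposition \ref{prp:T_1(p^2)}. There the coefficients $p^{n-k}+p$ of $T(p)$ arise after the twist $\psi_p^{-(n-k-1)/2}$, and dividing by the normalization $p^{n/2}\cdot p^{(k-1)/2}$ yields $p^{\pm(n-k-1)/2}$, consistent with the claim. A secondary point is that assigning a standard parameter to a residual, non-tempered form requires Arthur's classification. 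Since only Satake parameters enter the final statement, matching them and applying Jacquet--Shalika suffices.
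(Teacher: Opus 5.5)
Your proposal is correct and follows essentially the same route as the paper. Both arguments use case (iv) of Theorem \ref{thm:EisensteinseriesClassicalIsAdelic} for holomorphicity and the non-zero boundary value $f\otimes v$, the square-integrability criterion at $s_0=-(d+1)/2<0$, commutation of Hecke operators with the Eisenstein sum on the spherical line, and restriction of $\Std_{\Sort_{n+2}}$ to $\widehat M$ to attach the blocks $a_pp^{\pm(d+1)/2}$ to $\pi_f[d]$. Your sign convention for the twist ($p^{s_0}$ rather than the paper's $p^{-s_0}$ on the $\GL_2$ block) makes no difference by self-duality of $\pi_f$. Your additional steps, namely the Jacquet--Shalika uniqueness remark and the consistency check against Proposition \ref{prp:T_1(p^2)}, are sound and only make explicit what the paper leaves implicit.
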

\begin{proof}
The hypothesis is precisely case (iv) of Theorem \ref{thm:EisensteinseriesClassicalIsAdelic}. It gives holomorphicity and the non-zero boundary value $f\otimes v$. Since $s_0=-(d+1)/2<0$, the preceding square-integrability criterion applies. A square-integrable automorphic form which is $K$-finite and $Z(\mathfrak g)$-finite belongs to the discrete spectrum. Equivalently, it is a value of the negative-exponent Eisenstein family, or a residual contribution via the functional equation (see \cite[Chapter 10]{GetzHahn} and \cite{Langlands}).

The spherical line of each $I(\pi_{f,p}\otimes\pi_{v,p},s)$ is one-dimensional. Since Eisenstein summation commutes with Hecke operators, specializing this line at $s_0$ shows that $E_{f\otimes v}$ is an eigenform. Write $a_p=s_p(\pi_f)$ and $b_p=s_p(\pi_v)$. Restriction of the standard representation of $\Sort_{n+2}(\C)$ to $\widehat M$ gives the standard Satake parameter
\[ p^{-s_0}a_p\ \oplus\ \Std_{\Sort_{n-2}}(b_p)\ \oplus\ p^{s_0}a_p^{\vee}. \]
The representation $\pi_f$ is self-dual. Thus the two new blocks are $a_pp^{(d+1)/2}$ and $a_pp^{-(d+1)/2}$. By assumption, $b_p$ contains the block
\[ a_p\otimes[d]_p=\bigoplus_{j=0}^{d-1}a_pp^{j-(d-1)/2}. \]
Together they form $a_p\otimes[d+2]_p$. This proves the asserted parameter at every finite prime. The corresponding addition of infinitesimal characters gives the same parameter at infinity. Hence, the standard parameter of $E_{f\otimes v}$ is $\psi'\oplus\pi_f[n-k]$.
\end{proof}

Similarly, we have
\begin{prp}\label{prp:ParameterOfEisenstein2}
	Let $f\in\mathrm S_{n-2}(\SL_2(\Z))$ be a normalized eigenform and let $v$ be a simultaneous eigenvector for $\mathcal{H}(\Sort_W)$ with standard parameter $\psi$. Assume $L(\frac{1}{2},\pi_f\otimes\pi_i)\not=0$ for every summand $\pi_i[1]$ of $\psi$. Then $E_{f\otimes v}\in\mathrm M_k(\SGamma_n,1)$ is a non-zero Hecke eigenform, its adelization lies in the discrete spectrum and its standard parameter is
	\[ \psi\oplus\pi_f[2]. \]
\end{prp}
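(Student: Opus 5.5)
The proof follows the template of Proposition \ref{prp:ParameterOfEisenstein1}, now in case (iii) of Theorem \ref{thm:EisensteinseriesClassicalIsAdelic}.

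\textbf{Holomorphicity.} Set $k=n-2$. The hypothesis that $L(\frac12,\pi_f\otimes\pi_i)\neq0$ for every summand with $d_i=1$ is exactly condition (iii). Theorem \ref{thm:EisensteinseriesClassicalIsAdelic} therefore gives $E_{f\otimes v}\in\mathrm M_{n-2}(\SGamma_n,1)$ with $\Phi(E_{f\otimes v})=f\otimes v\neq0$. In particular $E_{f\otimes v}$ is non-zero.

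\textbf{Discrete spectrum.} The normalized inducing parameter is
\[ s_0=-(n-k-1)/2=-\tfrac12<0. \]
By the square-integrability remark following Proposition \ref{prp:EisensteinBasis} (Langlands' criterion), the only cuspidal exponent of the constant term is negative. Hence $\phi_{E_{f\otimes v}}$ is square-integrable. Being $K$-finite and $Z(\mfg)$-finite, it lies in the discrete spectrum.

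\textbf{Hecke eigenform.} Eisenstein summation commutes with $R(f)$ for $f\in\mathcal H(G)$. At each prime $p$ the spherical line in $I(\pi_{f,p}\otimes\pi_{v,p},s)$ is one-dimensional. So $R(f)\varphi_s$ is a scalar multiple of $\varphi_s$, with the scalar given by the Satake transform evaluated at the parameter $(p^{-s}a_p,b_p)$ of $\widehat M$. Specializing at $s_0$ shows that $E_{f\otimes v}$ is a simultaneous eigenform.

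\textbf{Satake parameter.} Restricting $\Std_{\Sort_{n+2}}$ to $\widehat M$ gives $\Std_{\GL_2}\oplus\Std_{\Sort_{n-2}}\oplus\Std_{\GL_2}^\vee$, exactly as in the proof of Proposition \ref{prp:T_1(p^2)}. The standard Satake parameter of $E_{f\otimes v}$ at $p$ is therefore
\[ p^{-s_0}a_p\oplus\Std(b_p)\oplus p^{s_0}a_p^\vee=a_pp^{1/2}\oplus\psi_p\oplus a_pp^{-1/2}, \]
using that $\pi_f$ is self-dual. The two new blocks $a_pp^{\pm1/2}$ together form exactly $a_p\otimes[2]_p$. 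Hence the parameter at every finite prime is $\psi_p\oplus(\pi_f[2])_p$.

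\textbf{Infinite place.} Adding the infinitesimal character of $\pi_f\otimes|\cdot|^{\pm s_0}$ to $\psi_\infty$ gives the new eigenvalues $\pm\frac{k-1}{2}\pm\frac12$. These are $\pm\frac{n-2}{2}$ and $\pm\frac{n-4}{2}$, which are exactly the weights of $\pi_f[2]$.

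\textbf{Identifying the standard parameter.} The main subtlety is the step from "Satake parameters agree with those of $\psi\oplus\pi_f[2]$ at almost all $p$" to "the standard parameter is $\psi\oplus\pi_f[2]$". I would invoke strong multiplicity one for isobaric sums on $\GL_{n+2}$ (Jacquet–Shalika): it determines the formal symbol uniquely from its Satake parameters at almost all primes. I would also check consistency with Lemma \ref{lem:ArthurParameterUnimodularProperties}. The top weight of $\pi_f[2]$ is $\frac{n-2}{2}$; it is new at rank $n+2$ and has multiplicity one, while the weights already present in $\psi_\infty$ stay as they were.
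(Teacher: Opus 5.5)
Your proposal is correct and matches the paper's argument: the paper proves this proposition by saying ``similarly'', i.e.\ by the template of Proposition \ref{prp:ParameterOfEisenstein1} specialized to case (iii) with $s_0=-\tfrac12$, where the two new blocks $a_pp^{\pm1/2}$ combine into $a_p\otimes[2]_p$. Your appeal to Jacquet--Shalika, to pass from agreement of Satake parameters to equality of the formal standard parameter, makes explicit a step the paper leaves implicit.
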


In the following we list some interesting examples that satisfy the conditions of Theorem \ref{thm:EisensteinseriesClassicalIsAdelic}.

\begin{thm}\label{thm:SingularWeightEisensteinSeries}
	Suppose that (H1) is true. Let $f\in\mathrm{S}_{n/2-1}(\SL_2(\Z))$ be a normalized eigenform. There exists a $v_f^1\in\C[I\!I_{n-2,0}^1]$ and $v_f^{\det}\in\C[I\!I_{n-2,0}^{\det}]$ with Arthur parameter $\psi = \pi_f[n/2-1]$. The Eisenstein series $E_{f\otimes v_f^\chi}(Z)\in\mathrm{M}_{n/2-1}(\Gamma_n,\chi)$ is of singular weight. We have
	\begin{multline*}
		\mathrm{M}_{n/2-1}(\SGamma_n,1) = \underbrace{\C\Psi_{I\!I_{n,2}}^{\mathrm{add}}(1)}_{\subset\mathcal{E}_{n/2-1}^0(\Gamma_{n},1)} \oplus \\
		\oplus\bigoplus_{\substack{f\in\mathrm{S}_{n/2-1}(\SL_2(\Z)) \\ \text{normalized eigenform}}}\left(\underbrace{\C E_{f\otimes v_f^1}}_{\subset\mathcal{E}_{n/2-1}^1(\Gamma_{n},1)} \oplus \underbrace{\C E_{f\otimes v_f^{\det}}}_{\subset\mathcal{E}_{n/2-1}^1(\Gamma_{n},\det)}\right).
	\end{multline*}
	If $L(\frac{1}{2},\pi_f,\Std_{\GL_2}) \not=0$, then $v_f^\chi\in\C[I\!I_{n-2,0}^\chi]$ exists without the assumption of (H1) and $\vartheta_{I\!I_{n-2,0}}^{(n/2-1)}(v_f^1)$ is the Ikeda-lift of $f$.
\end{thm}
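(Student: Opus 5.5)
The proof has four steps: existence of $v_f^\chi$ via Arthur's multiplicity formula, holomorphicity of the Eisenstein series from case (iv), the decomposition of the whole space, and the unconditional Ikeda-lift statement.

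\emph{Existence.} I would first check that $\psi=\pi_f[n/2-1]$ satisfies the constraints of Lemma \ref{lem:ArthurParameterUnimodularProperties}. The weights of $\pi_f$ are $\pm(k-1)/2$ with $k=n/2-1$, so the parameter has $d=n/2-1\equiv k-1 \bmod 2$ and $|k-1|=n/2-2=n-d-3$. The resulting multiset of infinitesimal weights is $\pm(n/2-2),\ldots,\pm1$ together with $0$ twice. Arthur's multiplicity formula for $\Sort_{n-2}$ (assuming (H1)) should then give multiplicity one for both the $\Ort$-invariant and the $\det$-eigen constituent, producing $v_f^1$ and $v_f^{\det}$. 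The key input is the computation of the character $\varepsilon_\psi$ of the component group, which is trivial because $\psi$ has a single summand, together with the Archimedean condition. This is the step I expect to be the main obstacle. One has to carry out the Chenevier--Lannes bookkeeping (their treatment of $\pi_\Delta[12]$ in \cite[Section 7.5]{ChenevierLannes}) uniformly in $n$, and separate the two outer-involution eigenspaces, which also needs the discrete series packet at infinity to contain the trivial representation for both characters.

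\emph{Holomorphicity.} With $k=n/2-1$ we have $n-k-2=n/2-1=d$. If $n>6$ then $k<n-2$, so case (iv) of Theorem \ref{thm:EisensteinseriesClassicalIsAdelic} applies with $\pi_i=\pi_f$. The second condition in (iv) is vacuous, since no summand has $d_i=n-k-1$. Hence $E_{f\otimes v_f^\chi}\in\mathrm M_k(\SGamma_n,1)$, and by the remark after the theorem it lies in $\mathrm M_k(\Gamma_n,\chi)$, with $\Phi(E_{f\otimes v_f^\chi})=f\otimes v_f^\chi$.

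\emph{Decomposition.} Singular weight forms have no cuspidal part, so
\[ \mathrm M_k(\SGamma_n,1)=\mathcal E^0\oplus\mathcal E^1 . \]
The piece $\mathcal E^0$ is spanned by $\Psi^{\mathrm{add}}_{I\!I_{n,2}}(1)$, as stated earlier. For $\mathcal E^1$, Proposition \ref{prp:EisensteinBasis} gives a basis of $E_{f\otimes v}$ with $(f,v)$ admissible. Theorem \ref{thm:NeighbourFormulaUsingOrthogonalModularForms} then says the boundary vector $v$ has $T_{\Z/p\Z}$-eigenvalue $c_f(p)(p^{n/2-1}-1)/(p-1)$, which is exactly the eigenvalue attached to the parameter $\pi_f[n/2-1]$. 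Alternatively, condition (iv) forces $\pi_f[n/2-1]$ to be a summand, and the dimension count $2\cdot(n/2-1)=n-2$ forces it to be the whole parameter. Multiplicity one then gives exactly one $v$ per $\chi$, proving the displayed direct sum.

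\emph{Unconditional case.} Suppose $L(\frac12,\pi_f)\neq0$. I would take the Ikeda lift $I_f$ of degree $n/2-1$ and weight $(n-2)/2$, and invoke the Böcherer/Rallis-type result that it lies in the image of $\vartheta^{(n/2-1)}$. The nonvanishing of $L(\frac12,\pi_f)$ is exactly the condition ensuring a nonzero theta preimage (\cite[Theorem 7.3.2]{ChenevierLannes} in the Niemeier case). Hecke equivariance of the theta map (Rallis) then identifies the standard parameter of the preimage with $\pi_f[n/2-1]$, giving $v_f^1$. To obtain $v_f^{\det}$, I would apply the odd Hecke operator $T_{W,-}(p^2)$, as in Proposition \ref{prp:Psi12}, to the image of $v_f^1$. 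Its nonvanishing follows because no standard eigenvalue $\alpha_p^{\pm1}p^{j-(n/2-2)/2}$ equals $\pm1$.
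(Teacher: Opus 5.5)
Your proposal follows the paper's proof essentially step for step: existence of $v_f^\chi$ from Arthur's multiplicity formula (the paper does not redo this but cites \cite[Theorem 8.5.8]{ChenevierLannes} and \cite[Proposition 5]{Muller}, so the bookkeeping you flag as the main obstacle is taken from the literature); holomorphicity from case (iv), with the dimension count $2(n/2-1)=n-2$ excluding further summands; the decomposition via Proposition \ref{prp:EisensteinBasis}, vanishing of singular-weight cusp forms and the additive lift for $\mathcal{E}^0$; and the unconditional part via \cite[Theorem 7.3.2]{ChenevierLannes} together with the odd Hecke operator argument of Proposition \ref{prp:Psi12}. The only slip is in your parity check, which should read $w=\pm(k-1)\equiv d-1 \bmod 2$ (indeed $d=k$), not $d\equiv k-1 \bmod 2$.
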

\begin{proof}
	The existence of the vectors $v_f^\chi\in\C[I\!I_{n-2,0}^\chi]$ with Arthur parameter $\psi = \pi_f[n/2-1]$ follows from Arthur's multiplicity formula \cite[Theorem 8.5.8]{ChenevierLannes}. The computation for these parameters in particular was done in \cite[Proposition 5]{Muller}. At $k=n/2-1$, case (iv) of Theorem \ref{thm:EisensteinseriesClassicalIsAdelic} requires the block $\pi_f[n/2-1]$. Its dimension is already $n-2$, so there can be no other summands and no central-value cancellation. Proposition \ref{prp:EisensteinBasis} therefore gives $\mathcal{E}_{n/2-1}^1(\SGamma_n,1)$ as claimed. Singular-weight cusp forms vanish, while $\mathcal{E}_{n/2-1}^0(\SGamma_n,1)$ is spanned by $\Psi_{I\!I_{n,2}}^{\mathrm{add}}(1)$ as explained in the beginning of Section \ref{sec:HolomorphicModularForms}. This proves the decomposition. The assertion concerning the Ikeda lift, with a suitable normalization of $v_f^1$, is \cite[Theorem 7.3.2]{ChenevierLannes}. The existence of $v_f^{\det}$ then follows by the same argument as Proposition \ref{prp:Psi12}.
\end{proof}
We remark that it is conjectured (see e.g.\ \cite{ConreyFarmer}) that $L(\frac{1}{2},\pi_f,\Std_{\GL_2}) \not= 0$ whenever $f\in\mathrm{S}_k(\SL_2(\Z))$ with $k\equiv0\bmod4$. This has been verified up to $k\leq 2000$ \cite{ConreyFarmer, FarmerJames}.

In particular, we obtain a second proof of Corollary \ref{cor:SingularWeightFormsn=26} and 
\begin{thm}\label{thm:Phi12IsEisensteinSeries}
	We have
	\[ \Psi_{12} = E_{\Delta\otimes v_\Delta^{1}}\qquad\text{and}\qquad\Phi_{12} = E_{\Delta\otimes v_\Delta^{\det}}. \]
\end{thm}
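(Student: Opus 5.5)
The plan is to identify both forms through their boundary values under $\Phi$. The difference of two modular forms with the same boundary value is a cusp form, and singular-weight cusp forms vanish. So it suffices to show that $E_{\Delta\otimes v_\Delta^{1}}$ and $E_{\Delta\otimes v_\Delta^{\det}}$ are holomorphic modular forms of weight $12$ on $I\!I_{26,2}$ whose images under $\Phi$ agree with those of $\Psi_{12}$ and $\Phi_{12}$ respectively.

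First, we fix the vectors. By Proposition \ref{prp:Psi12} and the result of Chenevier--Lannes quoted before Theorem \ref{thm:NeighbourFormulaUsingOrthogonalModularForms}, $v_\Delta^{\det}=[\Lambda^+]-[\Lambda^-]$ has standard parameter $\pi_\Delta[12]$. The vector $v_\Delta^1\in\C[I\!I_{24,0}]$ is taken to be the boundary vector $v$ of $\Psi_{12}$. It spans the common $\mathcal{H}(\Ort_W)$-eigenspace of $v_\Delta^{\det}$, because $T_{\Z/2\Z}$ has simple spectrum. Its standard parameter is therefore also $\pi_\Delta[12]$: the Satake parameters at every prime agree with those of $v_\Delta^{\det}$, and the parameter at infinity is forced by Lemma \ref{lem:ArthurParameterUnimodularProperties} together with strong multiplicity one for $\GL_{24}$, or more simply by \cite[Theorem 7.3.2]{ChenevierLannes}. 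None of this uses (H1), which is consistent with the unconditional part of Theorem \ref{thm:SingularWeightEisensteinSeries}.

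Next, with $n=26$ and $k=12=n/2-1<n-2$, we check case (iv) of Theorem \ref{thm:EisensteinseriesClassicalIsAdelic}. The parameter $\pi_\Delta[12]$ consists of the single block with $\pi_i=\pi_\Delta$ and $d_i=12=n-k-2$. There is no summand with $d_i=n-k-1=13$, so the central-value condition is vacuous. Hence $E_{\Delta\otimes v_\Delta^\chi}$ is a holomorphic modular form with $\Phi(E_{\Delta\otimes v_\Delta^\chi})=\Delta\otimes v_\Delta^\chi$. By the remark after Theorem \ref{thm:EisensteinseriesClassicalIsAdelic}, applied to an $\mathcal{H}(\Ort_W)$-eigenvector whose two special orthogonal constituents share the parameter, it lies in $\mathrm{M}_{12}(\Gamma_{26},\chi)$.

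Finally, we compare boundary values. We have $\Phi(\Phi_{12})=\Delta\otimes([\Lambda^+]-[\Lambda^-])$ up to the normalization fixed earlier, and $\Phi(\Psi_{12})=\Delta\otimes v_\Delta^1$ by construction. The differences $\Psi_{12}-E_{\Delta\otimes v_\Delta^1}$ and $\Phi_{12}-E_{\Delta\otimes v_\Delta^{\det}}$ therefore lie in $\mathrm{S}_{12}(\SGamma_{26},1)$, which is zero. Alternatively, the identification follows from the one-dimensionality of the relevant summands in Corollary \ref{cor:SingularWeightFormsn=26}.

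The main obstacle is bookkeeping of normalizations: the statement is an equality, not a proportionality. This requires matching the scaling of $\Phi_{12}$ with the ``up to normalization'' in $\Phi(\Phi_{12})$, and making sure $v_\Delta^1$ is defined exactly as the boundary vector of $\Psi_{12}$. I would therefore define $v_\Delta^\chi$ directly as the boundary vectors, so that the equalities hold by the vanishing of singular-weight cusp forms.
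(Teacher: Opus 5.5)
Your proposal is correct and matches the paper's own argument, which the paper leaves implicit ("in particular" after Theorem \ref{thm:SingularWeightEisensteinSeries}). Holomorphicity and the boundary values $\Delta\otimes v_\Delta^\chi$ come from case (iv) of Theorem \ref{thm:EisensteinseriesClassicalIsAdelic}, unconditional here, and the equalities then follow by comparing boundary values, since singular-weight cusp forms vanish (equivalently, from Corollary \ref{cor:SingularWeightFormsn=26}). Your extra care on two points the paper leaves tacit is appropriate: applying the theorem to $\mathcal{H}(\Ort_W)$- rather than $\mathcal{H}(\Sort_W)$-eigenvectors, and fixing $v_\Delta^\chi$ as the exact boundary vectors.
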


Similar to the previous proposition we have

\begin{prp}
	Suppose that (H1) is true. Let $f\in\mathrm{S}_k(\SL_2(\Z))$ be a normalized eigenform with $n/2-1 < k < n-2$ and $k> 2/3(n-1)$ if $k\equiv 2\bmod4$. There exists a $v_f\in\C[\widetilde{I\!I}_{n-2,0}]$ with Arthur parameter $\psi = [1] \oplus [2k-n+1] \oplus \pi_f[n-k-2]$. The Eisenstein series $E_{f\otimes v_f}(Z)\in\mathrm{M}_{k}(\SGamma_n,1)$. If $n-2>3(n-k-2)$ or $L(\frac{1}{2},\pi_f,\Std_{\GL_2})\not=0$, then $v_f$ lies in $\C[I\!I_{n-2,0}^{1}]$ and exists without the assumption of (H1). Its degree-$(n-k-2)$ theta lift is the Ikeda lift of $f$, after normalization (see \cite[Theorem 7.3.2]{ChenevierLannes}).
\end{prp}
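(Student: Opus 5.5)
The plan has three parts: produce the vector $v_f$ from Arthur's multiplicity formula, check holomorphy via case (iv) of Theorem \ref{thm:EisensteinseriesClassicalIsAdelic}, and handle the unconditional statements through the Ikeda lift, following the proof of Theorem \ref{thm:SingularWeightEisensteinSeries}.

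\textbf{Existence of $v_f$.} Put $d=n-k-2$. I first check that $\psi=[1]\oplus[2k-n+1]\oplus\pi_f[d]$ satisfies the constraints of Lemma \ref{lem:ArthurParameterUnimodularProperties}.
\begin{itemize}
\item The dimension is $1+(2k-n+1)+2(n-k-2)=n-2$.
\item The block $\pi_f[d]$ has weights $\pm(k-1)/2$, so it contributes the half-integers $\pm(k-1)/2-(d-1)/2+j$. These are the integers from $-(n/2-2)$ up to $-(n/2-2)+d-1$, together with their negatives.
\item The block $[2k-n+1]$ fills the integers with $|x|\leq k-n/2$.
\item The block $[1]$ supplies the second $0$.
\end{itemize}
Since $n/2-2-d+1=k-n/2+1$, the multiset is exactly $\pm(n/2-2),\ldots,\pm1$ together with $0$ twice. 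The parity and bound conditions 2), 3) are immediate, using $d-1\leq k-1$ because $k>n/2-1$.

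Under (H1), \cite[Theorem 8.5.8]{ChenevierLannes} then decides whether $\psi$ occurs for $\Sort_{n-2}$ with trivial infinity type and level one. This depends on a sign comparison between Arthur's character $\epsilon_\psi$ and the archimedean character. The only non-trivial pairings are $\epsilon(\pi_f\times[1])$ and $\epsilon(\pi_f\times[2k-n+1])$, which are given by root numbers of $L(s,\pi_f)$, that is by $(-1)^{k/2}$, raised to powers depending on the parities of the $d_i$. I expect the hypothesis $k>\frac23(n-1)$ in the case $k\equiv2\bmod4$ to be exactly what makes this sign check succeed. I would carry out the computation as in \cite[Proposition 5]{Muller}, and this is the step I expect to be the main obstacle.

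\textbf{Holomorphy of the Eisenstein series.} Since $k<n-2$, the block $\pi_f[d]$ with $d=n-k-2$ gives the first condition of case (iv). For the second condition, note that a summand with $d_i=n-k-1$ must be $[1]$ or $[2k-n+1]$, because $\pi_f[d]$ has $d_i=d$.
\begin{itemize}
\item $[1]$ has $d_i=1=n-k-1$ only if $k=n-2$, which is excluded.
\item $[2k-n+1]$ has $2k-n+1=n-k-1$ only if $3k=2n-2$, that is $k=\frac23(n-1)$. In that case the required non-vanishing concerns $L(\frac12,\pi_f\otimes 1)=L(\frac12,\pi_f)$. For $k\equiv2\bmod4$ this vanishes by the sign of the functional equation, and this is the case the hypothesis excludes. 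For $k\equiv0\bmod4$ I would either need the hypothesis to cover it or note that the sign check already rules it out.
\end{itemize}
Case (iv) then gives $E_{f\otimes v_f}\in\mathrm M_k(\SGamma_n,1)$.

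\textbf{Unconditional existence and the Ikeda lift.} Here I follow \cite[Theorem 7.3.2]{ChenevierLannes}, as in Theorem \ref{thm:SingularWeightEisensteinSeries}. The Ikeda lift of $f$ to genus $g=n-k-2$ has weight $(n-2)/2$, so it lies in the image of $\vartheta^{(g)}_{I\!I_{n-2,0}}$. This uses Böcherer's criterion, which needs non-vanishing of $L(\frac12,\pi_f)$ or that the genus is small, $g<(n-2)/3$, i.e.\ $n-2>3(n-k-2)$, where theta series span. Any preimage vector that is a Hecke eigenvector has, by Rallis' computation of the standard parameter of theta lifts, the parameter $\psi$ above. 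It lies in $\C[I\!I^1_{n-2,0}]$ because theta series only see isometry classes.
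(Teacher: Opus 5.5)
Your plan is the argument the paper intends. The paper gives no separate proof, only ``similar to'' Theorem \ref{thm:SingularWeightEisensteinSeries}, which means: existence from \cite[Theorem 8.5.8]{ChenevierLannes} under (H1), holomorphy from case (iv) of Theorem \ref{thm:EisensteinseriesClassicalIsAdelic}, and the unconditional statement and Ikeda lift from \cite[Theorem 7.3.2]{ChenevierLannes}. Your check of the dimension, the infinitesimal character and the constraints of Lemma \ref{lem:ArthurParameterUnimodularProperties} is correct. Like the paper, you leave the actual sign comparison in the multiplicity formula to the cited computation.

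The branch you leave open in the holomorphy step closes by parity, not by the sign check. Since $n$ is even, $2n-2\equiv 2\bmod 4$, so $3k=2n-2$ forces $k\equiv 2\bmod 4$. Hence $[2k-n+1]$ has $d_i=n-k-1$ only in the case excluded by the hypothesis $k>\frac23(n-1)$. Your alternative (``the sign check already rules it out'') would not work: for $k\equiv 0\bmod 4$ one has $\varepsilon(\pi_f)=(-1)^{k/2}=1$, so Arthur's character is trivial and excludes nothing.

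Relatedly, the strictness of the inequality does not come from the multiplicity formula. For $k\equiv 2\bmod 4$, the sign $\epsilon_\psi(s_{[2k-n+1]})=\varepsilon(\pi_f)^{\min(2k-n+1,\,n-k-2)}$ is $+1$ exactly when $3k\geq 2n-2$. At $3k=2n-2$ the vector even exists unconditionally, because $3(n-k-2)=n-4<n-2$. That boundary value is removed by holomorphy ($L(\frac12,\pi_f)=0$), as your own second step shows.

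Finally, $n-2>3(n-k-2)$ is not a range where theta series span in general. It is exactly the condition under which the arguments $\frac{3k-2n+5}{2},\dots,\frac{k-1}{2}$ of $L(s,\pi_f)$ are all at least $\frac32$. These are the arguments occurring in the Ikeda lift's standard $L$-value at B\"ocherer's point $s=k-\frac n2+1$, so B\"ocherer's non-vanishing condition then holds automatically. Otherwise $\frac12$ occurs among them, which is why $L(\frac12,\pi_f)\neq 0$ is the alternative hypothesis.
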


We list some further examples of eigenforms $f\in\mathrm{S}_k(\SL_2(\Z))$ and parameters $\psi$ for which there exists an eigenvector $v\in\C[\widetilde{I\!I}_{n-2,0}]$ with Arthur parameter $\psi$ such that $E_{f\otimes v}\in\mathrm{M}_k(\SGamma_n,1)$ according to Theorem \ref{thm:EisensteinseriesClassicalIsAdelic}. All elliptic cusp forms are normalized eigenforms. We assume (H1) when using the multiplicity formula to assert existence of an inducing parameter.
\begin{enumerate}[1)]
	\item Let $f\in\mathrm{S}_k(\SL_2(\Z))$ with $n/2-1 \leq k < n-2$ and choose integers $k_2<\hdots,k_m = k$ and set $d_m = n-k-2$, $d_i = k_{i+1} - k_i - d_{i+1}$ for $i=2,\hdots,m-1$ and $k_1 = d_1 = \frac{k_2-d_2}{2}$. Choose eigenforms $f_i\in\mathrm{S}_{k_i}(\SL_2(\Z))$ for $i=1,\hdots,m-1$ and set $f_m=f$ and $\psi = \bigoplus_{i=1}^m\pi_{f_i}[d_i]$. Here we require $d_i>0$ and $d_i\equiv0\bmod4$ for all $i$. These congruences make the multiplicity character trivial. The interval relations above give the required infinitesimal character, so existence follows from \cite[Theorem 8.5.8]{ChenevierLannes} under (H1).
	\item Let $f\in\mathrm{S}_{n-2}(\SL_2(\Z))$ with $L(\frac{1}{2},\pi_f,\Std_{\GL_2}) \not=0$ and $\psi = [1]\oplus[n-3]$.
	\item Let $f\in\mathrm{S}_{n}(\SL_2(\Z))$ be a normalized eigenform and $\psi = [1]\oplus[n-3]$.\label{exp:weightnExample1}
	\item Let $f\in\mathrm{S}_{n-2}(\SL_2(\Z))$ with $L(\frac{1}{2},\pi_f,\Std_{\GL_2}) \not=0$, $n/2-1 < k < n-2$ and $k\geq 2/3(n-1)$ if $k\equiv 2\bmod4$. Choose $g\in\mathrm{S}_k(\SL_2(\Z))$ and set $\psi = [1]\oplus[2k-n+1]\oplus\pi_g[n-k-2]$.
	\item Let $f\in\mathrm{S}_{n}(\SL_2(\Z))$ be a normalized eigenform, $n/2-1 < k < n-2$ and $k\geq 2/3(n-1)$ if $k\equiv 2\bmod4$. Choose $g\in\mathrm{S}_k(\SL_2(\Z))$ and set $\psi = [1]\oplus[2k-n+1]\oplus\pi_g[n-k-2]$.\label{exp:weightnExample2}
\end{enumerate}

For the weight-$n$ examples, $n\equiv2\bmod8$ implies
\[ \varepsilon(\pi_f)=(-1)^{n/2}=-1,\qquad L(\tfrac{1}{2},\pi_f,\Std_{\GL_2})=0. \]
Thus their $[1]$-summand gives precisely the vanishing required in case (ii).

\section{Geometric applications}\label{sec:GeneralType}

Let $X$ be a smooth projective variety with canonical bundle $K_X$. The \emph{Kodaira dimension} of $X$ is the integer $\kappa(X)$ such that $\dim (H^0(X,K_X^{\otimes d})) \sim d^{\kappa(X)}$. If $\dim (H^0(X,K_X^{\otimes d})) = 0$ for all $d\in\Z_{>0}$, we set $\kappa(X)=-\infty$. Otherwise, one always has $\kappa(X)\in\{0,\hdots,\dim(X)\}$. When $\kappa(X) = \dim(X)$, we say that $X$ is of \emph{general type}. Suppose that $X$ is not smooth nor projective. Let $X^\circ$ be the smooth locus of $X$ and $\overline{X^\circ}$ a smooth compactification. Then $\kappa(\overline{X^\circ})$ does not depend on the chosen compactification and we may define $\kappa(X) \define \kappa(\overline{X^\circ})$.

We use the following form of the low-weight cusp-form argument of Gritsenko, Hulek and Sankaran. The version with $d>1$ follows by taking powers in the same argument.
\begin{thm}[{cf.\ \cite[Theorem 1.1 and Section 2]{GritsenkoHulekSankaran2}}]\label{thm:GeneralTypeCriterion}
Let $L$ be an integral lattice of signature $(n,2)$, $n\geq9$ and let $\Gamma$ be a subgroup of finite index of $\Ort^+(L)$. The modular variety $\Gamma\backslash\mathcal{H}$ is of general type if there exists a character $\chi$ and a non-zero cusp form $F_k\in\mathrm{S}_k(\Gamma,\chi)$ of weight $k < dn$ that vanishes along the cusps and along the ramification divisor $R$ of the projection $\pi:\mathcal{H} \to \Gamma\backslash\mathcal{H}_n$ with order at least $d$.
\end{thm}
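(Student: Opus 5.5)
This is the low-weight cusp form criterion of Gritsenko--Hulek--Sankaran, so I would follow their argument and extend it to vanishing order $d$. Fix a toroidal compactification $\overline{X}$ of $X=\Gamma\backslash\mathcal{H}$ and a resolution $Y\to\overline{X}$. Let $N=n$ be the dimension. The first step identifies pluricanonical forms with modular forms. A form $G\in\mathrm{M}_{mN}(\Gamma,\det^m)$ gives a $\Gamma$-invariant section $G\,(dZ)^{\otimes m}$ of $K^{\otimes m}$ on $\mathcal{H}$, since $dZ$ transforms with weight $n$ and character $\det$. This section extends holomorphically to the regular part of $Y$ exactly when three obstructions vanish.
\begin{enumerate}[(a)]
\item \emph{Ramification.} $G$ must vanish to order at least $m$ along the branch divisor $R$, which consists of reflective divisors and contributes a pole of order $m$ after the quotient.
\item \emph{Boundary.} $G$ must vanish to order at least $m$ at the cusps; for the toroidal boundary this is order at least one per power of $K$.
\item \emph{Elliptic singularities.} The singularities of $X$ and $\overline{X}$ must impose no conditions. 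This holds for $n\geq 9$ by Gritsenko--Hulek--Sankaran (Reid--Tai criterion in the interior and their analysis at the boundary). I would take this as input from \cite{GritsenkoHulekSankaran2}, since the hypothesis $n\geq9$ exists to allow exactly that.
\end{enumerate}

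Next, build many such forms from $F_k$. Write $k=dn-\varepsilon$ with $\varepsilon>0$. For $m$ divisible by $d$, take
\[ G = F_k^{m/d}\cdot H, \qquad H\in\mathrm{M}_{m\varepsilon/d}(\Gamma,\chi'), \]
with the character $\chi'$ chosen so that the total character is $\det^m$. This may require passing to even $m$ so that all characters become trivial on squares. Since $F_k$ vanishes to order at least $d$ on $R$ and at the cusps, $F_k^{m/d}$ vanishes to order at least $m$ there. Hence every such $G$ satisfies (a) and (b) and defines an element of $H^0(Y,K_Y^{\otimes m})$. The map $H\mapsto G$ is injective because $F_k\neq0$, so
\[ h^0(mK_Y)\geq \dim \mathrm{M}_{m\varepsilon/d}(\Gamma,\chi'). \]

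Finally, by Hirzebruch--Mumford proportionality (or the Baily--Borel ampleness of the Hodge bundle), $\dim\mathrm{M}_{j}(\Gamma,\chi')$ grows like $c\,j^{n}$ with $c>0$. This gives $h^0(mK_Y)\gg m^n$, so $\kappa=n$ and $X$ is of general type. The main obstacle is step (c): the extension of pluricanonical forms over the canonical singularities and the toroidal boundary. I would cite it from \cite{GritsenkoHulekSankaran2} rather than reprove it; the vanishing order $d$ only alters the bookkeeping in (a) and (b).
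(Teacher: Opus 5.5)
Your proposal is correct and follows the paper's argument: the paper cites Gritsenko--Hulek--Sankaran and remarks that the case $d>1$ follows by taking powers. That is your construction $G=F_k^{m/d}H$ of weight $mn$, vanishing to order $\geq m$ along $R$ and the boundary, with the extension over the singularities for $n\geq 9$ taken from \cite{GritsenkoHulekSankaran2}.

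One small correction: characters of a general finite-index $\Gamma$ need not be trivial on squares. Instead restrict to $m\in 2d\operatorname{ord}(\chi)\Z$, where the order is finite because $\Gamma$ has finite abelianization; then $\chi^{m/d}=\det^m=1$ and you may take $\chi'$ trivial.
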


In particular, they showed
\begin{thm}[cf.\ Theorem 1.1 and 4.4 in \cite{GritsenkoHulekSankaran3}]
	Let $n = 8\ell+2$. Then
	\[ \kappa(\Gamma_n\backslash\mathcal{H}) = \begin{cases}
		-\infty & \text{if } \ell=0,1,2 \\
		\dim(\Gamma_n\backslash\mathcal{H}) = n & \text{if } \ell \geq5.
	\end{cases} \]
\end{thm}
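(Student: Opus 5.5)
The plan is to treat the two regimes separately, through the line bundle relation that is implicit in Theorem~\ref{thm:GeneralTypeCriterion}. On a toroidal compactification one has, away from cusp and branch contributions, $K \sim nL - \tfrac12 R$, where $L$ is the Hodge bundle. Here $R$ is the ramification divisor. Since $I\!I_{n,2}$ is unimodular, the reflections in $\Ort^+(I\!I_{n,2})$ are exactly the reflections $\sigma_r$ in norm-$2$ vectors $r$ (with the sign conventions of the paper), so $R$ is the union of the rational quadratic divisors $r^\perp$. Consequently, a $d$-fold pluricanonical form on $\Gamma_n\backslash\mathcal H$ that extends to a smooth model is the same thing as a modular form of weight $dn$ (with character $\det^d$) that vanishes to order $d$ on $R$ and on the boundary. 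The whole argument rests on controlling forms with prescribed vanishing along $R$.

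For $\ell=1,2$ (the case $\ell=0$, i.e.\ $n=2$, lies outside the standing assumption $n>2$ and is classical), I would construct a \emph{strongly reflective} form, meaning a form whose divisor is exactly $R$ with multiplicity one. Its weight should exceed $n$. The construction is Borcherds' multiplicative lift $\Psi^{\mathrm{mult}}_{I\!I_{n,2}}(h)$ of $h=E_4^{3-\ell}/\Delta$. This $h$ is weakly holomorphic of weight $1-n/2=-4\ell$ with principal part $q^{-1}$, so its divisor is $\sum_r r^\perp=R$. For $\ell=1$ one finds $c_h(0)=504$, hence weight $252>10$. For $\ell=2$ one finds $c_h(0)=264$, hence weight $132>18$. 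Now suppose $G$ is a pluricanonical form of weight $dn$ vanishing $d$ times on $R$. Then $G/\Psi^d$ is holomorphic on $\mathcal H$ by the divisor computation and extends over the cusps by Koecher. Its weight is $d(n-\tfrac{c_h(0)}2)<0$, so it vanishes. This gives $\kappa=-\infty$. The only care needed is that pluricanonical forms extend over the elliptic points and over the cusp, and that extension is standard from the cited reference.

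For $\ell\geq5$, I would apply Theorem~\ref{thm:GeneralTypeCriterion} with $d=1$. I would aim for a cusp form $F\in\mathrm S_k(\Gamma_n,\chi)$ with $k<n$ that vanishes on $R$. Once $k<n$ is achieved, $\kappa=n$ follows, together with the upper bound $\kappa\le\dim$. The construction I would try first is a Borcherds product $\Psi^{\mathrm{mult}}(h)$ with $h$ weakly holomorphic of weight $-4\ell$. Its principal part should be $q^{-1}+\sum_{m\ge2}a_mq^{-m}$ with all $a_m\ge0$, so that the product is holomorphic and contains $R$ in its divisor. Its weight $c_h(0)/2$ should satisfy $c_h(0)/2<n$. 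Vanishing at the boundary would then be read off from the Weyl vector, or the form could be multiplied by a low-weight Gritsenko lift of a cusp form to enforce cuspidality.

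The main obstacle is this step. For $\ell\ge4$ the form $h\Delta$ would have negative weight $12-4\ell$, so no such $h$ exists with principal part exactly $q^{-1}$, and higher poles are unavoidable. One must then balance positivity of the $a_m$ against the bound $c_h(0)<2n$, which amounts to an explicit linear-algebra problem in the spaces $\mathrm M_{12m-4\ell}(\SL_2(\Z))$. If this fails, my fallback is the Jacobi-form route. There I would take a Gritsenko additive lift of a theta-block Jacobi form of weight $<n$ whose theta factors vanish on $\mathfrak z\in r^\perp$, so that after symmetrisation it vanishes on all of $R$.
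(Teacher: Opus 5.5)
Your argument for $\ell=1,2$ is sound. The Borcherds lifts of $E_4^{3-\ell}/\Delta$ are the strongly reflective forms of weights $252$ and $132$. Dividing a $d$-canonical form (weight $dn$, order $\ge d$ on $R$) by $\Psi^d$ leaves a holomorphic form of negative weight, which must vanish. The paper gives no proof of its own here; it only cites Gritsenko--Hulek--Sankaran.

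The gap is the case $\ell\ge5$. There you never produce the required form, and none of the routes you sketch can produce it.

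First, take your principal part $q^{-1}+\sum_{m\ge2}a_mq^{-m}$ with $a_m\ge0$. This already fails for $\ell=5$. The obstruction space is $\mathrm S_{22}(\SL_2(\Z))=\C\,\Delta E_{10}$ with $\Delta E_{10}=q-288q^2+\cdots$, so some $a_m$ with $m\ge2$ must be positive. The weight is then $c_h(0)/2=\frac{22}{B_{22}}\sum_m a_m\sigma_{21}(m)\ge\frac{22}{B_{22}}(2^{21}+1)\approx 7.45\cdot 10^3$, far above $n=42$.

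Second, enforcing cuspidality by multiplying with a cuspidal additive lift cannot keep the weight below $n$. Such a lift has weight at least $n/2+11$ (the lift of $\Delta$). The non-constant Borcherds factor has weight at least the singular weight $n/2-1$. So the product has weight at least $n+10$. Cuspidality is the real difficulty: $\Phi_{12}$ is not a cusp form, which is exactly why $n=26$ is not covered by the statement.

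Third, your theta-block fallback is vacuous. For unimodular $D$ one has $J_{k,D}=\mathrm M_{k-n/2+1}(\SL_2(\Z))\,\vartheta_D$, so the Gritsenko lifts for $\Gamma_n$ are just the forms $\Psi^{\mathrm{add}}_{I\!I_{n,2}}(g)$. Take a cusp $D$ with roots. The first Fourier--Jacobi coefficient there is $g\vartheta_D$, which does not vanish at $\mathfrak z=0$. Yet $\{(r,\mathfrak z)=0\}\subset R$ for a root $r\in D$, so no non-zero such lift vanishes on $R$. Symmetrising a form for a smaller group by a product over cosets does not help, since it multiplies the weight by the index.

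The missing idea is the one behind the cited result. For $\ell\ge5$ one does not exhibit a single low-weight cusp form; one counts dimensions via Hirzebruch--Mumford proportionality, in the same spirit as the proof of Theorem~\ref{thm:GeneralType34}.

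On one side, $\dim\mathrm S_{k}(\Gamma_n,1)=C_nk^n+O(k^{n-1})$ with the constant $C_n$ from that proof, evaluated at $k=dn$. On the other side, vanishing to order $d$ along $R$ imposes at most $\sum_{j<d}\dim\mathrm M_{dn+j}(\Gamma_r)$ linear conditions. This is because the successive normal Taylor coefficients are modular forms on the stabiliser $\Gamma_r$ of a root hyperplane. All roots are $\Gamma_n$-equivalent, and $r^\perp\cap I\!I_{n,2}$ has signature $(n-1,2)$ and determinant $2$. This bound is again of order $d^n$, with leading coefficient given by a Hirzebruch--Mumford volume.

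Comparing leading coefficients, together with the conditions at the boundary, leaves $\gg d^n$ independent $d$-canonical forms for $\ell\ge5$. Without such a count, or an actual construction replacing it, your proposal proves only the case $\ell\le2$.
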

In this case of $\Gamma_n$, the ramification divisor $R$ is given by
\[ \Gamma_n\backslash\sumstack{\lambda\in I\!I_{n,2} \\ (\lambda,\lambda) = 2}\lambda^\bot \subset\Gamma_{n}\backslash\mathbf{D}, \]
where $\lambda^\bot$ is the fixed point divisor of the reflection $\sigma_\lambda$ along $\lambda$ (cf.\ \cite[Section 2]{GritsenkoHulekSankaran2}). On the other hand, if $F\in\mathrm{M}_k(\Gamma_n,\det)$, then
\[ F|_k[\sigma_\lambda]=-F \]
so that $F$ vanishes on the fixed divisor of $\sigma_\lambda$ since $j(\sigma_\lambda,Z) = 1$ if $[Z+z'+\q(Z)z]\in\lambda^\bot$. Hence, $F$ vanishes along $R$. We can now show
\begin{thm}\label{thm:GeneralType26}
	Let $n = 26 = 8\cdot3+2$. Then
	\[ \kappa(\Gamma_{26}\backslash\mathcal{H}_{26}) = \dim(\Gamma_{26}\backslash\mathcal{H}_{26}) = 26. \]
\end{thm}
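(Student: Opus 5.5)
The plan is to apply the criterion of Theorem \ref{thm:GeneralTypeCriterion} with $n=26$ and $\Gamma=\Gamma_{26}$. We need a non-zero cusp form of weight $k<26d$ that vanishes to order at least $d$ along the cusps and along the ramification divisor $R$. The natural building block is $\Phi_{12}\in\mathrm{M}_{12}(\Gamma_{26},\det)$. By the discussion just before the statement, it vanishes on $R$, since $F|_k[\sigma_\lambda]=-F$ and $j(\sigma_\lambda,Z)=1$ on $\lambda^\perp$. It is not a cusp form, however: $\Phi(\Phi_{12})=\Delta\otimes([\Lambda^+]-[\Lambda^-])\neq0$. Take $d=1$ and look for a form of weight $k<26$, or take a small $d$ and allow weight below $26d$.

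First I would try $F=\Phi_{12}\cdot G$, with $G\in\mathrm{M}_{k'}(\Gamma_{26},\chi')$ and $12+k'<26$. We need the product to lie in $\ker\Phi$ and to vanish at the unique $0$-dimensional cusp. Since $\Phi$ is multiplicative and evaluated cusp by cusp, $\Phi(\Phi_{12}G)_D=\Phi_D(\Phi_{12})\Phi_D(G)$. Now $\Phi_D(\Phi_{12})$ is non-zero only for $D=\Lambda^\pm$. So it suffices that $\Phi_{\Lambda^\pm}(G)=0$; for $G$ in the spaces at hand this means the Leech coefficient of $\Phi(G)$ vanishes.

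The candidate for $G$ is an Eisenstein series $E_{g\otimes v}$ from Theorem \ref{thm:EisensteinseriesClassicalIsAdelic}, or a combination with $\Psi^{\mathrm{add}}(E_{k'-12})$, in weight $k'\le13$. Note that $k'$ must be even, and $k'=12$ gives weight $24<26$. Inside $\mathrm{M}_{12}(\SGamma_{26},1)$, described in Corollary \ref{cor:SingularWeightFormsn=26}, I would take the linear combination $G=a\Psi^{\mathrm{add}}(1)+b\Psi_{12}+c\Phi_{12}$ whose boundary value has zero $\Lambda^\pm$-coordinates. Here $\Psi^{\mathrm{add}}(1)$ has constant boundary $E_0$-type value $1\otimes\sum[D]$. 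That is non-cuspidal in the elliptic sense, so matching requires care. Concretely, choose $G=\Psi_{12}-v_{\Lambda}\,\Phi_{12}$, or with $\chi'=\det$ so that the product has trivial character. Then $\Phi_\Lambda(G)=0$, hence $\Phi(\Phi_{12}G)=\Delta^2\otimes(\text{vector supported on }\Lambda^\pm\text{ times }0)=0$. Then $F=\Phi_{12}G$ is a cusp form of weight $24<26$, vanishing on $R$, and non-zero because both factors are non-zero.

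The main obstacle is confirming that the product genuinely vanishes along all cusps, including the $0$-dimensional one, and is non-zero. The $0$-dimensional cusp lies in the closure of every $1$-dimensional cusp, so vanishing of $\Phi$ suffices there. Non-vanishing requires $G\neq0$, which follows from the linear independence in Corollary \ref{cor:SingularWeightFormsn=26}. A subtle point is whether $v_\Delta^1$ has non-zero $\Lambda$-coordinate. If it does, $G$ must use the $\Psi_{12}$ and $\Phi_{12}$ boundary values to cancel. If it does not, even simpler, take $G=\Psi_{12}$ directly. In either case I would verify, using the explicit Ikeda/neighbour description of $v_\Delta^1$, that the required linear combination is non-zero. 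I would then apply Theorem \ref{thm:GeneralTypeCriterion} with $d=1$ to conclude $\kappa=26$.
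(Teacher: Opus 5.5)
Your $d=1$ strategy has a genuine gap, and the specific $G$ you propose does not do what you need. $\Phi_{12}$ has character $\det$, so its boundary values at the two proper Leech classes are opposite: $\Phi_{\Lambda^\pm}(\Phi_{12})=\pm\Delta$. By contrast, $v=v_\Delta^1\in\C[I\!I_{24,0}^1]$ has the same coordinate $v_\Lambda$ at $\Lambda^+$ and $\Lambda^-$. Also, $\Phi(\Psi^{\mathrm{add}}_{I\!I_{26,2}}(1))$ is a non-zero multiple of $E_{12}\otimes\sum_{[D]}[D]$ (weight $12$, not a constant).

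Hence for $G=a\Psi^{\mathrm{add}}_{I\!I_{26,2}}(1)+b\Psi_{12}+c\Phi_{12}$ one gets $\Phi_{\Lambda^\pm}(G)=aE_{12}+(bv_\Lambda\pm c)\Delta$. So $G$ vanishes at both Leech cusps only if $a=c=0$ and $bv_\Lambda=0$. Your choice $G=\Psi_{12}-v_\Lambda\Phi_{12}$ kills the $\Lambda^+$ value but leaves $\Phi_{\Lambda^-}(\Phi_{12}G)=-2v_\Lambda\Delta^2$.

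This $G$ also has no character for $\Gamma_{26}$. At best you obtain a cusp form for $\SGamma_{26}$, and general type of the double cover $\SGamma_{26}\backslash\mathcal{H}$ does not descend to $\Gamma_{26}\backslash\mathcal{H}$. Taking $\chi'=\det$ instead forces $G\in\mathrm{M}_{12}(\Gamma_{26},\det)=\C\Phi_{12}$, and $\Phi_{12}^2$ is not cuspidal.

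In fact no choice rescues the $d=1$ route unless $v_\Lambda=0$. Since $\Phi_{12}$ vanishes exactly on $R$ with multiplicity one, any $F\in\mathrm{M}_k(\Gamma_{26},\chi)$ vanishing on $R$ is $\Phi_{12}$ times a form of weight $k-12$ and character $\chi\det$. Non-zero forms have even weight equal to $0$ or at least $12$, so for $k<26$ this forces $F\in\C\Phi_{12}$, $F\in\C\Phi_{12}^2$, or $F\in\Phi_{12}\cdot\mathrm{M}_{12}(\Gamma_{26},1)$. By the computation above, the only possible cusp form is $\Phi_{12}\Psi_{12}$, and it is cuspidal if and only if $v_\Lambda=0$. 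Your argument therefore reduces entirely to the claim that the Ikeda vector $v_\Delta^1$ has vanishing Leech coordinate. You leave this open, and nothing in the paper provides it.

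The paper avoids the issue by taking $d=2$. The form $\Phi_{12}^2\in\mathrm{M}_{24}(\Gamma_{26},1)$ vanishes to order $2$ on $R$ and at every cusp except $[\Lambda]$. One multiplies it by some $F\in\mathrm{M}_{26}(\Gamma_{26},1)$ vanishing to order at least $2$ at $[\Lambda]$, which exists by a dimension count. The Fourier--Jacobi coefficients of index $0,1$ at $[\Lambda]$ lie in a $2$-dimensional obstruction space: pairs in $\mathrm{M}_{26}(\SL_2(\Z))\oplus\mathrm{M}_{14}(\SL_2(\Z))$ with $a_{g_1}(0)=a_{g_0}(1)$. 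On the other hand, $\dim\mathrm{M}_{26}(\Gamma_{26},1)\geq 3$, via $\Psi^{\mathrm{add}}_{I\!I_{26,2}}(E_{14})$ and the two case-(ii) Eisenstein series with boundary values $f_{26}\otimes v_0$ and $f_{26}\otimes v_{18}$. These are available because $L(\tfrac12,\pi_{f_{26}},\Std_{\GL_2})=0$. The product $F\Phi_{12}^2$ has weight $50<2\cdot 26$, so the criterion applies with $d=2$.
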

\begin{proof}
	Consider $\Phi_{12}\in\mathrm{M}_{12}(\Gamma_{26},\det)$. It vanishes along $R$ and $\Phi(\Phi_{12}) =\Delta\otimes([\Lambda^+] - [\Lambda^-])$ so that it vanishes along all cusps except for the cusp corresponding to $[\Lambda]$. It follows that $\Phi_{12}^2\in\mathrm{M}_{24}(\Gamma_{26},1)$ vanishes along $R$ and all cusps except $[\Lambda]$ with order at least $2$. We construct a modular form $F\in\mathrm{M}_{26}(\Gamma_{26},1)$ of weight $26$ and order of vanishing in $[\Lambda]$ at least $2$, so that $F\cdot \Phi_{12}^2$ satisfies the conditions of Theorem \ref{thm:GeneralTypeCriterion} for $k=50<2\cdot26$ and $d=2$: \\
	We write $Z = (\tau',\tau,\mathfrak{z})$ and
	\[ F(Z) = f_0(\tau) + \sum_{m=1}^\infty\langle f_m(\tau),\overline{\vartheta_{\Lambda(m)}(\tau,\mathfrak{z})}\rangle e(m\tau') \]
	where $f_0 = \Phi_{\Lambda}(F)\in\mathrm{M}_{26}(\SL_2(\Z))$ and $f_m\in\mathrm{M}_{14}(\omega_{\Lambda(m)})$ is the modular form in the theta decomposition of the $m$-th Fourier--Jacobi coefficient. The vanishing order of $F$ along $[\Lambda]$ is the smallest integer $m$, such that $f_{m}$ does not vanish. Since $\Lambda$ is unimodular, $\mathrm{M}_{14}(\omega_{\Lambda(1)}) = \mathrm{M}_{14}(\SL_2(\Z))$ and it follows from the invariance of $F$ under $\sigma_{\rho_{\Lambda} - \rho_{\Lambda}'}$ that $a_{f_1}(0) = a_{f_0}(1)$. Define
	\begin{multline*}
		\Obs_2(\mathrm{M}_k(\Gamma,1),[D]) \\
		\define \{(g_0,g_1)\in\mathrm{M}_k(\SL_2(\Z))\oplus\mathrm{M}_{k-n/2+1}(\SL_2(\Z))\mid a_{g_1}(0) = a_{g_0}(1)\}
	\end{multline*}
	so that $F$ vanishes with order at least $2$ if it lies in the kernel of the natural projection $\mathrm{M}_{26}(\Gamma_{26},1)\to\Obs_2(\mathrm{M}_{26}(\Gamma_{26},1),[\Lambda])$. In our case 
	\[ \dim(\Obs_2(\mathrm{M}_{26}(\Gamma_{26},1),[\Lambda])) = 2 + 1 - 1 = 2. \]
	On the other hand, let $f_{26}$ and $g_{18}$ be the normalized generators of $\mathrm S_{26}(\SL_2(\Z))$ and $\mathrm S_{18}(\SL_2(\Z))$. There are two independent vectors $v_0,v_{18}\in\C[I\!I_{24,0}]$ with parameters
	\[ [23]\oplus[1],\qquad \pi_{g_{18}}[6]\oplus[11]\oplus[1]. \]
	The first is the constant function on lattice classes. The second exists unconditionally by \cite[Theorem 7.3.2]{ChenevierLannes} since $24>3\cdot6$. As $L(\frac{1}{2},\pi_{f_{26}},\Std_{\GL_2})=0$, case (ii) gives two holomorphic Eisenstein series with boundary values $f_{26}\otimes v_0$ and $f_{26}\otimes v_{18}$. Together with the additive lift $\Psi_{I\!I_{26,2}}^{\mathrm{add}}(E_{14})$, whose zero-dimensional constant term is non-zero, these are linearly independent (see \cite[Theorem 14.3]{Borcherds98} for the additive lift). Thus $\dim\mathrm M_{26}(\Gamma_{26},1)\geq3$ and the obstruction map has a non-zero kernel.
\end{proof}
Replacing $\Phi_{12}$ by the Eisenstein series of singular weight constructed in Theorem \ref{thm:SingularWeightEisensteinSeries}, we can show
\begin{thm}\label{thm:GeneralType34}
	Let $n=8\ell+2$ with $\ell\geq4$ and if $\ell>500$ assume (H1). Suppose that the number of positive-definite even unimodular lattices of rank $8\ell$ which are chiral is smaller than
	\[ 24(n-1)(n/2+1)^{n-2}\cdot\frac{2^{-n/2+1}}{n!}\cdot\frac{B_2\cdot B_4\cdot\hdots\cdot B_n}{n!!}\cdot B_{n/2+1}. \]
	Then
	\[ \kappa(\Gamma_{n}\backslash\mathcal{H}_{n}) = \dim(\Gamma_{n}\backslash\mathcal{H}_{n}) = n. \]
\end{thm}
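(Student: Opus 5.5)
We follow the pattern of the proof of Theorem \ref{thm:GeneralType26} and apply Theorem \ref{thm:GeneralTypeCriterion} with $d=2$. We produce a cusp form of weight $<2n$ vanishing to order at least $2$ along $R$ and along every $1$-dimensional cusp.

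\emph{Step 1: an odd form of singular weight.} Since $n=8\ell+2$ with $\ell\geq4$, the weight $n/2-1=4\ell$ is divisible by $4$. So $\mathrm S_{n/2-1}(\SL_2(\Z))\neq0$, and we pick a normalized eigenform $f$. For $\ell\leq 500$ we have $n/2-1\leq 2000$, so $L(\tfrac12,\pi_f,\Std_{\GL_2})\neq0$ by the numerical verification quoted after Theorem \ref{thm:SingularWeightEisensteinSeries}. Hence $v_f^{\det}$ exists unconditionally; for $\ell>500$ we use (H1). Theorem \ref{thm:SingularWeightEisensteinSeries} then gives $G\define E_{f\otimes v_f^{\det}}\in\mathrm M_{n/2-1}(\Gamma_n,\det)$, and $G\neq0$ because $\Phi(G)=f\otimes v_f^{\det}\neq0$. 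As explained before Theorem \ref{thm:GeneralType26}, $G$ vanishes along $R$, so $G^2\in\mathrm M_{n-2}(\Gamma_n,1)$ vanishes to order $\geq2$ on $R$.

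Moreover $v_f^{\det}$ is supported only on chiral classes, since it is a combination of the vectors $[D^+]-[D^-]$. Therefore $\Phi_D(G)=0$ for every achiral $D$. Along such a cusp the zeroth Fourier--Jacobi coefficient of $G$ vanishes, so $G^2$ vanishes there to order $\geq2$.

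\emph{Step 2: a correcting form at the chiral cusps.} We need $F\in\mathrm M_{k'}(\Gamma_n,1)$, $F\neq0$, with $k'+n-2<2n$, i.e.\ $k'\leq n+1$; we take $k'=n$ as in Theorem \ref{thm:GeneralType26}. We require $F$ to vanish to order $\geq2$ at every chiral cusp $[D]$, meaning its image under the obstruction map into $\bigoplus_{D\text{ chiral}}\Obs_2(\mathrm M_n(\Gamma_n,1),[D])$ is zero. Each $\Obs_2$ has dimension $\dim\mathrm M_n+\dim\mathrm M_{n/2+1}-1$. This is roughly $n/12+n/24$, and this count is where the constant $24(n-1)$ and a factor of order $n/2+1$ should come from.

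\emph{Step 3: a lower bound for $\dim\mathrm M_n(\Gamma_n,1)$.} A single orthogonal Eisenstein series gives too few forms, so instead we use the Eisenstein series of Theorem \ref{thm:EisensteinseriesClassicalIsAdelic} in case (ii). Here $k=n$ and $\varepsilon(\pi_g)=-1$, so $L(\tfrac12,\pi_g)=0$ for every eigenform $g\in\mathrm S_n(\SL_2(\Z))$. Condition (ii) then holds for every Hecke eigenvector $v\in\C[I\!I^1_{n-2,0}]$ whose Arthur parameter contains a summand with $d_i=1$ and $\pi_i\cong1$. By Lemma \ref{lem:ArthurParameterUnimodularProperties} (the weight $0$ occurs twice) every parameter should contain such a summand, up to the check that the multiplicity-two weight $0$ forces a $[1]$ with trivial $\pi_i$.

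We want to avoid (H1) for small $\ell$, so we do not use the multiplicity formula for this step. The sum over all eigenvectors $v$ is intrinsic, so by Proposition \ref{prp:EisensteinBasis} we get
\[ \dim\mathrm M_n(\Gamma_n,1)\geq \dim\mathrm S_n(\SL_2(\Z))\cdot|I\!I_{n-2,0}|+1. \]
We then bound $|I\!I_{n-2,0}|$ from below by the Minkowski--Siegel mass formula,
\[ \sum_D\frac1{|\Ort(D)|}=\frac{|B_{n/2-1}|}{n-2}\prod_{j=1}^{n/2-2}\frac{|B_{2j}|}{4j}, \]
together with $|\Ort(D)|\leq$ an explicit bound. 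This is how the Bernoulli product $B_2\cdots B_n/n!!$, the factor $B_{n/2+1}$ and the factor $2^{-n/2+1}/n!$ should arise after reindexing.

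\emph{Step 4: conclusion.} The hypothesis on the number of chiral lattices says exactly that
\[ \#\{\text{chiral }D\}\cdot\dim\Obs_2<\dim\mathrm M_n(\Gamma_n,1), \]
once the estimates of Steps 2 and 3 are inserted. Hence the kernel of the obstruction map contains some $F\neq0$, and $F G^2\in\mathrm S_{2n-2}(\Gamma_n,1)$ satisfies Theorem \ref{thm:GeneralTypeCriterion} with $d=2$. Together with the known bound $\kappa\leq n$, this gives general type.

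The main obstacle is Step 3. We must make the mass-formula lower bound for $|I\!I_{n-2,0}|$, and hence for $\dim\mathrm M_n$, match the precise constant in the statement. We also must justify that every eigenvector has a $[1]$ summand with trivial $\pi_i$, so that (ii) applies unconditionally. We would first try to bound the mass from below with the crude estimate $|\Ort(D)|\leq |W(E_8)|^{\cdots}$, or with the bound on automorphism groups that yields the $n!$ and $2^{n/2-1}$ factors.
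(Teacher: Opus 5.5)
Your Step 1 matches the paper, including the split between $\ell\le 500$ and (H1). Steps 2--4, however, contain a genuine gap, and the strategy of fixing $d=2$ and weight $n$ cannot be repaired.

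\textbf{Step 3 rests on a false claim.} You assert that every parameter of an eigenvector in $\C[I\!I^1_{n-2,0}]$ contains a summand $[1]$. This fails for $\pi_f[n/2-1]$ (for instance $\pi_\Delta[12]$ in rank $24$). There the weight satisfies $w=d-1$, so the two halves of the single block each contribute one $0$, and there is no summand with $d_i=1$. Hence case (ii) fails for these vectors. They exist by Theorem \ref{thm:SingularWeightEisensteinSeries}, so Proposition \ref{prp:EisensteinBasis} only gives $|\mathcal B^1_n|$ Eisenstein series, not $\dim\mathrm S_n(\SL_2(\Z))\cdot|I\!I_{n-2,0}|$.

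\textbf{Even a perfect count would not suffice.} The Eisenstein part supplies at most $\dim\mathrm S_n(\SL_2(\Z))\approx n/12$ forms per lattice class. Each chiral cusp imposes $\dim\Obs_2\approx n/12+n/24$ conditions. Cusp forms of the fixed weight $n$ are controlled by nothing you have. So your inequality $h_{\mathrm{ch}}\dim\Obs_2<\dim\mathrm M_n$ would force $h_{\mathrm{ch}}$ below roughly $\tfrac23|I\!I_{n-2,0}|$, which is unrelated to the stated hypothesis. The hypothesis also does not ``say exactly'' what you claim. Its factor $(n/2+1)^{n-2}$ and the Bernoulli product $B_2B_4\cdots B_n\,B_{n/2+1}/(n!\,n!!)$ come from the Hirzebruch--Mumford volume of $\Gamma_n$, evaluated at weights of size about $d(n/2+1)$. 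They do not come from the Minkowski--Siegel mass of $I\!I_{n-2,0}$, which involves $B_2\cdots B_{n-4}\,B_{n/2-1}$, and no reindexing of the mass formula produces them.

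\textbf{The missing idea is to let $d\to\infty$ through even integers.}
\begin{itemize}
\item Take $E^d$ with $E=E_{f\otimes v_f^{\det}}$. It has weight $d(n/2-1)$ and vanishes to order $\geq d$ on $R$ and at the achiral cusps.
\item Look for $F\in\mathrm M_k(\Gamma_n,1)$ with $k=d(n/2+1)-2$ vanishing to order $\geq d$ at every chiral cusp. Then $FE^d$ has weight $dn-2<dn$.
\item The supply is the cusp-form space: $\dim\mathrm S_k(\Gamma_n,1)=C_nk^n+O(k^{n-1})$ by Hirzebruch--Mumford proportionality (Gritsenko--Hulek--Sankaran).
\item The demand per chiral cusp is bounded by theta-decomposing the Fourier--Jacobi coefficients of index $m<d$ into $\mathrm M_w(\omega_{D(m)})$, with $w=k-(n-2)/2$. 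A dimension formula uniform in $m$ gives about $\frac{w}{12}r_m$ for each $m$, where $r_m=(m^{n-2}+\gcd(m,2)^{n-2})/2$.
\item Since $\sum_{m<d}m^{n-2}\sim d^{n-1}/(n-1)$, this yields $\dim\Obs_d\le\frac{n/2+1}{24(n-1)}d^n+O(d^{n-1})$. This is where $24(n-1)$ really comes from.
\item Comparing leading coefficients in $d$ gives the condition $C_n(n/2+1)^n>h_{\mathrm{ch}}\frac{n/2+1}{24(n-1)}$, which is precisely the stated hypothesis.
\end{itemize}
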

\begin{proof}
	We proceed as in the previous theorem. Let $f\in\mathrm{S}_{4\ell}(\SL_2(\Z))$ be a simultaneous eigenform. By Theorem \ref{thm:SingularWeightEisensteinSeries}, there exists a holomorphic Eisenstein series $E_{f\otimes v_f^{\det}}\in\mathrm{M}_{4\ell}(\Gamma_{n},\det)$. Let $d$ tend to infinity through even integers. Then $E_{f\otimes v_f^{\det}}^d\in\mathrm{M}_{d(n/2-1)}(\Gamma_{n},1)$ has order of vanishing along $R$ and along all achiral cusps at least $d$. We show the existence of a form $F\in\mathrm{M}_{k}(\Gamma_n,1)$ with order of vanishing along the chiral cusps at least $d$ for $k\define d(n/2+1)-2$, so that $F\cdot E_{f\otimes v_f^{\det}}^d$ satisfies the condition of Theorem \ref{thm:GeneralTypeCriterion}: 

	Put $r=n-2=8\ell$ and $w=k-r/2$. Let $\Obs_d(\mathrm M_k(\Gamma_n,1),[D])$ be the image of the map taking the Fourier--Jacobi coefficients of indices $0,\ldots,d-1$. Theta decomposition embeds it in
	\[ \mathrm M_k(\SL_2(\Z))\oplus\bigoplus_{m=1}^{d-1}\mathrm M_w(\omega_{D(m)}). \]
	For the even weights under consideration, the subspace of the Weil representation on which modular forms of weight $w$ are supported has dimension
	\[ r_m=\frac{|D/mD|+|(D/mD)[2]|}{2}=\frac{m^r+\gcd(m,2)^r}{2}. \]
	The finite-image dimension formula of \cite[Theorem 6.1]{FreitagDimensionFormulas} gives uniformly in $m$
	\[ \dim(\mathrm M_w(\omega_{D(m)}))=\frac{w}{12}r_m+O(r_m). \]
	Indeed, each correction term is bounded by a constant times the dimension of the representation. Consequently,
	\begin{align*}
	\dim(\Obs_d(\mathrm M_k(\Gamma_n,1),[D]))
	&\leq\frac{k}{12}+\frac{w}{12}\sum_{m=1}^{d-1}r_m+O\left(1+\sum_{m=1}^{d-1}r_m\right)\\
	&\leq\frac{n/2+1}{24(n-1)}d^n+O(d^{n-1}).
	\end{align*}
	This uniform estimate is needed because the index $m$ varies with $d$. By \cite[Section 3.3]{GritsenkoHulekSankaran1},
	\[ \dim(\mathrm S_k(\Gamma_n,1)=C_nk^n+O(k^{n-1})),\quad
	C_n=\frac{2^{-n/2+1}}{n!}\cdot\frac{B_2\cdot B_4\cdot\hdots\cdot B_n}{n!!}\cdot\frac{B_{n/2+1}}{n/2+1}. \]
	If $h_{\mathrm{ch}}$ is the number of chiral classes, the numerical hypothesis is exactly
	\[ C_n(n/2+1)^n>h_{\mathrm{ch}}\frac{n/2+1}{24(n-1)}. \]
	For all sufficiently large even $d$, the map from $\mathrm M_k(\Gamma_n,1)$ to the direct sum of the $h_{\mathrm{ch}}$ obstruction spaces therefore has a non-zero kernel. Choose $F$ in this kernel. Its product with $E_{f\otimes v_f^{\det}}^d$ has weight $dn-2<dn$ and the required vanishing along the reflective and isotropic boundary divisors. Theorem \ref{thm:GeneralTypeCriterion} completes the proof.
\end{proof}

We remark that for $n=34$, i.e.\ $\ell=4$ we have
\[ 24(n-1)(n/2+1)^{n-2}\cdot\frac{2^{-n/2+1}}{n!}\cdot\frac{B_2\cdot B_4\cdot\hdots\cdot B_n}{n!!}\cdot B_{n/2+1} \approx 1.18\cdot10^{33}, \]
which far exceeds the lower bound $10^7$ of the number of positive-definite even unimodular lattices of rank $32$ without roots obtained by King (see \cite[Corollary 5]{King}).

\bibliographystyle{amsplain}
\bibliography{references}

\providecommand{\bysame}{\leavevmode\hbox to3em{\hrulefill}\thinspace}
\providecommand{\MR}{\relax\ifhmode\unskip\space\fi MR }
\providecommand{\MRhref}[2]{%
  \href{http://www.ams.org/mathscinet-getitem?mr=#1}{#2}
}
\providecommand{\href}[2]{#2}
\begin{thebibliography}{10}

\bibitem{Arthur}
James Arthur, \emph{The endoscopic classification of representations:
  Orthogonal and symplectic groups}, American Mathematical Society Colloquium
  Publications, vol.~61, American Mathematical Society, Providence, RI, 2013.
  \MR{3135650}

\bibitem{AGIKMS}
Hiraku Atobe, Wee~Teck Gan, Atsushi Ichino, Tasho Kaletha, Alberto M\'inguez,
  and Sug~Woo Shin, \emph{Local {I}ntertwining {R}elations and {C}o-tempered
  {$A$}-packets of {C}lassical {G}roups}, preprint, arXiv: 2410.13504, 2025.

\bibitem{BorcherdsMM}
Richard~E. Borcherds, \emph{Monstrous moonshine and monstrous {L}ie
  superalgebras}, Invent. Math. \textbf{109} (1992), no.~2, 405--444.
  \MR{1172696}

\bibitem{Borcherds98}
\bysame, \emph{Automorphic forms with singularities on {G}rassmannians},
  Invent. Math. \textbf{132} (1998), no.~3, 491--562. \MR{1625724}

\bibitem{BorcherdsFreitagWeissauer}
Richard~E. Borcherds, E.~Freitag, and R.~Weissauer, \emph{A {S}iegel cusp form
  of degree 12 and weight 12}, vol. 494, 1998, Dedicated to Martin Kneser on
  the occasion of his 70th birthday, pp.~141--153. \MR{1604476}

\bibitem{ChaudouardLaumon1}
Pierre-Henri Chaudouard and G\`erard Laumon, \emph{Le lemme fondamental
  pond\'er\'e. {I}. {C}onstructions g\'eom\'etriques}, Compos. Math.
  \textbf{146} (2010), no.~6, 1416--1506. \MR{2735371}

\bibitem{ChaudouardLaumon2}
\bysame, \emph{Le lemme fondamental pond\'er\'e. {II}. \'enonc\'es
  cohomologiques}, Ann. of Math. (2) \textbf{176} (2012), no.~3, 1647--1781.
  \MR{2979859}

\bibitem{ChenevierLannes}
Ga\"etan Chenevier and Jean Lannes, \emph{Automorphic forms and even unimodular
  lattices}, Ergebnisse der Mathematik und ihrer Grenzgebiete. 3. Folge. A
  Series of Modern Surveys in Mathematics [Results in Mathematics and Related
  Areas. 3rd Series. A Series of Modern Surveys in Mathematics], vol.~69,
  Springer, Cham, 2019, Kneser neighbors of Niemeier lattices, Translated from
  the French by Reinie Ern\'e. \MR{3929692}

\bibitem{Cogdell1}
James~W. Cogdell, \emph{Lectures on {$L$}-functions, converse theorems, and
  functoriality for {${\rm GL}_n$}}, Lectures on automorphic {$L$}-functions,
  Fields Inst. Monogr., vol.~20, Amer. Math. Soc., Providence, RI, 2004,
  pp.~1--96. \MR{2071506}

\bibitem{Cogdell2}
\bysame, \emph{{$L$}-functions and functoriality}, Automorphic forms and
  {$L$}-functions, Adv. Lect. Math. (ALM), vol.~30, Int. Press, Somerville, MA,
  2014, \url{https://people.math.osu.edu/cogdell.1/cimpa-www.pdf}, pp.~1--47.
  \MR{3307911}

\bibitem{ConreyFarmer}
John~B. Conrey and David~W. Farmer, \emph{Hecke operators and the nonvanishing
  of {$L$}-functions}, Topics in number theory ({U}niversity {P}ark, {PA},
  1997), Math. Appl., vol. 467, Kluwer Acad. Publ., Dordrecht, 1999,
  pp.~143--150. \MR{1691315}

\bibitem{Deligne}
Pierre Deligne, \emph{La conjecture de {W}eil. {I}}, Publ. Math. Inst. Hautes
  {\'E}tudes Sci. \textbf{43} (1974), 273--307.

\bibitem{EichlerZagier}
Martin Eichler and Don Zagier, \emph{The theory of {J}acobi forms}, Progress in
  Mathematics, vol.~55, Birkh\"auser Boston, Inc., Boston, MA, 1985.
  \MR{781735}

\bibitem{FarmerJames}
David~W. Farmer and Kevin James, \emph{The irreducibility of some level 1
  {H}ecke polynomials}, Math. Comp. \textbf{71} (2002), no.~239, 1263--1270.
  \MR{1898755}

\bibitem{FrankeSchwermer}
Jens Franke and Joachim Schwermer, \emph{A decomposition of spaces of
  automorphic forms, and the {E}isenstein cohomology of arithmetic groups},
  Math. Ann. \textbf{311} (1998), no.~4, 765--790. \MR{1637980}

\bibitem{FreitagDimensionFormulas}
Eberhard Freitag, \emph{Dimension formulae for vector valued automorphic
  forms}, 2012, Available at
  \url{https://www.mathi.uni-heidelberg.de/~freitag/preprints/riemann-roch.pdf}.

\bibitem{GetzHahn}
Jayce~R. Getz and Heekyoung Hahn, \emph{An introduction to automorphic
  representations---with a view toward trace formulae}, Graduate Texts in
  Mathematics, vol. 300, Springer, Cham, 2024. \MR{4738301}

\bibitem{GritsenkoNikulin1}
Valeri~A. Gritsenko and Viacheslav~V. Nikulin, \emph{Automorphic forms and
  {L}orentzian {K}ac-{M}oody algebras. {I}}, Internat. J. Math. \textbf{9}
  (1998), no.~2, 153--199. \MR{1616925}

\bibitem{GritsenkoNikulin2}
\bysame, \emph{Automorphic forms and {L}orentzian {K}ac-{M}oody algebras.
  {II}}, Internat. J. Math. \textbf{9} (1998), no.~2, 201--275. \MR{1616929}

\bibitem{GritsenkoHulekSankaran1}
Valery Gritsenko, Klaus Hulek, and Gregory~K. Sankaran, \emph{The
  {H}irzebruch-{M}umford volume for the orthogonal group and applications},
  Doc. Math. \textbf{12} (2007), 215--241. \MR{2350289}

\bibitem{GritsenkoHulekSankaran2}
\bysame, \emph{The {K}odaira dimension of the moduli of {K3} surfaces}, Invent.
  Math. \textbf{169} (2007), 519--567.

\bibitem{GritsenkoHulekSankaran3}
\bysame, \emph{Hirzebruch--mumford proportionality and locally symmetric
  varieties of orthogonal type}, Doc. Math. \textbf{13} (2008), 1--19.

\bibitem{GritsenkoFourierJacobi}
Valery~A. Gritsenko, \emph{Fourier-{J}acobi functions in {$n$} variables}, Zap.
  Nauchn. Sem. Leningrad. Otdel. Mat. Inst. Steklov. (LOMI) \textbf{168}
  (1988), 32--44, 187--188. \MR{982481}

\bibitem{Huybrechts}
Daniel Huybrechts, \emph{Lectures on {K}3 surfaces}, Cambridge Studies in
  Advanced Mathematics, vol. 158, Cambridge University Press, Cambridge, 2016.
  \MR{3586372}

\bibitem{King}
Oliver~D. King, \emph{A mass formula for unimodular lattices with no roots},
  Math. Comp. \textbf{72} (2003), no.~242, 839--863. \MR{1954971}

\bibitem{KriegRoemerSchaps}
Aloys Krieg, Hannah R{\"o}mer, and Felix Schaps, \emph{Hecke theory for
  {$\mathrm{SO}^{+}(2,n+2)$}}, J. Number Theory \textbf{262} (2024), 454--470.

\bibitem{Langlands}
Robert~P. Langlands, \emph{On the functional equations satisfied by
  {E}isenstein series}, Lecture Notes in Mathematics, vol. 544,
  Springer-Verlag, Berlin-New York, 1976. \MR{579181}

\bibitem{MoeglinWaldspurger}
Colette M{\oe}glin and Jean-Loup Waldspurger, \emph{Spectral decomposition and
  {E}isenstein series}, Cambridge Tracts in Mathematics, vol. 113, Cambridge
  University Press, Cambridge, 1995, Une paraphrase de l'\'Ecriture [A
  paraphrase of Scripture]. \MR{1361168}

\bibitem{MoeglinWaldspurger1}
\bysame, \emph{Stabilisation de la formule des traces tordue. {V}ol. 1},
  Progress in Mathematics, vol. 316, Birkh\"auser/Springer, Cham, 2016.
  \MR{3823813}

\bibitem{MoeglinWaldspurger2}
\bysame, \emph{Stabilisation de la formule des traces tordue. {V}ol. 2},
  Progress in Mathematics, vol. 317, Birkh\"auser/Springer, Cham, 2016.
  \MR{3823814}

\bibitem{Muller}
Manuel K.-H. M\"uller, \emph{{L}inear independence of theta series of
  positive-definite even unimodular lattices}, Manuscripta Math. \textbf{177}
  (2026), no.~5, 69.

\bibitem{thesis_mmueller}
Manuel K.-H. Müller, \emph{Generating systems for modular forms for the {W}eil
  representation and {H}ecke operators for orthogonal modular forms}, Ph.D.
  thesis, Technische {U}niversit\"at {D}armstadt, 2024, Available at
  \url{https://tuprints.ulb.tu-darmstadt.de/29551/20/thesis_mmueller.pdf}.

\bibitem{NebeVenkov}
Gabriele Nebe and Boris Venkov, \emph{On {S}iegel modular forms of weight 12},
  J. Reine Angew. Math. \textbf{531} (2001), 49--60. \MR{1810116}

\bibitem{Oda}
Takayuki Oda, \emph{On modular forms associated with indefinite quadratic forms
  of signature {$(2, n-2)$}}, Math. Ann. \textbf{231} (1977/78), no.~2,
  97--144. \MR{466026}

\bibitem{RallisSchiffmann}
Stephen Rallis and G\`erard Schiffmann, \emph{On a relation between
  {$\widetilde {\rm SL}\sb{2}$}\ cusp forms and cusp forms on tube domains
  associated to orthogonal groups}, Trans. Amer. Math. Soc. \textbf{263}
  (1981), no.~1, 1--58. \MR{590410}

\bibitem{DLMF}
\emph{\it {NIST} {D}igital {L}ibrary of {M}athematical {F}unctions},
  \url{https://dlmf.nist.gov/}, Release 1.2.8 of 2026-09-15, F.~W.~J. Olver,
  A.~B. {Olde Daalhuis}, D.~W. Lozier, B.~I. Schneider, R.~F. Boisvert, C.~W.
  Clark, B.~R. Miller, B.~V. Saunders, H.~S. Cohl, and M.~A. McClain, eds.

\bibitem{ScheithauerInventiones}
Nils~R. Scheithauer, \emph{On the classification of automorphic products and
  generalized {K}ac-{M}oody algebras}, Invent. Math. \textbf{164} (2006),
  no.~3, 641--678. \MR{2221135}

\bibitem{ScheithauerIMRN}
\bysame, \emph{The {W}eil representation of {${\rm SL}_2(\Bbb Z)$} and some
  applications}, Int. Math. Res. Not. IMRN (2009), no.~8, 1488--1545.
  \MR{2496771}

\bibitem{ScheithauerCompositio}
\bysame, \emph{Automorphic products of singular weight}, Compos. Math.
  \textbf{153} (2017), no.~9, 1855--1892. \MR{3705279}

\bibitem{Shahidi}
Freydoon Shahidi, \emph{Eisenstein series and automorphic {$L$}-functions},
  American Mathematical Society Colloquium Publications, vol.~58, American
  Mathematical Society, Providence, RI, 2010. \MR{2683009}

\bibitem{Shin}
Sug~Woo Shin, \emph{Weak transfer from classical groups to general linear
  groups}, Essent. Number Theory \textbf{3} (2024), no.~1, 19--62. \MR{4752712}

\bibitem{Waldspurger}
Jean-Loup Waldspurger, \emph{{\`A} propos du lemme fondamental pond\'er\'e{}
  tordu}, Math. Ann. \textbf{343} (2009), no.~1, 103--174. \MR{2448443}

\bibitem{Weil}
Andr\'e Weil, \emph{Sur certains groupes d'op\'erateurs unitaires}, Acta Math.
  \textbf{111} (1964), 143--211. \MR{165033}

\bibitem{Weissauer}
Rainer Weissauer, \emph{Stabile {M}odulformen und {E}isensteinreihen}, Lecture
  Notes in Mathematics, vol. 1219, Springer-Verlag, Berlin, 1986. \MR{923958}

\end{thebibliography}

\end{document}